\documentclass[a4paper,11pt]{amsart}
\usepackage{amssymb,amsthm,bbm,epic,eepic,graphics,amsbsy,url,a4wide,verbatim}

\usepackage{amssymb,amsthm}
\usepackage[curve]{xypic}
\usepackage[applemac]{inputenc}
\usepackage{longtable}
\usepackage{array}
\newcommand{\la}{\lambda}

\newtheorem{theo}[equation]{Theorem}
\newtheorem*{theor}{Theorem}
\newtheorem{prop}[equation]{Proposition}

\newtheorem{coro}[equation]{Corollary}
\newtheorem{defi}[equation]{Definition}
\newtheorem{lemma}[equation]{Lemma}
\newtheorem{remark}[equation]{Remark}
\numberwithin{equation}{section}

\newcommand{\mmm}{{\raisebox{1mm}{\rule{1.5mm}{.2mm}}}} 
\newcommand\ii{\mathbf{i}}
\newcommand\jj{\mathbf{j}}
\newcommand\iid{\mathrm{i}}
\newcommand\into{\hookrightarrow}
\newcommand\onto{\twoheadrightarrow}

\newcommand\Id{{\mathrm{Id}}}
\renewcommand{\cong}{\simeq}
\DeclareMathOperator{\Ad}{\mathrm{Ad}}
\DeclareMathOperator{\Aut}{\mathrm{Aut}}
\DeclareMathOperator{\End}{\mathrm{End}}

\DeclareMathOperator{\Hom}{\mathrm{Hom}}
\DeclareMathOperator{\Ext}{\mathrm{Ext}}
\DeclareMathOperator{\Tor}{\mathrm{Tor}}
\DeclareMathOperator{\Image}{\mathrm{Im}}
\DeclareMathOperator{\Irr}{\mathrm{Irr}}
\DeclareMathOperator{\Ker}{\mathrm{Ker}}

\DeclareMathOperator{\Res}{\mathrm{Res}}
\DeclareMathOperator{\GL}{\mathrm{GL}}
\DeclareMathOperator{\GU}{\mathrm{GU}}
\DeclareMathOperator{\SU}{\mathrm{SU}}
\DeclareMathOperator{\PGU}{\mathrm{PGU}}
\DeclareMathOperator{\PSU}{\mathrm{PSU}}
\DeclareMathOperator{\PGL}{\mathrm{PGL}}
\DeclareMathOperator{\Sp}{\mathrm{Sp}}
\DeclareMathOperator{\PSp}{\mathrm{PSp}}

\DeclareMathOperator{\SL}{\mathrm{SL}}
\DeclareMathOperator{\Ze}{\mathrm{Z}}
\DeclareMathOperator{\Mat}{\mathrm{Mat}}
\newcommand{\C}{{\mathbbm C}}
\newcommand{\F}{{\mathbbm F}}
\newcommand{\kk}{{k}}
\newcommand{\kkb}{{\mathbf{k}}}
\newcommand{\kb}{\overline{\kk}}
\newcommand{\kt}{\kk^{\times}}
\newcommand{\Q}{{\mathbbm Q}}
\newcommand{\Z}{{\mathbbm Z}}

\def\lexp#1#2{\kern\scriptspace\vphantom{#2}^{#1}\kern-\scriptspace#2}

\newcommand{\GAP}{{\tt GAP}}
\newcommand{\GAPt}{{\tt GAP3}}
\newcommand{\GAPq}{{\tt GAP4}}
\newcommand{\MACAULAY}{{\tt Macaulay 2}}
\newcommand{\CHEVIE}{{\tt CHEVIE}}
\def\OOO{{\mathcal{ O}}}

\title{On ternary quotients of cubic Hecke algebras}
\author{Marc Cabanes \& Ivan Marin}
\address{Institut de Math\'ematiques de Jussieu, Universit\'e Paris 7,
175 rue du Chevaleret, F-75013 Paris}

\date{February 4, 2012}               

\def\eps{\varepsilon}

\newcommand{\AAA}{{K}}
\def\bb{\mathbf{b}}
\def\cc{\mathbf{c}}
\def\qq{\mathbf{q}} 
\def\la{{\lambda}}

\def\ab{{\rm ab}}

\def\inn{\subseteq}

\def\tri{\triangleleft}
\def\smd{\rtimes}

\def\te{\otimes}

\begin{document}
\maketitle

{\bf Abstract.}  We prove that the quotient of the group
algebra of the braid group introduced by L. Funar in \cite{FUNAR} collapses in
characteristic distinct from 2. In characteristic 2 we define
several quotients of it, which are connected to the classical Hecke
and Birman-Wenzl-Murakami quotients, but which admit in addition a
symmetry of order 3. We also establish conditions on the
possible Markov traces factorizing through it.

\bigskip

\section{Introduction}

Let $B_n$ be the braid group on $n$ strings ($n\geq 2$), that is the group
defined by $n-1$ generators $s_1,\dots ,s_{n-1}$ submitted to the relations
$s_is_j=s_js_i$ whenever $i-j\geq 2$, and
$s_{i}s_{i+1}s_{i}=s_{i+1}s_{i}s_{i+1}$ for any $i=1,\dots n-2$
(see e.g. \cite{BIRMAN} or \cite{KURPITA} for basic results on these groups).

This paper grew out as an attempt to understand the mysterious
`cubic Hecke algebras' defined by L. Funar and used in \cite{FUNAR} and \cite{FUNARBELLIN}.
In \cite{FUNAR}, an algebra $K_n(\gamma)$ for $\gamma \in \kk$ is defined over a commutative ring $\kk$
as the quotient of the group algebra $\kk B_n$ of the braid group
$B_n$ on $n$ strands, by the relations $s_i^3 = \gamma$,
and $s_{i+1} s_i^2 s_{i+1} + s_i s_{i+1}^2 s_i + s_i^2 s_{i+1} s_i +
s_i s_{i+1} s_i^2 + s_i^2 s_{i+1}^2 + s_{i+1}^2 s_i^2 + \gamma s_i + \gamma
s_{i+1} = 0$. Notice that the relations are equivalent to
$s_1^3 = \gamma$, $s_{2} s_1^2 s_{2} + s_1 s_{2}^2 s_1 + s_1^2 s_{2} s_1 +
s_1 s_{2} s_1^2 + s_1^2 s_{2}^2 + s_{2}^2 s_1^2 + \gamma s_1 + \gamma
s_{2} = 0$. The striking property of this algebra is that the latter
relation involves only $s_1,s_2$ and that, as proved in \cite{FUNAR},
it is a finitely generated $\kk$-module (hence finite dimensional over $\kk$
if $\kk$ is a field). Although many finite-dimensional
cubic quotients of the (group algebra of the) braid groups have been defined,
to our knowledge it is the only one which is not a quotient of the
classical Birman-Wenzl-Murakami algebra and which can be defined from
relations in $\kk B_3$. Notice that, whenever $\gamma$ admits an invertible
third root $\alpha \in \kk$ with $\alpha^3 = \gamma$, we have $K_n(\gamma)
\simeq K_n(1)$ under $s_i \mapsto \alpha^{-1} s_i$ -- and in particular always $K_n(-1) \simeq K_n(1)$.
Moreover, $K_n(1)$ is a quotient of the group algebra $\kk \Gamma_n$,
for $\Gamma_n = B_n/<s_1^3>$. This group $\Gamma_n$ is a semidirect product
$\Gamma_n^0 \rtimes C_3$, with $C_k$ denoting the cyclic group of order $k$,
and the defining ideal of $K_n(1)$ has the remarkable property to be generated by a $C_3$-invariant
ideal in $\Z \Gamma_3^0$ -- thus deserving the name ternary used
in the title.

By a theorem of Coxeter, $\Gamma_n$ is finite if and only if $n \leq 5$.
Moreover, in this case it is a finite complex reflection group,
and, as was conjectured by Brou\'e, Malle and Rouquier,
$\kk  \Gamma_n$ for $n \leq 5$ admits a flat deformation similar to the presentation of the ordinary
Hecke algebra as a deformation of $\kk \mathfrak{S}_n$. 
This has been proved in \cite{BM}, Satz 4.7 for $n = 3,4$, and recently
in \cite{G32} for $n=5$.
Partly stimulated by this conjecture,
the authors of \cite{FUNARBELLIN} constructed a deformation
of $K_n(\gamma)$ (still finitely generated).

The main motivation in \cite{FUNAR} and \cite{FUNARBELLIN} is to construct
link invariants. In \cite{FUNAR} it is claimed that $K_n(-1)$
admits a Markov trace with values in $\Z/6\Z$. A more general
statement is claimed in \cite{FUNARBELLIN}, that the constructed
deformation provides a link invariant with values in some extended ring.
Around 2004-2005, S. Orevkov pointed out a gap in a part of \cite{FUNARBELLIN}
devoted to the proof of the invariance of the trace under Markov
moves, which originates in \cite{FUNAR}. In 2008, the second author of the present paper noticed that, when $\kk$ is a field
of characteristic 0,
the 'tower of algebras' $K_n(1)$ collapsed, more precisely
that $K_n(1) = 0$ for $n \geq 5$ (see theorem \ref{theodif23} below). However, when $\kk = \Z$, this
tower does not collapse. This can be seen from the fact that
the natural group morphisms $\Gamma_n \onto C_3$
induce morphisms $\Z \Gamma_n \onto
\Z C_3 \onto (\Z/8\Z)C_3$ which factorize through $K_n(1)$.

\subsection{Statement of the main results}

Letting $\AAA_n = K_n(1)$ we prove (see corollary \ref{corfg} and theorems \ref{theodif23} and \ref{theocol3})
\begin{theor} When $\kk = \Z$,
\begin{enumerate}
\item $\AAA_n$ is a finite $\Z$-module for $n \geq 5$.
\item The exponent (as an abelian group) of $\AAA_n$ has the form $2^r 3^s$
for some $r,s$ (depending on $n$) when $n \geq 5$.
\item The exponent of $\AAA_n$ is a power of $2$ (\emph{not} depending on $n$)
when $n \geq 7$.
\end{enumerate}
\end{theor}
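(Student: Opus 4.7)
The strategy is to combine the finite generation of $\AAA_n$ as a $\Z$-module (corollary \ref{corfg}) with vanishing statements after reduction modulo individual primes. Since $\AAA_n$ is a finitely generated abelian group, its structure is entirely controlled by the rational rank $\dim_\Q(\AAA_n\otimes_\Z \Q)$ and by the $\F_p$-dimensions $\dim_{\F_p}(\AAA_n\otimes_\Z \F_p)$ for each prime $p$; in particular $\AAA_n\otimes_\Z \F_p=0$ is equivalent to the $p$-primary part of $\AAA_n$ being zero.

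For (i), apply theorem \ref{theodif23} to get $\AAA_n\otimes \Q=0$ for $n\geq 5$; combined with finite generation this forces $\AAA_n$ to be finite. For (ii), rewrite the claim equivalently as the statement that the $p$-primary part of $\AAA_n$ vanishes for every prime $p\notin\{2,3\}$. Provided that the collapse given by theorem \ref{theodif23} holds in any characteristic $\neq 2,3$ (as its label suggests, and as the argument in characteristic $0$ should adapt), the required vanishing $\AAA_n\otimes\F_p=0$ follows and kills the $p$-primary part. For the qualitative half of (iii), invoke theorem \ref{theocol3} to obtain $\AAA_n\otimes \F_3=0$ for $n\geq 7$, and combine with (ii) to conclude that the exponent is a power of $2$.

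The genuinely delicate step is the \emph{not depending on $n$} clause of (iii), since the primewise vanishing results do not control the height of the $2$-primary part. To obtain a uniform bound, I would exploit the tower structure: the standard inclusion $B_n\into B_{n+1}$ respects the Funar relations and therefore induces a morphism $\AAA_n\to\AAA_{n+1}$. If a finite set of $\Z$-module generators of $\AAA_n$ coming from the proof of corollary \ref{corfg} can be chosen compatibly across $n$, then any $2$-adic relation $2^N\!\cdot x=0$ that holds at some explicit base stage (say $n=7$) propagates to all $n\geq 7$. Pinning down such a compatible generating set, or equivalently exhibiting an explicit quotient that captures the whole $2$-primary part, is where the main work lies; once this is done, the rest of the theorem follows formally from corollary \ref{corfg}, theorems \ref{theodif23}--\ref{theocol3}, and the structure theorem for finitely generated abelian groups.
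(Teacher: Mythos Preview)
Your treatment of (i), (ii), and the qualitative half of (iii) is correct and is exactly how the paper deduces them: finite generation (corollary \ref{corfg}) together with the vanishing of $\AAA_n\otimes_{\Z}\kk$ over fields of the relevant characteristics (theorems \ref{theodif23} and \ref{theocol3}) pins down the torsion via the structure theorem for finitely generated abelian groups.

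Where you go astray is in calling the ``not depending on $n$'' clause the ``genuinely delicate step'' and proposing to construct compatible generating sets across the tower. No such construction is needed. Each $\AAA_n$ is a \emph{unital} $\Z$-algebra, so for any integer $m$ and any $x\in\AAA_n$ one has $m\cdot x=(m\cdot 1_{\AAA_n})x$; hence the exponent of $\AAA_n$ as an abelian group is precisely the additive order of $1_{\AAA_n}$. The natural morphisms $\AAA_n\to\AAA_{n+1}$ are unital ring homomorphisms, so $m\cdot 1=0$ in $\AAA_n$ forces $m\cdot 1=0$ in $\AAA_{n+1}$. Thus the exponent of $\AAA_{n+1}$ divides that of $\AAA_n$, and in particular the exponent of $\AAA_n$ divides the exponent of $\AAA_7$ for every $n\geq 7$. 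Since the latter is a fixed power of $2$ by what you have already shown, the uniform bound follows immediately. There is no ``main work'' left here; the single relation $2^N\cdot 1=0$ at stage $7$ already does everything.
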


When $\kk$ is a field, in order to get a stably nontrivial structure,
we thus need to assume that $\kk$ has characteristic 2.

\begin{theor} Assume $\kk$ is a field of characteristic 2. For all $n$,
there exists a quotient $\mathcal{H}_n$ of $\AAA_n$,  which has dimension
$3(n!-1)$ and which embeds inside a product of three Hecke algebras.
This algebra $\mathcal{H}_n$ is the quotient of $\kk \Gamma_n$
by the relation $s_1 s_2^{-1} + s_2 s_1^{-1} + s_1^{-1} s_2 + s_2^{-1} s_1 = 0$.
\end{theor}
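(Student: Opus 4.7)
The plan is to organize the argument into three tasks: verify that $\mathcal{H}_n$ is actually a quotient of $\AAA_n$; construct a homomorphism $\Phi$ from $\mathcal{H}_n$ to a product of three Hecke algebras and identify its image as a natural subalgebra of dimension $3(n!-1)$; and show that $\Phi$ is injective.

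The first two tasks will be direct calculations in characteristic $2$. Since $s_i^3 = 1$ in $\Gamma_n$ gives $s_i^{-1} = s_i^2$, the ternary relation rewrites as $R := s_1 s_2^2 + s_2 s_1^2 + s_1^2 s_2 + s_2^2 s_1 = 0$. Multiplying $R$ on the right by $s_1$ and by $s_2$, summing in characteristic $2$, and using the braid relation $s_1 s_2 s_1 = s_2 s_1 s_2$ to identify $s_1 s_2 s_1^2 = s_2 s_1 s_2 s_1 = s_2^2 s_1 s_2$, one recovers Funar's cubic relation; hence $\mathcal{H}_n$ is a quotient of $\AAA_n = K_n(1)$. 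Next, after enlarging $\kk$ if necessary to contain a primitive cube root of unity $\omega$, for each pair $\{u,v\}\subset\{1,\omega,\omega^2\}$ form the classical Hecke algebra
\[ H_{uv} := \kk B_n / \langle (s_i - u)(s_i - v) \rangle \]
of dimension $n!$. Using $1+\omega+\omega^2 = 0$ and characteristic-$2$ arithmetic, the quadratic relation $(s_i - u)(s_i - v) = 0$ already forces $s_i^3 = 1$; and a direct expansion with $s_i^{-1} = (uv)^{-1}(s_i + u + v)$ shows $R = 0$ in $H_{uv}$. Each $H_{uv}$ is therefore a quotient of $\mathcal{H}_n$, and assembling the three quotients produces
\[ \Phi : \mathcal{H}_n \longrightarrow H_{12} \times H_{13} \times H_{23}. \]

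For the image of $\Phi$: the three one-dimensional characters $\chi_j : s_i \mapsto \omega^j$ of $\mathcal{H}_n$ each factor through exactly the two $H_{uv}$ with $\omega^j \in \{u,v\}$, so in the image of $\Phi$ the $\chi_j$-components in those two factors must coincide. These three linear conditions are independent, cutting out a subalgebra $B$ of the product of codimension $3$, hence $\dim B = 3 n! - 3 = 3(n!-1)$, and $\Phi$ factors through $B$.

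The main obstacle will be showing that $\Phi$ is injective onto $B$. Since $\Phi(\mathcal{H}_n) \subseteq B$, it would suffice to prove $\dim \mathcal{H}_n \leq 3(n!-1)$: together with $\dim B = 3(n!-1)$ and the surjection $\mathcal{H}_n \onto \Phi(\mathcal{H}_n)$, this would force $\Phi$ to be an isomorphism onto $B$. My plan is to establish this dimension bound by producing an explicit spanning set of $\mathcal{H}_n$ of cardinality $3(n!-1)$, obtained from $n!$ lifts of reduced expressions for $\Sgot_n$ inside $\Gamma_n$, tripled via a lift of the generator of $C_3 = \Gamma_n/\Gamma_n^0$, with exactly three redundancies (one per $C_3$-isotypic sector) enforced by the ternary relation. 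Making this count precise, most naturally by induction on $n$ using an embedding $\mathcal{H}_{n-1} \hookrightarrow \mathcal{H}_n$ and a coset analysis of $\Sgot_n/\Sgot_{n-1}$, will be the combinatorial heart of the argument.
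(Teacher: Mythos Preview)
Your overall architecture is the same as the paper's: show $(\qq)\subset(\bb)$, show $(\bb)\subset J_n(\alpha,\beta)$ for each pair of cube roots, and then squeeze $\dim \kk\Gamma_n/(\bb)$ between an upper bound (spanning set) and a lower bound (the image in the product). Your computation $\cc = Rs_1 + Rs_2$ is correct and is a slick variant of the paper's identity $\qq = \bb + s_1 s_2^{-1}\bb$.

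However, there is a genuine logical gap in your dimension count. You write that $\dim\mathcal{H}_n \le 3(n!-1)$ together with $\Phi(\mathcal{H}_n)\subseteq B$ and $\dim B = 3(n!-1)$ would force $\Phi$ to be an isomorphism onto $B$. It would not: these facts only give $\dim\Phi(\mathcal{H}_n)\le\dim\mathcal{H}_n\le\dim B$, which is compatible with $\Phi$ having a kernel and image a proper subalgebra of $B$. What you are missing is the \emph{lower} bound, namely that $\Phi(\mathcal{H}_n)=B$, i.e.\ that the natural map $\kk\Gamma_n\to B$ is surjective. This is a Chinese-remainder statement for three ideals and is not automatic. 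The paper handles it by computing $J_n(\alpha,\beta)+J_n(\alpha,\gamma)=\kk\Gamma_n(s_1-\alpha)\kk\Gamma_n$ (via $(s_1-\alpha)(s_1-\beta)-(s_1-\alpha)(s_1-\gamma)=(\gamma-\beta)(s_1-\alpha)$), noting that the three ideals $(s_1-\alpha)$ sum to $\kk\Gamma_n$, and then applying a short exact-sequence lemma for triples of ideals to identify $\kk\Gamma_n/J_n$ with $B$ exactly. Once you have that, your upper bound $\dim\kk\Gamma_n/(\bb)\le 3(n!-1)$ combined with the surjection $\kk\Gamma_n/(\bb)\twoheadrightarrow\kk\Gamma_n/J_n\cong B$ does force $(\bb)=J_n$.

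A secondary remark: your description of the spanning set (``$n!$ lifts of reduced words, tripled, with three redundancies'') is too optimistic as stated. The paper's inductive decomposition is
\[
U_{n+1} = U_n \oplus \bigoplus_{k=1}^{n} U_n\, s_n s_{n-1}\cdots s_{n-k+1} \;\oplus\; \bigoplus_{k=1}^{n} U_2\, s_n s_{n-1}\cdots s_{n-k+2} s_{n-k+1}^2,
\]
which gives the recursion $\dim U_{n+1}\le (n+1)\dim U_n + 3n$, hence $\dim U_n\le 3(n!-1)$. Establishing that the last summands really only need a $U_2$-coefficient (rather than $U_n$) is the delicate point, and in the paper it relies on two extra relations in $\kk\Gamma_4/(\bb)$ that do not follow formally from $\bb$ alone on three strands; they are extracted from the explicit structure of $\kk\Gamma_4$. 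Your coset-of-$\mathfrak{S}_n/\mathfrak{S}_{n-1}$ picture will need an analogue of this step.
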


We call this algebra the ternary Hecke algebra, as it can be defined as
the quotient of $\kk \Gamma_n$ by the intersection of the
three ideals whose corresponding quotients define the three possible Hecke algebras at
third roots of 1.

Taking $\kk = \Z$, we let $\AAA_{\infty}$ denote the direct limit of the
$\AAA_n$ under the natural morphisms $\AAA_n \to \AAA_{n+1}$, and we similarly
define $\mathcal{H}_{\infty}$. Using the second definition
above, $\mathcal{H}_{\infty}$ can be defined over $\Z/4\Z$.

We recall that a Markov trace on $\AAA_{\infty}$ is a $\Z$-module morphism
$t : \AAA_{\infty} \to M$, where $M$ is some $\Z[u,v]$-module, which
satisfies $t(xy) = t(yx)$ for all $x,y \in \AAA_{\infty}$, $t(xs_n) = u\,t(x)$
and $t(x s_n^{-1}) = v t(x)$ for all $x$ in (the image of) $\AAA_n$.
It can be shown that such a Markov trace, if it exists, is uniquely
determined by the value $t(1)$, and takes values in $\Z[u,v] t(1) \subset M$.

\begin{theor}
\begin{enumerate}
\item If $t : \AAA_{\infty} \to \Z[u,v] t(1)$ is a Markov trace, then
$16t(1) = 0$, $4uv. t(1)= 4t(1)$, $4u^3.t(1) = 4v^3.t(1) = -4t(1)$ and
$(3u^3 + 3 v^3 - 3uv + 1)t(1) = 0$.
\item If $4t(1) = 0$, then $t$ factors through $\mathcal{H}_{\infty}$
(defined over $\Z/4\Z$)
\item There exists a Markov trace $\bar{t} : \mathcal{H}_{\infty} \to (\Z/4\Z)[u,v]$
with $\bar{t}(1) = \bar{1} \in \Z/4\Z$, which originates from the Markov
traces on ordinary Hecke algebras.
\end{enumerate}
\end{theor}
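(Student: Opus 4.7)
The plan is to tackle the three parts of the theorem in sequence, with the bulk of the technical work landing in~(1) and~(3). For~(1), the strategy is to apply $t$ to elements of the defining ideal of $\AAA_n$ and reduce systematically using the trace property $t(xy) = t(yx)$, the braid relations, the cubic relation $s_i^3 = 1$ (so $s_i^2 = s_i^{-1}$), and the Markov rules $t(x s_n) = u\, t(x)$, $t(x s_n^{-1}) = v\, t(x)$ for $x \in \AAA_n$. As a first step, applying $t$ to the ternary relation at the top level with $(s_i, s_{i+1})$ specialized to $(s_{n-1}, s_n)$ reduces its eight terms to monomials in $u, v$ via cyclicity and $s_n^2 = s_n^{-1}$, and summing gives $4(u + v^2)\, t(1) = 0$; the $u \leftrightarrow v$ symmetric version yields $4(v + u^2)\, t(1) = 0$. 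Multiplying the ternary relation on the left or right by short words such as $s_n$, $s_{n-1} s_n$, $s_n s_{n-1} s_n$ before tracing---and using the braid relation $s_{n-1} s_n s_{n-1} = s_n s_{n-1} s_n$ to collapse longer words---produces further scalar relations. Collecting enough of these, the four stated equations should fall out by linear algebra. This step is careful but essentially mechanical; the subtlety is identifying a sufficient set of multiples to trace.

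For~(2), the kernel of $\AAA_\infty \onto \mathcal{H}_\infty$ is the two-sided ideal generated by $r = s_1 s_2^{-1} + s_2 s_1^{-1} + s_1^{-1} s_2 + s_2^{-1} s_1$ together with its shifts under $s_i \mapsto s_{i+1}$. By cyclicity and the Markov compatibility of $t$, it suffices to verify $t(z r) = 0$ for $z$ in a spanning set of $\AAA_\infty$. A direct computation along the lines of~(1) gives $t(r) = 4uv\, t(1)$, which vanishes under the hypothesis $4\, t(1) = 0$ thanks to the relation $4uv\, t(1) = 4\, t(1)$ of~(1). The plan is to extend this calculation by induction on the length of $z$, showing each $t(z r)$ reduces to a $\Z[u,v]$-multiple of $4\, t(1)$, hence to zero under the hypothesis.

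For~(3), I would exploit the defining embedding of $\mathcal{H}_n$ into $H_n^{(1)} \times H_n^{(\omega)} \times H_n^{(\omega^2)}$, the product of three Hecke algebras corresponding to the three roots of $X^3 - 1$ over $\F_4$ and its Witt lift to $\Z/4\Z[\omega]$. Each factor carries its classical Markov trace $\tau^{(\alpha)}$, and I would define $\bar t = \sum_\alpha c_\alpha \tau^{(\alpha)}$ for coefficients $c_\alpha \in \Z/4\Z[\omega]$ chosen both to enforce $\bar t(1) = \bar 1$ and to ensure Galois-invariance under $\omega \mapsto \omega^2$, so that $\bar t$ descends to $(\Z/4\Z)[u,v]$. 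The main obstacle, and the technical heart of~(3), is precisely this descent: the individual $\tau^{(\alpha)}$ live over a strictly larger ring than $\Z/4\Z$, and one must verify that the Galois-conjugate contributions cancel modulo~$4$ to land in $(\Z/4\Z)[u,v]$, while simultaneously respecting the Markov-move rules with parameters $u,v$ rather than the Hecke parameters of each factor.
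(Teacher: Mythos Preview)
Your outline for~(1) has a genuine gap. Tracing $\cc$ (or $\qq$) multiplied by words in $s_{n-1},s_n$ only recovers the $n=3$ constraints, namely $4(u^2+v)t(1)=4(v^2+u)t(1)=0$ together with a formula for $t(z_3)$; these three conditions do \emph{not} force $16t(1)=0$ or the remaining identities. The paper obtains the deeper relations by moving to $\Gamma_4$: it locates explicit pairs of \emph{conjugate} elements $x,y\in\Gamma_4$ (the conjugacy being established via the structure of $\Ker(\Gamma_4\twoheadrightarrow\Gamma_3)\simeq 3^{1+2}$), computes $t(x)$ and $t(y)$ separately using Markov moves and the ideal relation to rewrite $s_3s_2^2s_3$, and then equates them. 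One such pair yields $(3u^3+3v^3-5uv-1)t(1)=0$; a second, more intricate one gives $80u\,t(1)=0$, which combined with the known $2$-torsion gives $16t(1)=0$ and the rest. So the missing idea is the use of conjugacy in $\Gamma_4$ as a source of relations, not just vanishing on the ideal; your ``essentially mechanical'' linear-algebra step would stall at the $n=3$ information.

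For~(2), the inductive reduction of $t(zr)$ you sketch is not obviously workable: there is no evident length-decreasing mechanism when $z$ genuinely involves generators beyond $s_1,s_2$. The paper's route is cleaner and in fact goes via~(3): a Markov trace on $K_\infty$ is uniquely determined by $t(1)$, so once~(3) furnishes a trace with the same $t(1)$ that already factors through $\mathcal{H}_\infty$, the given $t$ must coincide with it and hence also factors. Your plan for~(3) is close in spirit to the paper's, though the paper takes the \emph{tuple} $(tr_\gamma(g))_\gamma$ landing in $\prod_\gamma R[u,v]/(v+\gamma u+\gamma^2)$ and checks it lies in the image of $R[u,v]/(u^3+v^3+uv+1)$, rather than forming a linear combination; the descent issue you flag is indeed the point requiring care.
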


Modulo 4, the most general link invariant that can be defined this way is thus
given by the following operation : take the Homfly polynomial,
consider the three possible specialisations `at third roots of 1',
and reduce these three values modulo 4.

Finally, we investigate another quotient of $K_n$, that we denote
$\mathcal{BMW}_n$ and which is obtained from the usual
Birman-Wenzl-Murakami algebras by a similar `ternary' operation.
Computer calculations seems to indicate that this quotient
is asymptotically very close to $K_n$. However, the study of this
quotient is more delicate, and we get only partial results on it.
This nethertheless shows that, over a field of characteristic 2, $K_n$
is actually \emph{larger} than all the quotients of $\kk \Gamma_n$
by relations on 3 strands that have been defined so far.

\subsection{Open problems}

The work leaves for now the following questions open :
\begin{enumerate}
\item Over $\Z/4\Z$, and even over $\Z$, does $\mathcal{H}_n$
coincide with the quotient of the group algebra of $\Gamma_n$
by the ideal generated by $s_1 s_2^{-1} - s_1^{-1} s_2 + s_2 s_1^{-1} - s_2^{-1} s_1$ ?
\item Which are the Markov traces on $K_n(1)$ with $4 t(1) \neq 0$ ? Are there
non-obvious ones~? (Notice that the natural projection $\Gamma_n \onto C_3 = <s>$
obviously induces a Markov trace $t : K_n \onto (\Z/8)C_3$ with $u=s$ and $v = s^2$.)
\item What is the minimal $r$ ($r \geq 3$) such that $2^r K_{\infty} = 0$ ? Note that
$2^5 K_{\infty} = 0$ by proposition \ref{propnew}.
\item We lack a nice description of the intersection of the defining
ideals of the `two Temperley-Lieb algebras', at third roots of 1 and in
characteristic 2. This would help understanding $\mathcal{BMW}_n$ (see Definition \ref{defbmw}).
\item Do we have $\mathcal{BMW}_{\infty} = K_{\infty}(1)$, over a field of characteristic 2  ? 
\item Are there `nice generators' for the defining ideal of $\mathcal{BMW}_n$ ?
\item We did not study here the deformation of $K_n$ proposed in \cite{FUNARBELLIN},
although we hope our work now provides a firmer ground for it. See \cite{G32}
for the characteristic $0$ case.
\item Does $\mathcal{H}_n$ admit a `nice' deformation, and a related
Markov trace ?
\item Is there a nice description of the algebra $K_4(1)$ in characteristic 3 ?
\item What are $K_5, K_6$ as modules over the ring $\Z_3$ of 3-adic integers ?
\item Are the natural morphisms $\Gamma_n \to \Gamma_m$ injective for $6 \leq n \leq m$ ?
\end{enumerate}

\subsection{Notations}

Let $G$ be a group. We denote 
by $\Ze (G)$, resp. $(G,G)$, the center, resp. derived subgroup of $G$, and 
we denote by $G^\ab $ the quotient $G/(G,G)$. If $H$ is a group on which $G$ acts by group automorphisms, we denote by $H\smd G$ the associated semi-direct product.

If $A$ is a ring and $G$ acts on $A$ by ring automorphisms, we denote by $A\smd G$ the semi-direct product ring, that is the free $A$-module $\oplus_{g\in G}Ag$ endowed with multiplication $(ag).(a'g')=a(g.a')gg'$ for $a,a'\in A$, $g,g'\in G$.

If $n\geq 1$ is an integer, one denotes by $C_n$ the cyclic group with $n$ 
elements. 

 For $\kk$ a field
we let $\overline{\kk}$ denote an algebraic closure of $\kk$.

If $G$ is a finite group, we denote by $\Irr (G)$ the set of irreducible 
characters of $G$, that is trace characters of simple $\C G$-modules.

If $A$ is a ring and $n\geq 1$ is an integer, one denotes by $\Mat_n(A)$ the 
ring of $n\times n$ matrices with coefficients in $A$. We will also use the more
general notation $Mat_I(A)$ for $I$ an arbitrary finite set.
One denotes by 
$\Id_n$ the identity matrix. One denotes matrix transposition by $M\mapsto {}^tM$.

Let $q$ be a power of a prime. We denote by $\F_q$ the field with $q$ 
elements. We denote by $\GL_n(q) = \GL_n(\F_q)$, resp. $\SL_n(q) = \SL_n(\F_q)$ the general and special 
linear groups in $\Mat_n(\F_q)$. One denotes by $\Sp_{2n}(q) = \Sp_{2n}(\F_q)$ the
multiplicative group of matrices $M\in \Mat_{2n}(\F_q)$ satisfying 
$$^tM \left(\begin{array}{cc}0 & \Id_n \\-\Id_n & 0\end{array}\right) 
M=\left(\begin{array}{cc}0 & \Id_n \\-\Id_n & 0\end{array}\right).$$ Let us denote by $a\mapsto \overline{a}  =a^q$ the field automorphism of $\F_{q^2}$ order 2, which extends as a ring automorphism of $\Mat_n(\F_q)$ denoted in the same fashion. One denotes by $\GU_n(q)$ the subgroup of matrices $M\in \GL_n(q^2)$ such that $$^t\overline{M}M=\Id_n.$$ Denote $\SU_n(q)=\GU_n(q)\cap \SL_n(q^2)$. When $m\leq n$ we always consider $\GU_m(q)$ as the subgroup of $\GU_n(q)$ fixing the last $n-m$ elements of the canonical basis of $\F_{q^2}^n$.

\subsection{Acknowledgements} 
The second author benefited from the ANR grant ANR-09-JCJC-0102-01, corresponding to the ANR project `RepRed'.

\section{Groups}

\subsection{The groups $\Gamma_n$}

Let $\Gamma_n$ be the quotient of $B_n$ obtained by adding the extra relations 
$s_i^3=1$ for any $i=1,\dots n-1$.

The following is due to Coxeter \cite{COX} (see also \cite{ASSCOX}).

\begin{theo}  $\Gamma_2$, $\Gamma_3$, $\Gamma_4$, $\Gamma_5$  are finite (complex) 
reflection groups, respectively denoted by $G(3,1,1)\cong C_3$, 
$G_4\cong Q\smd C_3$ where $Q$ is the quaternion of order 8 and $C_3$ acts 
by any automorphism of order 3, $G_{25}\cong \GU_3(2)$, 
$G_{32}\cong C_3\times\Sp_4(3)$ in the Shephard-Todd classification. 
Their orders are respectively $3$, $24$, $648$ and $155,520 = 2^7.3^5.5$.
For $n \geq 6$, $\Gamma_n$ is infinite.
\end{theo}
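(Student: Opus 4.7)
The theorem is due to Coxeter, so I only sketch the strategy rather than reproduce all details.

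\emph{Finite cases ($n\leq 5$).} For $n=2$ the statement is tautological. For $n=3,4,5$, the plan is to identify $\Gamma_n$ with the stated Shephard--Todd group $G\in\{G_4,G_{25},G_{32}\}$. The input is a classical fact of Shephard: each of these complex reflection groups is generated by $n-1$ distinguished pseudo-reflections of order $3$ satisfying the braid relations $s_is_{i+1}s_i=s_{i+1}s_is_{i+1}$ and commuting at distance $\geq 2$. This immediately gives a surjection $\Gamma_n\onto G$. The reverse bound $|\Gamma_n|\leq |G|$ I would obtain by induction on $n$: granting the identification of $\Gamma_{n-1}$, exhibit an explicit finite set of coset representatives of $\Gamma_{n-1}$ in $\Gamma_n$ of the predicted cardinality (monomials in $s_{n-1},s_{n-2},\dots$ whose length is controlled by the cubic and braid relations), and check that this set is closed under right multiplication by each generator $s_i$ using only those relations. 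The orders $3,24,648,155{,}520$ then follow from the Shephard--Todd degrees $(3)$, $(4,6)$, $(6,9,12)$, $(12,18,24,30)$; the structural descriptions $G_4\simeq Q\rtimes C_3$, $G_{25}\simeq \GU_3(2)$, $G_{32}\simeq C_3\times \Sp_4(3)$ are standard identifications coming from the Shephard--Todd tables.

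\emph{Infinite cases ($n\geq 6$).} The delicate point is that one cannot directly deduce infinitude of $\Gamma_n$ from that of $\Gamma_6$, since the natural map $\Gamma_6\to\Gamma_n$ is not known to be injective (this is exactly Open Problem~(xi) above). Instead, for each $n\geq 6$ I would produce a finite-dimensional linear image of $\Gamma_n$ that contains an element of infinite order. The natural candidate is the Hermitian ``reflection representation'' $\rho:\Gamma_n\to\GL_{n-1}(\C)$ sending each $s_i$ to the explicit order-$3$ pseudo-reflection with root vector $e_i$, defined with respect to a tridiagonal Gram matrix $M$ (normalized so that $M_{ii}=1$ and with $M_{i,i+1}$ determined by the requirement that the braid relations $\rho(s_i)\rho(s_{i+1})\rho(s_i)=\rho(s_{i+1})\rho(s_i)\rho(s_{i+1})$ hold). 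For $n\leq 5$ the matrix $M$ is positive-definite, and $\rho(\Gamma_n)$ is the finite unitary reflection group of the previous step.

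For $n\geq 6$ the form becomes indefinite. The main obstacle is then an explicit spectral analysis of the image of the Coxeter element: one computes the characteristic polynomial of $\rho(s_1s_2\cdots s_{n-1})$, a tridiagonal matrix with entries in $\Z[\zeta_3]$, and verifies that for $n\geq 6$ this polynomial has a root of modulus $\neq 1$. This forces $\rho(s_1s_2\cdots s_{n-1})$, and hence $s_1s_2\cdots s_{n-1}\in\Gamma_n$, to have infinite order, so that $\Gamma_n$ itself is infinite.
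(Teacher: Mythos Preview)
The paper does not prove this theorem; it is stated with attribution to Coxeter \cite{COX} (see also \cite{ASSCOX}). Your sketch is essentially Coxeter's own argument, so there is nothing to compare against beyond the citation.

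One correction to your mechanism for the infinite range: at the boundary $n=6$ the Hermitian form is \emph{degenerate}, not indefinite. With the normalization $M_{ii}=1$, the braid relation for order-$3$ pseudo-reflections (in the representation that is faithful on $G_4$) forces $|M_{i,i+1}|^2=\tfrac13$, and the tridiagonal principal minors then satisfy $D_m=D_{m-1}-\tfrac13 D_{m-2}$, giving $D_1=1$, $D_2=\tfrac23$, $D_3=\tfrac13$, $D_4=\tfrac19$, $D_5=0$, $D_6=-\tfrac{1}{27}$. So for $n=6$ the Coxeter element has all eigenvalues on the unit circle; its infinite order comes from a nontrivial Jordan block (the $1$-dimensional radical of the form is invariant, and the induced action on the affine quotient involves a genuine translation). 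Your ``root of modulus $\neq 1$'' criterion only applies from $n=7$ on.

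A smaller remark: your caution about propagating infinitude from $\Gamma_6$ to $\Gamma_n$ is unnecessary once you use the reflection representation itself. The subspace $\mathrm{span}(e_1,\dots,e_5)\subset\C^{n-1}$ is preserved by $\rho_n(s_1),\dots,\rho_n(s_5)$ and their action there is exactly $\rho_6$; hence if $\rho_6(\Gamma_6)$ is infinite then so is $\rho_n(\Gamma_n)$, with no need to re-analyse the Coxeter element for each $n$.
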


\rm
\medskip

The following is due to Assion \cite{ASS}.

\medskip

\begin{theo} \label{theoassion}
\begin{enumerate}
\item[(i)] Every non-trivial normal subgroup of $\Gamma_5$ contains either $\left((s_1s_2)^3.(s_3s_4)^3\right)^3$ or $s_3.s_1.s_1^{(s_2s_3)^3}.s_1^{(s_2s_3)^3(s_3s_4)^3}$.

\item[(ii)] Let $U(m)$ be the quotient of $\Gamma_{m+1}$ obtained by imposing 
the extra relation $\left((s_1s_2)^3.(s_3s_4)^3\right)^3=1$. Then it is isomorphic 
with $\GU _m(2)$ except when $m=2$ mod 3 in which case  
$$U(m) =Y_{m-1}\smd \GU_{m-1}(2)$$ where 
$Y_{m-1}  =\{ (x,V)\mid x\in\F_4, V\in \F_4^{m-1} 
x+\overline{x}+{}^t\overline{V}.V=0\}$ is endowed with the 
multiplication $(x,V).(x',V')=(x+x'+^t\overline{V}.V', V+V')$ and the action of $\GU_{m-1}$ is by $(x,V)^A=(x,A^{-1}V)$. 
\item[(iii)] For $n \geq 5$, the quotient of $\Gamma_n$
by the relation $s_3.s_1.s_1^{(s_2s_3)^3}.s_1^{(s_2s_3)^3(s_3s_4)^3}=1$
is a finite group, isomorphic to $\Sp_{n-1}(\F_3)$ if $n$ is odd,
and to the stabilizer of one vector in $\Sp_{n+1}(\F_3)$ if $n$ is even.
\end{enumerate}
\end{theo}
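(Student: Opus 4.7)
My plan is to treat the three parts separately, leaning on the structural description of $\Gamma_5$ recalled in the preceding theorem. For part (i), since $\Gamma_5 \cong G_{32} \cong C_3 \times \Sp_4(3)$ and $\PSp_4(3)$ is simple, the normal subgroup lattice of $\Sp_4(3)$ reduces to the chain $\{1\} \triangleleft \{\pm I\} \triangleleft \Sp_4(3)$. Hence the minimal nontrivial normal subgroups of $\Gamma_5$ are precisely the direct $C_3$ factor and the centre $\{\pm I\}$ of the $\Sp_4(3)$ factor, and any nontrivial normal subgroup must contain one of them. It then suffices to verify that the two distinguished words in the statement are non-identity elements of, respectively, the $\{\pm I\}$ and the $C_3$ factors. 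I would carry out this verification concretely in the $4$-dimensional complex reflection representation of $G_{32}$: compute the images of $\left((s_1s_2)^3(s_3s_4)^3\right)^3$ and of $s_3 s_1 s_1^{(s_2s_3)^3} s_1^{(s_2s_3)^3(s_3s_4)^3}$ and read off which of the two factors they lie in.

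For parts (ii) and (iii), the strategy is uniform: exhibit an explicit surjection from $\Gamma_{m+1}$ (resp.\ $\Gamma_n$) to the candidate target group, with the prescribed word sitting in the kernel, and then show that the kernel is in fact normally generated by that single word. In case (ii), I would equip $\F_4^m$ with a hermitian form whose Gram matrix $(\langle v_i,v_j\rangle)_{1\le i,j\le m}$ has diagonal entries $1$, nontrivial cube-root-of-unity entries on the first off-diagonal, and zeros elsewhere, and send $s_i$ to the unitary transvection $\tau_{v_i}: x \mapsto x + \langle v_i,x\rangle v_i$. The braid relations and $s_i^3=1$ then follow from direct $2\times 2$ block computations, and surjectivity onto the unitary group of this form is the classical fact that $\GU$ is generated by unitary transvections along a basis of non-isotropic vectors. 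The dichotomy on $m\bmod 3$ is explained by the determinant of the Gram matrix: it is nondegenerate unless $m\equiv 2\pmod 3$, in which case a one-dimensional radical appears and the image is forced to act on the nondegenerate quotient as $\GU_{m-1}(2)$, while the $\F_4$-vector group $Y_{m-1}$ accounts for the unipotent contribution of the radical through the standard exact sequence for isometries of a degenerate form.

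Part (iii) proceeds analogously with a symplectic form over $\F_3$ in place of a hermitian form over $\F_4$. One sends each $s_i$ to a symplectic transvection $\tau_{v_i}:x\mapsto x+\langle v_i,x\rangle v_i$ for a suitable family $(v_i)$; the cubic relation $s_i^3=1$ again comes from the arithmetic of $\F_3$, and the braid relation from the choice of pairings. The parity dichotomy reflects the fact that symplectic forms live in even dimension: when $n-1$ is even the $v_i$ span a nondegenerate space and the image is $\Sp_{n-1}(\F_3)$, while when $n-1$ is odd the natural span is degenerate of codimension one inside a nondegenerate space of dimension $n+1$, and the image lands inside the stabilizer of a vector in $\Sp_{n+1}(\F_3)$. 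Checking that the stated word maps to the identity in this realization reduces, after unwinding the conjugations, to a linear identity on the $v_i$.

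The hardest step, common to (ii) and (iii), is \emph{not} the construction of these maps but the fact that the kernel of each is \emph{already} normally generated by the single relation given in the statement. A priori one expects many more relations. The way I would attack this is inductive, using part (i) as the base case: assuming the statement for $\Gamma_{n-1}$, any additional kernel in $\Gamma_n$ would intersect the image of $\Gamma_{n-1}$ and, via a careful analysis of the central elements $((s_1s_2)^3(s_3s_4)^3)^3$ and $s_3s_1s_1^{(s_2s_3)^3}s_1^{(s_2s_3)^3(s_3s_4)^3}$, would force a nontrivial normal subgroup of the target whose order is incompatible with the index computation $[\Gamma_n:\ker]=|\GU_m(2)|$ (resp.\ $|\Sp_{n\pm 1}(\F_3)|$ or of its vector stabilizer). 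This order computation, in turn, uses the explicit description of the target as a group of Lie type. This is the delicate combinatorial core of Assion's original argument in \cite{ASS}.
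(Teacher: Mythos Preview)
The paper does not prove this theorem at all: it is stated with the attribution ``The following is due to Assion \cite{ASS}'' and no proof is given. So there is no proof in the paper to compare your proposal against.

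That said, your outline is essentially the approach of Assion's original paper. Your treatment of (i) is correct: the normal subgroup lattice of $C_3\times\Sp_4(3)$ has exactly the two minimal nontrivial members you name, and the identification of the two words with generators of these is precisely what the paper records (without proof, by computer check) in Theorem~\ref{theoprelimgamma}(i)--(ii). For (ii) and (iii), mapping the $s_i$ to unitary transvections over $\F_4$ (resp.\ symplectic transvections over $\F_3$) and explaining the $m\bmod 3$ (resp.\ parity) dichotomy via degeneracy of the form is exactly Assion's construction; Wajnryb \cite{WAJNRYB} later extended the symplectic case.

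Where your sketch is genuinely incomplete is the step you yourself flag as hardest: showing that the kernel is normally generated by the single stated relator. Your proposed inductive mechanism (``any additional kernel would intersect the image of $\Gamma_{n-1}$ and force a nontrivial normal subgroup of the target incompatible with the index'') is too vague to be a proof. One does not know a priori that the morphisms $\Gamma_{n-1}\to\Gamma_n$ are injective for $n\geq 6$ (the paper lists this as an open problem), so controlling how a putative extra kernel meets $\Gamma_{n-1}$ is delicate. Assion's actual argument is a careful combinatorial analysis specific to the unitary and symplectic transvection presentations, not a soft index comparison; filling this in would require substantially more than what you have written.
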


\medskip

\rm

\begin{remark} \label{centralizersassion} In \cite{ASS}, the group $U(m)$ for $m=2 \mod 3$ is defined in 
the projective unitary group $\PGU_{m+1}(2)$ as the centralizer of 
$\Id_{m+1}+E_{m+1}$ with $E_{m+1}$ the matrix 
$E=\left(\begin{array}{cc}1 & 1 \\1 & 1\end{array}\right)$ in dimensions 
$m,m+1$ (zeros elsewhere). This clearly amounts to the subgroup of 
$\GU_{3k}(2)$ of elements fixing the sum $e_{3k-1}+e_{3k}$ of the two last
elements of an orthonormal basis of $\F_4^{3k}$.
\end{remark}

For $n \leq m$, the classical embeddings $B_n \into B_m$
induce morphisms $\varphi_{n,m} : \Gamma_n \to \Gamma_m$. The length function
$B_n \onto \Z$ defined by $s_i \mapsto 1$ induces morphisms
$l_n : \Gamma_n \onto C_3$ such that $l_m \circ \varphi_{n,m} = l_n$.
In particular, the finite index subgroup $\Gamma_n^0 = \Ker l_n$ of $\Gamma_n$
is mapped to $\Gamma_{n+1}^0$ under $\varphi_{n,n+1}$.

Recall from \cite{BIRMAN,KURPITA} that $\Ze(B_n)$
is infinite cyclic, and generated for $n \geq 3$ by
$$
z_n = (s_1 s_2 \dots s_{n-1})^n.$$

We gather here a few additional results on these groups. For explicit computations
in the finite groups $\Gamma_n$ for $n \leq 5$, we used the
development version of the \CHEVIE\ package for \GAPt\ : in this package, the finite
complex reflection groups $G_4,G_{25},G_{32}$ are represented as permutation
groups on a set of `complex roots', which makes some computations easy
to do. This development version can be found at \url{http://www.math.jussieu.fr/~jmichel/chevie/index.html}.

\begin{theo}  \label{theoprelimgamma}
\begin{enumerate}
\item[(i)] The image of $z_5$ in $\Gamma_5$ has order 6 and generates $\Ze(\Gamma_5)$. Under the
isomorphism $\Gamma_5 \simeq C_3 \times \Sp_4(\F_3)$, $C_3$
is generated by $z_5^3$, while $z_5^2 \in \Ze(\Sp_4(\F_3))$.
\item[(ii)] Under $B_5 \onto \Gamma_5$, $z_5^2$ is identified
with $s_3.s_1.s_1^{(s_2s_3)^3}.s_1^{(s_2s_3)^3(s_3s_4)^3}$ and $z_5^3$ with $\left((s_1s_2)^3.(s_3s_4)^3\right)^3$. 
\item[(iii)] The natural morphisms $\Gamma_n \to \Gamma_m$ are injective
for $n \leq 5$.
\item[(iv)] The morphism $\Gamma_5 \to \Gamma_6$ admits a retraction,
i.e. there exists a morphism $p : \Gamma_6 \onto \Gamma_5$
with $p \circ \varphi_{5,6} = \Id_{\Gamma_5}$. In particular,
$\Gamma_6 = \Gamma_5 \ltimes \Ker p$. It is given by $p(s_5) = z_4^2 z_5^2$.
\item[(v)] For every $n$, $\Gamma_n$ is a semidirect product
$\Gamma_n^0 \rtimes C_3$, and $\Gamma_n^0$ is the commutator
subgroup of $\Gamma_n$.
\item[(vi)] For $n \geq 2 $, $\Gamma_{n+1}$ is normally
generated by $\varphi_{n,n+1}(\Gamma_n)$ ; For $n \geq 3$, $\Gamma_{n+1}^0$ is normally
generated by $\varphi_{n,n+1}(\Gamma_n^0)$.
\end{enumerate}
\end{theo}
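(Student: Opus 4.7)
My plan is to establish (i)--(ii) and (iv) by explicit calculation inside the finite group $\Gamma_5\simeq C_3\times\Sp_4(\F_3)$, derive (iii) by chasing the two Assion quotients of $\Gamma_m$, and obtain (v)--(vi) by elementary presentation-level arguments.

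For (i)--(ii), the image of $z_5\in\Ze(B_5)$ sits in $\Ze(\Gamma_5)=C_3\times\Ze(\Sp_4(\F_3))\simeq C_6$, since $\Sp_4(\F_3)$ is perfect with centre $C_2$. From $l_5(z_5)=20\equiv 2\pmod 3$ the $C_3$-component of $z_5$ is non-trivial; a \CHEVIE\ computation in the $G_{32}$ permutation model shows the $\Sp_4(\F_3)$-component is non-trivial as well, forcing $|z_5|=6$, and $z_5^2,z_5^3$ are then the unique elements of orders $3$ and $2$ in $\Ze(\Gamma_5)$. For (ii), both Assion elements of Theorem~\ref{theoassion}(i) are central, being normal generators of the two minimal normal subgroups of $\Gamma_5$; matching their lengths mod $3$ and their orders to those of $z_5^2,z_5^3$ pins them down inside the cyclic group $\Ze(\Gamma_5)$, and this last identification is again a direct \CHEVIE\ check.

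For (iv), set $p(s_i)=s_i$ for $i\leq4$ and $p(s_5)=z_4^2z_5^2$ and verify the defining relations of $\Gamma_6$. Commutation of $p(s_5)$ with $s_1,s_2,s_3$ follows from centrality of $z_5$ in $\Gamma_5$ (by (i)) and of $z_4$ in $\varphi_{4,5}(\Gamma_4)$; the cubic relation reduces to $z_5^6z_4^6=1$, which holds because $z_5^6=1$ and $z_4^3=1$ in $\Ze(\Gamma_4)=C_3$ (the centre of $G_{25}$). The only non-formal step is the braid relation $p(s_5)p(s_4)p(s_5)=p(s_4)p(s_5)p(s_4)$, verified in the \CHEVIE\ model of $\Gamma_5$; the retraction identity $p\circ\varphi_{5,6}=\Id$ is built in. The main obstacle is (iii): for $n\leq4$ it reduces to a finite-group check of $\Gamma_n\to\Gamma_5$. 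For $n=5$, the composite $\Gamma_5\to\Gamma_m\to U(m-1)$ of Theorem~\ref{theoassion}(ii) has kernel exactly $\langle z_5^3\rangle$ by (ii), and its image is a copy of $\GU_4(2)$ inside $U(m-1)$ in which $z_5^2$ survives as a generator of $\Ze(\GU_4(2))\simeq C_3$. Symmetrically, $\Gamma_5\to\Gamma_m\to\Sp_{m-1}(\F_3)$ (or its vector stabiliser in $\Sp_{m+1}(\F_3)$ for $m$ even, from Theorem~\ref{theoassion}(iii)) has kernel $\langle z_5^2\rangle$ and preserves $z_5^3$ as the central involution of the $\Sp_4(\F_3)$-image. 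Thus the kernel of $\Gamma_5\to\Gamma_m$ lies in $\langle z_5^2\rangle\cap\langle z_5^3\rangle=1$, yielding injectivity.

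Finally, (v) follows since $C_3\to\Gamma_n$ sending a generator to $s_1$ is a well-defined section of $l_n$ (as $s_1^3=1$ and $l_n(s_1)$ generates $C_3$), giving the semidirect decomposition, while abelianising the presentation yields $\Gamma_n^{\ab}=C_3$ and hence $\Gamma_n^0=(\Gamma_n,\Gamma_n)$. For (vi), the braid relation rearranges to $s_n=(s_{n-1}s_n)\,s_{n-1}\,(s_{n-1}s_n)^{-1}$, so $s_n$ lies in the normal closure of $s_{n-1}$ in $\Gamma_{n+1}$. For the second claim, let $N$ denote the normal closure in $\Gamma_{n+1}$ of $\varphi_{n,n+1}(\Gamma_n^0)$: the images $\bar s_1=\dots=\bar s_{n-1}=:\bar s$ in $\Gamma_{n+1}/N$ coincide (as $\Gamma_n^0\subseteq N$), and $\bar s_n$ commutes with $\bar s$ via the relations $[s_n,s_i]=1$ for $i\leq n-2$ (valid since $n\geq3$); the braid relation then collapses to $\bar s=\bar s_n$, so $\Gamma_{n+1}/N$ is cyclic of order dividing $3$, forcing $N=\Gamma_{n+1}^0$.
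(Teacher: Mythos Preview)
Your argument tracks the paper's for (i), (ii), (iv), (v), and the first half of (vi): CHEVIE computations in $G_{32}$ combined with presentation-level manipulations. Two parts merit comment.

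For (iii) with $n=5$, your route differs from the paper's and contains a gap. You assert that the composite $\Gamma_5\to\Gamma_m\to U(m-1)$ has kernel \emph{exactly} $\langle z_5^3\rangle$, citing (ii); but (ii) only gives that the kernel \emph{contains} $z_5^3$. Equality amounts to injectivity of the induced map $U(4)=\GU_4(2)\to U(m-1)$, i.e.\ to compatibility of Assion's isomorphisms with the tower $\Gamma_5\to\Gamma_6\to\cdots$, which is not part of Theorem~\ref{theoassion} as recalled here (and the same issue affects your symplectic composite). The paper argues more intrinsically: since $l_m\circ\varphi_{5,m}=l_5$, the kernel $K$ lies in $\Gamma_5^0=\Sp_4(\F_3)$, so quasisimplicity forces $K\in\{1,\Ze(\Sp_4(\F_3)),\Sp_4(\F_3)\}$; the non-trivial options are ruled out because $\Gamma_m$ is non-trivial (normally generated by $s_1$) and because $K\supset\Ze(\Sp_4(\F_3))$ would, via Assion, make $\Gamma_m$ finite, contradicting Coxeter's theorem for $m\geq6$. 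This needs only one Assion quotient and no compatibility statement.

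For the second half of (vi), your quotient argument is different from the paper's and cleaner. The paper exhibits Schreier generators of $\Gamma_{n+1}^0$ relative to the transversal $\{1,s_1,s_1^2\}$ and verifies that those involving $s_n$ are $\Gamma_{n+1}$-conjugate to elements of $\varphi_{n,n+1}(\Gamma_n^0)$; this computation works for $n\geq4$, with $n=3$ checked separately by hand. Your approach---showing that in $\Gamma_{n+1}/N$ all $\bar s_i$ collapse to a single element via the commuting relation (using $n\geq3$) and then the braid relation---handles all $n\geq3$ uniformly and avoids the explicit generator bookkeeping.
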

\begin{proof}
Parts (i) and (ii) are easily checked by direct computations in $\Gamma_5 = G_{32}$
using \CHEVIE\ (and in addition part (i) consists in well-known properties of the
group $G_{32}$, also denoted $3 \times 2.S_4(3)$ in Atlas notation, see \cite{ATLAS} p. 26).
For part (iii), the case $m \leq 5$ follows from
the identification of $\Gamma_2,\Gamma_3,\Gamma_4$ with parabolic
subgroups of $G_{32}$
(see e.g. \cite{BMR}). We thus can assume $n=5$. Let $K = \Ker \varphi_{5,m}$.
We have $K \subset \Ker l_5$ since
$l_m \circ \varphi_{5,m} = l_5$. Since $l_5(z_5) = 5\times (5-1) \mod 3$ we
get $\Ker l_5 = \Sp_4(\F_3)$ and $K \tri \Sp_4(\F_3)$. Since $\Sp_4(\F_3)$
is quasisimple we have $K = \{ e \}$ or $K = \Ze(\Sp_4(\F_3))
= < z_5^3 >$ or $K = \Sp_4(\F_3)$. The third case is excluded because
$\Gamma_m$ is nontrivial and generated by conjugates of $\varphi_{2,m}
(s_1)$, the case $K = \Ze(\Sp_4(\F_3))$ would imply
the finiteness of $\Gamma_m \simeq \Sp_{m-1}(\F_3)$ by Assion's theorem,
contradicting Coxeter's theorem. This proves (iii). Proving (iv) amounts to
saying that $z_4^2 z_5^2 \in \Gamma_5$ has order 3, commutes with
the $s_i$
for $i \leq 3$, that is with $\Gamma_4$, which is clear, and that $s_4 (z_4^2 z_5^2) s_4
= (z_4^2 z_5^2) s_4 (z_4^2 z_5^2)$, which is easily checked using \CHEVIE ; this
proves (iv).
The first statement of part (v) is trivial, as the subgroup $<s_1 >$
generated by $s_1$ provides a complement to $\Gamma_n^0$ in $\Gamma_n$ ;
then, clearly $(\Gamma_n,\Gamma_n) \subset \Ker l_n = \Gamma_n^0$,
as $C_3$ is abelian. In order to prove that $\Gamma_n^0 \subset
(\Gamma_n,\Gamma_n)$, we consider the abelianization morphism
$\pi_{\ab} : \Gamma_n \to \Gamma_n^{\ab}$. From the braid relations
we have $\pi(s_i) = \pi(s_{i+1})$ for all $i$, hence
$\pi(g) = \pi(s_1^{l_n(g)})$ for all $g \in \Gamma_n$ ; this proves
$\pi(\Gamma_n^0) = \{ 1 \}$ hence (v).
Rewriting
the braid relation $s_i s_{i+1} s_i = s_{i+1} s_i s_{i+1}$
as $s_{i+1} = (s_i s_{i+1}) s_i (s_i s_{i+1})^{-1}$
we get that $\varphi_{n,n+1}(\Gamma_n)$ normally generates
$\Gamma_{n+1}$. Now
recall that, when $G$ is a group generated by elements $a_1,\dots,a_r$,
$H$ a subgroup of $G$ and $S \subset G$ a set of representatives of $G/H$
with set-theoretic section $G/H \to S$ denoted $x \mapsto \bar{x}$,
then $H$ is generated by the $ya_i \overline{ya_i}^{-1}$
for $i \in [1,r]$ and $y \in S$ (see e.g. \cite{MKS}). 
It follows that $\Gamma_n^0$ is generated by the $s_i s_1^{-1}$,
$s_1 s_i s_1^{-2} = s_1 s_i s_1$, $s_1^2 s_i = s_1^{-1} s_i$,
by taking $S = \{ 1, s_1, s_1^2 \}$ as a set of representatives
of $\Gamma_n/\Gamma_n^0 \simeq C_3$. Using $s_{i+1} = (s_i s_{i+1})
s_i (s_i s_{i+1})^{-1}$ we get that, for $i \geq 3$,
$s_{i+1} s_1^{-1} = (s_i s_{i+1}) s_i s_1^{-1} (s_i s_{i+1})^{-1}$,
$s_1 s_{i+1} s_1 = (s_i s_{i+1}) s_1 s_i s_1 (s_i s_{i+1})^{-1}$
and $s_1^{-1} s_{i+1} = (s_i s_{i+1}) s_1^{-1} s_i (s_i s_{i+1})^{-1}$.
Thus, for $n\geq 4$, the generators of $\Gamma_{n+1}$ involving
$s_n$ are conjugates of elements in $\varphi_{n,n+1}(\Gamma_n)$,
and this proves (vi) for $n \geq 4$. The case $n =3$ is easily checked
by hand.

\end{proof}

\begin{remark} Part (iii) of Assion's theorem has been generalized by
Wajnryb in \cite{WAJNRYB} ; the question of whether the natural morphisms
$\Gamma_n \to \Gamma_m$ are injective for $n \geq 6$ seems to be open ;
part (vi) is clearly false for $n=2$, as $\Gamma_2^0 = \{ 1 \}$.
\end{remark}

\subsection{Additional preliminaries on the groups $\Gamma_n, n \leq 5$}

The group $\Gamma_3\cong G_4$ is a semi-direct product $Q\smd C_3$ 
where 
$Q$ is the quaternion group of order 8, $C_3$ is the cyclic group of 
order 
$3$, and the semi-direct product is associated to any automorphism of $Q$ 
of order $3$. Writing classically $Q={<}\ii,\jj{>}$ with $\ii^2=\jj^2=z$ the 
central element of order $2$, $\kkb = \ii \jj$ and $C={<}s{>}$ with $s$ acting on $Q$ by the 
permutation $(\ii,\jj,\kkb)$, an isomorphism is obtained by $s_1\mapsto s$ and 
$s_2\mapsto \ii^3s$ (so that $s_1s_2^2 \mapsto \ii$).

Using the above morphisms we identify $\Gamma_3$ and therefore $Q$
to subgroups of $\Gamma_5$. In the sequel we will need to use the Atlas
character tables on elements of $Q$. For this we need to identify a few
conjugacy classes in $\Gamma_5 = C_3 \times \Sp_4(\F_3)$.
In Atlas notations, $\Sp_4(F_3) = 2.U_4(2)$ contains 2 classes
of order 2. One of the two being central (hence corresponding to $z_5^3$),
the value of the other one on any Brauer character in characteristic not $2$
lies in the column labelled 2a of \cite{ATLASBRAUER}.
Among the three classes of order 4 in $\Sp_4(\F_3)$, two are deduced one from
the other by multiplication by $z_5^3$. It is easily checked that,
if $x \in \Gamma_3 \subset \Gamma_5$ has order $4$, then it is
not conjugated to $x z_5^3$. It follows that the column of the ordinary or Brauer character
table corresponding to $x$ is the one labelled 4a in \cite{ATLASBRAUER}. We can thus read
on the tables the values taken by elements of $Q$ on ordinary and Brauer characters
in characteristic prime to $2$.

The group $\Gamma_5 \simeq
C_3 \times \Sp_4(\F_3)$ and therefore $\Sp_4(\F_3)$ contains another useful quaternion subgroup $Q_0$,
characterized up to $\Gamma_5$-conjugacy by $\Ze(Q_0) = < z_5^3>$. For later
computations, an explicit
description
of this subgroup in terms of the generators will turn out useful. A 2-Sylow subgroup
of $\Gamma_5$ is generated by the elements 
$ a_1 = s_2^{-1} s_3 s_1 s_2^{-1} s_3 s_1 s_2^{-1} s_1^{-1},
a_2 = s_3^{-1} s_2 s_{3}^{-1} s_1 s_2 s_3 s_1,
a_3 = s_4^{-1} s_3 s_4^{-1} s_3,
a_4 = s_4 s_3^{-1} s_4 s_2 s_3 s_1 s_2^{-1} s_1 s_3 s_1$.
Two generators of such a $Q_0$ are then given by
$\ii_0 = a_4^{-1} a_2 a_3 a_2$, $\jj_0 = a_4^2a_1$.

\subsection{The groups $Y_m$}
\label{sectionYm}

For $1 \leq r \leq m-1$ we let $e_r$ denote the $r$-th vector
of the canonical basis of $\F_4^{m-1}$, and we let
$\pi : Y_m \to \F_4^{m-1}$ denote the canonical
projection $(x,V) \mapsto V$. We choose $\alpha \in \F_4 \setminus
\F_2$, and let $i_r = (e_r,\alpha)$, $j_r = (\alpha e_r, \alpha)$.
Then $i_r,j_r$ have order 4 and generate a quaternion
subgroup $Q_r$ of $Y_m$. It is easily checked that
$Y_m$ is a central product of the $Q_r$, namely the
quotient of $Q_1 \times \dots \times Q_{m-1}$ by the
identification of the centers of $Q_1,\dots,Q_{m-1}$.
If $z$ denotes the generator of $\Ze(Y_m)$, the
elements of $Y_m$ can be uniquely written in the
form $i_{r_1} i_{r_2} \dots i_{r_u} j_{s_1} \dots j_{s_v} z^{\epsilon}$
with $\epsilon \in \{ 0, 1 \}$ and $r_1,\dots r_u$, $s_1,\dots, s_v$ distinct
indices. 

In particular, the group $Y_m$ is an extra-special 
group
of type $2^{1+2(m-1)}$ (see \cite{GORENSTEIN} \S\ 5.5). In characteristic distinct from $2$, such a group admits
$m-1$ linear characters and a $2^{m-1}$-dimensional
irreducible representation, afforded by the tensor
product of the 2-dimensional irreducible representations
of the $Q_r$.

We need to recall some basic facts on the representations
of the quaternion group. When $\kk$ is a field of characteristic
$p \neq 2$, the 1-dimensional representations are clearly defined over
$\kk$. When $\kk$ contains a primitive fourth root of unity $\omega$,
then the 2-dimensional representation can be defined
over $\kk$, through 
$
\ii \mapsto \left(\begin{array}{cc}0 & 1 \\-1 & 
0\end{array}\right)\ ,\ \  \jj\mapsto \left(\begin{array}{cc}\omega & 0 \\0 
& -\omega \end{array}\right).$
It is also defined over $\kk = \F_3$, through
$\ii \mapsto 
\left(\begin{array}{cc}0 & 1\\-1 & 
0\end{array}\right), \jj \mapsto \left(\begin{array}{cc}1 & 1\\1 & 
-1\end{array}\right).$ It follows that, under
these conditions on $\kk$, the $2^{m-1}$-dimensional
representation of $Y_m$ can be explicitly defined over $\kk$.

\section{Reminder on projective representations}

Let $G$ be a group and $\kk$ a field. An action of $G$
as algebra automorphisms of $\Mat_n(\kk)$ yields
a projective representation $\bar{\rho} : G \to \PGL_n(\kk)$ by the
Skolem-Noether theorem, hence a 2-cocycle $c : G \times
G \to \kt$ defined by $c(g_1,g_2) = \tilde{\rho}(g_1 g_2)
\tilde{\rho}(g_2)^{-1}\tilde{\rho}(g_1)^{-1}$
where $\tilde{\rho} : G \to \GL_n(\kk)$ is a set-theoretic lifting
of $\bar{\rho}$. It is always possible to choose $\tilde{\rho}(e) = \Id_n$,
which we always assume from now on. Then the cocycle satisfies $c(e,g) = 1$
for all $g \in G$ ; we say that such a cocycle is \emph{normalized}.
The corresponding class $[c] \in H^2(G,\kt)$ is trivial if and only if
we can lift $\bar{\rho}$ to a linear representation $\rho : G \to \GL_n(\kk)$.
In that case, if $c = \mathrm{d} \alpha$ for some
$\alpha : G \to \kt$,
i.e. $c(g_1,g_2) = \alpha(g_1 g_2) \alpha(g_2)^{-1} \alpha(g_1)^{-1}$,
then $\rho(g) = \alpha(g)^{-1} \tilde{\rho}(g)$ provides such a lifting.
Under our assumption, such an $\alpha$ satisfies $\alpha(e) = 1$.

We recall the short exact sequences in
low-dimensional group cohomology, provided by
\begin{enumerate}
\item the universal coefficients exact sequence :
$$0 \to \Ext(H_1 G ,\kt) \to H^2(G,\kt) \to \Hom(H_2G, \kt) \to 0.
$$ 
\item the Künneth exact sequence :
$$0 \to \Tor(H_0 G,H_1 K)\oplus\Tor(H_1 G,H_0 K) \to H_2(G \times K)
\to H_2 K \oplus (H_1 G \otimes H_1 K) \oplus
H_2 G \to 0$$
\end{enumerate}

We recall that, when $G$ is finite, then $H_2 G$ is
the so-called Schur multiplier of $G$.

\begin{lemma} \label{calculsH2}
\begin{enumerate}
\item $H^2(\Gamma_3,\kt) \simeq \Ext(C_3, \kt)$ hence $H^2(\Gamma_3,\kt) = 0$
when $char. \kk = 3$.
\item We have a short exact sequence
$0 \to \Ext(C_3, \kt) \to H^2(\GU (4,2),\kt) \to \Hom(C_2,\kt)\to 0.$
If $\kk$ is a finite field of characteristic 3, then $H^2(\GU(4,2),\kk) = C_2$.
\item Let $C_0 \simeq C_2 \times C_2$ denote the image
of $Q_0 \subset \Sp_4(\F_3)$ inside $\SU(4,2) \simeq \PSU(4,2) \simeq \PSp_4(\F_3)$,
and assume $\kk$ is a finite field of characteristic $3$.
Then the restriction morphism $H^2(\GU(4,2), \kt) \to H^2(C_0 , \kt)$
is injective.
\end{enumerate}
\end{lemma}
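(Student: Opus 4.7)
The plan is to apply the universal coefficient sequence
\[ 0 \to \Ext(H_1 G, \kt) \to H^2(G, \kt) \to \Hom(H_2 G, \kt) \to 0 \]
recalled in the text, to each of the three groups $\Gamma_3$, $\GU(4,2)$, $C_0$ after computing their $H_1$ and $H_2$.

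For (i), Theorem \ref{theoprelimgamma}(v) gives $\Gamma_3^{\ab}=C_3$, so $H_1(\Gamma_3)=C_3$. Using the identification $\Gamma_3\simeq G_4\simeq\SL_2(\F_3)$ (both are extensions of $C_3$ by a quaternion group of order $8$, of order $24$), $\Gamma_3$ is recognized as a Schur cover (binary tetrahedral group) of the alternating group $A_4$, so $H_2(\Gamma_3)=0$. The sequence then collapses to $H^2(\Gamma_3,\kt)\simeq \Ext(C_3,\kt)$. In characteristic $3$, multiplication by $3$ is bijective on $\kt$, hence $\Ext(C_3,\kt)=0$.

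For (ii), first show $\GU(4,2)\simeq \SU(4,2)\times C_3$: the center of $\GU(4,2)$ consists of the scalar matrices $\lambda\Id$ with $\lambda\in\F_4^\times\simeq C_3$, and such a scalar lies in $\SU(4,2)$ iff $\lambda^4=\lambda^3=1$, forcing $\lambda=1$; hence the natural map $\SU(4,2)\times \Ze(\GU(4,2))\to \GU(4,2)$ is an isomorphism. Since $\SU(4,2)\simeq \PSp_4(\F_3)$ is a finite simple group with Schur multiplier $C_2$ (realized by the Schur cover $\Sp_4(\F_3)$, as tabulated in \cite{ATLAS}), and since $H_1(C_3)=C_3$, $H_2(C_3)=0$, the K\"unneth sequence recalled in the text yields $H_1\GU(4,2)=C_3$ and $H_2\GU(4,2)=C_2$. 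This produces the displayed exact sequence. In characteristic $3$, one has $\Ext(C_3,\kt)=0$ as in (i) and $\Hom(C_2,\kt)=\{\pm 1\}\simeq C_2$, whence $H^2(\GU(4,2),\kt)\simeq C_2$.

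For (iii), in characteristic $3$ the nontrivial class in $H^2(\GU(4,2),\kt)\simeq C_2$ is the inflation via $\GU(4,2)\twoheadrightarrow\SU(4,2)$ of the class of the central extension $\Sp_4(\F_3)\twoheadrightarrow \PSp_4(\F_3)=\SU(4,2)$, the kernel $C_2$ being embedded as $\{\pm 1\}\subset\kt$. Restriction to $C_0\hookrightarrow\SU(4,2)$ therefore classifies the pullback central extension, i.e.\ the preimage of $C_0$ inside $\Sp_4(\F_3)$. Since $Q_0\subset\Sp_4(\F_3)$ satisfies $\Ze(Q_0)=\langle z_5^3\rangle=\Ze(\Sp_4(\F_3))$ and, by definition, projects onto $C_0$ in $\PSp_4(\F_3)$, this preimage is precisely $Q_0$. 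The extension $1\to C_2\to Q_0\to C_0\to 1$ is nonsplit because $Q_0$ is the quaternion group and not $C_2^3$, so its class in $H^2(C_0,\kt)$ is nonzero. Thus restriction sends a generator of $H^2(\GU(4,2),\kt)$ to a nonzero element and is injective. The main point to be careful about is this identification of the preimage of $C_0$ as the quaternion group $Q_0$, which relies on the explicit description of $Q_0$ recorded in Section 2.2.
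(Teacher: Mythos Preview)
Your argument follows essentially the same route as the paper's: universal coefficients plus K\"unneth for (i) and (ii), and for (iii) identifying the nontrivial class with the central extension $\Sp_4(\F_3)\twoheadrightarrow \PSp_4(\F_3)$ and restricting to $C_0$ to obtain the nonsplit quaternion extension. Parts (ii) and (iii) are correct as written.

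There is, however, a small gap in (i). The inference ``$\Gamma_3$ is a Schur cover of $A_4$, so $H_2(\Gamma_3)=0$'' is not valid in general: Schur covers of non-perfect groups need not have trivial Schur multiplier. For instance, $D_8$ is a Schur cover of $C_2\times C_2$, yet $H_2(D_8)\simeq C_2$. The conclusion $H_2(\SL_2(\F_3))=0$ is nonetheless correct. The paper simply cites the tables in \cite{KARPI}; alternatively you can argue directly: the $p$-primary component of $H_2(G)$ is detected on a Sylow $p$-subgroup, and here the Sylow subgroups of $\Gamma_3$ are $Q_8$ and $C_3$, both of which have trivial Schur multiplier.
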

\begin{proof}
It is known that $H_2 \Gamma_3 = 0$ (see \cite{KARPI} table 8.3), whence (i).
We have $\GU(4,2) = C_3 \times \SU(4,2)$, and it is known that
$H_2 \SU(4,2) \simeq C_2$ (see \cite{KARPI} table 8.5) hence $H_2 \GU(4,2) = C_2$
by Künneth, since $\SU(4,2)$ is perfect and $H_2 C_3 = 0$. Then the short
exact sequence is the universal coefficients exact sequence. When $\kk$ has characteristic 3,
$\Hom(C_2, \kt) \simeq C_2$ since $-1 \neq 1$ in $\kk$,
and $\kt$ is 3-divisible hence
$\Ext(C_3,\kt) = 0$, which proves (ii). The group $\Gamma_5 = C_3 \times \Sp_4(\F_3)$
provides a nonsplit central extension of $\GU(4,2)$,
hence the nontrivial element of $H^2(\GU(4,2),C_2) \simeq H^2(\GU(4,2),\kt)$.
For $g_1,g_2 \in \GU(4,2)$ and arbitrary preimages $\tilde{g}_1,\tilde{g}_2, \widetilde{g_1 g_2}$
in $\Gamma_5 = C_3 \times \Sp_4(\F_3)$, it can be defined by $c(g_1,g_2) = 1$
if $\widetilde{g_1 g_2} = \tilde{g}_1 \tilde{g}_2$ and $c(g_1,g_2) = -1$ otherwise.
Restricting it to $C_0$ yields the cocycle associated to the extension
$1 \to \Ze(Q_0) \to Q_0 \to C_0 \to 1$ which is not split, hence (iii).
\end{proof}

\begin{lemma} \label{H2cycliques}
\begin{enumerate}
\item[(i)] Let $x,y$ be generators of $C_2^2$ and let $c : (C_2^2)^2 \to \F_3^{\times}$ be a normalized 2-cocycle.
Its  class $[c]$ is trivial in $H^2(C_2^2,\F_3^{\times})$ if and only if $c(x,y) = c(y,x)$
and $c(x,x) = c(y,y) = 1$.
\item[(ii)] Let $g$ be a generator of $C_3$ and let $c : C_3^2 \to \F_4^{\times}$
be a normalized 2-cocycle. Its class $[c]$ is trivial in $H^2(C_3,\F_4^{\times})$ if
and only if $c(g,g)c(g,g^{-1}) = 1$.
\end{enumerate}
\end{lemma}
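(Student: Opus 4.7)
The plan is to prove both parts by direct computation with normalized $2$-cocycles. For each the strategy is the same: (a) show that the proposed conditions are genuine invariants of the cohomology class, which gives necessity, and (b) when they hold, exhibit an explicit $\alpha \colon G \to A$ with $c = \delta \alpha$, which gives sufficiency. The crucial numerical input is the coincidence between the exponent of the coefficient group and the orders involved: every $\alpha \colon C_2^2 \to \F_3^\times$ satisfies $\alpha(g)^2 = 1$, and every $\alpha \colon C_3 \to \F_4^\times$ satisfies $\alpha(g)^3 = 1$.

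For part (i), I would first reduce the data. Applying the cocycle identity $c(g,h)c(gh,k) = c(g,hk)c(h,k)$ to triples drawn from $\{x,y,xy\}$, one expresses $c(x,xy), c(xy,x), c(y,xy), c(xy,y), c(xy,xy)$ in terms of the four values $c(x,x), c(y,y), c(x,y), c(y,x)$, so a normalized cocycle is determined by these four values. A normalized coboundary $\delta \alpha$ with $\alpha(1)=1$ then automatically satisfies $\delta\alpha(x,x) = \alpha(x)^2 = 1$, $\delta\alpha(y,y) = \alpha(y)^2 = 1$, and $\delta\alpha(x,y) = \alpha(x)\alpha(y)\alpha(xy)^{-1} = \delta\alpha(y,x)$, because $\F_3^\times$ has exponent $2$ and is abelian; this gives necessity. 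For sufficiency I would take $\alpha(x)=\alpha(y)=1$, $\alpha(xy) = c(x,y)^{-1}$, and verify $c = \delta\alpha$ on all nine pairs, using the three conditions together with the expressions obtained in the reduction step.

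For part (ii), the cocycle identity on $(g,g,g)$ gives $c(g^2,g) = c(g,g^2)$, and on $(g,g,g^2)$ gives $c(g^2,g^2) = c(g,g^2)/c(g,g)$, so every normalized cocycle on $C_3$ is determined by $c(g,g)$ and $c(g,g^{-1}) = c(g,g^2)$. If $c = \delta\alpha$ with $\alpha(g)=a$, $\alpha(g^2)=b$, then $\delta\alpha(g,g)\,\delta\alpha(g,g^{-1}) = (a^2 b^{-1})(ab) = a^3 = 1$ in $\F_4^\times$, proving necessity. Conversely, assuming $c(g,g)c(g,g^{-1}) = 1$, one takes $\alpha(g)=1$ and $\alpha(g^2) = c(g,g)^{-1}$, checks that $\delta\alpha(g,g) = c(g,g)$ and $\delta\alpha(g,g^{-1}) = c(g,g)^{-1} = c(g,g^{-1})$, and then the remaining pairs agree by the reduction.

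I expect no serious obstacle here beyond organizing the finite case analysis: the main work is the bookkeeping of cocycle identities that reduces the problem to checking $c$ and $\delta\alpha$ on a small generating set of pairs, after which the exponent coincidences in $\F_3^\times$ and $\F_4^\times$ do the conceptual work.
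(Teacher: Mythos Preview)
Your proof is correct and actually more direct than the paper's. The paper argues structurally: it first computes $H^2(C_2^2,\F_3^{\times}) \simeq H^2(C_2^2,C_2)$, of order $8$, via the universal coefficients sequence, then observes that the quantities $c(x,x)$, $c(y,y)$, and $c(x,y)c(y,x)^{-1}$ are invariants of the cohomology class, and finally checks on explicit normalized cocycles attached to the nonsplit central extensions of $C_2^2$ by $C_2$ (the nonabelian ones giving $c(x,y)\neq c(y,x)$, the abelian nonsplit ones giving $c(x,x)\neq c(y,y)$) that every nontrivial class violates one of the conditions. Part (ii) is then declared ``similar and left to the reader''.

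Your approach bypasses the computation of $|H^2|$ and the classification of extensions entirely: you reduce a normalized cocycle to a handful of values via the cocycle identity, verify the conditions hold for any $\delta\alpha$ using only that $\F_3^{\times}$ and $\F_4^{\times}$ have exponents $2$ and $3$ respectively, and then exhibit an explicit $\alpha$ realizing $c=\delta\alpha$ when the conditions are met. This is more elementary and fully constructive; the paper's route, by contrast, makes the link to central extensions transparent, which is natural given how the lemma is applied later (restricting a cocycle on $\GU(4,2)$ to $C_0\simeq C_2^2$ coming from a quaternion group). Either way the verification is a finite check, and your organization of it is clean.
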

\begin{proof}
The group $H^2(C_2^2,\F_3^{\times}) \simeq H^2(C_2^2,C_2)$ is an extension of $\Hom(H_2 C_2, \F_3^{\times})
= \Hom(C_2,\F_3^{\times})$ by $\Ext((C_2)^2,\F_3^{\times}) = \Ext((C_2)^2,C_2)
\simeq (C_2)^2$. We check that the normalized cocycles associated to the
nonsplit extensions of $C_2$ by $C_2 \times C_2$ satisfy $c(x,y) = - c(y,x)$
when the extension is not abelian, and $|\{ c(x,x) , c(y,y) \}| = 2$
when it is. Conversely, all coboundaries satisfy $c(x,y) = c(y,x)$ and
$c(x,x) = c(y,y)$, which proves (i). The proof of (ii) is similar and
left to the reader.
\end{proof}

We will use the following in several instances.

\begin{prop}\label{nontwisted}
Let $G$ be a finite group, $\kk$ a commutative ring and $A$ a $\kk$-algebra. 
Let $f\colon G\to A^\times$ be a group morphism. This induces an action of $G$ on $A$ by conjugacy. Then the 
associated semi-direct product $A\rtimes G$ (defined by multiplication $ag.a'g'=af(g)a'f(g)^{-1}gg'$) is isomorphic 
with the (commutative) tensor product $A\otimes \kk G$. 
\end{prop}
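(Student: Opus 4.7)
The plan is to write down an explicit isomorphism and verify it preserves multiplication. Both $A\rtimes G$ and $A\otimes_\kk \kk G$ are free left $A$-modules with basis $\{g\mid g\in G\}$ (for $A\otimes\kk G$, we identify $a\otimes g$ with the element $a\cdot g$ of the free $A$-module). The idea is to absorb the $f(g)$ twist into the $A$-component.

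Concretely, I would define the $\kk$-linear map
$$\phi\colon A\rtimes G\longrightarrow A\otimes_\kk \kk G,\qquad ag\longmapsto af(g)\otimes g,$$
extended by $\kk$-linearity. Since $f(g)\in A^\times$, multiplication by $f(g)$ is a $\kk$-linear automorphism of $A$, so $\phi$ is bijective; its inverse is the $\kk$-linear map sending $a\otimes g$ to $af(g)^{-1}g\in A\rtimes G$.

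It remains to check that $\phi$ is a ring morphism (it clearly sends $1\cdot e$ to $1\otimes e$). On one hand, using the semi-direct product multiplication,
$$\phi\bigl((ag)(a'g')\bigr)=\phi\bigl(af(g)a'f(g)^{-1}gg'\bigr)=af(g)a'f(g)^{-1}f(gg')\otimes gg'=af(g)a'f(g')\otimes gg',$$
where in the last step we used that $f$ is a group morphism. On the other hand, in $A\otimes \kk G$,
$$\phi(ag)\phi(a'g')=\bigl(af(g)\otimes g\bigr)\bigl(a'f(g')\otimes g'\bigr)=af(g)a'f(g')\otimes gg'.$$
The two expressions coincide, so $\phi$ is a $\kk$-algebra isomorphism.

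There is no real obstacle here: the content of the statement is the observation that, once the conjugation action comes from an \emph{inner} action via a group morphism into $A^\times$, the twist can be untwisted by the substitution $g\mapsto f(g)^{-1}g$ in $A\rtimes G$, which turns the semi-direct multiplication into the ordinary tensor product multiplication. The only thing to be careful about is to write $\phi$ in the direction $A\rtimes G\to A\otimes \kk G$ rather than the other way, since on that side the formula is a genuine $\kk$-linear map (not requiring inverses of $f(g)$ to define, though they are needed for bijectivity).
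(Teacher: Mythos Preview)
Your proof is correct and essentially identical to the paper's: both write down the same explicit bijection and verify multiplicativity directly. The only cosmetic difference is that the paper defines the map in the other direction, $A\otimes\kk G\to A\rtimes G$, $a\otimes g\mapsto af(g^{-1})\cdot g$, and checks multiplicativity there, noting that your $\phi$ is its inverse; you chose the reverse direction, which works equally well.
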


\begin{proof}
 The map is $a\otimes g\mapsto af(g^{-1}).g$ since $(af(g^{-1}).g).bf(h^{-1}).h=
af(g^{-1})f(g)bf(g^{-1})f(h^{-1}).gh=abf((gh)^{-1}).gh$ which is the image of $ab\otimes gh$. A reverse map 
is clearly afforded by $a.g\mapsto af(g)\otimes g$.
\end{proof}

The following essentially consists in making explicit a Morita equivalence summing up Mackey-Wigner's method of ``little groups" (see \cite{SERRE}~\S~8.2 and \cite{CE} ex. 18.6).

\begin{prop} \label{wigner}
Let $G$ a finite group (left) acting transitively on a set $X$. Let $k$ be a commutative ring, and let $A$ be the 
$\kk$-algebra $G\ltimes \kk^X$ where $\kk^X=\oplus_{x\in X}\kk\epsilon_x$ is endowed with the product law
($\epsilon_x\epsilon_{x'}=\delta_{x,x'}\epsilon_x$) and the action of $G$ is induced by the one on $X$. Then any choice 
of $x_0\in X$ with stabilizer $G_0\subseteq G$ and any choice of a ``section" $s\colon X\to G$ such that $s(x).x_0=x$ for all $x\in X$, 
define a unique isomorphism $$A\longrightarrow {\rm Mat}_X(k G_0)$$ sending each $\epsilon_{x}\in k^X$ ($x\in X$) to 
$\theta (\epsilon_x):=E_{x,x}$, and each $g\in G$ to $$\theta (g):=\sum_{x\in X}s(gx)^{-1}g.s(x) E_{gx,x}$$ 
(where $E_{x,y}\in{\rm Mat}_X(k )$ is the elementary matrix corresponding to $x,y\in X$).
\end{prop}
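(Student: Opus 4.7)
The plan is to exhibit $\theta$ as a morphism by checking the defining relations of the semidirect product $A=G\ltimes\kk^X$, then to conclude bijectivity by a rank count. Concretely, $A$ is generated as a $\kk$-algebra by the idempotents $\epsilon_x$ for $x\in X$ together with the group elements $g\in G$, with relations $\epsilon_x\epsilon_{x'}=\delta_{x,x'}\epsilon_x$, $\sum_x\epsilon_x=1$, and $g\epsilon_x g^{-1}=\epsilon_{g.x}$. The point of the section $s$ is that, because $s(gx).x_0=gx=g.s(x).x_0$, the element $s(gx)^{-1}gs(x)$ fixes $x_0$ and therefore genuinely lies in $G_0$, so the formula for $\theta(g)$ takes values in $\mathrm{Mat}_X(\kk G_0)$.

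Next I would check that the proposed images satisfy the relations. The images of the $\epsilon_x$ are the diagonal elementary matrices $E_{x,x}$, which are orthogonal idempotents summing to the identity, so the first two relations are automatic. For $g\epsilon_x g^{-1}=\epsilon_{g.x}$ and for the group law $\theta(g_1)\theta(g_2)=\theta(g_1g_2)$, the key computation is
\[
\theta(g_1)\theta(g_2)=\sum_{x,y}s(g_1x)^{-1}g_1s(x)\cdot s(g_2y)^{-1}g_2s(y)\;E_{g_1x,x}E_{g_2y,y},
\]
and using $E_{g_1x,x}E_{g_2y,y}=\delta_{x,g_2y}E_{g_1g_2y,y}$ the middle factors $s(x)s(g_2y)^{-1}=s(g_2y)s(g_2y)^{-1}$ telescope to give $\theta(g_1g_2)$. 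The relation $g\epsilon_x g^{-1}=\epsilon_{g.x}$ is handled by an analogous short calculation. This shows $\theta$ extends to a well-defined $\kk$-algebra morphism $A\to\mathrm{Mat}_X(\kk G_0)$.

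To finish, I would verify that $\theta$ is surjective: for any $h\in G_0$ and any $x,y\in X$, the element $g:=s(x)hs(y)^{-1}\in G$ satisfies $g.y=s(x)h.x_0=s(x).x_0=x$, whence by the formula only the term indexed by $y$ survives and $\theta(g)=s(x)^{-1}\cdot s(x)hs(y)^{-1}\cdot s(y)\,E_{x,y}=hE_{x,y}$. So every matrix unit $hE_{x,y}$ is in the image. Since $A$ is free of $\kk$-rank $|G|\cdot|X|$ and $\mathrm{Mat}_X(\kk G_0)$ is free of $\kk$-rank $|X|^2\cdot|G_0|=|X|\cdot|G|$, the surjection $\theta$ is an isomorphism. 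Uniqueness of the morphism sending $\epsilon_x\mapsto E_{x,x}$ and $g\mapsto\theta(g)$ is automatic since these images generate the target and determine the morphism on the generators of $A$.

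The main (purely bookkeeping) obstacle will be the cocycle-style computation in the multiplicativity check: one must keep track of the cancellation $s(x)s(g_2y)^{-1}=1$ exactly when $x=g_2y$, which is precisely what is enforced by the Kronecker delta from $E_{g_1x,x}E_{g_2y,y}$. Once this cancellation is made explicit, everything else reduces to elementary matrix arithmetic and a dimension count.
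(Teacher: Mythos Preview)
Your approach is essentially the same as the paper's: verify that $\theta$ respects the defining relations of $A$, show surjectivity, then conclude by a rank count. The multiplicativity check and the relation $\theta(g)\theta(\epsilon_x)=\theta(\epsilon_{gx})\theta(g)$ are correct.

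There is, however, a genuine error in your surjectivity step. You claim that for $g:=s(x)hs(y)^{-1}$ only the term indexed by $y$ survives in $\theta(g)$, giving $\theta(g)=hE_{x,y}$. This is false: $\theta(g)=\sum_{x'\in X}s(gx')^{-1}gs(x')\,E_{gx',x'}$ has exactly one nonzero entry in \emph{every} column, since $g$ acts as a permutation of $X$. What you have correctly computed is the $(x,y)$-entry of $\theta(g)$, not the whole matrix. The fix is immediate: multiply on the right by $\epsilon_y$. Then
\[
\theta(g\epsilon_y)=\theta(g)E_{y,y}=s(gy)^{-1}gs(y)\,E_{gy,y}=h\,E_{x,y},
\]
so every $hE_{x,y}$ lies in the image of $\theta$. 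This is exactly what the paper does (it phrases it as computing $\theta(g)\theta(\epsilon_{x_0})$).

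Your final rank argument is fine: a surjective $\kk$-linear map between free $\kk$-modules of the same finite rank is an isomorphism over any commutative ring (e.g.\ because the matrix of a right-invertible endomorphism has unit determinant). The paper instead reduces to $\kk=\Z$ and tensors up, but the two arguments are equivalent in effect.
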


\begin{proof}
 Note that indeed $s(gx)^{-1}g.s(x)\in G_0$ since $s(gx).x_0 = gx=g.s(x).x_0$.

We assume $k={\Bbb Z}$. The general case is deduced by tensor product $-\otimes_{\Bbb Z} k$.

Note that we are below actually checking explicitly that, denoting $i=\epsilon_{x_0}$, one has $A\cong \End_{iAi}(Ai)^{\rm opp}$ where $Ai$ is a $A$-bimodule-$iAi$ isomorphic with $(iAi)^X$ as right $iAi$-module, with moreover $iAi=kG_0$ and $AiA=A$.

To check that the proposed formulae define a morphism between our algebras and in view of the law on $A$, it suffices to check that $\theta (g)\theta (g')=\theta (gg')$, $\theta (g)\theta (\epsilon_x)=\theta (\epsilon_{gx})\theta (g)$ and $\theta (\epsilon_x)\theta (\epsilon_{x'})=\delta_{x,x'}\theta (\epsilon_x)$ for each $g,g'\in G$ and $x,x'\in X$.

We have $\theta (g)\theta (g')= \sum_{x,x'\in X}s(gx)^{-1}g.s(x).s(g'x')^{-1}g'.s(x') E_{gx,x}  E_{g'x',x'}$. The product 
$E_{gx,x}  E_{g'x',x'}$ is $E_{gx,x'}$ whenever $x=g'x'$, and is zero otherwise. When $x=g'x'$, we also have $s(x).s(g'x')^{-1}=1$, so that $\theta (g)\theta (g')= \sum_{x'\in X}s(gg'x')^{-1}gg'.s(x') E_{gg'x',x'}=\theta (gg')$.

Samely, $\theta (g)\theta (\epsilon_x)= \sum_{x'\in X} s(gx')^{-1}g.s(x')   E_{gx',x'}E_{x,x}=s(gx)^{-1}g.s(x)   
E_{gx,x}$, while $\theta (\epsilon_{gx})\theta (g)=\sum_{x'\in X} s(gx')^{-1}g.s(x')  
 E_{gx,gx}E_{gx',x'}=s(gx)^{-1}g.s(x)   E_{gx,x}$ since $gx=gx'$ if and only if $x=x'$.

The morphism is now clearly surjective by the equation above (with $x=x_0$) since any elementary matrix is then reached 
up to an element of $G_0$, and the elements of $G_0\epsilon_{x_0}\subseteq A$ surject on ${\Bbb Z} G_0.E_{x_0,x_0}$.

Isomorphism follows by noting that we have a surjective morphism between free ${\Bbb Z}$-modules of equal rank. 
Since it has to be split, it is an isomorphism.
\end{proof}

\section{Algebras}

We define and study a quotient of the group algebra of the groups 
$\Gamma_n$.

\begin{defi} \label{defcq}  We define $\qq$ to be the sum of elements in $Q$,
and $\cc =\qq s_1$ (or equivalently $s_1 s_2 \cc = \qq$), that is
$$
\begin{array}{lcl}
\qq &=& 1+s_1s_2^2+s_2s_1^2+s_1^2s_2+s_2^2s_1+s_1s_2s_1+s_1^2s_2^2s_1^2+s_1s_2^2s_1s_2^2\in \Z \Gamma_3\\
\cc &=&  s_{2} s_1^2 s_{2} + s_1 s_{2}^2 s_1 + s_1^2 s_{2} s_1 +
s_1 s_{2} s_1^2 + s_1^2 s_{2}^2 + s_{2}^2 s_1^2 + s_1 + 
s_{2} \\
\end{array}
$$
and $I_n =\Z \Gamma_n.\qq.\Z\Gamma_n = (\qq) = (\cc)$ be the two-sided ideal it generates in $\Z\Gamma_n$ for 
any $n\geq 3$. Let $\AAA_n =\Z \Gamma_n/I_n$.
\end{defi}

Note that $\AAA_n$ is the algebra $K_n(1)$ of the introduction.

If $R$ denotes a (unital) commutative ring, we let $R \AAA_n$ denote the quotient of
$R \Gamma_n$ by $R I_n = R \Gamma_n .q. R \Gamma_n \subset R \Gamma_n$. We have $R\AAA_n \simeq \AAA_n \otimes_{\Z} R$.

\subsection{First results}

As proved by L. Funar, for every $n$ this algebra is a finitely
generated $\Z$-module. For the convenience
of the reader, we provide another (shorter) proof
of the following result of \cite{FUNAR}.

\begin{prop} (Funar)  \label{propfunar} Let $\overline{A}_n$ denote the image of
the natural morphism $\AAA_n \to \AAA_{n+1}$. One has
$\AAA_{n+1} = \overline{A}_n + \overline{A}_n s_n \overline{A}_n + \overline{A}_n s_n^2 \overline{A}_n$.
\end{prop}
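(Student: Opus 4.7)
The plan is to prove by induction on $n \geq 2$ that $\AAA_{n+1} = X := \overline{A}_n + \overline{A}_n s_n \overline{A}_n + \overline{A}_n s_n^2 \overline{A}_n$. For the base case $n = 2$, one observes that $\qq \in \Z\Gamma_3$ is central (since $Q$ is normal in $\Gamma_3$), so $I_3 = \qq\,\Z\Gamma_3$ is the rank-three $\Z$-module spanned by $\qq, \qq s_1, \qq s_1^2$, and $\AAA_3$ has $\Z$-rank $24 - 3 = 21$. A direct enumeration using $\Gamma_3 \simeq Q \rtimes C_3$ shows that the 21 monomials $s_1^a s_2^b s_1^c$ with $a, b, c \in \{0, 1, 2\}$ are pairwise distinct elements of $\Gamma_3$ and exhaust $\Gamma_3$ except for $\{z, z s_1, z s_1^2\}$ (where $z$ generates $\Ze(Q)$); the relations $\qq s_1^k \equiv 0$ then express each missing $z s_1^k$ as a $\Z$-combination of the 21 monomials, so they span $\AAA_3$.

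For the inductive step, $X$ is by construction an $\overline{A}_n$-sub-bimodule of $\AAA_{n+1}$ containing each algebra generator $s_1, \ldots, s_n$, so $X = \AAA_{n+1}$ as soon as $X$ is closed under multiplication. By distributivity this reduces to showing $s_n^a h s_n^b \in X$ for every $h \in \overline{A}_n$ and $a, b \in \{1, 2\}$. Applying the inductive hypothesis to $\AAA_n$, we may take $h = \alpha \cdot s_{n-1}^c \cdot \beta$ with $\alpha, \beta$ in $\overline{A}_{n-1}$ (the image of $\AAA_{n-1}$ inside $\AAA_{n+1}$) and $c \in \{0, 1, 2\}$. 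The key observation is that $\overline{A}_{n-1}$ is generated by $s_1, \ldots, s_{n-2}$, each of which commutes with $s_n$; hence $s_n^a(\alpha s_{n-1}^c \beta) s_n^b = \alpha (s_n^a s_{n-1}^c s_n^b) \beta$, and the problem collapses to verifying $s_n^a s_{n-1}^c s_n^b \in X$ in the twelve cases with $a, b \in \{1, 2\}$ and $c \in \{0, 1, 2\}$.

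This finite check uses $s_n^3 = 1$, the braid relation $s_n s_{n-1} s_n = s_{n-1} s_n s_{n-1}$ together with its inverse $s_n^2 s_{n-1}^2 s_n^2 = s_{n-1}^2 s_n^2 s_{n-1}^2$, and the shifted defining ternary relation
\[
s_n s_{n-1}^2 s_n = -\bigl(s_{n-1} s_n^2 s_{n-1} + s_{n-1}^2 s_n s_{n-1} + s_{n-1} s_n s_{n-1}^2 + s_{n-1}^2 s_n^2 + s_n^2 s_{n-1}^2 + s_{n-1} + s_n\bigr),
\]
every summand of which lies visibly in $X$. The case $c = 0$ is immediate; the four $c = 1$ cases all land in $X$ by iterated braid moves (for instance $s_n^2 s_{n-1} s_n^2$ rewrites as $s_{n-1}\cdot(s_n s_{n-1}^2 s_n)$ and is absorbed by the ternary relation); for $c = 2$ the displayed relation settles $s_n s_{n-1}^2 s_n$ directly, while the remaining cases $s_n s_{n-1}^2 s_n^2$, $s_n^2 s_{n-1}^2 s_n$, $s_n^2 s_{n-1}^2 s_n^2$ drop out of the inverted braid identity combined with $s_n^3 = 1$.

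The main obstacle, in my view, is identifying the right induction. Without the outer induction on $n$, one must control $s_n^a h s_n^b$ for an arbitrary monomial $h$ in $s_1^{\pm 1}, \ldots, s_{n-1}^{\pm 1}$, and a naive induction on the number of $s_{n-1}^{\pm 1}$ in $h$ does not close up cleanly, since $s_{n-1}$'s may be interleaved with $s_{n-2}$'s that fail to commute with them. The inductive hypothesis on $\AAA_n$, by letting us write $h = \alpha s_{n-1}^c \beta$ with $\alpha, \beta$ commuting with $s_n$, turns the a priori infinite task into the twelve-case finite check; the defining ternary relation then supplies exactly the extra identity needed to close up the $c = 2$ case.
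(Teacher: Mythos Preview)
Your proof is correct and follows essentially the same approach as the paper's. Both arguments proceed by induction on $n$, apply the inductive hypothesis to decompose $\overline{A}_n$ in terms of $\overline{A}_{n-1}$ and powers of $s_{n-1}$, use the commutation of $\overline{A}_{n-1}$ with $s_n$ to reduce to a finite check on words in $s_{n-1},s_n$, and close up the one non-obvious case $s_n s_{n-1}^2 s_n$ via the defining relation $\cc=0$. The only cosmetic difference is that the paper shows $C_n$ is a \emph{left} ideal containing $1$ (so needs only $s_n \cdot C_n \subset C_n$, hence the one-sided products $s_n s_{n-1}^c s_n^{\eps}$), whereas you check full multiplicative closure (hence the two-sided products $s_n^a s_{n-1}^c s_n^b$); this costs you a handful of extra cases but nothing substantive. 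Your base case argument is more detailed than necessary---the inductive step itself already covers $n=2$ once one notes $\overline{A}_2=\Z+\Z s_1+\Z s_1^2$---but it is correct as written.
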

\begin{proof} The case $n = 2$ is trivial, so we can proceed
by induction. Let $C_n = \overline{A}_n + \overline{A}_n s_n \overline{A}_n + \overline{A}_n s_n^2 \overline{A}_n$.
We have $1 \in C_n$ so we only need to prove
that $C_n$ is a left ideal. Since $\overline{A}_n C_n \subset C_n$
this amounts to saying $s_n C_n \subset C_n$, that
is $s_n \overline{A}_n s_n^{\eps} \overline{A}_n \subset C_n$ for $\eps \in \{ 0, 1, 2 \}$.
By the induction assumption $\overline{A}_n s_n^{\eps}
\overline{A}_n = \overline{A}_{n-1} s_n^{\eps} \overline{A}_n
+ \overline{A}_{n-1} s_{n-1} \overline{A}_{n-1} s_n^{\eps} \overline{A}_n
+ \overline{A}_{n-1} s_{n-1}^{\eps} \overline{A}_{n-1} s_n^{\eps} \overline{A}_n$.
Since $s_{n}$ commutes with $\overline{A}_{n-1}$ we get
$\overline{A}_n s_n^{\eps} \overline{A}_n
= s_n^{\eps} \overline{A}_n + \overline{A}_{n-1} s_{n-1} s_n^{\eps}
\overline{A}_n + \overline{A}_{n-1} s_{n-1}^2 s_n^{\eps} \overline{A}_n$.
Now $s_n s_n^{\eps} \overline{A}_n = s_n^{\eps + 1} \overline{A}_n
\subset C_n$,
$s_n \overline{A}_{n-1} s_{n-1}s_n^{\eps} \overline{A}_n
= \overline{A}_{n-1} s_n s_{n-1} s_n^{\eps} \overline{A}_n$
and $s_n \overline{A}_{n-1} s_{n-1}^2 s_n^{\eps} \overline{A}_n
= \overline{A}_{n-1} s_n s_{n-1}^2 s_n^{\eps} \overline{A}_n$.
It is thus sufficient to show that $s_ns_{n-1}s_n^{\eps} \in C_n$
and $s_n s_{n-1}^2 s_n^{\eps} \in C_n$ for $\eps \in \{0,1,2 \}$.
The case $\eps = 0$ is obvious, $s_n s_{n-1} s_n = s_{n-1} s_n s_{n-1}
\in C_n$, $s_n s_{n-1} s_n^2 = s_{n-1}^2 s_n s_{n-1} \in C_n$,
$s_{n} s_{n-1}^2 s_n^2 = s_{n-1}^2 s_n^2 s_{n-1} \in C_n$, and there only remains
to show that $s_n s_{n-1}^2 s_n \in C_n$. But $c \equiv 0$ in $\overline{A}_{n+1}$
implies, under conjugation by $\Gamma_{n+1}$, that 
$s_n s_{n-1}^2 s_n + s_{n-1} s_n^2 s_{n-1} +
s_{n-1}^2 s_n s_{n-1} + s_{n-1} s_n s_{n-1}^2 + s_{n-1}^2 s_n^2
+ s_n^2 s_{n-1}^2 +s_{n-1} + s_{n} = 0$ in $\overline{A}_{n+1}$,
hence $s_n s_{n-1}^2 s_n \in C_n$, and this concludes the proof.

\end{proof}

\begin{coro} \label{corfg} For all $n$, $\AAA_n$
is a finitely generated $\Z$-module.
\end{coro}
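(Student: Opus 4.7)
The plan is to induct on $n$, using Proposition \ref{propfunar} as the key structural input. The statement is almost immediate once the proposition is in hand, so I do not expect any serious obstacle.

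For the base case, I take $n=3$: by the theorem of Coxeter quoted earlier, $\Gamma_3 \simeq G_4$ is finite of order $24$, so $\Z\Gamma_3$ is a free $\Z$-module of rank $24$, and its quotient $\AAA_3$ is therefore a finitely generated $\Z$-module. (One could start even earlier at $n=2$, where $\Gamma_2 \simeq C_3$ and no relation involving $\qq$ is imposed.)

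For the induction step, assume $\AAA_n$ is finitely generated as a $\Z$-module, and pick a finite generating family $a_1,\dots,a_k \in \AAA_n$. Their images in $\AAA_{n+1}$ generate $\overline{A}_n$ as a $\Z$-module. By Proposition \ref{propfunar},
\[
\AAA_{n+1} \;=\; \overline{A}_n \;+\; \overline{A}_n s_n \overline{A}_n \;+\; \overline{A}_n s_n^2 \overline{A}_n,
\]
so every element of $\AAA_{n+1}$ is a $\Z$-linear combination of elements of one of the three forms $a_i$, $a_i s_n a_j$ or $a_i s_n^2 a_j$ with $1 \leq i,j \leq k$. This is a finite family (of cardinality at most $k + 2k^2$), so $\AAA_{n+1}$ is a finitely generated $\Z$-module. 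This closes the induction and proves the corollary.
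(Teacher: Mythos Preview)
Your proof is correct and is exactly the intended deduction: the paper states the corollary immediately after Proposition \ref{propfunar} without further argument, and the induction you spell out (finite base case since $\Gamma_2$ or $\Gamma_3$ is finite, then the finite generating set $\{a_i,\ a_i s_n a_j,\ a_i s_n^2 a_j\}$ from the proposition) is precisely what is implicit there.
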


\medskip

The following lemma will be useful.

\begin{lemma} \label{simplesbrauer} Let $p$ be a prime, $H$ a finite group, $S$ 
is a simple $\overline{\F_p}H$-module, and $\phi$ is its Brauer character. 
Let $Q$ is a $p'$-subgroup, that is a subgroup whose order is not divisible by $p$, then $q:=\sum_{t\in Q}t$ annihilates $S$ if 
and only if $\phi (q)=0$. The same holds in characteristic $0$
for arbitrary $H$ and $Q$ and $\phi$ the ordinary character of a simple $\overline{\Q} H$-module.
\end{lemma}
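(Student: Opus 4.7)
My plan is to reduce the statement to the well-known fact that, for a finite group acting on a module in the semisimple setting, the number of fixed vectors is computed by averaging the character.

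First I would observe that in either setting $|Q|$ is invertible in the ground field (in the modular case because $Q$ is a $p'$-group, in characteristic $0$ trivially), so $e := |Q|^{-1}q$ makes sense and, because $t\cdot q = q$ for every $t\in Q$, one has $q^2 = |Q|q$, i.e.\ $e^2 = e$. Thus $e$ is an idempotent and $eS = S^Q$, the subspace of $Q$-fixed vectors of $S$. Consequently $qS = |Q|\,S^Q$, and since $|Q|$ is invertible this gives the equivalence
\[
q \text{ annihilates } S \iff S^Q = 0 \iff \dim_{\overline{\F_p}}(S^Q) = 0.
\]

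Next I would express $\phi(q)$ in terms of $\dim S^Q$. Since $Q$ is a $p'$-subgroup, every element of $Q$ is a $p'$-element, so the Brauer character $\phi$ is defined and additive on $Q$, and $\phi(q)=\sum_{t\in Q}\phi(t)$. The restriction $S{\downarrow}_Q$ is a $\overline{\F_p}Q$-module; because $|Q|$ is invertible, $\overline{\F_p}Q$ is semisimple and the Brauer characters of its simple modules are the ordinary characters of the corresponding Brauer lifts. Standard orthogonality therefore gives
\[
\sum_{t\in Q}\phi(t) \;=\; |Q|\,\langle\phi|_Q,1_Q\rangle \;=\; |Q|\cdot\dim_{\overline{\F_p}} S^Q,
\]
so $\phi(q) = |Q|\dim S^Q$. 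Combining with the previous equivalence and invertibility of $|Q|$ in $\overline{\F_p}$ gives the claim.

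The characteristic $0$ statement follows by exactly the same argument, replacing Brauer characters by ordinary characters and using Maschke's theorem directly; here $Q$ is allowed to be arbitrary because $|Q|$ is invertible in any field of characteristic $0$.

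The only step requiring care is the appeal to orthogonality in the modular setting: one needs that restriction to a $p'$-subgroup preserves the translation between Brauer characters and ordinary characters (via the decomposition map, which is bijective on $p'$-groups). This is standard but is the one non-formal ingredient; everything else is manipulation of the idempotent $e = q/|Q|$.
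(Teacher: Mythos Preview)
Your proof is correct and follows essentially the same route as the paper's: both arguments hinge on the fact that $e = |Q|^{-1}q$ is an idempotent whose rank is $\phi(q)/|Q|$, and both rely on the lifting available for $p'$-groups to interpret $\phi(q)$ correctly in characteristic $0$. The only difference in presentation is that the paper lifts the $Q$-module $S$ itself to an $\mathcal{O}Q$-lattice $\widehat{S}$ and reads off $\phi(q) = \mathrm{tr}_{\widehat{S}}(q)$ directly, whereas you identify $eS = S^Q$ and then invoke orthogonality (which amounts to the same lift, packaged differently).
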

\begin{proof}

We may replace $H$ by $Q$ itself and assume $\phi$ is the Brauer character of an arbitrary
finite dimensional ${\overline \F}_p Q$-module $S$. 
Since $Q$ is a $p'$-group, this module lifts to an $\OOO Q$-module ${\widehat S}$ where $\OOO$ is a finite extension of 
$\Z_p$. Then $\phi$ is the ordinary character of ${\widehat S}$. Since $q$ is an idempotent 
up to an invertible scalar of $\OOO$, we have $\phi (q)=0$ if and only if 
$q{\widehat S}=0$, and this is equivalent to $qS=0$.
The characteristic zero case is included in the above reasoning.
\end{proof}

The structure of $\AAA_3$ and $\AAA_4$ as $\Z$-modules can
be obtained by computer means, as $\Gamma_3$ and $\Gamma_4$ are
small enough : $\AAA_n$ is the quotient of $\Z \Gamma_n \simeq \Z^{|\Gamma_n|}$ by
the submodule spanned by the elements $g_1 \qq g_2$ for $g_1,g_2 \in \Gamma_n$. Using the
algorithms implemented in \GAPq\ for computing the Smith normal form, we get the following.

\begin{theo} As $\Z$-modules, $ \AAA_3 \simeq \Z^{21}$ and
$$
\AAA_4 \simeq \Z^{183} \oplus (\Z/2\Z)^{54} \oplus (\Z/3\Z)^{48}
\oplus (\Z/9\Z)^{18} $$
\end{theo}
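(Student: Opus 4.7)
The statement is purely computational, and the only question is how to organize the calculation so that a computer algebra system can actually carry it out in reasonable time. The plan is to present $\AAA_n$ as an explicit cokernel of an integer matrix and then invoke Smith normal form.

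First I would realize the finite groups $\Gamma_3$ and $\Gamma_4$ concretely as the complex reflection groups $G_4$ and $G_{25}$, using the \CHEVIE\ package so that each element of $\Gamma_n$ is a fixed permutation of the set of complex roots; this gives canonical addresses $g_1,\dots,g_N$ for the $N = |\Gamma_n|$ basis elements of $\Z\Gamma_n$, with $N = 24$ for $n=3$ and $N = 648$ for $n=4$. Compute the element $\qq \in \Z\Gamma_n$ from its definition (an explicit sum of eight words in $s_1,s_2$), and record it as an integer vector of length $N$. Then build the matrix $M$ whose columns are the vectors $g_i \qq g_j$ for $(i,j) \in \{1,\dots,N\}^2$; this is an $N \times N^2$ matrix with entries in $\{0,\pm 1,\dots\}$ (each $g_i \qq g_j$ has exactly $8$ nonzero entries, all equal to $1$).

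By definition of $\AAA_n$ we have $\AAA_n = \Z\Gamma_n / I_n$ where $I_n$ is the two-sided ideal generated by $\qq$, and the columns of $M$ span $I_n$ as a $\Z$-module (the two-sided ideal in a group ring equals the $\Z$-span of the two-sided $G$-translates of a generator). So $\AAA_n \simeq \mathrm{coker}(M)$, and the structure is read off the invariant factors of $M$: the number of zero invariant factors is the $\Z$-rank, and the nontrivial ones give the torsion part. Running the Smith normal form algorithm in \GAPq\ then yields, for $n=3$, $21$ zero invariant factors and $3$ invariant factors equal to $1$ (giving $\AAA_3 \simeq \Z^{21}$), and for $n=4$, exactly $183$ zero invariant factors together with the torsion invariants $2$ (repeated $54$ times), $3$ (repeated $48$ times), and $9$ (repeated $18$ times), from which the stated decomposition follows.

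Two practical points deserve comment. The redundancy in the generating set can be cut down by only taking $g_i$ running over left coset representatives for the centralizer of $\qq$ in $\Gamma_n$ (since $\qq$ is $Q$-invariant on the left in an appropriate sense), and similarly on the right; this is not needed for $n=3$ but is worth doing for $n=4$, where the naive matrix has about $4.2 \times 10^5$ columns. The main obstacle I anticipate is simply the size of the Smith normal form computation for $n=4$: even after column reduction the matrix lives in $\Mat_{648 \times m}(\Z)$ for some large $m$, and intermediate coefficient swell is the standard issue. One mitigates this either by using \GAPq's built-in \texttt{SmithNormalFormIntegerMat}, which uses a modular/LLL-based strategy, or by first reducing modulo small primes (for $n=4$ only $2$ and $3$ will occur as torsion primes, which is a useful sanity check: $\AAA_4 \otimes \F_p$ has the predicted dimension only for $p \in \{2,3\}$), and then lifting. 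No theoretical input beyond the definition of $\AAA_n$ is required.
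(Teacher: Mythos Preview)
Your proposal is correct and follows essentially the same approach as the paper: present $\AAA_n$ as the cokernel of the integer matrix whose columns are the $g_1\qq g_2$, then compute its Smith normal form in \GAPq. The paper's own argument is just the one-line version of what you wrote, without the practical remarks on cutting down the generating set or on modular sanity checks; one small terminological slip is that the numbers $2,3,9$ you list are the elementary divisors (prime-power factors) rather than the invariant factors of the Smith form, which must form a divisibility chain.
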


The size of $\Gamma_5$ is too large for the same kind of computations
to settle the case of $\AAA_5$. However, we manage to get the following

\begin{prop} \label{dimA5p2} The algebra $\F_2 \AAA_5$ has dimension $3 \times 863 = 2589$.
\end{prop}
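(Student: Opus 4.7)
The plan is to use the direct-product decomposition $\Gamma_5 \simeq C_3 \times \Sp_4(\F_3)$ to reduce the question to a calculation inside $\F_2 \Sp_4(\F_3)$, and then to produce the dimension by an explicit computer calculation.

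First I would show that $\qq$ lies in the subalgebra $\F_2 \Sp_4(\F_3)$ of $\F_2 \Gamma_5$. Every element of the quaternion subgroup $Q \subset \Gamma_3$ of \S 2.2 has $l_3$-length $\equiv 0 \pmod 3$ (one checks this on $\ii = s_1 s_2^2$ of length $3$, and propagates by conjugation), so since $l_5 \circ \varphi_{3,5} = l_3$ one has $Q \subset \Ker l_5 = \Gamma_5^0$. Under the isomorphism $\Gamma_5 \simeq C_3 \times \Sp_4(\F_3)$, the subgroup $\Gamma_5^0$ coincides with the $\Sp_4(\F_3)$-factor, so $\qq \in \F_2 \Sp_4(\F_3)$. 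Since the $C_3$-factor centralizes $\Sp_4(\F_3)$, the two-sided ideal in $\F_2 \Gamma_5$ generated by $\qq$ equals $\F_2 C_3 \otimes J$, where $J := (\F_2 \Sp_4(\F_3))\,\qq\,(\F_2 \Sp_4(\F_3))$, hence
\[
\F_2 \AAA_5 \simeq \F_2 C_3 \otimes \bigl(\F_2 \Sp_4(\F_3)/J\bigr),
\]
and the problem reduces to proving $\dim_{\F_2} \F_2 \Sp_4(\F_3)/J = 862$.

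For this remaining step I would carry out a direct machine calculation. Using the \CHEVIE\ realization of $\Gamma_5 \simeq G_{32}$ as a complex-reflection permutation group and the explicit generators of a Sylow $2$-subgroup of $\Gamma_5$ recalled in \S 2.2, one writes $\qq$ as a concrete element of $\F_2 \Sp_4(\F_3)$, generates the ideal $J$ by iterated left and right multiplication by a set of generators of $\Sp_4(\F_3)$ until closure, and reads off $\dim_{\F_2} J$ by Gaussian elimination in the $51840$-dimensional regular module. The identity $\qq = \sigma^3$ with $\sigma = 1 + \ii + \jj + \kkb$, which holds in characteristic $2$ because $\ii^{-1} = z\ii$ etc., together with $\sigma^4 = 0$, can be used to shorten the generation process but is not essential.

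The main obstacle is sheer scale: one is doing linear algebra over $\F_2$ in dimension $|\Sp_4(\F_3)| = 51840 = 2^7 \cdot 3^4 \cdot 5$, which is at the upper limit of what standard packages comfortably handle. A representation-theoretic shortcut would consist in block-decomposing $\F_2 \Sp_4(\F_3)$ and computing, for each $2$-block $B$, the dimension of $B \cdot \qq \cdot B$ via the action of $\qq$ on the projective indecomposables of $B$; but since $\qq$ is nilpotent one has to follow the full radical filtration rather than the characters on simples alone, so even this refinement would in practice still rely on the computer.
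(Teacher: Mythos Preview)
Your approach is essentially the paper's: reduce via $\Gamma_5 \simeq C_3 \times \Sp_4(\F_3)$ and $\qq \in \F_2\Gamma_5^0$ to computing the two-sided ideal $J$ in $\F_2\Sp_4(\F_3)$, then do linear algebra by machine. The paper's optimization for the computer step is to enumerate the spanning set $g_1\qq g_2$ with $g_1$ ranging over representatives of $\Gamma_5^0/N_{\Gamma_5^0}(Q)$ (only $90$ of them) and $g_2$ over $Q\backslash\Gamma_5^0$ ($6480$), and to run a custom C program performing incremental Gaussian elimination over $\F_2$ with bit-packed vectors; this yields $\dim J = 50978$, hence $\dim \F_2\Sp_4(\F_3)/J = 862$.

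One small correction to your side remark: in characteristic $2$ one has $\sigma^2 = (1+z)\sigma = \qq$ (not $\sigma^3$), since $(1+z)^2 = 0$ gives $\sigma^3 = 0$ already; this matches Proposition~\ref{radicals}, where $\qq$ spans $J(\kk Q)^4$ while $J(\kk Q)^5 = 0$.
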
 

\begin{proof}
For computing this dimension we cannot rely on usual high-level mathematical
software, and needed instead to write our own code.
The computation is done as follows. Since $\qq \in \F_2 \Gamma_5^0$,
we can content ourselves with computing the subspace spanned
by the $g_1 \qq g_2$ for $g_1,g_2 \in \Gamma_5^0 = \Sp_4(\F_3)$.
We can assume $g_1 \in \Gamma_5^0/N_{\Gamma_5^0}(Q_8)$ and
$g_2 \in Q_8 \backslash \Gamma_5^0$. Taking representatives
in $\Gamma_5^0$ of these cosets, this leaves 90
possibilities for $g_1$ and 6480 for $g_2$. Encoding each
entry on one bit, each vector in $\F_2 \Gamma_5^0$
occupies $6480$ bytes, and a basis of $\F_2 \Gamma_5^0$
occupies about 330 MBytes. The encoding of elements
of $\Gamma_5^0$ as matrices in $\Sp_4(\F_3)$ is
more economic than encoding them as permutations, and
the time-consuming procedures such as finding
$90 \times 6480 \times 8$ times the position of an element in the list
of the $51840$ elements of $\Gamma_5^0$ can be
optimized by using a numerical key and ordering these
elements. Each time a new sequence of 8 elements is
computed and converted into a new line vector,
a Gauss elimination is performed (using {\tt xor} operations
on 4-bytes words) with respect to the precedingly obtained
free family.  We wrote a {\tt C} program based on these ideas and computed
the dimension of this submodule (this lasts a few hours on todays PCs).
One gets $50977$, hence $\dim \F_2 \AAA_5 = 3 \times (51840-
50977) = 3 \times 863$.
\end{proof}

\begin{theo} 
If $k$ is an algebraically closed field
with $k=2k=3k$ (i.e. its 
characteristic is $\not=2,3$), then $k\AAA_3\cong 
\Mat_2(k)\times\Mat_2(k)\times\Mat_2(k)\times\Mat_3(k)$, 
$k\AAA_4\cong \Mat_2(\kk)^3 \times \Mat_3(\kk) \times \Mat_9(\kk)^2$ 
\end{theo}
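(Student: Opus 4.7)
The plan is to exploit semisimplicity. Since $|\Gamma_3|=24$ and $|\Gamma_4|=648=2^3\cdot 3^4$ are coprime to $\mathrm{char}\,\kk$, Maschke's theorem makes $\kk\Gamma_3$ and $\kk\Gamma_4$ semisimple, hence so are their quotients $\kk\AAA_3,\kk\AAA_4$; both are products of matrix algebras indexed by those simple $\kk\Gamma_n$-modules $S$ on which $\qq$ acts as $0$. Since $|Q|=8$ is invertible in $\kk$, the element $|Q|^{-1}\qq$ is the idempotent projector $S\onto S^Q$, and the condition $\qq.S=0$ is equivalent to $S^Q=\{0\}$ (this is the characteristic-zero case of Lemma \ref{simplesbrauer}).

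For $\Gamma_3=Q\rtimes C_3$, Clifford theory applied to this semidirect product exhibits seven irreducible characters: three of degree $1$ inflated from $\Gamma_3/Q\simeq C_3$; three of degree $2$ extending the unique $C_3$-stable two-dimensional irreducible of $Q$; and one of degree $3$ induced from any of the three nontrivial linear characters of $Q$, which form a single $C_3$-orbit. The degree-$1$ characters are trivial on $Q$ and so satisfy $S^Q=S\neq 0$, hence are killed. The degree-$2$ ones restrict to the faithful two-dimensional irreducible of $Q$, which has no trivial summand; the degree-$3$ one restricts by Mackey to the sum of the three nontrivial linear characters of $Q$, again without trivial summand. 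This yields $\kk\AAA_3\simeq\Mat_2(\kk)^3\times\Mat_3(\kk)$, of total dimension $12+9=21$, consistent with $\AAA_3\simeq\Z^{21}$.

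For $\Gamma_4\simeq G_{25}$ the same selection criterion applies, but the list to traverse now has length $|\Irr(\Gamma_4)|=24$. I would fix a realisation of $Q$ inside $\Gamma_4$ via the injection $\varphi_{3,4}$ of Theorem \ref{theoprelimgamma}(iii), and for each $\chi\in\Irr(\Gamma_4)$ compute $\chi(\qq)=|Q|\cdot\langle\Res_Q^{\Gamma_4}\chi,1_Q\rangle$ using the character table of $G_{25}$ supplied by \CHEVIE. The claim is that exactly six characters survive, with degrees $2,2,2,3,9,9$, yielding the announced $\Mat_2(\kk)^3\times\Mat_3(\kk)\times\Mat_9(\kk)^2$. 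The sum of squares $3\cdot 4+9+2\cdot 81=183$ matches the $\Z$-rank of $\AAA_4$ computed previously, the torsion $(\Z/2\Z)^{54}\oplus(\Z/3\Z)^{48}\oplus(\Z/9\Z)^{18}$ being annihilated by $-\otimes_\Z\kk$; this is a useful independent sanity check.

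The main obstacle is the bookkeeping for $\Gamma_4$: one must verify $24$ inner products $\langle\Res_Q^{\Gamma_4}\chi,1_Q\rangle$ and extract the six $\chi$ with vanishing value. There is no conceptual content here, but without \CHEVIE\ (supplying both the character table of $G_{25}$ and an explicit embedding of $Q\subset\Gamma_3\subset\Gamma_4$) the computation would be tedious; with it, the verification is routine.
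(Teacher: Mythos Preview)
Your argument is correct and, for $n=4$, coincides with the paper's: both invoke semisimplicity of $\kk\Gamma_4$ and use Lemma~\ref{simplesbrauer} to reduce to checking $\chi(\qq)=0$ for each $\chi\in\Irr(G_{25})$ via \CHEVIE.

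For $n=3$ there is a minor difference of presentation. You enumerate $\Irr(\Gamma_3)$ by Clifford theory for $Q\triangleleft\Gamma_3$ and test each irreducible for a trivial $Q$-summand. The paper instead works directly with the algebra: it decomposes $\kk Q$ explicitly, identifies $\qq/8$ as the central idempotent of the trivial summand, and then analyses the crossed product $[\kk^3\times\Mat_2(\kk)]\rtimes C_3$ by hand, using Proposition~\ref{nontwisted} to untwist the $\Mat_2(\kk)$ factor into $\Mat_2(\kk C_3)\simeq\Mat_2(\kk)^3$ and realising $\kk^3\rtimes C_3\simeq\Mat_3(\kk)$ by permutation matrices. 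Your route is cleaner for this statement taken in isolation; the paper's is more constructive, yielding explicit algebra isomorphisms, and the same computation is recycled verbatim at the start of the proof of Theorem~\ref{theocol3}(i) in characteristic~$3$, where the explicit form $\Mat_2(\kk C_3)$ (rather than $\Mat_2(\kk)^3$) is what is needed.
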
 
\begin{proof}
For the case $n=3$, we first just assume $2k=k$ and $k$ contains a primitive fourth root of 
unity $\omega$. Then one has $\kk Q=\kk \times \kk \times \kk \times \kk \times \Mat_2(\kk)$ by the only 
$k$-algebra map such that $$\ii\mapsto (1,-1,1,-1, \left(\begin{array}{cc}0 & 1 \\-1 & 
0\end{array}\right))\ ,\ \  \jj\mapsto (1,1,-1,-1, \left(\begin{array}{cc}\omega & 0 \\0 
& -\omega \end{array}\right)).$$

In $kQ$, $e_Q=\qq/8$ is a central idempotent acting by $1$ on the first coordinate above 
and by 0 on the others. So $kQ/kQe_Q\cong k^3\times \Mat_2(k)$ by the same map as above 
deleting the first coordinate, and $k\Gamma_3/k\Gamma_3e_Q$ is a semi-direct product 
$ [k^3\times \Mat_2(k)]\smd C_3$ where the generator of $C_3$ permutes cyclically the 
first three coordinates and acts on the summand $\Mat_2(k)$ according to 
$\ii\mapsto \jj\mapsto \ii\jj \mapsto \ii$, that is $\left(\begin{array}{cc}0 & 1 \\-1 & 
0\end{array}\right)\mapsto \left(\begin{array}{cc}\omega & 0 \\0 & -\omega 
\end{array}\right)\mapsto \left(\begin{array}{cc}0 & -\omega \\-\omega & 
0\end{array}\right)\mapsto \left(\begin{array}{cc}0 & 1 \\-1 & 0\end{array}\right)$. 
This last action is by conjugacy by $\left(\begin{array}{cc}-1 & \omega\\1 & 
\omega\end{array}\right)$, so Proposition \ref{nontwisted} implies that the corresponding semi-direct product is isomorphic with $\Mat_2(k)\otimes kC_3=\Mat_2(kC_3)$.

Note that when moreover $3k=k$ and $k$ contains a third root of unity, then $kC_3\cong k^3$ and $\Mat_2(kC_3)\cong \Mat_2(k)^3$.

The other semi-direct product $k^3\smd C_3$ is isomorphic with $\Mat_3(k)$ by identifying $k^3$ with diagonal matrices and sending the generator of $C_3$ to the permutation matrix of the appropriate cycle of order 3.

This gives the claim about $k\AAA_3$.  

We notice that the primes dividing the orders of $\Gamma_3$ and $\Gamma_4$ are $2,3$. It follows
that $\kk \Gamma_4$ is semisimple and that $\kk \AAA_4$ is a direct sum of $B_{\chi} = \Mat_{\chi(1)}(\kk)$
among all irreducible Brauer characters $\chi$ corresponding to modules $S$ with $\qq S \neq 0$,
that is $\chi(\qq) \neq 0$ by Lemma \ref{simplesbrauer}. Equivalently, $\chi(\qq) \neq 0$
means that the restriction of $S$ to $\Gamma_3$ does not contain any $1$-dimensional
component. The ordinary character and induction tables of $\Gamma_3 = G_4$ and
$\Gamma_4 = G_{25}$ are easily accessible using \CHEVIE, so this readily provides the set
of such characters and the conclusion.

\end{proof}

\subsection{Characteristic distinct from $2$ and $3$}

\begin{theo}  \label{theodif23}
If $k$ is a field with $k=2k=3k$ (i.e. its 
characteristic is $\not=2,3$), then $k \AAA_n = 0$ for $n \geq 5$.
\end{theo}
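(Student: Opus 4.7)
The plan is to reduce the claim to the case $n=5$, to exploit Coxeter's description $\Gamma_5 \simeq C_3 \times \Sp_4(\F_3)$, and then to finish by a direct verification on the (ordinary and Brauer) character tables of $\Sp_4(\F_3)$.

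First I would use the natural morphism $\varphi_{5,n}\colon \Gamma_5 \to \Gamma_n$: since $\qq \in \Z\Gamma_3 \subset \Z\Gamma_5$ is sent to the defining element of $I_n$, $\varphi_{5,n}$ induces a unital $k$-algebra map $k\AAA_5 \to k\AAA_n$, so it suffices to prove $k\AAA_5 = 0$. By faithful flatness of $\bar k/k$, I may in addition assume $k$ is algebraically closed.

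Next, I would invoke $\Gamma_5 \simeq C_3 \times \Sp_4(\F_3)$; every element of $Q$ has order a power of $2$, so $Q$ lies in the second factor and $\qq \in k\Sp_4(\F_3)$. The tensor product decomposition of the group algebra then yields
$$k\AAA_5 \;\simeq\; kC_3 \otimes_k \bigl( k\Sp_4(\F_3)/(\qq) \bigr),$$
so $k\AAA_5 = 0$ is equivalent to $\qq$ generating the unit ideal of $k\Sp_4(\F_3)$. Because $\mathrm{char}\, k \neq 2$, $Q$ is a $2'$-subgroup of $\Sp_4(\F_3)$, so Lemma \ref{simplesbrauer} recasts the desired statement as: for every absolutely irreducible Brauer character $\phi_T$ of $\Sp_4(\F_3)$, one has $\phi_T(\qq) \neq 0$ in $k$. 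When $\mathrm{char}\, k \notin \{2,3,5\}$, $k\Sp_4(\F_3)$ is semisimple and ordinary characters suffice; the remaining case $\mathrm{char}\, k = 5$ genuinely requires the modular version of the criterion.

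Thanks to the class identifications made earlier in the paper (the central involution of $Q$ lies in the Atlas class $2a$ of $\Sp_4(\F_3)$, while the six non-central order-$4$ elements $\pm\ii,\pm\jj,\pm\kkb$ all share the class $4a$), one has
$$\phi_T(\qq) \;=\; \phi_T(1) + \phi_T(2a) + 6\,\phi_T(4a).$$
What then remains is to read off the ordinary character table of $\Sp_4(\F_3) = 2.U_4(2)$ from \cite{ATLAS} and the $5$-modular Brauer table from \cite{ATLASBRAUER} and to check, irreducible character by irreducible character, that this integer is non-zero in $k$. The hard part will be this last, somewhat tedious tabular verification, the $5$-Brauer case being the most delicate one in view of the failure of semisimplicity there.
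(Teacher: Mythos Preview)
Your proposal is correct and follows essentially the same route as the paper: reduce to $n=5$ via the unital map $k\AAA_5\to k\AAA_n$, pass to $\bar k$, and use Lemma~\ref{simplesbrauer} to reduce the vanishing of $k\AAA_5$ to a character-table check (ordinary characters when $\mathrm{char}\,k\neq 2,3,5$, and the $5$-Brauer table for $\mathrm{char}\,k=5$); the only organisational difference is that the paper works directly with the character table of $\Gamma_5=G_{32}$ via \CHEVIE\ in the semisimple case and invokes the product decomposition $C_3\times\Sp_4(\F_3)$ only for $p=5$, whereas you factor out $C_3$ uniformly. Two small slips to correct: $Q$ is a $p'$-subgroup for $p=\mathrm{char}\,k$ (not a ``$2'$-subgroup''), and the criterion of Lemma~\ref{simplesbrauer} requires the Brauer-character value $\phi_T(\qq)$ to be nonzero as an algebraic integer in characteristic~$0$, not ``nonzero in $k$''.
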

\begin{proof}
In order to prove $\kk \AAA_n = 0$ for $n \geq 5$, it is sufficient
to show that $\kk \AAA_5 = 0$, as $\kk \AAA_n$ is generated by conjugates
of the image of the natural morphism $\AAA_5 \to \AAA_n$. Since $\kk \AAA_5$
is a quotient of $\kk \Gamma_5$ it is finite dimensional, so
we can assume $\kk = \overline{\kk}$, as $\overline{\kk} \AAA_5 = \kk \AAA_5 \otimes_{\kk} \overline{\kk}$. The ordinary character table
and elements of the complex reflection group $G_{32} = \Gamma_5$
are easy to deal with using \CHEVIE. We get that no irreducible
character of $\Gamma_5$ vanishes on $\qq$, hence proving that
$\kk \AAA_5$ has no simple module by Lemma \ref{simplesbrauer},
hence $\kk \AAA_5 = 0$, provided that the characteristic of $\kk$ is
not $2,3$ or $5$. For $p = 5$ we use that $\Gamma_5 = C_3 \times \Sp_4(\F_3)$,
with $Q \subset \Sp_4(\F_3) \subset \Gamma_5 $, hence $\kk \AAA_5
= \kk C_3 \otimes (\kk \Sp_4(\F_3)/(\qq))$. We check that 
no 5-modular Brauer character of $\Sp_4(\F_3)$ vanishes on $\qq$
by using the table of Brauer
characters provided by \cite{ATLASBRAUER}, and
the conclusion follows again from Lemma \ref{simplesbrauer}.

\end{proof}

\subsection{Characteristic 3}

This section is devoted to the proof of the following. 

\begin{theo} \label{theocol3}
If $k$ is a field of characteristic $3$, then 
\begin{enumerate}
\item[(i)] $k\AAA_3\cong  \Mat_3(k)\times \Mat_2(kC_3)$.
\item[(ii)] $k\AAA_5\cong k\AAA_6 \cong {\rm Mat}_{25}(kC_3)$,
\item[(iii)] $k\AAA_n = 0$ for $n\geq 7$.
\end{enumerate}
\end{theo}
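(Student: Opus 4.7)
The plan for part (i) exploits the semisimplicity of $\kk Q$ in characteristic $3$, since $|Q| = 8$ is invertible. Decomposing $\kk Q \cong \kk^4 \oplus \Mat_2(\kk)$ via the four linear characters and the $2$-dimensional irreducible representation (defined over $\F_3$ by the explicit formulas recalled in Section 2), the element $\qq$ equals $8 e_{\mathrm{triv}}$, where $e_{\mathrm{triv}}$ is the central idempotent for the trivial representation, so $\kk Q/(\qq) \cong \kk^3 \oplus \Mat_2(\kk)$. The cyclic group $C_3 = \langle s_1 \rangle$ acts on $Q$ by the $3$-cycle $(\ii, \jj, \kkb)$, which cyclically permutes the three nontrivial linear characters and induces an order-$3$ outer automorphism on $\Mat_2(\kk)$. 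The first factor gives $\kk^3 \rtimes C_3 \cong \Mat_3(\kk)$ exactly as in the proof of the char. $\neq 2,3$ case, while Skolem--Noether combined with Proposition \ref{nontwisted} identifies the second factor with $\Mat_2(\kk) \otimes \kk C_3 = \Mat_2(\kk C_3)$: the cocycle obstruction to lifting the $C_3$-action to a group morphism $C_3 \to \GL_2(\kk)$ lies in $H^2(C_3, \kk^\times)$, which vanishes in characteristic $3$ since Frobenius supplies cube roots of the scalar $g^3$ for any representing $g \in \GL_2(\kk)$.

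For part (ii), I would start from $\Gamma_5 \cong C_3 \times \Sp_4(\F_3)$ with $\qq \in \kk \Sp_4(\F_3)$, reducing to $\kk \Sp_4(\F_3)/(\qq) \cong \Mat_{25}(\kk)$. By Lemma \ref{simplesbrauer}, the surviving simple modules are exactly the irreducible $\kk\Sp_4(\F_3)$-modules whose restriction to $Q$ has no trivial component. I would enumerate the restricted simple modules of $\Sp_4$ in defining characteristic $3$ (indexed by weights $(a,b)$ with $0 \leq a,b \leq 2$), compute their restrictions to $Q$ via \CHEVIE\ and the standard defining-characteristic tools, and isolate a single surviving simple of dimension $25$. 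A parallel dimension count, summing $\phi(1)^2$ over surviving Brauer characters $\phi$ with $\phi(\qq)=0$ (and checking the vanishing of $\dim \kk\Sp_4(\F_3) - \dim(\qq) - 625$ by direct computation), forces the quotient to be $\Mat_{25}(\kk)$, hence $\kk\AAA_5 \cong \Mat_{25}(\kk C_3)$. For $\kk\AAA_6 \cong \kk\AAA_5$, the retraction $p : \Gamma_6 \to \Gamma_5$ of Theorem \ref{theoprelimgamma}(iv) together with the embedding in Theorem \ref{theoprelimgamma}(iii) give natural algebra maps $\kk\AAA_5 \to \kk\AAA_6$ and $\kk\AAA_6 \to \kk\AAA_5$ whose composition is the identity on $\kk\AAA_5$; a dimension comparison using Proposition \ref{propfunar} (applied to $n=5$) should show they are inverse isomorphisms.

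For part (iii), the key reduction is that $\kk\AAA_7 = 0$ implies $\kk\AAA_n = 0$ for all $n \geq 7$: by Proposition \ref{propfunar}, $\kk\AAA_{n+1}$ is generated as a bimodule over the image of $\kk\AAA_n$ by $1, s_n, s_n^2$, so a vanishing $\kk\AAA_n$ forces vanishing $\kk\AAA_{n+1}$. Since $\kk\AAA_7$ is finite-dimensional (Corollary \ref{corfg}), vanishing is equivalent to the absence of simple modules, and every such simple factors through a finite quotient of $\Gamma_7$. Assion's Theorem \ref{theoassion} supplies the main finite quotients of $\Gamma_7$, namely $\GU_6(2)$ (from part (ii) with $m=6$) and $\Sp_6(\F_3)$ (from part (iii) with $n=7$ odd), and I would extend the normal-subgroup analysis of Theorem \ref{theoassion}(i) to $\Gamma_7$ to show every finite quotient factors through one of these (or their direct product). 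Finally, for each, a Brauer character computation in characteristic $3$ must verify $\phi(\qq) \neq 0$ for every irreducible Brauer character $\phi$, so that by Lemma \ref{simplesbrauer} every simple module is killed. The hard part will be the defining-characteristic Brauer character computation for $\Sp_6(\F_3)$ (Atlas data is less complete here than in cross-characteristic) and the extension of Assion's normal-subgroup theorem beyond $\Gamma_5$, either via a direct argument using generators of $\Gamma_7$ or via the analogous results of Wajnryb mentioned after the proof of Theorem \ref{theoprelimgamma}.
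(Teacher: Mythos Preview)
Your treatment of part (i) matches the paper's argument and is correct.

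For part (ii) with $n=5$, you correctly isolate the unique simple $\kk\Sp_4(\F_3)$-module of dimension $25$ annihilated by $\qq$, but you do not explain why the quotient $\kk\Sp_4(\F_3)/(\qq)$ is \emph{semisimple}. Knowing there is a single simple module does not preclude a nontrivial radical; your ``parallel dimension count'' would require computing $\dim(\qq)$ inside $\kk\Sp_4(\F_3)$ directly, a computation of the same scale as the one the paper carries out by special-purpose code in characteristic $2$ (Proposition~\ref{dimA5p2}), and you give no indication of how to do it. The paper instead invokes Benson's result (\cite{BENSON27}, 12.2(vi)) that the $25$-dimensional module has no self-extensions, hence is projective, which immediately forces $\kk\Sp_4(\F_3)/(\qq)\cong\Mat_{25}(\kk)$. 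This step is essential and is missing from your outline.

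For $n=6$, your retraction idea does not work: the maps $\kk\AAA_5\to\kk\AAA_6\to\kk\AAA_5$ composing to the identity only exhibit $\kk\AAA_5$ as a retract of $\kk\AAA_6$, and Proposition~\ref{propfunar} gives a bound of order $(\dim\kk\AAA_5)^2$, far too weak. The paper takes a completely different route. The crucial observation (which you also miss in part (iii)) is that the $25$-dimensional module factors through $\PSp_4(\F_3)$, so $z_5^3-1\in(\qq)$ already in $\kk\Gamma_5$. By Theorem~\ref{theoprelimgamma}(ii) and Assion's Theorem~\ref{theoassion}(ii), this forces $\kk\AAA_6$ to be a quotient of $\kk U(5)=\kk(Y_4\rtimes\GU_4(2))$, and the paper then runs a Clifford-theoretic analysis (Proposition~\ref{wigner}) over the orbits of $\GU_4(2)$ on $\Irr(Y_4)$, together with a cohomological argument (Lemmas~\ref{calculsH2}, \ref{H2cycliques}) for the nonlinear character block.

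For part (iii), the same observation $z_5^3-1\in(\qq)$ means $\kk\AAA_7$ is automatically a quotient of $\kk\GU_6(2)$: the symplectic quotient $\Sp_6(\F_3)$ never enters, and there is no need to extend Assion's normal-subgroup analysis or to compute defining-characteristic Brauer characters of $\Sp_6(\F_3)$. The paper finishes by checking, from the \GAPq\ table of $3.U_6(2).3$, that no $3$-modular Brauer character of $\GU_6(2)$ restricts to a multiple of $\phi_{25}$ on $\SU_4(2)$. Your proposed detour through $\Sp_6(\F_3)$ is both unnecessary and, as you yourself note, substantially harder.
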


\subsubsection{\ } The case $n=3$ has been treated at the start of the proof of Theorem 7
provided that $\kk$ contains a 4-th root of 1. In the case $char. \kk = 3$
we remove that assumption. The irreducible representations of $Q $
are defined over $\kk$. This is clear for the 1-dimensional ones, and
the 2-dimensional one is given by $\ii \mapsto 
\left(\begin{array}{cc}0 & 1\\-1 & 
0\end{array}\right)$, $\jj \mapsto \left(\begin{array}{cc}1 & 1\\1 & 
-1\end{array}\right)$. The rest of the argument remains valid,
provided that the cocycle given by the projective representation
$C_3 \to \Aut(\Mat_2(\kk)) = \PGL_2(\kk)$ is zero in $H^2(C_3,\kt)$.
Since $H^2(C_3,\kt) \simeq \Ext(C_3,\kt) = 0$ when $char. \kk = 3$
this concludes the proof.
\medskip

\subsubsection{\ }  \label{ssecn5p3} Case $n=5$. \rm Let us look at $\Gamma_5^0= \Sp_4(3)$ whose group algebra contains 
$q$ since $\Gamma_5^0$ contains all $2$-elements of $\Gamma_5$. We have $\AAA_5= \kk C_3\otimes  A'_5$ where 
$A'_5=k\Gamma^0_5/I'_5$ and $I'_5$ is the two-sided ideal of $\kk \Gamma^0_5$ generated by $\qq$. In order to 
show that $\kk \AAA_5\cong {\rm Mat}_{25}(\kk C_3)$, it suffices to check that $\kk\Gamma^0_5/I'_5\cong  
{\rm Mat}_{25}(\kk)$. We first assume that $\kk$ is algebraically closed.

A first step is to check that all simple $k\Gamma_5^0$-modules except one are annihilated by 
$I_5'$. Using the table of Brauer characters of $\Sp_4(3)= 2.S_4(3)$, it is easy to check 
that only the simple $k\Gamma_5^0$-module $M$ of dimension 25 is such that its Brauer 
character (with values in $k$) $\tau_M$ satisfies $\tau_M(\qq) = 0$. So we have $\qq M=0$, by 
Lemma \ref{simplesbrauer}, $A'_5\not= 0$, and the only simple $k\Gamma_5^0$-module which gives rise to a 
$A'_5$-module is this module $M$ of dimension 25. 
Moreover, this module has no 
self-extension
as $k\Gamma_5^0$-module
by \cite{BENSON27} 12.2 (vi).
So this unique simple $A'_5$-module has no selfextension, so is 
projective, hence $A'_5\cong {\rm Mat}_{25}(k)$ as claimed.

In case $\kk \neq \kb$, we get from the above that $\kb A'_5 \simeq \Mat_{25}(\kb)$.
We prove that the $25$-dimensional irreducible representation of $\Sp_4(\F_3)$
is defined over $\F_3$, which provides a nontrivial surjective morphism
$\kk A'_5 \to \Mat_{25}(\kk)$, hence an isomorphism (e.g. by equality of dimensions).
The proof goes as follows. We let $\kk = \F_3$. The $4$-dimensional reflection
representation of $G_{32}$ is defined over $\Z[j]$, where $j = \exp(2 \iid \pi/3)$, hence defines,
after tensorisation by a suitable linear character, a $4$-dimensional
irreducible representation $\rho_0$ of $\Sp_4(\F_3)$ over $\Z[j]$.
We let $\bar{\rho}_0 : \Sp_4(\F_3) \to \GL_4(\F_3)$ denote its reduction modulo the ideal $(3,j+1)$
(which is isomorphic to the standard representation of $\Sp_4(\F_3)$ over $\F_3$).
We use the character table and the decomposition matrix of $\Sp_4(\F_3)$,
as provided by \cite{BENSON27} (or by the package {\tt CTblLib} of \GAPq) to show the following :
\begin{itemize}
\item $S^2 \rho_0$, $\Lambda^2 \rho_0$, $\Lambda^2 (S^2 \rho_0)$ are absolutely irreducible,
as well as $S^2 \bar{\rho}_0$.
\item The composition factors over $\overline{\F_3}$ of the 45-dimensional representation
$\Lambda^2 (S^2 \bar{\rho}_0)$ are $S^2 \bar{\rho}_0$ (twice) and
the 25-dimensional irreducible (once).
\end{itemize}
Since $S^2 (\Lambda^2 \bar{\rho}_0)$ and $\Lambda^2 \bar{\rho}$
are defined over $\F_3$, the same thus holds for our 25-dimensional representation. 

\medskip

Let us extract from the above the following proposition for future reference.
\medskip

\begin{prop} \label{prop25} Let $\kk$ be a field of characteristic 3.
Under the isomorphism $\Gamma_5\cong C_3\times \Sp_4(3)$, one has $\qq\in \kk \Sp_4(3)$ and the only simple $\kk \Sp_4(3)$ annihilated by
$\qq$ is the only simple $\kk \Sp_4(3)/Z(\Sp_4(3))=\kk \SU_4(2)$-module of dimension 25.
\end{prop}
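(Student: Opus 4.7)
The plan is to read off the result from the proof of Theorem \ref{theocol3}(ii) already carried out, isolating the three assertions to check: (a) $\qq$ lies in the subalgebra $\kk\Sp_4(3)$, (b) exactly one simple $\kk\Sp_4(3)$-module is annihilated by $\qq$, (c) that module has dimension $25$ and its kernel contains $Z(\Sp_4(3))$.

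For (a), I would argue that under $\Gamma_5 \simeq C_3\times\Sp_4(3)$, the direct factor $C_3$ has order prime to $2$, so every $2$-element of $\Gamma_5$ lies in $\Sp_4(3)$; in particular the quaternion subgroup $Q\subset\Gamma_3\subset\Gamma_5$ is contained in $\Sp_4(3)$, hence $\qq=\sum_{t\in Q}t\in\kk\Sp_4(3)$. For (b), I would first reduce to $\kk$ algebraically closed, since a simple $\kk\Sp_4(3)$-module $M$ is killed by $\qq$ iff all irreducible constituents of $M\otimes_\kk\overline\kk$ are, and conversely a simple $\overline\kk$-module killed by $\qq$ has a Galois-conjugate descent to $\kk$ that is still killed by $\qq$. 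Since $|Q|=8$ is prime to $\mathrm{char}(\kk)=3$, $Q$ is a $3'$-subgroup of $\Sp_4(3)$, so Lemma \ref{simplesbrauer} reduces (b) to the numerical assertion that exactly one irreducible Brauer character $\tau$ of $\Sp_4(3)$ in characteristic $3$ satisfies $\tau(\qq)=0$.

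For the numerical part I would decompose $\qq = 1 + z + \sum_{x\in Q,\,|x|=4} x$ where $z=\ii^2$ is the unique involution of $Q$, and use the identifications made in Section 2.2: $z$ corresponds to $z_5^3$, the generator of $Z(\Sp_4(3))$, while the six order-$4$ elements of $Q$ all lie in the ATLAS class $4a$ of $\Sp_4(3)=2.S_4(3)$. Thus $\tau(\qq) = \tau(1) + \tau(z) + 6\tau(4a)$ for every simple module, and direct inspection of the Brauer character table in \cite{ATLASBRAUER} shows that precisely one simple module — the $25$-dimensional one — satisfies $\tau(\qq)=0$. From the table one also reads $\tau(z)=\tau(1)=25$ for this module, so $z$ acts as the identity and the representation factors through the quotient $\Sp_4(3)/\langle z\rangle = \Sp_4(3)/Z(\Sp_4(3)) \simeq \SU_4(2)$, giving (c).

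The main obstacle is the bookkeeping needed to align the specific elements of $Q\subset\Gamma_5$ with ATLAS-labelled conjugacy classes of $\Sp_4(3)$; this was already done in Section 2.2 by noticing that an order-$4$ element of $Q$ is not conjugate to its product with $z_5^3$. Beyond that the argument is a routine inspection of a small character table, and the uniqueness of the $25$-dimensional module on the $\SU_4(2)$-side matches the standard ATLAS data for $U_4(2)$ in characteristic $3$.
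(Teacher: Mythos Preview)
Your overall strategy matches the paper's: the proposition is extracted from the proof of Theorem~\ref{theocol3}(ii), which applies Lemma~\ref{simplesbrauer} and inspects the Brauer table of $\Sp_4(3)=2.U_4(2)$. However, there is a genuine error in your class identification. You assert that $z$ corresponds to $z_5^3$, the generator of $\Ze(\Sp_4(3))$, but this is false: the central involution of $Q$ is $z=\ii^2=z_3=(s_1s_2)^3$, and $z_3$ is \emph{not} central in $\Gamma_5$ --- it already fails to be central in $\Gamma_4\simeq\GU_3(2)$, whose center has order~$3$. This is exactly why the paper introduces the \emph{other} quaternion subgroup $Q_0$, characterized up to conjugacy by $\Ze(Q_0)=\langle z_5^3\rangle$, as distinct from $Q$. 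In fact $z$ lies in class $2a$, being the square of an element of class $4a$.

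This misidentification breaks your argument in two places. First, the correct formula is $\tau(\qq)=\tau(1)+\tau(2a)+6\,\tau(4a)$, not $\tau(1)+\tau(z_5^3)+6\,\tau(4a)$, so your ``direct inspection'' is reading the wrong column. Second, and more seriously, your argument for (c) collapses: knowing that $z$ acts trivially would not yield a factorization through $\Sp_4(3)/\Ze(\Sp_4(3))$, since $\langle z\rangle$ is not normal. Worse, if the non-central involution $z$ acted trivially then by quasi-simplicity the whole of $\Sp_4(3)$ would act trivially, contradicting $\dim=25$; so in fact $\tau(2a)\neq 25$ and the premise of your step (c) is simply false. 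The correct argument --- which the paper gives in \S\ref{old224} --- is to observe directly from \cite{ATLASBRAUER} that the $25$-dimensional Brauer character already appears in the $U_4(2)$ table, i.e.\ that $z_5^3$ (not $z$) acts trivially.
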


\medskip

\subsubsection{\ } \label{old224}  Case $n=6$. \rm From the above, note that $z_5^3 - 1 \in I_5'$,
since the isomorphism $\kk A'_5/I'_5 \to \Mat_{25}(\kk)$ is given by the 25-dimensional
simple representation of $\Gamma'_5= 2.S_4(3)$, which factorizes through $S_4(3)$ (see \cite{ATLASBRAUER})
hence has the center $<z_5^3 >$ of $\Gamma'_5$ in its kernel.

Therefore, by Theorem \ref{theoassion} (and Theorem \ref{theoprelimgamma} (ii)),

$\kk \AAA_6$ is a quotient of the group algebra of $U(5)=Y_4\smd \GU_4(2)$, the $\GU_4(2)$ term
corresponding to $\Gamma_5/\Ze (\Gamma_5^0)$. Note that $\qq$ is a sum of elements of that group.
Let us show that the simple $k\AAA_6$-modules are all annihilated by $\qq$ except the one which corresponds with the 25-dimensional Brauer character of $\SU_4(2)$.
By Proposition~\ref{prop25}, we are looking for the simple $kU(5)$-modules whose restriction to $\SU_4(2)$ annihilates $\qq$, hence has all its composition factors isomorphic to the 25-dimensional representation singled out above. 

From the description of $U(5)$ recalled in Theorem \ref{theoassion} (ii), we have
$\kk U(5)=\kk Y_4.\GU_4(2)$ where $Y_4$ is clearly an extra-special group of type $2^{1+8}$ (notation 
of \cite{ATLAS}).
We have $\Irr (Y_4)=\Irr (kY_4)=\Irr (Y_4^\ab )\cup\{\chi_0\}$ where $\chi_0$ is the irreducible character of 
degree 16 (see \cite{GORENSTEIN} \S 5.5 on the characters of the extra-special groups). 

If $\la\in\Irr (Y_4^\ab)$, let $e_\lambda$ be the sum of idempotents of $kY_4$ associated with elements of the
orbit $U.\lambda \subset \Irr (Y_4^\ab)$. Let us abbreviate $U=\GU_4(2)$ and let $U_\lambda$ denote the stabilizer of $\lambda$ in $U$ by conjugacy.

By  Proposition \ref{wigner},  $kY_4U.e_\lambda\cong \Mat_{(U:U_\la )}(kU_\la )$, so the simple $kY_4U.e_\lambda$-modules are of dimensions $(U:U_\la )$ times the dimension of some simple $kU_\la$-module.

If $\la = 1$, then $U_\la=U$, so we find a block isomorphic to $kU$ and the quotient by the ideal generated by $\qq$ is $\Mat_{25}(k)$.

To study other stabilizers, note that $Y_4^{\rm ab}\cong\Irr (Y_4^\ab )$ by the hermitian form. This is $U$-equivariant, so we may consider those subgroups $U_\la$ as stabilizers of non trivial elements $V$ in the natural representation space $\F_4^4$.

If $^t\overline{V}V\not= 0$, then $\F_4^4 =\F_4.V\oplus V^\perp   $ and $U_\la$ then identifies with the 
unitary group on $V^\perp$, isomorphic with $\GU_3(2)$. By computing its Brauer character table (e.g.
using \GAPq), we get that its simple modules over $k$ have dimensions 1,2,3, so we get dimensions 1,2,3$\times (\GU_4(2):\GU_3(2))$ which is never a multiple of 25.

If $^t\overline{V}V= 0$, then $V$ can be taken as the sum of last two vectors of an orthonormal basis, 
so that the computation of its stabilizer is similar to the one of Remark \ref{centralizersassion}. Then $U_\la$ identifies with a 
semi-direct product $Y_2\smd \GU_2(2)$. By the discussion used above for $Y_4\smd \GU_4(2)$, one can sort out 
the dimensions of the simple $k[Y_2\smd \GU_2(2)]$-modules as follows. 
We first have $\GU_2(2)\cong (C_3\times C_3)\smd C_2$ with trivial Schur multiplier. So the simple projective representations of this group and the simple representations of its subgroups are of degree 1, hence the simple
$k[Y_2\smd \GU_2(2)]$-modules are of dimensions dividing $18$.  They are prime to 5, so that once multiplied with $(\GU_4(2):\GU_2(2))=1440$ they give dimensions not a multiple of 25.

 Let now $e_0$ be the idempotent of $kY_4$ corresponding to the only non linear character of the extra-special group $Y_4$. It is central in $kY_4U$, $e_0.kY_4\cong \Mat_{16}(k)$ by semi-simplicity and we have an action of $ U$ on the latter.
 
 The following shows that $e_0k\AAA_6=0$, thus establishing our claim.

\medskip

\begin{prop}
\begin{enumerate}
\item The above action of $\GU_4(2)$ on $\Mat_{16}(k)$ is induced by a morphism $\GU_4(2)\to \GL_{16}(k)$ and conjugacy.  \rm
\item One has $e_0\in kY_4U.\qq.kY_4U.$
\end{enumerate}
\end{prop}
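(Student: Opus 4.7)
The plan splits by the two parts of the proposition. For (i), Skolem--Noether provides a projective representation $\bar\rho : \GU_4(2) \to \PGL_{16}(k)$ whose obstruction class $[c] \in H^2(\GU_4(2), k^\times)$ we must show is trivial. By Lemma~\ref{calculsH2}(ii), $H^2(\GU_4(2), k^\times) \cong C_2$, and by Lemma~\ref{calculsH2}(iii) the restriction $H^2(\GU_4(2),k^\times)\to H^2(C_0,k^\times)$ is injective, so it suffices to show $[c]|_{C_0} = 0$. Let $\bar{\ii}_0, \bar{\jj}_0 \in C_0$ denote the images of the elements $\ii_0, \jj_0 \in Q_0$ written out explicitly in the preliminaries on $\Gamma_n$, $n\le 5$. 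By Lemma~\ref{H2cycliques}(i) applied to generators of $C_0\simeq C_2\times C_2$, the vanishing $[c]|_{C_0} = 0$ reduces to the identities $c(\bar{\ii}_0,\bar{\jj}_0) = c(\bar{\jj}_0,\bar{\ii}_0)$ and $c(\bar{\ii}_0,\bar{\ii}_0) = c(\bar{\jj}_0,\bar{\jj}_0) = 1$ for one system of normalized lifts.

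To exhibit such lifts, I would compute the action of $\bar{\ii}_0, \bar{\jj}_0$ on the generators $i_r, j_r$ of $Y_4$ via the composite $\Sp_4(\F_3)\subset \Gamma_5 \subset \Gamma_6 \onto U(5) = Y_4\rtimes \GU_4(2)$ (using \CHEVIE\ to handle the words in the $s_i$ that define $\ii_0, \jj_0$), and from these data produce explicit matrices $M_{\bar{\ii}_0}, M_{\bar{\jj}_0} \in \GL_{16}(k)$ implementing those conjugations on the tensor-product $16$-dimensional representation of $Y_4$ spelled out in Section~\ref{sectionYm}. Rescaling each matrix by a scalar (the only freedom available once the conjugation action is fixed) one then checks the two identities above by direct matrix arithmetic. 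The main obstacle here is exactly the bookkeeping of scalars: the action of $Q_0$ on $Y_4$ factors through $C_0$, so the lifts $M_{\bar{\ii}_0}, M_{\bar{\jj}_0}$ must be chosen ``by hand'' rather than coming from an honest representation of $Q_0$, and one has to track the scalars carefully to extract the exact cocycle values.

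Once (i) is established, part (ii) becomes formal. The idempotent $e_0$ is $U$-invariant (as the unique nonlinear character of $Y_4$ is fixed by all of $U$), hence central in $kY_4U$, and Proposition~\ref{nontwisted} applied to the morphism $f : U\to \GL_{16}(k)$ furnished by (i) yields
\[ e_0 kY_4 U \;\cong\; e_0 kY_4 \otimes kU \;\cong\; \Mat_{16}(kU), \]
under which $e_0$ corresponds to the identity matrix and $\qq\in kU$ corresponds to $\qq\cdot \mathrm{Id}$. Thus, using centrality of $e_0$, the membership $e_0 \in kY_4U\cdot\qq\cdot kY_4U$ translates into $\mathrm{Id}\in \Mat_{16}(kU\cdot\qq\cdot kU)$, i.e.\ into $J = kU$ where $J$ is the two-sided ideal of $kU$ generated by $\qq$; equivalently, into the statement that no simple $kU$-module is annihilated by $\qq$. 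Since $Q$ is a $2$-group and $\mathrm{char}\,k = 3$, Lemma~\ref{simplesbrauer} further reduces this to the non-vanishing of every irreducible $3$-modular Brauer character of $U = \GU_4(2) = C_3\times\SU_4(2)$ on $\qq$. Since $Q\neq Q_0$ we have $Q\cap\langle z_5^3\rangle = \{1\}$, so $Q$ injects into $\SU_4(2)$ as a quaternion group of order $8$ and $\qq$ becomes a sum of $8$ distinct elements there; the required non-vanishing is then a finite check performed directly with the $3$-modular Brauer character table of $\SU_4(2) = U_4(2)$ from \cite{ATLASBRAUER} (or via \GAPq).
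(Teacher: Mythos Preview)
Your treatment of (i) matches the paper's: reduce the obstruction to $H^2(C_0,k^\times)$ via Lemma~\ref{calculsH2}(iii), produce explicit intertwiners for the two generators, and verify the conditions of Lemma~\ref{H2cycliques}(i).

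Your argument for (ii), however, contains a genuine error. Under the isomorphism of Proposition~\ref{nontwisted}, an element $g\in U$ sitting inside $e_0kY_4\rtimes U$ as $1\cdot g$ is sent to $f(g)\otimes g$, \emph{not} to $1\otimes g$. Hence $\qq=\sum_{g\in Q}g$ is carried to
\[
M=\sum_{g\in Q} f(g)\otimes g \in \Mat_{16}(k)\otimes kU \cong \Mat_{16}(kU),
\]
which is the matrix with $(i,j)$-entry $\sum_{g\in Q} f(g)_{ij}\,g$, and not the scalar matrix $\qq\cdot\Id$. The two-sided ideal it generates is $\Mat_{16}(I)$ where $I\subset kU$ is generated by these matrix entries, not by $\qq$ itself. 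Your reduction to ``no simple $kU$-module is annihilated by $\qq$'' is therefore the wrong statement, and in fact that statement is false: Proposition~\ref{prop25} exhibits precisely the $25$-dimensional simple $k\SU_4(2)$-module that \emph{is} annihilated by $\qq$, so the Brauer-table check you propose would fail.

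The paper proceeds instead by computing the entries of $M$ explicitly (this is where the actual lift $f|_Q$ matters, and why the paper tracks the four possible liftings of $\bar\rho|_Q$), finds that $z-1$ lies in $I$ for $z$ the central involution of $Q$, and uses simplicity of $\SU_4(2)$ to conclude that $I$ contains the augmentation ideal. A final dimension comparison with $\Mat_{25}(k)$ then forces $e_0kY_4U'/(\qq)=0$.
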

\begin{proof} This action defines a projective representation $\GU(4,2) \to \PGL_{16}(\kk)$,
and we need to show that it is linearizable, meaning that the
induced element of $H^2(\GU(4,2),\kt)$ is zero. By Lemma \ref{calculsH2} it is sufficient
to compute its image in $H^2(C_0,\kt)$ where $C_0 \simeq (C_2)^2$ is
the image of the quaternion group $Q_0 \subset \Gamma_5$
in $\GU(4,2) \simeq C_3 \times \PSp_4(\F_3)$. We compute it explicitly
as follows. Since $\kk$ has characteristic 3, we can assume that
$\kk = \F_3$ and that the 16-dimensional representation
$\psi$ of $Y_4$ is defined over $\F_3$ by the matrix models given in Section \ref{sectionYm}.
For $x,y$ two generators of $C_0 \subset \GU(4,2)$, their actions
on $Y_4$ define twisted representations $\psi_x = \psi \circ \Ad x$,
$\psi_y = \psi \circ \Ad y$ of $Y_4$, which provides intertwinners
$P_x,P_y \in \GL_{16}(\kk)$ and a normalized cocycle. We check that they satisfy $P_x P_y = P_y P_x$
and $P_x^2 = P_y^2 = \Id_{16}$. From Lemma \ref{H2cycliques} it follows
that this cocycle is a coboundary, which concludes (i).

We let $U' = \SU(4,2) \subset \GU(4,2) = U$. 
From the above and Proposition \ref{nontwisted} we get that $e_0 \kk Y_4 \rtimes U' \simeq \Mat_{16}(\kk) \rtimes U'$
is isomorphic to $\Mat_{16}(\kk) \otimes \kk U'$. If $\rho : Q \to \GL_{16}(\kk)$
denotes the restriction to $Q \subset \GU(4,2)$ of the representation defined above, 
$\qq$ is mapped to $M = \sum_{g \in Q} \rho(g) \otimes g \in \Mat_{16}(\kk) \otimes \kk U'$
under this isomorphism. Then the ideal $e_0 \kk Y_4 U' \qq Y_4 U'$ of $e_0 \kk Y_4 U'$ is mapped to the ideal
generated by $M$ inside $\Mat_{16}(\kk) \otimes \kk U' \simeq \Mat_{16}(\kk U')$. Every ideal
of $\Mat_{16}(\kk U')$ being isomorphic to $\Mat_{16}(I)$ for some ideal $I$ of $\kk U'$,
we get that this ideal is $\Mat_{16}(I)$ for $I$ generated by the entries $(m_{ij})$
of the matrix $M$. In order to compute it we need to explicitly lift the representation $\bar{\rho} : Q
\to \PGL_{16}(\kk)$ afforded by the intertwinners to a linear representation $\rho$. It is clearly
sufficient to lift the generators $\ii,\jj$ of $Q$. Although any lifting will do, as $\kk^{\times}
= \{ -1 , 1 \}$ hence the set of the $\rho \otimes \chi$ for $\chi$ a linear character
of $Q$ covers all the possible liftings of the generators, we find that the four possible liftings
are not equivalent as representations of $Q$, hence only one is the restriction
of the linear representation of $U'$
providing the isomorphism. Nevertheless, computing the entries of $M$ in the four cases,
we find that $\ii\jj(z-1)$ belongs to all four possible vector subspaces of $\kk Q$ spanned by the entries
of $M$, where $z = \ii^2=\jj^2$. It follows that $z-1$ belongs to $I$. Since $U'=\SU(4,2)$ is simple, the conjugates of
$z \in Q \subset U'$ generate $U'$ hence $I$ contains the augmentation ideal of $\kk U'$.
As a consequence the quotient of $e_0 \kk Y_4 \rtimes U'$ by $\qq$ is either zero or isomorphic
to $\Mat_{16}(\kk)$. Since the image of $\kk U' \subset e_0 \kk Y_4 \rtimes U'$ factorize through $\Mat_{25}(\kk)$ it
has to be $0$. Since it generates $(e_0 \kk Y_4 \rtimes U')/(\qq)$ we get $e_0 \kk Y_4 U' = 
e_0 \kk Y_4 U' \qq Y_4 U'$.
\end{proof}

\medskip

\subsubsection{\ }  Case $n\geq 7$. \rm  It suffices to show that $\qq$ generates $\kk \Gamma_7$ as a two-sided ideal, 
to get the same in any $k\Gamma_n$ for any $n\geq 7$. By the argument at the start of \ref{old224} above,
$z_5^3 - 1$ belong to the ideal generated by $\qq$ in $\kk \Gamma_5$, hence to the ideal
generated by $(\qq)$ in $\kk \Gamma_7$, and 
Theorem \ref{theoassion} (ii) then implies that $\kk \AAA_7$ is a quotient of $\kk \GU_6(2)$ by the two-sided ideal generated by 
$\qq\in \kk\SU_4(2)$.

Assume that $\kk \AAA_7\neq 0$ and let $S$ be a simple $\kk \AAA_7$-module. We see it as a simple $\kk \GU_6(2)$-module such 
that $\qq S = 0$.
Since the restriction of $S$ to $\SU_4(2)$ is a module annihilated by $\qq$, all its composition factors are 
isomorphic to the same 25-dimensional simple $\kk \SU_4(2)$-module. Its Brauer
character $\phi_S$ then satisfies $\Res^{\GU_6(2)}_{\SU_4(2)}\phi_S=m.\phi_{25}$ where $m\geq 1$ is an integer and $\phi_{25}$ is a Brauer $3$-modular character of degree 25. 

In the table of Brauer characters of $\GU_6(2)$ (denoted by $3.U_6(2).3$ in the notations 
of \cite{ATLASBRAUER}), it should then appear as a character of degree $25m$ and with values in $m\OOO$ for 
the classes of elements of $\SU_4(2)\subset \GU_6(2)$ ($\OOO$ denotes the ring of integers of the $3$-adic ring 
of a splitting field of $\GU_6(2)$).

Since the publication of \cite{ATLASBRAUER}, this table has been computed and made available in \GAPq\ 
(package {\tt CTblLib 1.1.3}), so we can check that
only two characters match the condition on degree, and it is for $m=111$ and $154$. But the condition on values is satisfied in neither case.
(see table \ref{3U63m3}).

\begin{table}
\begin{center}
\resizebox{12cm}{!}{\includegraphics{3U63m3.pdf}}
\end{center}
\caption{Brauer character table for $3.U_6(3).3$, after \GAPq}
\label{3U63m3}
\end{table}

\bigskip

\subsection{Even characteristic}

Here we choose another equivalent description of $\AAA_n$ and introduce a new element $\bb\in \Z\Gamma_3$ that will prove important 
to our study of characteristic 2.

\begin{defi} \label{defb} Let $$\bb = s_1 s_2^{-1} + s_2^{-1} s_1 + s_1^{-1} s_2 + s_2 s_1^{-1}$$

\end{defi}

Note that $\bb + s_1 s_2^{-1} \bb = \qq$, and in particular $(\qq) \subset (\bb)$.

In characteristic 2, we will not get a complete description of $\kk \AAA_n$. This section is devoted to
the description of $\kk \AAA_n$ for $n \in \{ 3, 4 \}$, and to preliminary results on the ideal generated by $b$.
We will prove the following, letting $z \mapsto \bar{z}$
denote the element $z \mapsto z^2$ of $\mathrm{Gal}(\overline{\F_2}/\F_2)$.

\begin{theo}
If $k$ is a field of characteristic $2$, then 
\begin{enumerate}
\item[(i)] $\kk \AAA_3\cong \kk \Gamma_3 / J(\kk \Gamma_3)^4 \cong (\kk Q / J(\kk Q)^4) \rtimes C_3  $.
\item[(ii)] When $\kk \supset \F_4$, $\kk \AAA_4\cong \kk \AAA_3 \oplus \Mat_3(\kk \Gamma_3/I_q) \oplus \Mat_3(\kk \Gamma_3/\overline{I_q})$
with $I_q = M_q C_3 \subset \kk \Gamma_3$, $M_q$ a 4-dimensional ideal of
$\kk Q$ with $J(\kk Q)^3 \subset M_q \subset J(\kk Q)^2$,
$M_q + \overline{M_q} = J(\kk Q)^2$.
\end{enumerate}
\end{theo}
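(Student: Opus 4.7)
The strategy is to prove (i) by identifying $\qq$ with a socle generator of $\kk Q$ in characteristic $2$, and to prove (ii) by decomposing $\kk\Gamma_4\cong \kk\GU_3(2)$ into three central-character summands over $\F_4$ and analyzing each one separately.

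\emph{Part (i).} Observe that $\qq=\sum_{g\in Q}g$ is $C_3$-invariant, central in $\kk Q$, and satisfies $(g-1)\qq=0=\qq(g-1)$ for every $g\in Q$, so $\qq\in\mathrm{soc}(\kk Q)$. For $Q=Q_8$ in characteristic $2$, the group algebra $\kk Q$ has Loewy length $5$ with Loewy layers of dimensions $(1,2,2,2,1)$; in particular the socle is one-dimensional, coincides with $J(\kk Q)^4$, and is spanned by $\qq$. Since $|C_3|$ is invertible in $\kk$, the standard $p'$-extension argument gives $J(\kk\Gamma_3)=J(\kk Q)\cdot\kk\Gamma_3$, whence $J(\kk\Gamma_3)^4=J(\kk Q)^4\cdot\kk C_3=\kk\qq\cdot\kk C_3$. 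Because $\qq$ is central in $\kk Q$ and fixed by the $C_3$-action, the two-sided ideal $(\qq)\subset\kk\Gamma_3$ equals $\kk\qq\cdot\kk C_3=J(\kk\Gamma_3)^4$, giving the first isomorphism. The second follows since $J(\kk Q)^4$ is a $C_3$-stable two-sided ideal of $\kk Q$, so the quotient inherits the semidirect product structure.

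\emph{Part (ii).} Identify $\Gamma_4\cong\GU_3(2)$ and take the central subgroup $Z=Z(\Gamma_4)\cong C_3$. Since $\kk\supset\F_4$, the three cube roots of unity lie in $\kk$ and provide central idempotents of $\kk Z$ that split $\kk\Gamma_4=e_0\kk\Gamma_4\oplus e_\omega\kk\Gamma_4\oplus e_{\bar\omega}\kk\Gamma_4$ into three summands of dimension $216$, the latter two exchanged by the Frobenius $z\mapsto z^2$. For the principal summand $e_0\kk\Gamma_4$, the image of $(\qq)$ is arranged so that the quotient is exactly $\kk\AAA_3$, via the natural factorization through the parabolic subgroup $\Gamma_3$. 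For each non-principal summand, apply Propositions \ref{nontwisted} and \ref{wigner} to a suitable transitive action coming from the index-3 normal subgroup $\SU_3(2)\subset\GU_3(2)$; this identifies $e_\omega\kk\Gamma_4\cong\Mat_3(\kk\Gamma_3)$ as $\kk$-algebras. Under this isomorphism, $\qq$ is carried to a scalar-matrix image of an element of $\kk\Gamma_3$, so the induced quotient is $\Mat_3(\kk\Gamma_3/I_q)$ for an ideal $I_q=M_q\cdot C_3$ of $\kk\Gamma_3$. To pin down $M_q$, note that $J(\kk Q)^2/J(\kk Q)^3$ is 2-dimensional over $\kk$ and decomposes, under the $C_3$-action, as the sum of the two nontrivial characters $\omega,\bar\omega$ (both defined over $\F_4$); taking the pullback of each eigenline by $J(\kk Q)^3$ yields two 4-dimensional ideals $M_q$, $\overline{M_q}$ automatically satisfying $J(\kk Q)^3\subset M_q\subset J(\kk Q)^2$ and $M_q+\overline{M_q}=J(\kk Q)^2$, with the Frobenius swap giving the claimed conjugation.

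The main obstacle is the Morita-type identification $e_\omega\kk\Gamma_4\cong\Mat_3(\kk\Gamma_3)$ and the precise tracking of $\qq$ through it; this requires bookkeeping of the action of $\GU_3(2)/\SU_3(2)\cong C_3$ on $\SU_3(2)$ and a projective-representation argument in the spirit of Section 3 (via Lemmas \ref{calculsH2} and \ref{H2cycliques}) to trivialize the resulting cocycle on the matrix-algebra side. A final dimension check $\dim_\kk\kk\AAA_4=21+108+108=237$ agrees with the $\F_2$-rank read off the integral description $\AAA_4\simeq\Z^{183}\oplus(\Z/2\Z)^{54}\oplus\cdots$ and provides consistency.
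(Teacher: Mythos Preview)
Your treatment of part (i) is essentially the paper's: identify $\qq$ with the socle generator $J(\kk Q)^4$ and use $J(\kk\Gamma_3)^n=J(\kk Q)^nC_3$.

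Part (ii), however, has genuine gaps. Your decomposition by the central character of $\zeta=z_4$ is fine and coincides with the paper's Clifford-theoretic splitting, but the analysis of each summand goes astray.

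\emph{Principal summand.} You assert that $e_0\kk\Gamma_4/(\qq)\cong\kk\AAA_3$ ``via the natural factorization through the parabolic subgroup $\Gamma_3$''. This hides the real work. The block $e_0\kk\Gamma_4$ is $\kk^9\rtimes\Gamma_3$ (the nine linear characters of $K=\ker(\Gamma_4\onto\Gamma_3)\cong 3^{1+2}$), which splits further into $\kk\Gamma_3\oplus(\kk^{8}\rtimes\Gamma_3)$ according to the two $\Gamma_3$-orbits on these characters. One must show that $(\qq)$ kills the $192$-dimensional piece $\kk^{8}\rtimes\Gamma_3$ entirely. This is where Proposition~\ref{wigner} is actually used in the paper: taking $X=\Gamma_3/\langle s_1\rangle$ with $Q$ as a transversal, one finds that $\qq$ maps to the all-ones matrix $\sum_{u,v\in Q}E_{u,v}\in\Mat_8(\kk C_3)$, which generates the whole block.

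\emph{Non-principal summands.} Your proposed route through the index-$3$ normal subgroup $\SU_3(2)$ does not give the isomorphism $e_\omega\kk\Gamma_4\cong\Mat_3(\kk\Gamma_3)$; Proposition~\ref{wigner} applies to algebras of the shape $G\ltimes\kk^X$, which $e_\omega\kk\Gamma_4$ is not. The paper instead uses the semidirect decomposition $\Gamma_4=K\rtimes\Gamma_3$: since $e_\omega\kk K\cong\Mat_3(\kk)$ (the unique $3$-dimensional irreducible of $K$ with central character $\omega$), one has $e_\omega\kk\Gamma_4\cong\Mat_3(\kk)\rtimes\Gamma_3$, and then Proposition~\ref{nontwisted} untwists this to $\Mat_3(\kk\Gamma_3)$ after explicitly lifting the projective action $\Gamma_3\to\PGL_3(\kk)$ to a linear $\tilde\rho:\Gamma_3\to\GL_3(\kk)$.

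\emph{Image of $\qq$.} Your claim that $\qq$ becomes a scalar matrix is false. Under the untwisting $a\cdot g\mapsto a\tilde\rho(g)\otimes g$, the element $\qq=\sum_{g\in Q}g$ is sent to $\sum_{g\in Q}\tilde\rho(g)\otimes g\in\Mat_3(\kk\Gamma_3)$, which is not scalar. The ideal $I_q\subset\kk\Gamma_3$ is generated by the nine \emph{entries} of this matrix, and the identification $I_q=M_qC_3$ with the stated properties of $M_q$ is obtained in the paper by direct computation, not by the eigenline argument you sketch (which, while plausible, would require a separate verification that it matches the actual $I_q$).

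Your final dimension check $21+108+108=237$ is correct and a useful sanity check, but the intermediate steps need the corrections above.
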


For $n = 3$ this is a consequence of the following.
\begin{prop}\label{radicals} Keep $k$ of characteristic 2. Then $J(k\Gamma_3)^4=(\qq)\subset (\bb)=J(k\Gamma_3)^3$.
\end{prop}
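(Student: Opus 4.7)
The overall plan is to reduce everything to the local algebra $kQ\cong kQ_8$. Since $|C_3|=3$ is invertible in $k$, the Jacobson radical satisfies $J(k\Gamma_3)=J(kQ)\cdot k\Gamma_3$, so $J(k\Gamma_3)^n=J(kQ)^n\otimes_k kC_3$ as $k$-subspaces. It therefore suffices to identify $J(kQ)^4\otimes kC_3$ with $(\qq)$ and $J(kQ)^3\otimes kC_3$ with $(\bb)$, by extracting enough explicit $Q$-elements using the $C_3$-conjugation inside $k\Gamma_3$.

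First I would recall the Loewy structure of $kQ_8$ in characteristic $2$, either by direct computation or via Jennings' theorem: $kQ$ is local of Loewy length $5$ with radical layer dimensions $1,2,2,2,1$. In particular the socle $J(kQ)^4$ is one-dimensional and spanned by $\qq=\sum_{g\in Q}g$, while $J(kQ)^3$ is three-dimensional and spanned over $k$ by $(1+z)(1+\ii),\;(1+z)(1+\jj),\;(1+z)(1+\kkb)$, where $z=\ii^2=\jj^2=\kkb^2$ is the central involution of $Q$ and $\kkb=\ii\jj$.

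The identification $(\qq)=J(k\Gamma_3)^4$ is then immediate: $\qq$ is $C_3$-invariant and satisfies $g\qq=\qq g=\qq$ for all $g\in Q$, so $\qq$ is central in $k\Gamma_3$, giving $(\qq)=\qq\cdot kC_3$, of dimension $3$, which matches $\dim J(k\Gamma_3)^4$. For $(\bb)$, using the isomorphism $s_1\mapsto s$, $s_2\mapsto\ii^3 s$ recalled above, a direct computation gives $s_1 s_2^{-1}=\ii$, $s_2^{-1}s_1=\kkb$, $s_1^{-1}s_2=\kkb^{-1}$, $s_2 s_1^{-1}=\ii^{-1}$, so in characteristic $2$
$$\bb=(\ii+\ii^{-1})+(\kkb+\kkb^{-1})=(1+z)(\ii+\kkb)=\ii\,(1+\ii)^2(1+\jj)\;\in\;J(kQ)^3,$$
whence $(\bb)\subseteq J(k\Gamma_3)^3$. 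For the reverse inclusion, I would conjugate $\bb$ by $s_1$ and by $s_1^2$, which implement the cyclic $C_3$-action $\ii\mapsto\jj\mapsto\kkb\mapsto\ii$ on $Q$: this produces $\jj(1+z)(1+\kkb)$ and $\kkb(1+z)(1+\ii)$ in $(\bb)$. Left-multiplying $\bb$, $s_1\bb s_1^{-1}$ and $s_1^2\bb s_1^{-2}$ by $\ii^{-1},\jj^{-1},\kkb^{-1}$ respectively recovers the three displayed generators of $J(kQ)^3$ inside $(\bb)$, and closing under $kC_3$ yields $(\bb)\supseteq J(kQ)^3\otimes kC_3=J(k\Gamma_3)^3$.

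The main obstacle is the structural input on $kQ_8$ in characteristic $2$: that the socle is exactly $k\qq$ and that the three elements $(1+z)(1+\ii)$, $(1+z)(1+\jj)$, $(1+z)(1+\kkb)$ span $J(kQ)^3$. Once that is granted, the decisive observation is the factorization $\bb=\ii(1+\ii)^2(1+\jj)$, which places $\bb$ at precisely the third Loewy layer of $kQ$; the rest of the argument is just a dimension comparison powered by $s_1$-conjugation and the centrality of $\qq$.
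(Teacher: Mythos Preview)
Your proof is correct and follows essentially the same route as the paper: both reduce to the Loewy filtration of $kQ_8$ via Jennings' theorem, identify the same three elements $(1+z)(1+\ii)=\sigma_{\ii}$, $(1+z)(1+\jj)=\sigma_{\jj}$, $(1+z)(1+\kkb)=\sigma_{\kkb}$ as a basis of $J(kQ)^3$, and recover them from $\bb$ via $s_1$-conjugation and one-sided multiplication by elements of $Q$. The only cosmetic differences are that you invoke the general fact $J(kQ\rtimes C_3)=J(kQ)\cdot kC_3$ (valid since $3\in k^\times$) where the paper checks this directly by nilpotence and a dimension count, and that you exhibit the factorization $\bb=\ii(1+\ii)^2(1+\jj)$ to place $\bb$ in $J(kQ)^3$, whereas the paper reaches the same conclusion from $\sigma_x=(1+x)^3$.
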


\begin{proof}

As before, we let $z = (s_1 s_2)^3$, $\ii = s_2 s_1^{-1} z^{-1}$,
$s_2^2 s_1= \kkb = \ii\jj \in Q$. We have $\bb = [s_1,s_2^2]+[s_1^2,s_2] = (\ii + \ii\jj)(1+z)$, hence
$\sigma_{\jj} = \sum_{ x\in {<} \jj { >}} x = \ii \bb \in (\bb)$ and similarly
$\sigma_{\ii} = (\ii\bb)^{s_1^2}, \sigma_{\ii \jj} = (\ii\bb)^{s_1} \in (\bb)$. Let $K = \kk \sigma_{\ii} \oplus \kk \sigma_{\jj}
\oplus \kk \sigma_{\ii\jj} \subset \kk Q$. It is easily checked to be
a 2-sided ideal, stable under $s_1$-conjugation. Since $Q $ is a 2-group,
the Jacobson radical $J(\kk Q )$ is the 7-dimensional augmentation
ideal, and in particular $1+\ii \in J(\kk Q )$. By Jennings theorem (see
\cite{BENSONBOOK} thm. 3.14.6) one easily gets that
$\sum_{r \geq 0} t^r \dim_{\kk} J(\kk Q )^r / J(\kk Q )^{r+1} = 1 + 2 t + 2 t^2 + 2t^3 + t^4$
hence $J(\kk Q )^5 = 0$,
$\dim J(\kk Q )^4 = 1$, $\dim J(\kk Q )^3 = 3$ and $\dim J(\kk Q )^2 = 5$. In particular
$J(\kk Q )^4$
coincides with the simple submodule $\kk \qq$. 
We have $\sigma_x = (1+x)^3$ for $x \in \{\ii, \jj , \ii\jj \}$, so $K \subset
J(\kk Q )^3$ hence $K = J(\kk Q )^3$ by equality of dimensions.
The ideal $J(\kk Q )$ of $\kk Q $ being stable under $s_1$-conjugation,
we get that $J(\kk Q )C_3 = C_3 J(\kk Q )$ is an ideal of $\kk \Gamma_3
= \kk Q  \ltimes C_3$ with $(J(\kk Q )C_3)^5 = 0$ hence $J(\kk Q )C_3
\subset J(\kk \Gamma_3)$. We have $\dim J(\kk Q )C_3 = 21$
and $\dim J(\kk \Gamma_3) = 24 -3=21$ because $\kk \Gamma_3$
admits 3 simple 1-dimensional modules (when $\kk \supset \F_4$),
hence $J(\kk Q )C_3 = J(\kk \Gamma_3)$.

From $J(\kk Q )C_3 = C_3 J(\kk Q )$
and $C_3 C_3 = C_3$ we get that $J(\kk \Gamma_3)^n = J(\kk Q )^n C_3$.
It follows that the ideal $(\bb)$ in $\kk \Gamma_3$ is
$K C_3 = J(\kk Q )^3 C_3 = J(\kk \Gamma_3)^3$, and the one generated
by $\qq$ is $\kk \qq. C_3 = J(\kk Q )^4 C_3 =  J(\kk \Gamma_3)^4$.
\end{proof}

We now consider the case $n = 4$. We let $K$ denote the kernel of
the natural morphism $\Gamma_4 \onto \Gamma_3$. It is the
extra-special group $3^{1+2}$ with exponent 3.
Generators are given by $a=s_1 s_3^{-1}$,$u= (s_1 s_2^2 s_1)^{-1}(s_3 s_2^2 s_3)$,
$\zeta = (s_1 s_2 s_3)^4 \in Z(\Gamma_4)$, and we have $(a,u) = aua^{-1}u^{-1} = \zeta$.
The action of $\Gamma_3$ on $K$ is given by $s_1 u s_1^{-1} = a u$,
$s_2 a s_2^{-1} = u^{-1} \zeta a$, $(s_1,a) = (s_2,u) = (s_1,\zeta) = (s_2,\zeta) = 1$.

We assume $\kk \supset \F_4$. The irreducible representations of $K$ are
defined over $\kk$. Choosing $j \in \F_4 \setminus \F_2$, an irreducible
3-dimensional representation $R : K \to \GL_3(\F_4)$ is given by
$$
a \mapsto \begin{pmatrix} 1 & 0 & 0 \\ j & j^2 & 0 \\ 1 & 1 & j
\end{pmatrix}
u \mapsto \begin{pmatrix} 1 & 1 & 1 \\ 0 & 1 & 1 \\ 0 & 1 & 0
\end{pmatrix}
\zeta \mapsto j^2
$$
and another one is afforded by its Galois conjugate $\bar{R}$. Then
$\kk K = \kk^9 \oplus \Mat_3(\kk) \oplus \Mat_3(\kk)$,
and $\kk \Gamma_4 = (\kk^9 \rtimes \Gamma_3) \oplus (\Mat_3(\kk) \rtimes_{R} \Gamma_3)
\oplus (\Mat_3(\kk) \rtimes_{\bar{R}} \Gamma_3)$.
We will prove that $\Mat_3(\kk) \rtimes \Gamma_3 \simeq \Mat_3(\kk \Gamma_3)$
and describe an explicit isomorphism. For $g \in \Gamma_3$ we
denote $R^g : x \mapsto R(gxg^{-1})$. From $R^g \simeq R$ for every
$g \in \Gamma_3$ we get a projective representation $\rho : \Gamma_3 \to
\PGL_3(\kk)$ ; by explicit computations we check that this $\rho$
can be lifted to a linear representation $\tilde{\rho} : \Gamma_3 \to \GL_3(\kk)$
given by
$$
s_1 \mapsto \begin{pmatrix} 1 & 0 & 0 \\
j & j^2 & 0 \\ j & j & 1 \end{pmatrix} \ \ 
s_2 \mapsto \begin{pmatrix} j^2 & 0 & j \\
0 & 1 & 0 \\ 0 & 0 & 1 \end{pmatrix} \ \ 
$$
Then, an explicit isomorphism $\Mat_3(\kk) \rtimes \Gamma_3 \to \Mat_3(\kk) \otimes \kk \Gamma_3$
is given by $1 \otimes g \mapsto \tilde{\rho}(g)\otimes g$. The ideal generated
by $\qq \in \kk \Gamma_3$ in $\Mat_3(\kk) \rtimes \Gamma_3$
then corresponds to $\Mat_3(I_q)$ with $I_q$ the ideal generated in $\kk \Gamma_3$
by the entries of $\sum_{x \in Q } \rho(x)$. By computer we find that
$I_q$ has dimension 12, and is generated by $s_1^{-1} s_2 s_1 + j^2 s_1 s_2^{-1} s_1 +
j^2 s_2 s_1^{-1} s_2 + j s_2^{-1} s_1^{-1} + j^2 s_1^{-1} s_2^{-1}$.
We also check that $I_q = M_qC_3$ with
$M_q$ the 4-dimensional ideal of $\kk Q $ generated by
$1 + j s_1 s_2 s_1 + j^2 (s_1 s_2)^3 + j s_1 s_2^{-1} + j s_2^{-1} s_1$,
that $J(\kk Q )^3 \subset M_q \subset J(\kk Q )^2$.

Similarly, we get that the ideal generated by $\bb$ in $\Mat_3(\kk) \rtimes \Gamma_3$
corresponds to $\Mat_3(I_b)$ with $I_b$ an ideal of dimension 21
that contains $s_1^{-1} s_2 + 1$. since $(\kk \Gamma_3)/
(s_1^{-1} s_2 + 1) = \kk (\Gamma_3/s_1^{-1} s_2) \simeq \kk C_3$
we get $I_b = (s_1^{-1} s_2 + 1)$ and $\Mat_3(\kk \Gamma_3) / \Mat_3(I_b)
\simeq \Mat_3(\kk C_3)$.
\begin{lemma} \label{lemr1r2} The images of the elements $r_1 = s_2 s_3^2 + s_1^2 s_2 + s_1 s_2^2 + s_3 s_1^2 + s_2^2 s_3 + s_1 s_3^2$
and $r_2 = s_2^2 s_3 + s_1 + s_2 + s_2 s_3 s_1^2 + s_2^2 s_3 s_1 + s_1^2 s_3^2$
of $\kk \Gamma_4$ inside $\Mat_3(\kk) \rtimes \Gamma_3$ 
lie inside the image of $(\bb)$.
\end{lemma}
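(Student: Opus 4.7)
We first translate the claim into a concrete matrix verification. From the two paragraphs immediately preceding the lemma, the ideal $(\mathbf{b})$ of $\Mat_3(k) \rtimes \Gamma_3$ corresponds, under the isomorphism $\Phi\colon \Mat_3(k) \rtimes \Gamma_3 \xrightarrow{\sim} \Mat_3(k\Gamma_3)$ described there, to $\Mat_3(I_b)$, and $k\Gamma_3/I_b \simeq kC_3$ via $s_1, s_2 \mapsto t$ (a chosen generator of $C_3$). Hence $\pi(r_i) \in (\mathbf{b})$ is equivalent to $\tau(r_i) = 0$, where
\[
\tau\colon k\Gamma_4 \xrightarrow{\pi} \Mat_3(k) \rtimes \Gamma_3 \xrightarrow{\Phi} \Mat_3(k\Gamma_3) \twoheadrightarrow \Mat_3(kC_3)
\]
and $\pi$ is the projection of $k\Gamma_4$ onto its $R$-isotypic summand.

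On generators, $\tau(\kappa) = R(\kappa) \otimes 1$ for $\kappa \in K$, $\tau(s_i) = \tilde\rho(s_i) \otimes t$ for $i \in \{1,2\}$, and $\tau(s_3) = R(a)^{-1}\tilde\rho(s_1) \otimes t$ (using $s_3 = a^{-1} s_1$). To evaluate $\tau$ on each summand of $r_1$ or $r_2$, the plan is to push all $K$-factors to the left of each monomial via the conjugation rules recorded above ($s_1 a s_1^{-1} = a$, $s_2 a s_2^{-1} = u^{-1}\zeta a$, $s_1 u s_1^{-1} = au$, $s_2 u s_2^{-1} = u$, $au = \zeta ua$, $\zeta$ central), obtaining a normal form $\kappa \cdot g$ with $\kappa \in K$ and $g \in \Gamma_3$. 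Then $\tau(\kappa g) = R(\kappa) \tilde\rho(g) \otimes t^{l(g)}$, where $l\colon \Gamma_3 \to \Z/3\Z$ is the length map.

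The lemma then reduces to two matrix identities in $\Mat_3(kC_3) \simeq \Mat_3(k)^3$ (using $k \supset \F_4$). The $C_3$-grading splits each identity by length modulo $3$: the six summands of $r_1$ all have length $\equiv 0$, giving one identity in $\Mat_3(k)$, while the summands of $r_2$ split as one of length $\equiv 0$ and five of length $\equiv 1$, giving two. Plugging in the explicit $3\times 3$ matrices $R(a), R(u), R(\zeta), \tilde\rho(s_1), \tilde\rho(s_2)$ recorded above produces each sum, and the vanishing is to be confirmed by direct calculation (which is easily mechanized).

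The only real obstacle is computational bookkeeping: each occurrence of $s_3$ spawns an additional conjugation step, and monomials such as $s_2^2 s_3 s_1$ or $s_2 s_3 s_1^2$ need several commutations to reach normal form. Conceptually nothing new is required beyond the explicit description of $\tau$ assembled from the preceding data.
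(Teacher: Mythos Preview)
Your approach is correct and essentially identical to the paper's: both reduce the claim to the vanishing of $r_1,r_2$ under the composite $\kk\Gamma_4 \to \Mat_3(\kk)\rtimes\Gamma_3 \to \Mat_3(\kk\Gamma_3) \to \Mat_3(\kk C_3)$, rewrite each monomial in the normal form $\kappa\cdot g$ with $\kappa\in K$, $g\in\Gamma_3$, and then verify the resulting matrix identities by direct computation. The paper carries out the normal-form step explicitly (obtaining e.g.\ $r_1 = u^{-1}\zeta a\, s_2 s_1^2 + s_1^2 s_2 + s_1 s_2^2 + a^{-1} + au^{-1}a\, s_2^2 s_1 + a$) and then, like you, defers the final matrix check to ``a straightforward computation''; your additional remark on the $C_3$-grading is a convenient organizational refinement but not a different idea.
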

\begin{proof}
We first write $r_1,r_2$ inside $\kk K \rtimes \Gamma_3$. We
get $r_1 = u^{-1} \zeta a s_2 s_1^2 + s_1^2 s_2 + s_1 s_2^2 + a^{-1} + 
au^{-1} a s_2^2 s_1 + a$
and $r_2 = \zeta^{-1}ua s_2^2s_1^2 + s_1 + s_2 + aua s_2 + au^{-1} a s_2^2 s_1^2 + a s_1$.
We need to prove that they map to $0$ through the composite of the morphisms
$\kk K \rtimes \Gamma_3 \to \Mat_3(\kk) \rtimes \Gamma_3 \to \Mat_3(\kk) \otimes \kk \Gamma_3 \onto \Mat_3(\kk C_3)$,
that is that $ R(u^{-1} \zeta a) \tilde{\rho}(s_2 s_1^2) + \tilde{\rho}(s_1^2 s_2) + \tilde{\rho}(s_1 s_2^2) + R(a^{-1}) + 
R(au^{-1} a) \tilde{\rho}(s_2^2 s_1) + R(a) = 0$ and $R(\zeta^{-1}ua) \tilde{\rho}(s_2^2s_1^2) + \tilde{\rho}(s_1) +
\tilde{\rho}(s_2) + R(aua) \tilde{\rho}(s_2) + R(au^{-1} a)
\tilde{\rho}(s_2^2 s_1^2) + R(a) \tilde{\rho}(s_1) = 0$. This follows from a straightforward computation.
\end{proof}

The case of the other 3-dimensional representations is similar
and can moreover be deduced from the first one by Galois action :
letting $x \mapsto \bar{x}$ denote the nontrivial element of
$\mathrm{Gal}(\F_4/\F_2)$, $(\qq)$
corresponds to the ideal $\overline{I_q} = \overline{M_q} C_3$,
and we check $M_q + \overline{M_q} = J(\kk Q )^2$.

We now turn to the 1-dimensional representations $\rho_{\alpha,\beta} : K
\to \F_4^{\times}$ defined by $a \mapsto j^{\alpha}, u \mapsto j^{\beta},
\zeta \mapsto 1$
for $\alpha,\beta \in \{ 0,1,2 \}$. We have $\rho_{\alpha,\beta}^{s_1}(a)
= j^{\alpha}$, $\rho_{\alpha,\beta}^{s_1}(u)
= j^{\alpha+\beta}$, $\rho_{\alpha,\beta}^{s_2}(a)
= j^{\alpha-\beta}$, $\rho_{\alpha,\beta}^{s_2}(u)
= j^{\beta}$. Identifying the possible $(\alpha,\beta)$ with $\F_3^2$,
the $\Gamma_3$-action on the classes of representations thus corresponds to
the identification of $\Gamma_3$ with $\SL_2(\F_3)$ given by
$$
s_1 \mapsto \begin{pmatrix} 1 & 0 \\ 1 & 1 \end{pmatrix} \ \ 
s_2 \mapsto \begin{pmatrix} 1 & -1 \\ 0 & 1 \end{pmatrix}
$$
It follows that there are two orbits, of cardinalities 1 and 8. We have $\kk^9 \rtimes \Gamma_3 =
\kk \Gamma_3 \oplus \kk^{\Gamma_3/C} \rtimes \Gamma_3$ with $C$
the stabilizer of a nonzero vector in $\F_3^2$.

We apply Proposition \ref{wigner}  with $C = < s_1 >$
and $ Q $ making a representative system of $\Gamma_3/C$. Then, under the isomorphism
$\kk^{\Gamma_3/C} \rtimes \Gamma_3 \simeq \Mat_{Q }(\kk C_3)$,
$g \in Q $ is mapped to $\sum_{u \in Q } E_{gu,u}$. In particular,
$\qq \in \kk Q $ is mapped to
$$
\sum_{v \in Q } \sum_{u \in Q } E_{vu,u} = \sum_{u,v \in Q } E_{u,v}.
$$
The ideal of $\kk^{\Gamma_3/C} \rtimes \Gamma_3$
generated by $\qq$ is then mapped to $\Mat_8(I)$ for $I$ the ideal
of $\kk C_3$ generated by $1$, hence is the full block
$\kk^{\Gamma_3/C} \rtimes \Gamma_3$. Since $\qq \in (\bb)$, the same holds
for the ideal generated by $\bb$.

\begin{prop} \label{proprelsGamma4} The elements $r_1, r_2 \in \kk \Gamma_4$ of Lemma \ref{lemr1r2} belong
to the 2-sided ideal generated by $\bb$.
\end{prop}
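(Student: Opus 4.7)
The plan is to prove membership in $(\bb)$ block-by-block, using the decomposition
$$ \kk \Gamma_4 \;\cong\; (\kk^9 \rtimes \Gamma_3) \;\oplus\; (\Mat_3(\kk)\rtimes_R \Gamma_3) \;\oplus\; (\Mat_3(\kk)\rtimes_{\bar R} \Gamma_3) $$
produced earlier from the irreducible representations of the normal extra-special subgroup $K\cong 3^{1+2}$. It is enough to show that the image of each $r_i$ lies in the ideal generated by $\bb$ in every summand.

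For the two matrix blocks, Lemma~\ref{lemr1r2} is exactly the assertion that the images of $r_1,r_2$ in $\Mat_3(\kk)\rtimes_R\Gamma_3$ lie in the image of $(\bb)$; the Galois conjugate argument, applied verbatim with $\bar R$ in place of $R$, dispatches the $\bar R$-block. Thus the whole content of the proposition beyond Lemma~\ref{lemr1r2} is to control the nine-dimensional block.

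For the block $\kk^9\rtimes \Gamma_3$, I would split according to the two $\Gamma_3$-orbits on the linear characters of $K$: the nontrivial orbit of length $8$, contributing $\kk^{\Gamma_3/C}\rtimes \Gamma_3$, and the trivial character, contributing $\kk\Gamma_3$. The discussion just before the proposition has already exhibited $(\qq)\subset(\bb)$ as filling the entire summand $\kk^{\Gamma_3/C}\rtimes \Gamma_3$, so nothing is to be checked there. In the remaining $\kk\Gamma_3$-summand, the projection $\kk\Gamma_4 \twoheadrightarrow \kk\Gamma_3$ induced by $K\mapsto 1$ (equivalently $s_3\mapsto s_1$, which is well-defined since it respects all braid relations) must be computed on the explicit sums defining $r_1$ and $r_2$. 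A direct calculation, using $s_i^3=1$ and pair-cancellation in characteristic~$2$, shows that $r_1$ reduces to $\bb$ itself and $r_2$ reduces to $0$; both manifestly lie in the ideal $(\bb)=J(\kk\Gamma_3)^3$ of Proposition~\ref{radicals}.

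The main obstacle, already overcome in Lemma~\ref{lemr1r2}, is the explicit bookkeeping for the matrix blocks: each cubic monomial occurring in $r_1$ or $r_2$ has to be rewritten in normal form $(\text{element of }K)\cdot (\text{element of }\Gamma_3)$ via the commutation rules $s_2 a s_2^{-1}=u^{-1}\zeta a$, $s_1 u s_1^{-1}=au$, $(s_1,a)=(s_2,u)=(\cdot,\zeta)=1$, and then evaluated through the explicit isomorphism $\Mat_3(\kk)\rtimes\Gamma_3 \simeq \Mat_3(\kk)\otimes \kk\Gamma_3$ induced by the lift $\tilde\rho$ constructed earlier; once inside $\Mat_3(\kk\Gamma_3)$ one checks the image sits in $\Mat_3(I_b)$. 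Granted that step, the verifications sketched above in the nine-dimensional block are routine and complete the proof.
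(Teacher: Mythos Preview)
Your proof is correct and follows essentially the same approach as the paper: check membership in $(\bb)$ block-by-block using the decomposition of $\kk\Gamma_4$, invoke Lemma~\ref{lemr1r2} (plus Galois conjugation) for the two $\Mat_3(\kk)\rtimes\Gamma_3$ blocks, note that the $8$-orbit block is already entirely contained in $(\qq)\subset(\bb)$, and verify by direct computation that $r_1\mapsto\bb$ and $r_2\mapsto 0$ under the projection $\kk\Gamma_4\onto\kk\Gamma_3$. The paper's write-up is more compressed, summarizing the block analysis as the isomorphism $\kk\Gamma_4/(\bb)\cong(\kk\Gamma_3/(\bb))\oplus\Mat_3(\kk C_3)\oplus\Mat_3(\kk C_3)$, but the argument is the same.
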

\begin{proof}
We showed that $\kk \Gamma_4 / (\bb)$ is isomorphic to
$(\kk \Gamma_3/(\bb)) \oplus \Mat_3(\kk C_3) \oplus \Mat_3(\kk C_3)$.
The images of $r_1,r_2$ in both $\Mat_3(\kk C_3)$ is $0$ by Lemma
\ref{lemr1r2}, and it is readily checked that $r_1 \mapsto \bb$
and $r_2 \mapsto 0$ through $\kk \Gamma_4 \onto \kk \Gamma_3$. The conclusion follows.
\end{proof}

\subsection{A finer description of $K_5$ as a $\Z$-module}

Similar algorithms as the ones used in the proof of proposition \ref{dimA5p2} enabled us,
using several months of CPU time, to determine the structure of
$(\Z/32 \Z) K_5$ as a $(\Z/32\Z)$-module. Combined with our study of odd characteristic, this implies the
following.

\begin{prop} \label{propnew} As a $\Z$-module, $K_5 \simeq (K_5^0)^3$
with
$$
K_5^0 \simeq (\Z/2\Z)^{744} \times (\Z/4\Z)^{38} \times (\Z/8\Z)^{80} \times
(\Z/16\Z) \times G
$$
where $G$ is an abelian $3$-group with $\dim_{\F_3} G \otimes \F_3 = 25^2= 625$.
\end{prop}

\section{A ternary Hecke algebra in characteristic 2}

We assume that $k$ is a field of characteristic 2 with $k \supset \F_4 = \{ 0, 1, j,j^2 \}$. Recall that $\bb$ and $\qq$ are defined in 
Definitions \ref{defb} and \ref{defcq}.

\begin{defi} For $\alpha ,\beta\in k$, and $n\geq 3$, we define the following.

Let $J_n(\alpha,\beta) = k 
\Gamma_n . (s_1- \alpha)(s_1-\beta) . k \Gamma_n$ 

Let
$H_n(\alpha,\beta)= k \Gamma_n/J_n(\alpha,\beta)$. 

Let  $J_n = J_n(1,j) \cap J_n(1,j^2) \cap J_n(j,j^2)$.
\end{defi}

The aim of this section is essentially to prove the following. In particular, we see that $k\AAA_n$ never collapses 
and actually has dimension $\geq 3(n!-1)$. Recall that $\qq\in k\Gamma_n.\bb.k\Gamma_n$ (see Proposition~\ref{radicals})

\begin{theo} Let $n\geq 3$. Then $J_n=k\Gamma_n.\bb.k\Gamma_n$ as a 2-sided 
ideal of $k\Gamma_n$ and $k\Gamma_n/J_n$ has dimension $3(n!-1)$.

\end{theo}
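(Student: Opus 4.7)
The plan is to establish the two assertions in parallel: compute $\dim k\Gamma_n/J_n = 3(n!-1)$ by a Chinese-remainder-style dimension count, then deduce $(\bb)=J_n$ by combining this with a separate upper bound on $\dim k\Gamma_n/(\bb)$, argued by induction on $n$.

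The containment $(\bb)\subseteq J_n$ is a direct check: inside each $H_n(\alpha,\beta)$ the quadratic relation yields $s_i^{-1}=\frac{\alpha+\beta}{\alpha\beta}-\frac{1}{\alpha\beta}s_i$, and substituting into $\bb$ produces a sum of the form $2u(s_1+s_2)+2v(s_1s_2+s_2s_1)$ for scalars $u,v\in k$, which vanishes in characteristic $2$. Hence $\bb$ lies in each $J_n(\alpha_\lambda,\beta_\lambda)$, so $(\bb)\subseteq J_n$.

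For the dimension, write $I_\lambda=J_n(\alpha_\lambda,\beta_\lambda)$. For distinct $\lambda,\mu$ the sum $I_\lambda+I_\mu$ is the kernel of the unique character $k\Gamma_n\to k$ sending each $s_i$ to the common element of $\{\alpha_\lambda,\beta_\lambda\}\cap\{\alpha_\mu,\beta_\mu\}$, so $\dim k\Gamma_n/(I_\lambda+I_\mu)=1$; and the three quadratics share no root, giving $I_1+I_2+I_3=k\Gamma_n$. The images $\overline{I_1},\overline{I_2}$ in $k\Gamma_n/I_3$ are therefore two-sided coprime, so the standard identity $\overline{I_1}\cap\overline{I_2}=\overline{I_1}\,\overline{I_2}+\overline{I_2}\,\overline{I_1}$, combined with $I_1I_2+I_2I_1\subseteq I_1\cap I_2$, yields the key identity
\[
(I_1\cap I_2)+I_3=(I_1+I_3)\cap(I_2+I_3).
\]
Two applications of $\dim A/(I\cap J)=\dim A/I+\dim A/J-\dim A/(I+J)$ now give $\dim k\Gamma_n/J_n=(2n!-1)+n!-2=3(n!-1)$.

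To finish, it remains to prove $\dim k\Gamma_n/(\bb)\leq 3(n!-1)$, which forces equality in the natural surjection $k\Gamma_n/(\bb)\twoheadrightarrow k\Gamma_n/J_n$. I would proceed by induction on $n$, with the base case $n=3$ given by Proposition~\ref{radicals}, which identifies $(\bb)$ with $J(k\Gamma_3)^3$ of codimension $15=3(3!-1)$. For the inductive step I would combine Funar's spanning $\mathcal{H}_{n+1}=\overline{\mathcal{H}}_n+\overline{\mathcal{H}}_n s_n\overline{\mathcal{H}}_n+\overline{\mathcal{H}}_n s_n^2\overline{\mathcal{H}}_n$ (applicable since $\mathcal{H}_{n+1}$ is a quotient of $\AAA_{n+1}$ via $\qq=(1+s_1s_2^{-1})\bb$) with the $\Gamma_n\into\Gamma_{n+1}$-translates of the relations $r_1,r_2$ from Proposition~\ref{proprelsGamma4}, using $\bb=0$ and its conjugates to straighten $s_n$-factors through $\overline{\mathcal{H}}_n$ and compress the three bimodules into a spanning set of cardinality $3((n+1)!-1)$, in analogy with the coset refinement producing the standard basis of a Hecke algebra. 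The main obstacle is precisely this refinement: the three missing dimensions in $3((n+1)!-1)$ versus $3(n+1)!$ correspond to the three linear compatibility conditions defining the fiber product of the Hecke algebras, and matching these up explicitly with concrete consequences of $\bb$ and the $r_i$ is a delicate bookkeeping problem that becomes more intricate as $n$ grows.
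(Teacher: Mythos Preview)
Your overall strategy matches the paper's exactly: verify $(\bb)\subset J_n$ directly, compute $\dim k\Gamma_n/J_n=3(n!-1)$ by a Chinese-remainder argument, and then squeeze by proving the matching upper bound $\dim U_n\leq 3(n!-1)$ for $U_n=k\Gamma_n/(\bb)$ by induction. Your treatment of the first two steps is correct and essentially identical to the paper's (your identity $(I_1\cap I_2)+I_3=(I_1+I_3)\cap(I_2+I_3)$ under $I_1+I_2+I_3=A$ is precisely Lemma~\ref{lemIIIdistrib}, and your dimension count is the content of Proposition~\ref{ternary}).

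The gap is the inductive step for the upper bound, which you yourself flag as ``the main obstacle''. Saying you would ``straighten $s_n$-factors'' and ``compress the three bimodules'' is not yet a proof; the paper carries this out through a specific refinement that you have not found. Starting from Funar's spanning $U_{n+1}=U_n+U_ns_nU_n+U_ns_n^2U_n$, the paper first reduces the third summand to $U_ns_n^2$ using $\bb$ alone (Lemma~\ref{lemUnUnp1}(ii)), and then further to $U_2s_n^2$ using the relations $r_1,r_2$ of Proposition~\ref{proprelsGamma4} (Lemma~\ref{lemUnUnp1}(iii),(iv)). The middle summand $U_ns_nU_n$ is then decomposed via the coset-like elements $s_{n,k}=s_ns_{n-1}\cdots s_{n-k+1}$ and $x_{n,k}=s_ns_{n-1}\cdots s_{n-k+2}s_{n-k+1}^2$: one proves (Lemma~\ref{lems1Un}) that left multiplication by any $s_r$ on $x_{n,k}$ agrees with left multiplication by $s_1$ modulo $\sum_j U_ns_{n,j}$, giving the spanning
\[
U_{n+1}=\sum_{k=0}^n U_ns_{n,k}\;+\;\sum_{k=1}^n U_2\,x_{n,k}.
\]
The inductive count is then the clean $(n+1)\dim U_n+3n\leq (n+1)\cdot 3(n!-1)+3n=3((n+1)!-1)$, and equality is forced by the surjection onto $\mathcal H_{n+1}$. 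The three ``missing'' dimensions you worry about are not lost at a single step; they are absorbed uniformly by the induction hypothesis $\dim U_n\leq 3(n!-1)$. What you are missing is not a conceptual idea but this concrete normal-form machinery (the $s_{n,k}$ and $x_{n,k}$, and the reduction lemmas that make the count go through).
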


Notice that $J_n(\alpha ,\beta )$ contains
$(s_i - \alpha)(s_i - \beta)$ for arbitrary $1\leq i< n $.

\begin{lemma} \label{lemAtoH}  Assume $n \geq 3$. We have $I_n \subset J_n(\alpha,\beta)$ 
whenever $\alpha^3 = \beta^3 = 1$ and $\alpha \neq \beta$.
\end{lemma}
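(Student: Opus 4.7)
The plan is to reduce everything to showing that the image of $\cc$ in the quotient $H_n(\alpha,\beta) = k\Gamma_n/J_n(\alpha,\beta)$ vanishes, since Definition \ref{defcq} tells us $I_n = (\cc)$ as a two-sided ideal. As $\cc$ involves only $s_1$ and $s_2$, this is a purely three-strand computation.

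In $H_n(\alpha,\beta)$ the relation $(s_i-\alpha)(s_i-\beta)=0$ gives $s_i^2 = (\alpha+\beta)s_i + \alpha\beta$ (using $\mathrm{char}\, k = 2$). Write $a = \alpha+\beta$ and $b = \alpha\beta$. I would substitute $s_1^2 = as_1 + b$ and $s_2^2 = as_2 + b$ into each of the eight terms of
$$\cc = s_2 s_1^2 s_2 + s_1 s_2^2 s_1 + s_1^2 s_2 s_1 + s_1 s_2 s_1^2 + s_1^2 s_2^2 + s_2^2 s_1^2 + s_1 + s_2,$$
expand, cancel in pairs (characteristic 2), and apply the braid relation $s_1 s_2 s_1 = s_2 s_1 s_2$ to merge the cubic contributions. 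The cubic terms collapse because $a s_1 s_2 s_1$ appears three times and $a s_2 s_1 s_2$ once, giving $4 a s_1 s_2 s_1 = 0$. A short bookkeeping in the remaining monomials shows that the image of $\cc$ in $H_n(\alpha,\beta)$ is
$$\bar{\cc} = (a^2 + b)(s_1 s_2 + s_2 s_1) + (ab+1)(s_1 + s_2).$$

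Finally, I would check that both coefficients vanish under the hypothesis $\alpha^3 = \beta^3 = 1$, $\alpha\neq\beta$. Indeed, in $k[X]$ one has $X^3-1 = (X-\alpha)(X-\beta)(X-\gamma)$ where $\gamma$ is the third cube root of unity. Expanding and comparing with $X^3 - 1$ yields $\alpha+\beta+\gamma = 0$, $\alpha\beta + (\alpha+\beta)\gamma = 0$, and $\alpha\beta\gamma = 1$, i.e. $\gamma = a$, $b + a^2 = 0$, and $ab = 1$. Therefore $a^2 + b = 0$ and $ab + 1 = 0$ simultaneously, so $\bar\cc = 0$ and $\cc \in J_n(\alpha,\beta)$, which gives $I_n = (\cc) \subset J_n(\alpha,\beta)$.

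The only real obstacle is the bookkeeping in the reduction producing the formula for $\bar\cc$; once that expression is in hand, the characteristic 2 identities among cube roots of unity (equivalently, the fact that $X^2 + aX + b$ divides $X^3-1$ exactly when $a^2 = b$ and $ab=1$) do the rest. No subtle conceptual input is needed beyond the observation that $J_n(\alpha,\beta)$ forces the quadratic Hecke relation on each generator.
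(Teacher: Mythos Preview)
Your proof is correct and follows essentially the same computation as the paper: reduce $\cc$ using the quadratic relation, collect terms, and check that the coefficients $(\alpha+\beta)^2+\alpha\beta$ and $(\alpha+\beta)\alpha\beta+1$ vanish. The only difference is that the paper carries out the arithmetic over $\Z$ (obtaining $\cc \equiv 4(\alpha+\beta)s_1s_2s_1 + 4(s_1+s_2) + 4\alpha^2\beta^2$ before reducing), which is why the result also holds over $(\Z/4\Z)[j]$ as noted in the subsequent remark; your characteristic~$2$ shortcut gives the lemma as stated but not this refinement.
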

\begin{proof}
We need to show that $\cc \equiv 0$ modulo $J_n(\alpha,\beta)$. From 
$s_1^2 \equiv (\alpha  + \beta) s_1 - \alpha \beta$
we get
$s_2 s_1^2 s_2 \equiv (\alpha  + \beta) s_2s_1s_2 - \alpha \beta s_2 ^2 \equiv
(\alpha  + \beta) s_2s_1s_2 - \alpha \beta (\alpha + \beta) s_2 + (\alpha \beta)^2
$
and symmetrically
$
s_1 s_2^2 s_1 \equiv  (\alpha  + \beta) s_1s_2s_1 - \alpha \beta (\alpha + \beta) s_1 + (\alpha \beta)^2
$, thus
$s_1 s_2^2 s_1 + s_2 s_1^2 s_2 =  2(\alpha + \beta) s_1 s_2 s_1 - \alpha \beta (\alpha+ \beta)(s_1 + s_2) + 2 \alpha^2 \beta^2$.
From the same equation we get $s_1^2 s_2 s_1 \equiv (\alpha  + \beta) s_1s_2 s_1 -\alpha \beta s_2 s_1$
and $s_1 s_2 s_1^2 \equiv (\alpha  + \beta) s_1 s_2 s_1 - \alpha \beta s_1 s_2$ hence
$s_1^2 s_2 s_1 + s_1 s_2 s_1^2 \equiv 2 \alpha \beta s_1 s_2 s_1 - \alpha \beta (s_1 s_2+s_2 s_1)$. 
Finally
$
s_1 ^2 s_2^2 \equiv \left( (\alpha + \beta) s_1 - \alpha \beta \right)\left( (\alpha + \beta) s_2 - 
\alpha \beta \right)
\equiv (\alpha + \beta)^2 s_1 s_2 - \alpha \beta(\alpha + \beta) (s_1 + s_2) + (\alpha \beta)^2
$
and symetrically  $s_2^2 s_1^2 \equiv (\alpha + \beta)^2 s_2 s_1 - \alpha \beta(\alpha + \beta)(s_1 + s_2) + (\alpha \beta)^2$.
Altogether this yields
$$
\cc \equiv 4 (\alpha + \beta) s_1 s_2 s_1 + \left((\alpha+\beta)^2 - \alpha \beta \right) (s_2 s_1 + s_1 s_2) +
\left(1- 3 \alpha \beta(\alpha + \beta)\right) (s_1 + s_2) + 4 \alpha^2 \beta^2.
$$
Since $(\alpha + \beta)^2 - \alpha \beta = \alpha^2 + \alpha \beta + \beta^2 = 0$ and
$\alpha \beta(\alpha + \beta) = (\beta/\alpha) + (\alpha/\beta) = -1$
we get $\cc \equiv 4(\alpha + \beta)s_1 s_2 s_1 + 4 (s_1 + s_2) + 4 \alpha^2 \beta^2$.
Since $4 = 0$ this concludes the proof.
\end{proof}

Recall $J_n = J_n(1,j) \cap J_n(1,j^2) \cap J_n(j,j^2)$. From the above lemma, $J_n\supset I_n$ and obviously $k \AAA_n$ surjects onto $k \Gamma_n / J_n$,
while $k \Gamma_n / J_n$ embeds into $H_n(1,j) \times H_n(1,j^2) \times H_n(j,j^2)$.

In order to deal with quotients of an intersection of three ideals we
will need, here and later on, the following two lemmas.

\begin{lemma} \label{lemIIIdistrib} Let $A$ be a (possibly non-commutative) unital ring, $I_1,I_2,I_3$ three 2-sided ideals, such that $A = I_1 + I_2+I_3$.
Then $I_1 + I_2 \cap I_3 = (I_1 + I_2) \cap (I_1 + I_3)$.
\end{lemma}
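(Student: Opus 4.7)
The plan is as follows. The inclusion $I_1 + I_2 \cap I_3 \subseteq (I_1+I_2)\cap(I_1+I_3)$ is immediate since $I_2 \cap I_3 \subseteq I_2$ and $I_2 \cap I_3 \subseteq I_3$, so only the reverse inclusion needs work. The hypothesis $A = I_1 + I_2 + I_3$ furnishes a decomposition $1 = e_1 + e_2 + e_3$ with $e_i \in I_i$, and this partition of unity is the key tool I would use to move things around between the $I_i$.

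Given $x \in (I_1+I_2) \cap (I_1+I_3)$, the first step is to write $x = a_1 + a_2 = b_1 + b_3$ with $a_1, b_1 \in I_1$, $a_2 \in I_2$, $b_3 \in I_3$. The obstruction is that $a_2$ lies in $I_2$ but not necessarily in $I_3$, so I want to replace it by something in $I_2 \cap I_3$ at the cost of an $I_1$-correction. The relation $a_2 - b_3 = b_1 - a_1 \in I_1$ records exactly how far $a_2$ is from being in $I_3$.

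Now multiply the partition of unity: $a_2 = e_1 a_2 + e_2 a_2 + e_3 a_2$. Observe that $e_1 a_2 \in I_1$ and $e_3 a_2 \in I_2 \cap I_3$ (being in $I_2$ because $a_2 \in I_2$ and in $I_3$ because $e_3 \in I_3$). For the middle term, substitute $a_2 = b_3 + (b_1 - a_1)$ to obtain $e_2 a_2 = e_2 b_3 + e_2(b_1 - a_1)$, where $e_2 b_3 \in I_2 \cap I_3$ and $e_2(b_1 - a_1) \in I_1$. Collecting terms,
\[ x = \bigl(a_1 + e_1 a_2 + e_2(b_1 - a_1)\bigr) + \bigl(e_2 b_3 + e_3 a_2\bigr) \in I_1 + (I_2 \cap I_3), \]
which yields the desired inclusion. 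The main (very mild) obstacle is just spotting that $a_2 - b_3 \in I_1$ is the right bridge to cross; once the partition of unity is introduced, the rest is bookkeeping.
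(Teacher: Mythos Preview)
Your proof is correct. The paper's argument is essentially the same idea packaged as ideal arithmetic rather than element-chasing: writing $I=(I_1+I_2)\cap(I_1+I_3)$, it computes $I = I(I_1+I_2+I_3) = I(I_1+I_2)+I\cdot I_3 \subseteq (I_1+I_3)(I_1+I_2)+(I_1+I_2)I_3 \subseteq I_1 + I_3I_2 + I_2I_3 \subseteq I_1+(I_2\cap I_3)$. Your partition of unity $1=e_1+e_2+e_3$ and the subsequent manipulation of $a_2 = e_1a_2 + e_2a_2 + e_3a_2$ is exactly the elementwise unfolding of this computation; the paper's version is shorter because it avoids naming elements and uses the two inclusions $I\subseteq I_1+I_2$ and $I\subseteq I_1+I_3$ at the ideal level instead of invoking the two representations $x=a_1+a_2=b_1+b_3$.
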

\begin{proof}

Denote $I=(I_1+I_2)\cap (I_1+I_3)$. Then the inclusion $I_1+(I_2\cap I_3)\subseteq I$ is trivial. 
On the other hand, we have $I=I(I_1+I_2+I_3)=I(I_1+I_2)+I.I_3\subseteq (I_1+I_3).(I_1+I_2)+(I_1+I_2).I_3\subseteq I_1+I_3I_2+I_2I_3\subseteq I_1+(I_2\cap I_3)$.

\end{proof}

\begin{lemma} \label{lemchinoistop} Let $A$ be an abelian group, $I,J,K$ subgroups of $A$ with
$I+J+K = A$. We define morphisms
{}
$$
A/I \cap J \cap K  \stackrel{d_1}{\longrightarrow} A/I \times A/J \times A/K \stackrel{d_2}{\longrightarrow} 
A/(J+K) \times A/(I + K) \times A/(I+J)
$$

where $d_2$ is induced by $(a,b,c) \mapsto (b-c,a-c,a-b)$ and $d_1$
is the natural (injective) map. Then $d_2 \circ d_1 = 0$, $d_2$
is surjective and $\Ker d_2 / \Image d_1 \simeq (K+I ) \cap (K+J) /
K+ I \cap J$.
\end{lemma}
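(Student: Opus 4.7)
My plan is to handle the three assertions in turn. The identity $d_2 \circ d_1 = 0$ is immediate: $d_1(\bar a) = ([a]_I, [a]_J, [a]_K)$ is sent by $d_2$ to $([a-a]_{J+K}, [a-a]_{I+K}, [a-a]_{I+J}) = 0$.

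For the surjectivity of $d_2$, I would make essential use of the hypothesis $I + J + K = A$. Given any $x \in A$, decompose $x = i_x + j_x + k_x$ with $i_x \in I$, $j_x \in J$, $k_x \in K$. Then a direct computation gives
$$d_2\bigl([0]_I,\,[i_x + j_x]_J,\,[0]_K\bigr) = \bigl([x]_{J+K},\,0,\,0\bigr),$$
because $x - (i_x + j_x) = k_x \in J + K$, while $0 \in I + K$ and $i_x + j_x \in I + J$. The cyclic analogues of this construction hit $(0, \bar y, 0)$ and $(0, 0, \bar z)$, which together yield surjectivity.

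The heart of the proof is the identification $\Ker d_2 / \Image d_1 \simeq \bigl((K+I) \cap (K+J)\bigr)/(K + I \cap J)$. I would construct an explicit homomorphism
$$\tau \colon \Ker d_2 \longrightarrow \bigl((K+I) \cap (K+J)\bigr)/(K + I \cap J)$$
as follows: given $([a]_I, [b]_J, [c]_K) \in \Ker d_2$, the constraint $a - b \in I + J$ furnishes a decomposition $a - b = i + j$ with $i \in I$, $j \in J$; set $m := a - i = b + j \in A$ and $\tau([a]_I, [b]_J, [c]_K) := \overline{m - c}$. The element $m - c$ lies in $(I + K) \cap (J + K)$, since $m - c = (a - c) - i \in (I + K) + I = I + K$ and $m - c = (b - c) + j \in (J + K) + J = J + K$. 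Well-definedness of $\overline{m-c}$ modulo $K + I \cap J$ requires checking the four sources of ambiguity: altering the representative of $\bar c$ by an element of $K$ shifts $m - c$ by an element of $K$; altering the representative of $\bar a$ (resp.\ $\bar b$) by an element of $I$ (resp.\ $J$) is absorbed by a compensating change of $i$ (resp.\ $j$); and two decompositions $a - b = i + j = i' + j'$ differ by an element of $I \cap J$.

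It remains to check that $\Ker \tau = \Image d_1$ and that $\tau$ is surjective. If $\tau([a]_I,[b]_J,[c]_K) = 0$, so $m - c = k + x$ with $k \in K$ and $x \in I \cap J$, then $y := m - x = a - (i + x) = b + (j - x)$ satisfies $y \equiv a \pmod I$, $y \equiv b \pmod J$, and $y - c = k$ so $y \equiv c \pmod K$, whence $([a]_I, [b]_J, [c]_K) = d_1(\bar y)$; conversely, $d_1(\bar a)$ produces $m = a = c$, giving $m - c = 0$. For surjectivity, any $z = i + k_1 = j + k_2 \in (K + I) \cap (K + J)$ yields $(\bar 0, \bar 0, \overline{-z}) \in \Ker d_2$ (indeed $z \in I + K$ and $z \in J + K$ are witnessed by the two presentations), and $\tau$ maps this element to $\bar z$. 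The main obstacle in this plan is the careful bookkeeping required to write down $\tau$ and to verify its well-definedness; once that formula is in place, every other verification reduces to a mechanical computation.
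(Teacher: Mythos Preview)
Your proof is correct and follows essentially the same approach as the paper. Both arguments reduce an element of $\Ker d_2$ to a triple of the form $(m,m,c)$ (your $m=a-i=b+j$ is the paper's $a'$) and then send it to $\overline{m-c}$; you are simply more explicit about well-definedness and about constructing the map $\tau$, whereas the paper works directly with representatives and is terser.
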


\begin{proof} $d_2 \circ d_1 = 0$ is clear. $\Image d_2$ contains
$A/(I+J) = 0 \times 0 \times A/(I+J) $,
as $A/(I+J) = (I+J+K)/(I+J)$ is clearly $d_2(K/I \times 0 \times 0)$,
where $K/I$ denotes the image of $K$ in $A/I$, hence $d_2$ is surjective
by symmetry. An element of $\Ker d_2$ is the class of
a triple $(a,b,c) \in A^3$ with $a-b = i+j$, $b-c = j'+k$, $a-c = i' + k'$
for some $i,i' \in I$, $j,j' \in J$, $k,k' \in K$, hence
of a $(a-i,b+j,c) = (a',a',c)$ with $a' = a-i = b+j$. One has $a'-c =
b-c+j = a-i - c \in (K+I) \cap (K+J)$. Conversely,
the class of any $(a,a,c)$ with $a-c \in (K+I) \cap (K+J)$ belongs to
$\Ker d_2$.

On the other hand, such a triple $(a,a,c)$ originates from $A$ iff
there exists $i \in I$, $j \in J$, $k \in K$ such that $a+i = a+j = c+k$,
which means $c \in K + I \cap J$. This proves $\Ker d_2 / \Image d_1
\simeq (K+I) \cap (K+J) /(K + I \cap J)$ under $(a,a,c) \mapsto a-c$.
\end{proof}

When $\alpha, \beta \in \mu_3(\kk)$ with $\alpha \neq \beta$,
we let $q_{\alpha} : H_n(\alpha,\beta) \to \kk$ denote the
natural morphism sending each $s_i$ to $\alpha$.
$$
\xymatrix{
H_n(j,j^2) \ar[d]_{q_{j^2}} \ar[dr]^{q_{j}} & H_n(1,j^2) \ar[dl]^{q_{j^2}} \ar[dr]_{q_{1}} & H_n(1,j) \ar[dl]_{q_{j}} \ar[d]^{q_1} \\
\kk & \kk & \kk 
}
$$

\begin{defi}
We let $\mathcal{H}_n$ denote the subalgebra of $H_n(j,j^2) \oplus H_n(1,j^2) \oplus
H_n(1,j)$ made of the triples $(x_{1},x_{j},x_{j^2})$
such that $q_{\alpha}(x_{\alpha'}) = q_{\alpha}(x_{\alpha''})$
whenever $\{ \alpha,\alpha', \alpha'' \} = \mu_3(\kk)$.

We also denote $U_n=k\Gamma_n/(\bb)$ (see Definition \ref{defb}).

\end{defi}

Recall that $J_n = J_n(j,j^2) \cap J_n(1,j^2) \cap J_n(1,j)$. 

\begin{prop}\label{ternary} The image of the natural embedding 
$\kk \Gamma_n / J_n \into H_n(j,j^2) \oplus H_n(1,j^2) \oplus
H_n(1,j)$ is $\mathcal{H}_n$. We have $\dim \mathcal{H}_n = 3(n!-1)$ and
$\kk \AAA_n \onto \mathcal{H}_n$.
\end{prop}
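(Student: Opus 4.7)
The strategy is to apply Lemma \ref{lemchinoistop} to $A = \kk\Gamma_n$ with the three ideals $I = J_n(j,j^2)$, $J = J_n(1,j^2)$, $K = J_n(1,j)$, for which $I\cap J \cap K = J_n$ by definition, so $A/(I\cap J\cap K) = \kk\Gamma_n/J_n$.

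The first step is to verify the hypothesis $A = I+J+K$ of Lemmas \ref{lemIIIdistrib} and \ref{lemchinoistop}. A direct calculation in $\kk[s_1]$ is enough: the elements $(s_1-1)(s_1-j^2) \in J$ and $(s_1-1)(s_1-j) \in K$ sum to $(s_1-1)\bigl[(s_1-j)+(s_1-j^2)\bigr] = (j+j^2)(s_1-1) = s_1-1$, since $j+j^2 = 1$ in characteristic $2$. Hence $s_1-1 \in J+K$, and by the symmetric computation $s_1-j \in I+K$ and $s_1-j^2 \in I+J$. Their pairwise differences then produce invertible scalars of $\kk$ inside $I+J+K$, so $A = I+J+K$. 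The same computation simultaneously identifies each of $A/(J+K)$, $A/(I+K)$, $A/(I+J)$ with $\kk$: each receives a surjective character ($q_1$, $q_j$, $q_{j^2}$ respectively) while the inclusions above bound the dimension by one.

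With these identifications in hand, Lemma \ref{lemIIIdistrib} ensures that $(K+I)\cap(K+J) = K + I\cap J$ together with its cyclic variants, so Lemma \ref{lemchinoistop} yields a short exact sequence
$$
0 \to \kk\Gamma_n/J_n \xrightarrow{d_1} H_n(j,j^2) \oplus H_n(1,j^2) \oplus H_n(1,j) \xrightarrow{d_2} \kk^3 \to 0.
$$
Reading the formula for $d_2$ shows that its kernel coincides with $\mathcal{H}_n$: the three components of $d_2$ compute $q_1(x_j) - q_1(x_{j^2})$, $q_j(x_1) - q_j(x_{j^2})$ and $q_{j^2}(x_1) - q_{j^2}(x_j)$, whose joint vanishing is precisely the defining condition of $\mathcal{H}_n$. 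This identifies the image of $\kk\Gamma_n/J_n \into \bigoplus H_n(\alpha,\beta)$ with $\mathcal{H}_n$.

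For the dimension count, each $H_n(\alpha,\beta)$ is the ordinary Iwahori--Hecke algebra of type $A_{n-1}$: the relation $(s_i-\alpha)(s_i-\beta)=0$ forces $s_i^3 = 1$, because $(s_i-\alpha)(s_i-\beta)(s_i-\gamma) = s_i^3 - 1$ whenever $\{\alpha,\beta,\gamma\}=\mu_3(\kk)$, using $\alpha+\beta+\gamma = \alpha\beta+\alpha\gamma+\beta\gamma = 0$ and $\alpha\beta\gamma = 1$. Thus $H_n(\alpha,\beta) = \kk B_n/\langle (s_i-\alpha)(s_i-\beta)\rangle$, of dimension $n!$. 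Exactness of the sequence above then yields $\dim \mathcal{H}_n = 3n! - 3 = 3(n!-1)$. Finally, Lemma \ref{lemAtoH} gives $I_n \subset J_n(\alpha,\beta)$ for each of the three pairs, hence $I_n \subset J_n$ and $\kk\AAA_n = \kk\Gamma_n/I_n$ surjects onto $\kk\Gamma_n/J_n \cong \mathcal{H}_n$. The only real bookkeeping difficulty is keeping track of the subscript conventions that match $\mathcal{H}_n$ against $\ker d_2$; all the algebraic substance is provided by Lemmas \ref{lemIIIdistrib}, \ref{lemchinoistop} and \ref{lemAtoH}.
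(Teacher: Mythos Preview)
Your proof is correct and follows essentially the same route as the paper: verify that the pairwise sums $J_n(\alpha,\beta)+J_n(\alpha,\gamma)$ equal the two-sided ideal $(s_1-\alpha)$ (so the three ideals sum to $\kk\Gamma_n$ and each pairwise quotient is $\kk$), then apply Lemmas~\ref{lemIIIdistrib} and~\ref{lemchinoistop} to obtain the short exact sequence, identify $\ker d_2$ with $\mathcal{H}_n$, and read off the dimension. Your added remark that $(s_i-\alpha)(s_i-\beta)(s_i-\gamma)=s_i^3-1$, justifying $\dim H_n(\alpha,\beta)=n!$, is a detail the paper leaves implicit.
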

\begin{proof}

We want to apply lemmas \ref{lemIIIdistrib} and \ref{lemchinoistop} to the ideals
$J_n(\alpha,\beta)$ for $\alpha,\beta$ distinct third roots of 1. If $\mu_3(\kk)
= \{ \alpha,\beta,\gamma \}$, we first prove  $J_n(\alpha , \beta )+ J_n(\alpha , \gamma )
=k\Gamma_n (s_1-\alpha )k\Gamma_n$ : under $\varphi$, we can assume for this $\alpha = 1$,
$\beta = j$, $\gamma = j^2$ and we have $(s_1-1)(s_1-j)-(s_1-1)(s_1-j^2) = (1+2j)(s_1-1)$
with $1+2j$ invertible.
This implies at once that the sum of the three $J_n(\alpha,\beta)$
have for sum $\kk \Gamma_n$. We also get that the map $d_2$ of \ref{lemchinoistop}
defines $\mathcal{H}_n$ as its kernel. We thus get the first statement of our
proposition by applying Lemma \ref{lemchinoistop} in a case where Lemma \ref{lemIIIdistrib}
ensures that the sequence of maps is exact. This exactness also implies the claim on dimensions
since each Hecke algebra has dimension $n!$.
\end{proof}

\begin{remark} Lemma \ref{lemAtoH} and Proposition \ref{ternary} hold true with $\kk$ replaced by
$(\Z/4\Z)[j] = (\Z/4\Z)[x]/(x^2+x+1)$, with the same proofs (except, of course, for the statement on the dimension).
\end{remark}

\begin{remark}  We have natural morphisms $\mathcal{H}_n \onto H_n(\alpha,\beta)$,
hence every simple $H_n(\alpha,\beta)$-module $M$ provides a simple module
for $\mathcal{H}_n$, and for $\kk \Gamma_n$. Since $char. \kk = 2$ and $s_1 \in \Gamma_n$
has order 3, the action of $s_1$ is semisimple, and so
we can assume that the induced morphism $\kk \Gamma_n \to
\End_{\kk}(M)$ either factorizes through $K_n(\alpha)$ (up to exchanging $\alpha$
and $\beta$) or does not factorize through any of the $J_n(u,v)$
for $\{u,v \} \neq \{ \alpha, \beta \}$. It follows that
a collection of non-isomorphic simple modules for $\mathcal{H}_n$
is afforded by the simple $H_n(\alpha,\beta)$-modules of dimension
at least 2 and the three 1-dimensional modules defined by $s_1 \mapsto \alpha$
for a given $\alpha \in \mu_3(\kk)$. By the same argument, the same holds for
indecomposable modules as well.
\end{remark}

\def\ss{s}

\begin{prop} For $n \geq 3$, $J_n$ is generated as a 2-sided
ideal by $\bb$ (see Definition \ref{defb}).
\end{prop}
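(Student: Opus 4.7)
The plan is to prove both inclusions. For $(\bb) \subset J_n$, it suffices to verify that $\bb$ vanishes in each Hecke algebra $H_n(\alpha,\beta)$ with $\alpha \neq \beta$ in $\mu_3(k)$. There $s_i^3 = 1$ together with $(s_i-\alpha)(s_i-\beta)=0$ forces $s_i^{-1} = s_i^2 = (\alpha+\beta)s_i + \alpha\beta$, and (in characteristic $2$, where signs are irrelevant) expanding the four terms of $\bb$ and summing gives $2(\alpha+\beta)(s_1s_2+s_2s_1) + 2\alpha\beta(s_1+s_2) = 0$. So $\bb \in J_n(\alpha,\beta)$ for each of the three pairs, hence in their intersection.

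The reverse inclusion $J_n \subset (\bb)$ is equivalent to the surjection $U_n := k\Gamma_n/(\bb) \twoheadrightarrow k\Gamma_n/J_n \simeq \mathcal{H}_n$ of Proposition~\ref{ternary} being an isomorphism, i.e. $\dim U_n \leq 3(n!-1)$. I would argue by induction on $n$. The base case $n=3$ is immediate from Proposition~\ref{radicals}: since there $(\bb) = J(k\Gamma_3)^3$, and the Jennings series computed in that proof gives $\dim J(k\Gamma_3)^3 = 3 \cdot \dim J(kQ)^3 = 9$, we obtain $\dim U_3 = 24 - 9 = 15 = 3(3!-1)$. For the inductive step, since $\qq \in (\bb)$, Funar's spanning result (Proposition~\ref{propfunar}) carries over verbatim to $U_n$ and yields $U_{n+1} = \overline{U}_n + \overline{U}_n s_n \overline{U}_n + \overline{U}_n s_n^2 \overline{U}_n$. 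To sharpen this crude bound to exactly $3((n+1)!-1)$, one would combine it with the extra relations $r_1, r_2 \in (\bb)$ established in Proposition~\ref{proprelsGamma4}, together with their conjugates under $\Gamma_{n+1}$ corresponding to each consecutive triple $(s_i,s_{i+1},s_{i+2})$. These should act as ``straightening rules'' that, combined with the braid and cubic relations, collapse any word in the generators to a normal form indexed by a set of cardinality $3((n+1)!-1)$.

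The hard part is exactly this sharpening of Funar's bound. Because $\mathcal{H}_{n+1}$ is \emph{not} a free $\mathcal{H}_n$-module (the ratio $((n+1)!-1)/(n!-1)$ is not integral in general), there is no direct Hecke-style free decomposition to imitate, and naive rank arguments go in the wrong direction. The most promising route is to work inside the embedding $\mathcal{H}_{n+1} \hookrightarrow H_{n+1}(j,j^2) \oplus H_{n+1}(1,j^2) \oplus H_{n+1}(1,j)$ of Proposition~\ref{ternary} and lift a chosen basis of the right-hand side back to $U_{n+1}$, verifying that the fibers of the surjection $U_{n+1} \twoheadrightarrow \mathcal{H}_{n+1}$ collapse to points thanks to the $\bb$-type relations; alternatively, one can perform a detailed word-by-word normal-form analysis in $U_{n+1}$, using the $\Gamma_{n+1}$-conjugates of $r_1$, $r_2$ to reduce every monomial to a representative in a distinguished list of the correct length.
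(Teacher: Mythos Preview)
Your inclusion $(\bb)\subset J_n$ and the base case $n=3$ are correct and match the paper. The overall strategy for the reverse inclusion --- bound $\dim U_n$ from above by $3(n!-1)$ via induction --- is also right. But you explicitly stop short of executing the inductive step: the phrases ``should act as straightening rules'', ``the most promising route'', and ``alternatively, one can perform'' are plans, not arguments. The paper actually does this work, and the mechanism is more structured than either of your two sketched routes.

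Concretely, the paper proves that
\[
U_{n+1}=U_n+U_n s_{n,1}+\cdots+U_n s_{n,n}+U_2\,x_{n,1}+\cdots+U_2\,x_{n,n},
\]
where $s_{n,k}=s_n s_{n-1}\cdots s_{n-k+1}$ and $x_{n,k}=s_n\cdots s_{n-k+2}s_{n-k+1}^2$. This gives $\dim U_{n+1}\le (n+1)\dim U_n+3n$, and the induction $3(n!-1)(n+1)+3n=3((n+1)!-1)$ closes exactly. The proof proceeds in stages: first, the $\bb$-relation alone (rewritten as $s_n^2 s_{n-1}\equiv s_{n-1}s_n^2+s_{n-1}^2 s_n+s_n s_{n-1}^2$) sharpens Funar's $U_n s_n^2 U_n$ to $U_n s_n^2$. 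Then the relations $r_1,r_2$ of Proposition~\ref{proprelsGamma4} are used --- but not as generic straightening rules. Their specific role is to show $s_k^r s_n^2\equiv s_1^r s_n^2$ modulo $U_n+U_n s_n$ for all $k<n$, which collapses $U_n s_n^2$ down to $U_2 s_n^2$. Finally, a separate inductive lemma controls the elements $x_{n,k}$ and shows $U_n s_n U_n\subset \sum_k U_n s_{n,k}+\sum_k U_2 x_{n,k}$. You correctly anticipated that $r_1,r_2$ are the missing ingredient, but the precise decomposition above --- with the $U_2$ (not $U_n$) terms absorbing the defect from freeness --- is the idea you are missing.
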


It is easily checked that, for $n \geq 3$, $\bb \in J_n$. Indeed,
we have $\bb \in J_n(\alpha,\beta)$ because the image
of $\bb= \ss_1 \ss_2^{2} +\ss_2^{2} \ss_1 + \ss_1^{2} \ss_2 + \ss_2 \ss_1^{2}$
in $H_n(\alpha,\beta) = \kk \Gamma_n / J_n(\alpha,\beta)$
is $2(\alpha + \beta) (\ss_1 \ss_2 + \ss_2 \ss_1) + 2 \alpha \beta(\ss_1 + \ss_2) \equiv 0$,
using $s_i^2 \equiv (\alpha + \beta) s_i + \alpha \beta$ and char. $\kk$ = 2.

Recall $U_n = \kk \Gamma_n / (\bb)$. In view of Proposition \ref{ternary}, in order to prove $J_n = (\bb)$,
it is enough to check that $\dim U_n \leq \dim \kk \Gamma_n / J_n
= 3 (n! - 1)$. We need a lemma, where we abuse notations
by letting $U_k$ denote the image of $U_k$ in $U_{n+1}$.

\begin{lemma} \label{lemUnUnp1}For $n \geq 2$, one has
\begin{enumerate}
\item $U_{n+1} = U_n + U_n \ss_n U_n + U_n \ss_n^2 U_n$
\item $U_{n+1} = U_n + U_n \ss_n U_n + U_n \ss_n^2$
\item If $k < n$, $r,t \in \{0,1,2 \}$ we have
$s_k^r s_1^t s_n^2 \equiv s_1^{r+t} s_n^2$ modulo $U_n + U_n s_n$.
\item $U_{n+1} = U_n + U_n \ss_n U_n + U_2 \ss_n^2$
\end{enumerate}
\end{lemma}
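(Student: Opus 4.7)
The plan is to prove the four parts in order, exploiting the observation (just after Definition \ref{defb}) that $(\qq)\subseteq(\bb)$ in $\kk\Gamma_n$, so $U_n$ is a further quotient of $\kk\AAA_n$. Part (i) is then immediate: Proposition \ref{propfunar} asserts the analogous decomposition for $A_{n+1}$, and it descends verbatim to $U_{n+1}$ since $U_{n+1}$ is a quotient of $\kk A_{n+1}$.

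For (ii), we strengthen (i) by removing the rightmost factor of $U_n$ in $U_n s_n^2 U_n$. It suffices to prove $s_n^2 m \in U_n + U_n s_n U_n + U_n s_n^2$ for every monomial $m$ in $s_1,\dots,s_{n-1}$, by induction on the word length of $m$. When $m$ begins with $s_k^\eps$ for $k\leq n-2$, the factor commutes with $s_n$ and the reduction is immediate from the inductive hypothesis. The critical cases are $m = s_{n-1} m'$ and $m = s_{n-1}^2 m'$. For the former we apply the conjugate relation $b(n-1,n)=0$, which yields $s_n^2 s_{n-1} = s_{n-1} s_n^2 + s_{n-1}^2 s_n + s_n s_{n-1}^2$: after the inductive step, the first term lies in $U_n + U_n s_n U_n + U_n s_n^2$, the second in $U_n s_n U_n$, and the third in $s_n U_n \subseteq U_n s_n U_n$. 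For the latter, multiplying $b(n-1,n)=0$ on the right by $s_{n-1}$ and substituting the previous identity into itself gives $s_n^2 s_{n-1}^2 = s_{n-1}^2 s_n^2 + s_{n-1} s_n s_{n-1}^2 + s_{n-1}^2 s_n s_{n-1}$, which is handled in the same way.

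For (iii), the most delicate step, the case $k=1$ is tautological. For $k\geq 2$, the essential sub-claim is $s_k s_n^2 \equiv s_1 s_n^2 \pmod{U_n + U_n s_n}$; the full statement then follows by iteration of this sub-claim and by absorbing the $s_1^t$ factor via $s_1^r\cdot s_1^t = s_1^{r+t}$. For $2\leq k\leq n-2$, the factor $s_k$ commutes with $s_n$, and the sub-claim is derived from instances of $b$ at positions $(k-1,k)$ together with the commutation of $s_k$ with $s_n^2$. The case $k = n-1$, where $s_{n-1}$ does not commute with $s_n$, is the most involved: it requires combining $b(n-1,n)=0$ with the braid relation $s_{n-1} s_n s_{n-1} = s_n s_{n-1} s_n$ and with further $\bb$-relations so as to match the two residues in the left $U_n$-module $U_{n+1}/(U_n + U_n s_n)$.

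For (iv), we apply (iii) iteratively: writing a monomial of $U_n$ as $m = s_{k_1}^{r_1}\cdots s_{k_\ell}^{r_\ell}$, successive applications of (iii) replace the tail $s_{k_\ell}^{r_\ell} s_n^2$ by $s_1^{r_\ell} s_n^2$, then $s_{k_{\ell-1}}^{r_{\ell-1}} s_1^{r_\ell} s_n^2$ by $s_1^{r_{\ell-1}+r_\ell} s_n^2$, and so on, reducing $m s_n^2$ modulo $U_n + U_n s_n \subseteq U_n + U_n s_n U_n$ to $s_1^{r_1+\cdots+r_\ell} s_n^2 \in U_2 s_n^2$. Combined with (ii), this yields (iv). The main obstacle will be (iii): its statement is an intricate congruence linking an arbitrary generator $s_k$ to $s_1$ when followed by $s_n^2$, and making it explicit requires delicately combining the $\bb$-relation with the braid relations while tracking equivalences modulo the (non-direct) sum $U_n + U_n s_n$.
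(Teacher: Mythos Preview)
Your treatment of parts (i), (ii) and (iv) is essentially the paper's argument and is correct. The gap is in part (iii).

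The assertion you label as the ``essential sub-claim,'' namely $s_k s_n^2 \equiv s_1 s_n^2 \pmod{U_n+U_n s_n}$, cannot be obtained from the two-variable relations $\bb(k-1,k)=0$ and commutation alone in the way you describe. The relation $\bb(k-1,k)=0$ only gives
\[
s_{k-1}s_k^2 + s_k^2 s_{k-1} + s_{k-1}^2 s_k + s_k s_{k-1}^2 = 0,
\]
which, when multiplied by $s_n^2$, produces a relation among terms of the form $(\text{word in }s_{k-1},s_k)\cdot s_n^2$; it does not single out $s_k s_n^2$ against $s_{k-1}s_n^2$ modulo $U_n+U_n s_n$. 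The case $k=n-1$ is worse: here $s_{n-1}$ does not commute with $s_n$, and linking $s_{n-1}s_n^2$ to $s_{n-2}s_n^2$ modulo $U_n+U_n s_n$ genuinely involves three consecutive indices.

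The paper supplies the missing ingredient via Proposition~\ref{proprelsGamma4}: it proves (using the block decomposition of $\kk\Gamma_4$) that the three-variable elements
\[
r_1 = s_2 s_3^2 + s_1^2 s_2 + s_1 s_2^2 + s_3 s_1^2 + s_2^2 s_3 + s_1 s_3^2,\qquad
r_2 = s_2^2 s_3 + s_1 + s_2 + s_2 s_3 s_1^2 + s_2^2 s_3 s_1 + s_1^2 s_3^2
\]
lie in the ideal $(\bb)$ of $\kk\Gamma_4$. Conjugating these to indices $(n-2,n-1,n)$ immediately yields $s_{n-1}s_n^2 \equiv s_{n-2}s_n^2$ and $s_{n-1}^2 s_n^2 \equiv s_{n-2}^2 s_n^2$ modulo $U_n+U_n s_n$, which is the base case of a \emph{decreasing} induction on $k$. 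The inductive step, passing from $k+1$ to $k$ for $k<n-2$, is then a short calculation: one first shows $s_{k+1}^a s_k^b s_n^2 \equiv s_k^b s_{k+1}^a s_n^2$ (using the inductive hypothesis and that $V_n$ is a $U_{n-1}$-bimodule), and then combines this with the braid relation $s_k s_{k+1} s_k = s_{k+1} s_k s_{k+1}$ to deduce $s_k s_n^2 \equiv s_{k+1} s_n^2$. Your sketch gestures at these braid manipulations but misses the necessity of $r_1,r_2$; without them there is no anchor for the induction.
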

\begin{proof}
Item (i) is a consequence of $\qq \in (\bb)$ by Proposition \ref{propfunar}.

We now prove (ii). One has
$\ss_2^2 \ss_1 = \ss_1 \ss_2^2 + \ss_1^2 \ss_2 + \ss_2 \ss_1^2 + \bb$.
By (1), $U_{n+1}$ is spanned by $U_n$, $U_n \ss_n U_n$ and the
$w_1 \ss_n^2 w_2$ for $w_1,w_2$ positive words in the $\ss_i$
for $i \leq n-1$. We let $l(w_2)$ denote the length of
$w_2$ with respect to these generators. 
If, as a word, $w_2 = \ss_r w_2'$ with $r\leq n-2$, then
$w_1 \ss_n^2 w_2 = w_1 \ss_n^2 \ss_r w_2' = 
w_1 \ss_r \ss_n^2 w_2'$ with $l(w'_2) < l(w_2)$.
If $w_2 = 1$ is the empty word, then $w_1 \ss_n^2 w_2 \in U_n \ss_n^2$.
Otherwise, we have $w_2 = \ss_{n-1} w_2'$. By conjugating $\bb$,
we get $\ss_n^2 \ss_{n-1} \equiv \ss_{n-1} \ss_n^2 + \ss_{n-1}^2 \ss_n +
\ss_n \ss_{n-1}^2 \mod (\bb)$ hence
$$w_1\ss_n^2 \ss_{n-1}w'_2 \equiv w_1\ss_{n-1} \ss_n^2w'_2 + w_1\ss_{n-1}^2 \ss_n w'_2+
w_1\ss_n \ss_{n-1}^2 w'_2 \mod (\bb)
$$
On the other hand, $l(w'_2) < l(w_2)$ and $w_1\ss_{n-1}^2 \ss_n w'_2+
w_1\ss_n \ss_{n-1}^2 w'_2  \in U_n \ss_n U_n$, so we
can conclude by induction on the length of $w_2$.

We first note that (iii) is trivial for $n = 2$, so we assume $n \geq 3$. It is
also trivial for $r = 0$, so we can assume $r \in \{ 1 , 2 \}$.
We first deal with the case $t = 0$.
We let $V_n = U_n + U_n s_n$ and
we use that, in $\kk \Gamma_4/(\bb)$,
$s_2 s_3^2 = 
(s_1^2 s_2 + s_1 s_2^2) + (s_1^2s_3  + s_2^2 s_3) + s_1 s_3^2$
and
$s_2^2 s_3^2 = 
(s_1 + s_2) + (s_2 s_1^2s_3  + s_2^2 s_1s_3 ) + s_1^2 s_3^2$
(see Proposition \ref{proprelsGamma4}).
Here and in the following, all congruences are modulo additive subgroups.
By conjugation we thus get 
$s_{n-1} s_n^2 \equiv (s_{n-2}^2 s_{n-1} + s_{n-2} s_{n-1}^2) +
(s_n s_{n-2}^2 + s_{n-1}^2 s_n) + s_{n-2} s_n^2 \mod (\bb)$ 
whenever $n \geq 3$,
and in particular $s_{n-1} s_n^2 \equiv s_{n-2} s_n^2$ modulo $V_n$ and
also $s_{n-1}^2 s_n^2 \equiv s_{n-2}^2 s_n^2$ modulo $V_n$. We need to
prove that
$s_k^r s_n^2 \equiv s_1^r s_n^2 \mod V_n$ for all $k < n$ and
$r \in \{ 1, 2 \}$, or, equivalently, that
$s_{k}^r s_n^2 \equiv s_{k+1}^r s_n^2 \mod V_n$ for all $k < n-1$ and $r \in \{ 1, 2 \}$.
We prove this by decreasing induction, the case $k = n-2$ being already
known. Let now $k < n-2$. Notice that $V_n$ is both a left $U_n$-module
and a $U_{n-1}$-bimodule. Modulo $V_n$, we have by the induction hypothesis
and the commutation relations that
$s_{k+1}^a s_k^b s_n^2 \equiv s_{k+1}^a s_n^2 s_k^b \equiv
s_{k+2}^a s_n^2 s_k^b \equiv s_k^b s_{k+2}^a s_n^2 \equiv
s_k^b s_{k+1}^a s_n^2$
for all $a,b \in \{1,2 \}$. On the other hand, $s_k s_{k+1} s_k = s_{k+1}
s_k s_{k+1}$ hence
$s_k s_{k+1} s_k s_n^2 \equiv s_k^2 s_{k+1} s_n^2 \equiv s_{k+1} s_k^2 s_n^2$
is equal modulo $V_n$ to
$s_{k+1} s_k s_{k+1} s_n^2 \equiv s_{k+1}^2 s_k s_n^2$. Multiplying
on the left by $s_k^{-1} s_{k+1}^{-1}$ we thus get
$s_k s_n^2 \equiv s_k^{-1} s_{k+1} s_k s_n^2 \equiv 
s_k^{-1} s_k s_{k+1} s_n^2 \equiv s_{k+1} s_n^2$ hence the conclusion for $t = 0$.
For arbitrary $t$, we then have
$s_k^r s_1^t s_n^2
\equiv s_k^r s_k^t s_n^2 \equiv s_k^{r+t} s_n^2 \equiv s_1^{r+t} s_n^2$.

\medskip

Notice that (ii) and (iv) are the same statement for $n = 2$, so
we can again assume $n \geq 3$. Since $U_n + U_n s_n \subset U_n + U_n s_n U_n$,
(iii) implies $U_n s_n^2 \subset U_2 s_n^2 + U_n + U_n s_n U_n$
hence (iv) follows from (ii).

\end{proof}

For $0 \leq k \leq n$, we let $s_{n,k} = s_n s_{n-1} \dots s_{n-k+1}$
with the convention that $s_{n,0} = 1$ and $s_{n,1} = s_n$.
We let $U_n^k = U_n s_{n,k}$ (hence $U_n^0 = U_n$). Similarly,
we let $x_{n,k} = s_n s_{n-1} \dots s_{n-k+2} s_{n-k+1}^2$
for $1 \leq k \leq n$,
with the convention $x_{n,1} = s_n^2$.

\begin{lemma} \label{lems1Un}
\begin{enumerate}
\item If $r \leq n-1$, $1 \leq k \leq n$ and $c \in \{ 0, 1 , 2 \}$, then $s_r s_1^c x_{n,k} \in
s_1^{c+1} x_{n,k} + U_n^0 + \dots + U_n^k$.
\item For $w \in \Gamma_n$, $w x_{n,k} \in s_1^{l(w)}x_{n,k} + U_n^0 + \dots + U_n^n$,
where $l : \Gamma_n \onto \Z/3\Z$ is $s_i \mapsto 1$.
\end{enumerate}
\end{lemma}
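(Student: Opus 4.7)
The plan is to prove part (i) by induction on $k$ (simultaneously at all levels $n$); part (ii) will then follow from (i) by iteration. The base case $k=1$ of (i) reads $s_r s_1^c s_n^2 \in s_1^{c+1} s_n^2 + U_n + U_n s_n$ for $r\leq n-1$ and $c\in\{0,1,2\}$, which is exactly Lemma~\ref{lemUnUnp1}(iii) applied with its exponent parameter equal to $1$ (and the auxiliary $r=2$ form that also follows from that proof).

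For the inductive step, assume (i) holds at level $k$ for all $n$, and consider $s_r s_1^c x_{n,k+1}$. The key decomposition is $x_{n,k+1}=s_n\cdot x_{n-1,k}$. When $r\leq n-2$, both $s_r$ and $s_1$ commute with $s_n$, so $s_r s_1^c x_{n,k+1}=s_n\cdot s_r s_1^c x_{n-1,k}$. The inductive hypothesis at level $n-1$ gives $s_r s_1^c x_{n-1,k}\in s_1^{c+1} x_{n-1,k}+\sum_{j=0}^k U_{n-1}^j$. Left-multiplying by $s_n$ and using that $s_n$ commutes with $U_{n-1}$ together with the identity $s_n\cdot s_{n-1,j}=s_{n,j+1}$ yields membership in $s_1^{c+1} x_{n,k+1}+\sum_{j=0}^{k+1} U_n^j$, as required.

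The remaining case $r=n-1$ is the main obstacle: direct commutation fails and one must combine the braid relation with the defining relation $(\bb)$. I would reduce to the subcase $c=0$ (for $n\geq 4$ one has $s_1 s_{n-1}=s_{n-1} s_1$, so that the general $c$ follows from $c=0$; the few small-$n$ cases are checked directly). For $k+1\geq 3$, the braid identity $s_{n-1} s_n s_{n-1}=s_n s_{n-1} s_n$ together with commutation of $s_n$ past $s_{n-2},\ldots,s_{n-k}$ gives the identity $s_{n-1} x_{n,k+1}=x_{n,k+1} s_n$ in $\Gamma_{n+1}$. The congruence $(s_{n-1}-s_{n-2})x_{n,k+1}\in\sum_{j=0}^{k+1} U_n^j$ is then extracted from the conjugate relation $s_{n-1} s_n^2+s_n^2 s_{n-1}+s_{n-1}^2 s_n+s_n s_{n-1}^2\equiv 0\pmod{(\bb)}$, applied after suitably exposing an $s_n^2$ pattern (using $s_n^3=1$ to shift between $x_{n,k+1}$ and $x_{n,k+1} s_n$). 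Combined with the $r=n-2$ subcase already handled by Case~A, this yields $s_{n-1} x_{n,k+1}\equiv s_1 x_{n,k+1}$ modulo $\sum_{j=0}^{k+1} U_n^j$. The exceptional small case $k+1=2$, where $s_{n-1} x_{n,2}=x_{n,2} s_n$ fails as a group identity, must be checked separately using the explicit relations in $\kk\Gamma_4/(\bb)$ provided by Proposition~\ref{proprelsGamma4}.

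Part (ii) then follows by iteration. Write $w=s_{i_m}\cdots s_{i_1}$ with each $i_j\leq n-1$; each $s_{i_j}\in U_n$, and each $U_n^j=U_n s_{n,j}$ is a left $U_n$-module, so the subspace $\sum_{j=0}^{n} U_n^j$ is stable under left multiplication by each $s_{i_j}$. Applying (i) first to $s_{i_1} x_{n,k}$, then to $s_{i_2}(s_1 x_{n,k})$, etc., each step produces a leading term with one more factor of $s_1$ plus an error in $\sum_{j=0}^k U_n^j$; after $m$ steps we obtain $w x_{n,k}\equiv s_1^m x_{n,k}\mod\sum_{j=0}^k U_n^j$. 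Since $m\equiv l(w)\pmod 3$ and $s_1^3=1$, we have $s_1^m=s_1^{l(w)}$, and the containment in the larger sum $\sum_{j=0}^n U_n^j$ is a fortiori. The main difficulty is thus concentrated in the $r=n-1$ subcase of the inductive step of (i).
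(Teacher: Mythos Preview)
Your overall architecture matches the paper: induction on $k$ for (i), base case $k=1$ from Lemma~\ref{lemUnUnp1}(iii), the case $r\leq n-2$ handled by commuting past $s_n$ and invoking the hypothesis at level $n-1$ with $s_n U_{n-1}^j\subset U_n^{j+1}$, and (ii) deduced by iterating (i). All of that is fine.

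The gap is in the case $r=n-1$. Your braid identity $s_{n-1}x_{n,k+1}=x_{n,k+1}s_n$ (for $k+1\geq 3$) is correct, but it leaves you with an element having an extra $s_n$ on the \emph{right}, and the target subspace $\sum_{j\leq k+1}U_n^j=\sum_j U_n s_{n,j}$ is not stable under right multiplication by $s_n$. Your phrase ``suitably exposing an $s_n^2$ pattern (using $s_n^3=1$ to shift between $x_{n,k+1}$ and $x_{n,k+1}s_n$)'' is not an argument: the $\bb$-relation in $s_{n-1},s_n$ acts at the top of the word, whereas what you need is to absorb the rightmost $s_n$ into the existing tail, and there is no obvious mechanism for that. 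In particular, showing $x_{n,k+1}s_n\equiv s_1 x_{n,k+1}$ modulo $\sum_j U_n^j$ is not a consequence of what you have written.

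The paper handles $r=n-1$ by a different idea: it applies the $\bb$-relation at the \emph{bottom} of the chain. Writing $b=n-k+1$ and using $s_{b+1}s_b^2=s_{b+1}^2 s_b+s_b s_{b+1}^2+s_b^2 s_{b+1}$, one splits $s_{n-1}s_1^c x_{n,k}$ into three pieces $A+B+C$. The piece $C$ lands directly in $U_n^{k-1}$ because the bottom factor $s_b^2$ can be pulled left into $U_n$; the pieces $A$ and $B$ involve $x_{n,k-1}$ (a shorter tail) so the induction hypothesis applies to them, after which one more application of the same bottom relation and of Lemma~\ref{lemUnUnp1}(iii) finishes. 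The point is that manipulating at index $b$ keeps all new factors inside $U_n$ on the left, so the structure $U_n s_{n,j}$ is preserved throughout. Note also that the paper carries the parameter $c$ through the induction rather than reducing to $c=0$; this avoids the small-$n$ checks you gesture at.
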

\begin{proof}
We first deal with (i). Notice that the statement is trivial for $n \leq 2$, so we
can assume $n \geq 3$ and in particular $s_1 s_n = s_n s_1$.
We prove the statement by induction on $k$, for all $n$. The case
$k = 1$ being known by the previous lemma, we can assume $k \geq 2$.
Let $r \leq n-1$. We first consider the case $r \leq n-2$. Then
$s_r s_1^c x_{n,k} = s_r s_1^c s_n s_{n-1} \dots s_{n-k+1}^2
= s_n s_r s_1^c s_{n-1} \dots s_{n-k+1}^2 = s_n s_r s_1^c  x_{n-1,k-1}$. 
By the induction hypothesis we have $s_r s_1^c x_{n-1,k-1}
\equiv s_1^{c+1} x_{n-1,k-1}$ modulo $U_{n-1}^0 + \dots + U_{n-1}^{k-1}$
hence $s_n s_r s_1^c x_{n-1,k-1} \equiv s_n s_1^{c+1} x_{n-1,k-1}$
modulo $s_nU_{n-1}^0 + \dots + s_nU_{n-1}^{k-1}$.
Noticing that $s_n U_{n-1}^j = s_n U_{n-1} s_{n-1,j} = U_{n-1} s_n s_{n-1,j}
= U_{n-1} s_{n,j+1} \subset U_n s_{n,j+1}$ we get that
$s_n s_r s_1^c x_{n-1,k-1} \equiv s_n s_1^{c+1}  x_{n-1,k-1}\equiv s_1^{c+1} s_n x_{n-1,k-1}
\equiv s_1^{c+1} x_{n,k}$ modulo $U_n + U_n^1 + \dots
+ U_n^k$.

We now consider the case $r = n-1$. For clarity, we let $b
= n-k+1$. Then, using
$s_{b+1} s_b^2 = s_{b+1}^2 s_b + s_b s_{b+1}^2 + s_b^2 s_{b+1}$,
we get that $s_{n-1} s_1^c x_{n,k} = A + B + C$ with
$$
\left\lbrace
\begin{array}{lcl}
A &=& s_{n-1} s_1^c s_n s_{n-1} \dots s_{b+2} s_{b+1}^2 s_b \\
B &=& s_{n-1} s_1^c s_n s_{n-1} \dots s_{b+2} s_b s_{b+1}^2 \\
C &=& s_{n-1}s_1^c  s_n s_{n-1} \dots s_{b+2} s_b^2 s_{b+1}
\end{array} \right.
$$

First note that $C =  s_{n-1} s_1^c s_b^2 s_n s_{n-1} \dots s_{b+2}  s_{b+1}
\in U_n s_{n,k-1} = U_n^{k-1}$.
By the induction hypothesis, $A = (s_{n-1} s_1^c s_n s_{n-1} \dots s_{b+2} s_{b+1}^2) s_b$
is congruent to $(s_{1}^{c+1} s_n s_{n-1} \dots s_{b+2} s_{b+1}^2) s_b = s_1^{c+1} x_{n,k-1} s_b$
modulo $(U_n + U_n^1 + \dots + U_n^{k-1}) s_b \subset
U_n + U_n^1 + \dots + U_n^{k-2} + U_n^k$. Now $s_1^{c+1} x_{n,k-1} s_b = s_1^{c+1}
 s_{n,k-2} s_{b+1}^2 s_b$
and using again $s_{b+1}^2 s_b =  s_{b+1} s_b^2 + s_b s_{b+1}^2 + s_b^2 s_{b+1}$
we get that $$s_1^{c+1} x_{n,k-1} s_b = s_1^{c+1} x_{n,k} + s_1^{c+1} s_n \dots s_{b+2} s_b s_{b+1}^2
+ s_1^{c+1} s_n \dots s_{b+2} s_b^2 s_{b+1}.$$ We have
$s_1^{c+1} s_n \dots s_{b+2} s_b^2 s_{b+1} = s_1^{c+1}  s_b^2 s_{n,k-1} \in U_n^{k-1}$. Moreover, $s_1^{c+1} s_n \dots s_{b+2} s_b s_{b+1}^2 = s_1^{c+1} s_b x_{n,k-1}$, and
by the induction hypothesis, we have $s_b x_{n,k-1} \in s_1 x_{n,k-1} + U_n + \dots 
+ U_n^{k-1}$. Hence
$A \in s_1^{c+1} x_{n,k} + s_1^{c+2} x_{n,k-1} + U_n + \dots + U_n^k$.

We now consider $B$. We have $s_b s_{b+1}^2 \in s_1 s_{b+1}^2 + U_{b+1} + U_{b+1} s_{b+1}$ by Lemma \ref{lemUnUnp1}.
Moreover $s_{n-1} s_1^c s_n \dots s_{b+2} U_{b+1} = s_{n-1} U_{b+1} s_n \dots s_{b+2} \subset
U_n s_n \dots s_{b+2}$ and similarly $s_{n-1} s_1^c s_n \dots s_{b+2} U_{b+1}s_{b+1} \subset U_n s_n \dots s_{b+1}$,
hence $B \in s_{n-1} s_1^c \dots s_{b+2} s_1 s_{b+1}^2 +U_n^{k-2} + U_n^{k-1}$
i.e. $B \in s_{n-1} s_1^{c+1} x_{n,k-1} + U_n^{k-2} + U_n^{k-1}
\subset s_1^{c+2} x_{n,k-1} + U_n + U_n^{1} + \dots +  U_n^{k-1}$ by the induction hypothesis.
Altogether this yields $A+B+C \in s_1^{c+1} x_{n,k} + U_n + \dots + U_n^k$ and the conclusion
for (i).

Part (ii) is an immediate consequence of (i), as we have $s_r x_{n,k} \equiv s_1 x_{n,k}$
and $s_r^2 x_{n,k} = s_r s_r x_{n,k} \equiv s_r s_1 x_{n,k} \equiv s_1^2 x_{n,k}$
modulo $U_n^0+U_n^1+\dots +U_n^n$ whenever $r <n$, and the $s_r$ for $r <n$ generate $\Gamma_n$.
\end{proof}

\begin{prop} Let $n \geq 2$. Then $\dim U_n = 3(n!-1)$ and
$$
U_{n+1} = U_n \oplus U_n^1 \oplus \dots \oplus U_n^n \oplus U_2 x_{n,1} \oplus \dots \oplus U_2 x_{n,n}
$$
\end{prop}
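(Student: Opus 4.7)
The plan is to proceed by induction on $n \geq 2$, establishing simultaneously the dimension $\dim U_n = 3(n!-1)$ and the claimed direct sum decomposition of $U_{n+1}$, with each summand attaining its maximal possible dimension ($3(n!-1)$ for each $U_n^k$, and $3$ for each $U_2 x_{n,k}$).

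For the base case $n=2$: one has $U_2 = k C_3$ of dimension $3$, and Lemma \ref{lemUnUnp1}(iv) yields $U_3 = U_2 + U_2 s_2 U_2 + U_2 s_2^2$. I would expand $U_2 s_2 U_2 = U_2 s_2 + U_2 s_2 s_1 + U_2 s_2 s_1^2 = U_2^1 + U_2^2 + U_2 x_{2,2}$, using $s_2 s_1^2 = x_{2,2}$, together with $U_2 s_2^2 = U_2 x_{2,1}$, so that $U_3$ is spanned by the five claimed summands, whence $\dim U_3 \leq 15$. The reverse inequality $\dim U_3 \geq \dim \mathcal{H}_3 = 15$ comes from Proposition \ref{ternary}, forcing both equality of dimensions and directness of the sum (and hence maximality of each summand's dimension).

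For the inductive step, assume the proposition at level $n$. Summing dimensions in the (direct) level-$n$ decomposition gives $\dim U_{n+1} = (n+1)\cdot 3(n!-1) + n \cdot 3 = 3((n+1)!-1)$. To treat $U_{n+2}$, Lemma \ref{lemUnUnp1}(iv) provides $U_{n+2} = U_{n+1} + U_{n+1} s_{n+1} U_{n+1} + U_2 x_{n+1,1}$, and I would split the right-hand $U_{n+1}$ along the level-$n$ decomposition. Since $s_{n+1}$ commutes with both $U_n$ and $U_2$: for $u \in U_n$, $s_{n+1} u = u s_{n+1}$, giving $U_{n+1}^1$; for $u s_{n,k} \in U_n^k$ with $1 \leq k \leq n$, $s_{n+1} u s_{n,k} = u s_{n+1,k+1}$, giving $U_{n+1}^{k+1}$; for $u x_{n,k} \in U_2 x_{n,k}$ with $1 \leq k \leq n$, $s_{n+1} u x_{n,k} = u x_{n+1,k+1}$, so $U_{n+1} s_{n+1} u x_{n,k} \subset U_{n+1} x_{n+1,k+1}$, which by Lemma \ref{lems1Un}(ii) lies in $U_2 x_{n+1,k+1} + \sum_{j=0}^{n+1} U_{n+1}^j$.

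Collecting contributions, $U_{n+2}$ is spanned by $U_{n+1}, U_{n+1}^1, \ldots, U_{n+1}^{n+1}$ together with $U_2 x_{n+1,1}, \ldots, U_2 x_{n+1,n+1}$, giving $\dim U_{n+2} \leq (n+2)\cdot 3((n+1)!-1) + (n+1)\cdot 3 = 3((n+2)!-1)$. The matching lower bound $\dim U_{n+2} \geq \dim \mathcal{H}_{n+2}$ from Proposition \ref{ternary} forces equality, and this equality in turn forces the sum to be direct and each summand to attain maximal dimension, closing the induction. The main obstacle is the quadratic part: the pieces $U_2 x_{n,k}$ are not obviously $3$-dimensional, and the role of Lemma \ref{lems1Un}(ii) is precisely to trap the cross-term $U_{n+1} x_{n+1,k+1}$ inside the claimed span; both the directness of the decomposition and the maximality of the individual dimensions cannot be extracted intrinsically but rely on the dimension comparison with the ternary Hecke algebra $\mathcal{H}_{n+1}$ supplied by Proposition \ref{ternary}.
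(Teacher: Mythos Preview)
Your proof is correct and follows essentially the same route as the paper: use Lemma \ref{lemUnUnp1}(iv) together with Lemma \ref{lems1Un}(ii) to show the claimed summands span $U_{n+1}$, obtain the upper bound $\dim U_{n+1}\leq 3((n+1)!-1)$, and match it against the lower bound coming from the surjection $U_{n+1}\onto\mathcal{H}_{n+1}$ of Proposition \ref{ternary} to force both the dimension and the directness of the sum. The only organizational difference is that the paper runs two separate inductions (first the spanning statement for all $n$, then the dimension inequality for all $n$) before invoking the lower bound once at the end, whereas you fold everything into a single induction and carry the maximality of each summand as part of the hypothesis; the content is the same.
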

\begin{proof}
We first prove that
$U_{n+1} = U_n + U_n^1 + \dots + U_n^n + U_2 x_{n,1} + \dots + U_2 x_{n,n}$
by induction on $n$. Assuming this to be true for $n$, we have
$U_{n+2} = U_{n+1} + U_{n+1} s_{n+1} U_{n+1} + U_2 x_{n+1,1}$
by Lemma \ref{lemUnUnp1}, and
$$U_{n+1}s_{n+1} U_{n+1} \subset U_{n+1} s_{n+1} (U_n + \dots + U_n^n) +
U_{n+1} s_{n+1} U_2 x_{n,1} + \dots + U_{n+1} s_{n+1} U_2 x_{n,n}.
$$
But, for $k=1,\dots , n$, $U_{n+1} s_{n+1} U_n^k = U_{n+1} s_{n+1} U_n s_{n,k} = 
U_{n+1}  U_n s_{n+1} s_{n,k} = U_{n+1}^{k+1}$
and  $U_{n+1} s_{n+1} U_2 x_{n,k} \subset U_{n+1} s_{n+1} x_{n,k} = U_{n+1} x_{n+1,k+1}$.
Therefore $U_{n+2}\subset \sum_{k=1}^nU_{n+1}x_{n+1,k+1}+\sum_{k=1}^nU_{2}x_{n+1,k}$.
On the other hand, $U_{n+1} x_{n+1,k+1} \subset U_2 x_{n+1,k+1} + U_{n+1} + \dots + U_{n+1}^{n+1}$
by Lemma \ref{lems1Un}. It follows that 
$U_{n+2} \subset U_{n+1} + U_{n+1}^1 + \dots + U_{n+1}^{n+1} + U_2 x_{n+1,1} + \dots + U_2 x_{n+1,n+1}$
and we conclude by induction.

We then prove that $\dim U_n \leq 3(n!-1)$, again by induction on $n$.
Since $U_{n+1} = U_n + U_n^1 + \dots + U_n^n + U_2 x_{n,1} + \dots + U_2 x_{n,n}$,
we get $\dim U_{n+1} \leq (n+1) \dim U_n + 3n \leq 3 (n+1)! - 3(n+1) + 3n = 3( (n+1)! - 1)$.

Finally, since $U_n$ maps onto $\mathcal{H}_n$ we know $\dim U_n \geq 3( n! - 1)$ hence
$\dim U_n = 3(n! -1)$. It follows that all inequalities above are equalities and the sum is direct, which
concludes the proof.
\end{proof}

\section{A ternary Birman-Wenzl algebra}

\begin{figure}
\begin{center}
\resizebox{12cm}{!}{\includegraphics{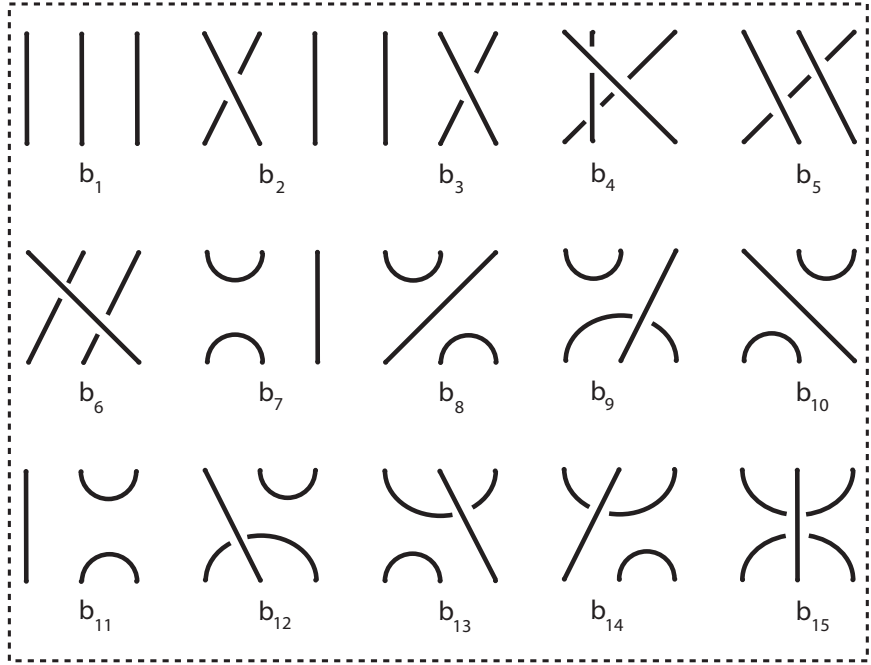}}
\end{center}
\caption{Basis for $BW_3$}
\label{pictsBW3}
\end{figure}

\begin{table}
$$
\begin{array}{|lcl|lcl|}
\hline
1 & \mapsto & b_1 & s_1 s_2^{-1} & \mapsto & b_2 + b_6 + b_8 + b_{11} + b_{14} \\
\hline
s_1^{-1} s_2 & \mapsto & b_3 + b_6 + b_7 + b_8 + b_9 & s_2 s_1^{-1} & \mapsto & b_3 + b_5 + b_{13} \\ 
\hline
s_2^{-1}s_1 & \mapsto & b_2 + b_5 + b_{12} & s_1 s_2 s_1 & \mapsto & b_4 \\
\hline
\end{array}
$$
{}
$$
\begin{array}{|lcl|}
\hline
s_1^{-1}s_2^{-1}s_1^{-1} & \mapsto & b_2 + b_3 + b_4 + b_5 + b_6 + b_9 + b_{10} + b_{14} + b_{15} \\
\hline
s_2^{-1}s_1 s_2^{-1} s_1 & \mapsto & b_1 + b_2 + b_3 + b_5 + b_6 + b_7 + b_{10} + b_{11} + b_{12} + b_{13} + b_{15}  \\
\hline
\end{array}
$$
\caption{The map $\F_4 Q_8 \onto BW_3$}
\label{tablemapQ8BW3}
\end{table}

{}

\begin{table}
$$
\begin{array}{|lcl|lcl|lcl|}
\hline
b_1 s &=& b_2 & b_2 s & = & b_1 + b_2 + b_7 & b_3 s &=& b_5 \\
b_4 s &=& b_4 + b_6 + b_{10} & b_5 s &=& b_3 + b_5 + b_{13} &
b_6 s &=& b_4 \\
b_7 s &=& b_7 & b_8 s &=& b_9 & b_9 s &=& b_7 + b_8 + b_9 \\
b_{10} s &=& b_{10} & b_{11} s &=& b_{12} & b_{12} s &=& b_{10} + b_{11}
+ b_{12}  \\
b_{13} s &=& b_{13} & b_{14} s &=& b_{15} & b_{15} s &=& b_{13} + b_{14} + b_{15} \\
\hline
\end{array}
$$
\caption{Multiplication by $s$ in $BW_3$}
\label{tableMultsBW3}
\end{table}

\subsection{Birman-Wenzl algebras}

If $\kk$ is a ring and $x,\la,q \in \kk^{\times}$, $\delta \in \kk$ with $\delta = q-q^{-1}$, and $x \delta = \delta-\la + \la^{-1}$  the Birman-Wenzl algebra $BW_n$
is defined by generators $s_1^{\pm},\dots,s_{n-1}^{\pm}$, $e_1,\dots,e_{n-1}$ and relations
\begin{enumerate}
\item $s_i s_{i+1} s_i = s_{i+1} s_i s_{i+1}$
\item $s_i s_j = s_j s_i$ for $|j-i| \geq 2$
\item $e_i s_{i-1}^{\pm 1} e_i = \la^{\mp 1} e_i$
\item $s_i - s_i^{-1} = \delta(1-e_i)$
\item $s_i e_j = e_j s_i$ for $|j-i| \geq 2$
\item $e_i e_j = e_j e_i$ for $|j-i| \geq 2$
\item $s_i e_i = e_i s_i = \la e_i$
\item $s_i s_j e_i = e_j e_i = e_j s_i s_j$ for $|j-i| = 1$
\item  $e_i^2 = x e_i$
\item  $e_i e_{i\pm 1} e_i = e_i$.
\end{enumerate}
It is a free $\kk$-module of dimension $(2n-1).(2n-3).\dots.3.1$, isomorphic to Kaufmann's tangle algebra (see \cite{MW}).
In case $\delta$ is invertible, the $e_i$ can be expressed in terms of the $s_i$.
This algebra can then be described as the quotient of the group algebra $\kk B_n$ with relations
(3), (7) ,(8), (9), (10) where $e_i$ is defined as $1 - \delta^{-1}(s_i - s_i^{-1})$.
Relation (7) is then equivalent to (7') :  $(s_i - \la)(s_i + q^{-1})(s_i - q) = 0$,
and a straightforward calculation shows that it implies (9). Now notice that the pair $(s_i,s_{i+1})$
is conjugated in $B_n$ to the pair $(s_{i+1}, s_i)$, hence (3) can be rewritten as
$e_i s_j^{\pm 1} e_i  = \la^{\mp 1} e_i$ whenever $|j-i| = 1$. Then (10) is easily seen to
be a consequence of (3) and (9), hence of (3) and (7). The relation (8) can be shown to be
implied by (3) and (7') (see \cite{WENZL} \S 3).
Finally, note that conjugation in the braid groups shows that (3) is equivalent to
(3') : $e_1 s_2^{\pm 1} e_1 = \la^{\mp 1} e_1$.

A natural quotient of $BW_n$ is obtained by adding the relation $e_i = 0$, or
equivalently $s_i - s_i^{-1} = \delta$. This quotient is naturally
isomorphic to the Hecke algebra $\kk B_n / (s_i -q)(s_i+q^{-1})$.

We now specialize to the specific instance we are interested in, by taking
$\kk = \F_4$, $q = j \in \F_4 \setminus \F_2$ hence $\delta = 1$, and
$x = 1$. Then relation (7') is $s_i^3 = 1$, which means that
$BW_n$ is the quotient of $\kk \Gamma_n$ by the relations $(3')$, which we
split as the two relations $(3'_{\pm}) : e_1 s_2^{\pm} e_1 = e_1$. 
It can be checked (e.g. by computer) that the ideals generated by $(3'_+)$ and
$(3'_-)$ have dimension 8 in $\kk \Gamma_3$, while their sum
has dimension 9, as is known by $\dim BW_3 = 15$. Note that
the relations $(3'_{\pm})$ can also be rewritten in our
case $e_1(s_2^{\pm} + 1) e_1 = 0$.

\begin{defi} Let $r_w^{\pm} = e_1(s_2^{\pm} + 1) e_1 \in \kk \Gamma_3$,
that is, writing $\Gamma_3 = Q_8 \rtimes C_3$,
$$
\left\lbrace \begin{array}{lcl}
r_w^+ &=& (1+\ii z + \jj z + \kkb z)(1+s+s^2) \\
r_w^- &=& (1+\ii+\jj+\kkb)(1+s+s^2)
\end{array} \right.
$$
\end{defi}

We notice for future use that the three 1-dimensional
representations of $\kk \Gamma_n$ factor through $BW_n$. Indeed,
the two non-trivial ones factor through the Hecke algebra
$H_n(j,j^2)$, which is a quotient of $BW_n$,
while $BW_n$ admits the representation $s_i \mapsto 1$,
$e_i \mapsto 1$, which induces the trivial representation of $\Gamma_n$.

\subsection{Another quotient of $\AAA_n$ in characteristic 2}

We use the representation of $BW_n$ in terms of tangles, taking
for convention that the product $xy$ of the tangles $x$ and $y$
is obtained by putting $y$ \emph{below} $x$. Following \cite{MW}, a basis
for $BW_n$ is given by a basis of the algebra of Brauer diagrams and an arbitrary
choice of over and under crossings. The basis chosen for $BW_3$ is pictured
in figure \ref{pictsBW3}, with $s_1 = s = b_2$ ; 
the morphism $\kk Q_8 \to BW_3$
and the multiplication on the right by $s = b_2$ are
tabulated in tables \ref{tablemapQ8BW3} and \ref{tableMultsBW3},
respectively.

Let $\varphi \in \Aut(\kk B_n)$ be defined by $\varphi(s_i) = j s_i$.
It induces an automorphism of $\kk \Gamma_n$ of order 3, and
$\varphi^3 = \Id$. Let $\mathcal{B}_1^n$ be the kernel
of $\kk \Gamma_n \onto BW_n$, namely the ideal
$(3'_+) + (3'_-)$. We have $\varphi(\qq) = \qq$, and we let
$\mathcal{B}_j^n = \varphi(\mathcal{B}_1^n)$,
$\mathcal{B}_{j^2}^n = \varphi^2(\mathcal{B}_1^n)$,

\begin{prop} \label{propBBBsomme}
\begin{enumerate}
\item The natural morphism $\kk \Gamma_n \onto BW_n$ factors through $\AAA_n$
\item When $n = 3$, its kernel is contained in $J(\kk \Gamma_3)$ 
\item We have $\qq \in \mathcal{B}^n = \mathcal{B}_1^n \cap \mathcal{B}_j^n\cap \mathcal{B}_{j^2}^n$
\item $(\qq) = \mathcal{B}^3$
\item $1+z_3 \in \mathcal{B}_+ =   \mathcal{B}_1 + \mathcal{B}_j + \mathcal{B}_{j^2}$
\item $\kk \Gamma_n/\mathcal{B}_+ = \kk C_3$ for $n \geq 5$.
\item For $n = 3$, $\mathcal{B}_+ = J(\kk Q_8)^2 C_3 = J(\kk \Gamma_3)^2$.
\end{enumerate}
\end{prop}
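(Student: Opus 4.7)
The plan is to establish the seven items in an order that brings out their interdependence, with (5) and a $\varphi$-eigenspace analysis for (4) being the key inputs. First, items (1) and (3) are equivalent: since $\qq \in \Gamma_3^0$ has total degree $0$ modulo $3$ in the $s_i$, one has $\varphi(\qq) = \qq$, so $\qq \in \mathcal{B}_1^n$ already implies $\qq \in \mathcal{B}_j^n \cap \mathcal{B}_{j^2}^n = \mathcal{B}^n$. For $n = 3$ this membership is a direct tally on Table~\ref{tablemapQ8BW3}: summing the images of the eight elements of $Q_8$ in $BW_3$, every basis vector $b_i$ appears an even number of times, so the sum vanishes in characteristic~$2$; for larger $n$ the image of $\qq$ lies in the subalgebra of $BW_n$ generated by $s_1, s_2$, which is a quotient of $BW_3$. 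Item (2) follows because, for $\kk \supseteq \F_4$, the semisimple quotient $\kk\Gamma_3/J(\kk\Gamma_3) \simeq \kk C_3 \simeq \kk^3$ corresponds to three one-dimensional characters, all of which factor through $BW_n$ (as recalled just before the proposition); the induced surjection $\kk^3 \onto BW_3/J(BW_3)$ is therefore an isomorphism, forcing $\ker(\kk\Gamma_3 \onto BW_3) \subseteq J(\kk\Gamma_3)$.

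For (4), the crucial idea is to exploit the $\varphi$-eigenspace decomposition
\[
\kk\Gamma_3 = \kk Q \oplus \kk Q\, s \oplus \kk Q\, s^2
\]
corresponding to eigenvalues $1, j, j^2$, where $s = s_1$ and $\kk Q = (\kk\Gamma_3)^{\varphi = 1}$ because $\varphi$ fixes $\kk \Gamma_3^0$ pointwise. Since $\mathcal{B}^3$ is $\varphi$-stable, it respects this decomposition; for $x \in \kk Q$ the condition $x \in \mathcal{B}_j = \varphi(\mathcal{B}_1)$ amounts to $\varphi^{-1}(x) = x \in \mathcal{B}_1$, so $\mathcal{B}^3 \cap \kk Q = \mathcal{B}_1 \cap \kk Q$. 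This latter intersection is the kernel of the restricted map $\kk Q \to BW_3$ read off Table~\ref{tablemapQ8BW3}; a Gaussian elimination on the $8 \times 15$ matrix of images shows its rank is $7$, so the kernel is one-dimensional and must equal $\kk\qq$. For the eigenspaces $\kk Q\, s$ and $\kk Q\, s^2$, the same trick (now using that $\varphi^{-1}$ acts by a nonzero scalar on each eigenspace, which preserves membership in a linear subspace) reduces to $\mathcal{B}_1 \cap \kk Q\, s^k = (\mathcal{B}_1 \cap \kk Q)\, s^k = \kk\qq\, s^k$, using that $\mathcal{B}_1$ is a two-sided ideal. Summing the three pieces yields $\mathcal{B}^3 = \kk\qq \oplus \kk\qq\, s \oplus \kk\qq\, s^2 = (\qq)$, since $\qq$ is central in $\kk\Gamma_3$.

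For (5), write $r_w^+ = X e_1$ and $r_w^- = Y e_1$ with $X = 1 + z\ii + z\jj + z\kkb$, $Y = 1 + \ii + \jj + \kkb$, and $e_1 = 1 + s + s^2$. Since $\varphi$ fixes $X, Y$ (they lie in $\kk Q$) while cyclically permuting $e_1, \varphi(e_1), \varphi^2(e_1)$, whose sum equals $1$, summing $r_w^+ + \varphi(r_w^+) + \varphi^2(r_w^+)$ yields $X \in \mathcal{B}_+$, and similarly $Y \in \mathcal{B}_+$. Because $\mathcal{B}_+$ is a two-sided ideal, $(1+z) Y \in \mathcal{B}_+$, and the identities $X + Y = (1+z)(\ii+\jj+\kkb)$ and $(1+z) Y = (1+z) + (1+z)(\ii+\jj+\kkb)$ combine to give $1 + z = (X + Y) + (1+z) Y \in \mathcal{B}_+$; since $z_3 = z$ in $\Gamma_3$, this yields (5), and the same computation inside $\kk\Gamma_3 \subseteq \kk\Gamma_n$ handles all $n \geq 3$. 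Item (7) now follows: the inclusion $\mathcal{B}_+ \subseteq J(\kk\Gamma_3)^2$ uses $X, Y \in J(\kk Q)^2$ (their images in $J(\kk Q)/J(\kk Q)^2 \simeq Q^{\ab} \otimes_{\Z} \F_2$ vanish) together with the identity $J(\kk\Gamma_3)^n = J(\kk Q)^n\, \kk C_3$ from the proof of Proposition~\ref{radicals}; the reverse inclusion holds because $(1+z)$ and $Y$ already span the $5$-dimensional $J(\kk Q)^2$ inside $\mathcal{B}_+ \cap \kk Q$, since $(1+z)\kk Q$ covers the $4$-dimensional subspace $\kk(1+z) \oplus J(\kk Q)^3$ and $Y$ lies outside it. Finally, (6) is a group-theoretic consequence of (5): $(z_3 - 1) \in \mathcal{B}_+^n$, and for $n \geq 5$ the normal closure of $z_3$ in $\Gamma_n$ equals $\Gamma_n^0$, since $z_3$ is non-central in the quasi-simple group $\Gamma_5^0 = \Sp_4(\F_3)$ and $\Gamma_n^0$ is normally generated by $\Gamma_5^0$ by Theorem~\ref{theoprelimgamma}(vi); hence $\mathcal{B}_+$ contains the augmentation ideal of $\Gamma_n^0$, while the reverse inclusion $\mathcal{B}_+ \subseteq \ker(\kk\Gamma_n \onto \kk C_3)$ is immediate since each $r_w^\pm$ visibly vanishes under abelianization. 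The main obstacle throughout is the rank~$7$ computation underpinning (4), which is mechanical but must be carried out carefully to pin down the one-dimensional kernel of $\kk Q \to BW_3$.
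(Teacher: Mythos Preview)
Your proof is correct and follows essentially the same strategy as the paper: the $\varphi$-eigenspace decomposition for (4), the identification $\mathcal{B}^3 \cap \kk Q = \ker(\kk Q \to BW_3)$ read off Table~\ref{tablemapQ8BW3}, and the use of $1+z_3$ for (5)--(7). Two minor differences are worth noting. For (5), the paper observes directly that $(1+z)S = r_w^+ + z\,r_w^- \in \mathcal{B}_1$ (with $S = 1+s+s^2$) and then uses that $S,\varphi(S),\varphi^2(S)$ span $\kk C_3$; you instead first isolate $X,Y \in \mathcal{B}_+$ via the identity $e_1 + \varphi(e_1) + \varphi^2(e_1) = 1$ in characteristic~$2$, and then combine them---a slight rearrangement of the same computation. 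For (7), the paper establishes the reverse inclusion by citing the dimension count $\dim \mathcal{B}_+ = 15$, whereas you give a structural argument: $(1+z)\kk Q$ is the $4$-dimensional space $\kk(1+z) \oplus J(\kk Q)^3$, and $Y = (1+\ii)(1+\jj)$ lies in $J(\kk Q)^2$ but outside $(1+z)\kk Q$ (it is not $z$-invariant), so together they force $J(\kk Q)^2 \subset \mathcal{B}_+$. This bypasses the computer check and is a small improvement in self-containedness.
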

\begin{proof}
Part (i) means that $q$ is mapped to $0$, which can be checked on
table \ref{tablemapQ8BW3}.
Part (ii) is because $\kk \Gamma_3$ has only 3 simple representations,
all coming from $BW_n$, hence all annihilated by the kernel.
Part (iii) follows from $\varphi(\qq) = \qq$. For part (iv),
notice that $\mathcal{B}^3$ is stabilized by $\varphi$,
hence $\mathcal{B}^3 = I_1 \oplus I_j \oplus I_j^2$
with $I_{\alpha} = \Ker(\varphi- \alpha)$, as $char. \kk \neq 3$.
On the other hand, $\kk \Gamma_3 = \bigoplus_{i=0}^2 (\kk Q_8) s^i$
and $\varphi(s^i) = j^is^i$. Since $\mathcal{B}^3$ is an ideal, $I_j = I_1 s,
I_{j^2} = I_1 s^2$, hence $\mathcal{B}_3 = I_1 \rtimes C_3$ with
$I_1$ an ideal of $\kk Q_8$. This ideal is the kernel of
the natural map $\kk Q_8 \to BW_3$, and it is easy to determine
from table \ref{tablemapQ8BW3}. We find $I_1 = \kk \qq$, hence
(iv).
In $Q_8$, we have $1+z = (1+z\ii+z\jj+z\kkb)  + z(1+\ii+\jj+\kkb)$, hence
$(1+z)S \in \mathcal{B}_0$ with $S = 1+ s + s^2$. Since
$S,\varphi(S),\varphi^2(S)$ span $\kk C_3$, we have $1+z \in \mathcal{B}_+$,
which proves (v). We have $\Gamma_5 = \Gamma_5^0 \rtimes C_3$,
with $\Gamma_5^0$ the normal subgroup of $\Gamma_5$ generated by $z_3$.
Since $\mathcal{B}_+$ is invariant under $\varphi$, we have
$\mathcal{B}_+ = I \rtimes C_3$ with $I \subset \kk \Gamma_5^0$.
Now $1 + z_3 \in \mathcal{B}_+$ hence $1+z_3 \in I$,
and $\kk \Gamma_5^0 /I = \kk (\Gamma_5^0/\ll z_3 \gg) = \kk$.
The ideal $I$ is then the augmentation ideal of $\kk \Gamma_5^0$,
$\kk \Gamma_5 / \mathcal{B}_+ = \kk C_3$. This implies
$\kk \Gamma_n / \mathcal{B}_+ = \kk C_3$ for all $n \geq 5$, hence (vi).
For $n=3$, we have similarly $\mathcal{B}_+ = I C_3$ for
some ideal $I$ of $\kk Q_8$. We have $(1+\ii)(1+\jj) = 1+\ii+\jj+\kkb$,
and $1+z\ii+z\jj+z\kkb = z(1+\ii)(1+\jj) + (1+\ii)^2$ hence $I \subset J(\kk Q_8)^2$.
We know that $\dim J(\kk Q_8)^2 = 5$ and we compute that $\dim \mathcal{B}_+
= 15$, which proves (vii).
\end{proof}

\begin{remark} For $n=4$, $\mathcal{B}_+$ has dimension 639.
\end{remark}

The proposition above enables us to define the following quotient of
$\AAA_n$.

\begin{defi} \label{defbmw} We define the algebra $\mathcal{BMW}_n$ as
$\kk \Gamma_n / \mathcal{B} = \kk \Gamma_n / (\mathcal{B}_1 \cap \mathcal{B}_j \cap \mathcal{B}_{j^2})$.
It is a quotient of $\AAA_n$.
\end{defi}

\subsection{A natural embedding}

Let $(T_w, w \in \mathfrak{S}_n)$ denote the standard basis of the Hecke algebra under consideration (see \cite{HUMPHREYS}) 
and 
$\ell : \mathfrak{S}_n \to \Z_{\geq 0}$
the Coxeter length.
For $\alpha \in \mu_3(\kk)$, we let
$E_n(\alpha) = \sum_{w \in \mathfrak{S}_n} \alpha^{\ell(w)} T_w$.
In particular
$E_3(\alpha) = \alpha^3 s_1 s_2 s_1 + \alpha^2 s_1 s_2 + \alpha^2 s_2 s_1 + \alpha s_1 + \alpha s_2 + 1 = s_1 s_2 s_1 + \alpha^2 s_1 s_2 + \alpha^2 s_2 s_1 + \alpha s_1 + \alpha s_2 + 1$.
We recall from (\cite{GrahamLehrer}, \S 4.3) that the Temperley-Lieb
algebra $TL_n(1,j)$ is $H_n(1,j) / E_3(j^2) = H_n(1,j) / E_3(j^{-1})$ (notice that
a slight renormalization of the Artin generators is
needed from the original formulations there).
It has dimension the $n$-th Catalan number $C_n = \frac{1}{n+1} \left( \begin{array}{c} 2n \\ n \end{array} \right)$.

Let $\{ \alpha,\beta,\gamma \} = \mu_3$. We introduce
the involutive automorphism $\tau_{\gamma}$ of $\kk \Gamma_n$ defined by $s_i \mapsto
\gamma^2 s_i^{-1}$. It maps $(s_i + \alpha)(s_i + \beta)$
to $s \alpha \beta(s+ \beta)(s+ \alpha)$, hence induces
an involutive automorphism $\tau_{\alpha,\beta}$ of $H_n(\alpha,\beta)$.
The automorphism $\varphi$ induces isomorphisms $\tilde{\varphi} : H_n(\alpha,
\beta) \to H_n(j^2 \alpha,j^2 \beta)$ making the natural diagram commute.
$$
\xymatrix{ \kk \Gamma_n \ar[d] \ar[r]^{\varphi} & \kk \Gamma_n  \ar[d] \\
H_n(\alpha,\beta) \ar[r]_{\tilde{\varphi}} & H_n(j^2 \alpha,j^2 \beta)
}
$$
{}
$$
\xymatrix{
& \mathcal{H}_n \ar[r] & H_n(j,j^2) \ar@/^1pc/[rr]^{\tilde{\varphi}}  &\times & H_n(1,j) \ar@/^1pc/[rr]^{\tilde{\varphi}} &\times & H_n(1,j^2) \ar@/_3pc/[llll]_{\tilde{\varphi}} \\  
\AAA_n \ar[ur] \ar[rr] & & \kk \Gamma_n/\mathcal{B}_1  \ar[u]  \ar@/_1pc/[rr]_{\tilde{\varphi}} & \times & \kk \Gamma_n/\mathcal{B}_j  \ar[u]  \ar@/_1pc/[rr]_{\tilde{\varphi}}  & \times & \kk \Gamma_n/\mathcal{B}_{j^2} \ar[u] \ar@/^3pc/[llll]^{\tilde{\varphi}}\\
}
$$
Let $ITL_n^j(1,j)$ denote the ideal of $H_n(1,j)$ generated
by $E_3(j^2) = E_3(j^{-1})$. Then $ITL_n^1(1,j) = \tau_{1,j} ITL_n^+(1,j)$
is the ideal generated by $\tau_{1,j}E_3(j^{-1}) = E_3(1)$. A straightforward computation
shows more generally that $\tau_{\gamma}E_3(\alpha^{-1}) = E_3(\beta^{-1})$.
We define more generally

\begin{defi} For $\{ \alpha, \beta , \gamma \} = \mu_3(\kk)$,
we define $ITL_n^{\alpha}(\alpha,\beta) = ITL_n^{\alpha}(\beta,\alpha)$
as the 2-sided ideal of $H_n(\alpha,\beta)$ generated by $E_3(\alpha^{-1})$.
\end{defi}

With this definition, we have
$\tau_{\alpha,\beta}(ITL_n^{\alpha}(\alpha,\beta)) = 
ITL_n^{\beta}(\beta,\alpha)$. Moreover,
we have $\varphi(E_3(x)) = E_3(jx)$,
hence $\tilde{\varphi}$ maps
$ITL_n^{\alpha}(\alpha,\beta) \subset H_n(\alpha,\beta)$
to the ideal of $H_n(j^2 \alpha,j^2 \beta)$ generated
by $E_3(j \alpha^{-1}) = E_3((j^2 \alpha)^{-1})$, that is
$ITL_n^{j^2 \alpha}(j^2\alpha,j^2 \beta)$.

\begin{lemma} \label{lemideauxTL} In $H_n(\alpha,\beta)$, let $M_n(\alpha)$ and $M_n(\beta)$
denote the kernels of the natural morphisms $H_n(\alpha,\beta) \to \kk$
defined by $s_i \mapsto \alpha$ and $s_i \mapsto \beta$, respectively. We have
\begin{enumerate}
\item $ITL_n^{\alpha}(\alpha,\beta ) \subset M_n(\alpha) \cap M_n(\beta)$
\item $ITL_n^{\alpha}(\alpha,\beta ) + ITL_n^{\beta}(\alpha,\beta )= M_n(\alpha) \cap M_n(\beta)$ for $n \geq 5$.
\item For all $n$, $M_n(\alpha) \cap M_n(\beta)$ is generated by $s_1 s_2 + s_2 s_1$.
\end{enumerate}
\end{lemma}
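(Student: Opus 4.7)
For part (i), I would apply each of the two 1-dimensional quotient maps $H_n(\alpha,\beta)\twoheadrightarrow k$ given by $s_i\mapsto\alpha$ and $s_i\mapsto\beta$ to $E_3(\alpha^{-1})=\sum_{w\in\mathfrak{S}_3}\alpha^{-\ell(w)}T_w$. The first augmentation evaluates each term to $1$, giving $|\mathfrak{S}_3|=6\equiv 0\pmod 2$. The second evaluates $E_3(\alpha^{-1})$ to $\sum_{k=0}^{3}N_k\zeta^k=1+2\zeta+2\zeta^2+\zeta^3=2(1+\zeta+\zeta^2)=0$, where $\zeta=\beta\alpha^{-1}\in\mu_3(k)\setminus\{1\}$. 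Hence $E_3(\alpha^{-1})\in M_n(\alpha)\cap M_n(\beta)$, and since both are two-sided ideals the same holds for the ideal $ITL_n^{\alpha}(\alpha,\beta)$ generated by $E_3(\alpha^{-1})$.

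For part (iii), the inclusion $\supseteq$ is clear since both augmentations kill $s_1s_2+s_2s_1$. For $\subseteq$, the plan is to pass to the quotient $H_n(\alpha,\beta)/(s_1s_2+s_2s_1)$: in characteristic 2 this imposes $s_1s_2=s_2s_1$, and together with the braid relation this forces $s_1=s_2$, hence all $s_i$ collapse to a single element $s$ satisfying $(s-\alpha)(s-\beta)=0$. So the quotient is $k[s]/((s-\alpha)(s-\beta))\simeq k\oplus k$. On the other hand, since $\alpha-\beta$ is invertible we have $M_n(\alpha)+M_n(\beta)=H_n(\alpha,\beta)$, and the Chinese remainder theorem yields $H_n(\alpha,\beta)/(M_n(\alpha)\cap M_n(\beta))\simeq k\oplus k$ as well. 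Comparing dimensions, the containment of (iii) must be an equality.

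For part (ii), by (iii) it suffices to prove that $s_1s_2+s_2s_1$ lies in $ITL_n^{\alpha}(\alpha,\beta)+ITL_n^{\beta}(\alpha,\beta)$ for $n\geq 5$. Using $\alpha+\beta=\gamma$ and $\alpha^{-1}+\beta^{-1}=\gamma^{-1}$ (valid since $\{\alpha,\beta,\gamma\}=\mu_3(k)$ and we are in characteristic 2), together with cancellation of the $s_1s_2s_1$ and constant terms, I would first compute
\[ E_3(\alpha^{-1})+E_3(\beta^{-1})=\gamma(s_1s_2+s_2s_1)+\gamma^{-1}(s_1+s_2), \]
which gives the relation $s_1s_2+s_2s_1\equiv\gamma(s_1+s_2)$ modulo the sum of ideals, and analogously for every consecutive pair $(s_i,s_{i+1})$. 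The hard part will be to exploit the additional room afforded by $n\geq 5$---via conjugates $w\,E_3(\alpha^{-1})\,w^{-1}$ for $w\in\mathfrak{S}_n$ and multiplications on either side by arbitrary elements---to eliminate the correction term $\gamma(s_1+s_2)$ and force $s_1s_2+s_2s_1\equiv 0$. A parallel approach, which I would pursue in tandem, is to reformulate the claim as: for $n\geq 5$, the only simple $H_n(\alpha,\beta)$-modules factoring through both $TL_n^{\alpha}$ and $TL_n^{\beta}$ are the two augmentations $\chi_{\alpha}$ and $\chi_{\beta}$. This would imply that $H_n(\alpha,\beta)/(ITL_n^{\alpha}+ITL_n^{\beta})$ is at most $2$-dimensional and hence isomorphic to $k\oplus k$. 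Establishing this structural fact through the cell-module theory of Temperley--Lieb at a primitive third root of unity in characteristic 2 is where I expect the main technical difficulty to lie.
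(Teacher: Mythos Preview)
Your arguments for parts (i) and (iii) are correct and essentially identical to the paper's: direct evaluation of $E_3(\alpha^{-1})$ under the two augmentations for (i), and the observation that imposing $s_1s_2=s_2s_1$ in the Hecke presentation forces all $s_i$ to coincide, giving a two-dimensional quotient, for (iii).

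For part (ii), however, your argument is incomplete. Your computation $E_3(\alpha^{-1})+E_3(\beta^{-1})=\gamma(s_1s_2+s_2s_1)+\gamma^{-1}(s_1+s_2)$ is correct and gives a useful relation, but you do not actually carry out the elimination of the correction term, nor the cell-module analysis you propose as an alternative. Both routes are plausible in principle, but the Temperley--Lieb cell theory at a primitive third root of unity in characteristic~$2$ is genuinely delicate (the algebra is not semisimple), and you have not indicated how you would overcome this.

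The paper takes a quite different route for (ii). By a computer check in $H_5(\alpha,\beta)$ it verifies that the group-like element $s_1s_2s_4+1$ lies in $ITL_5^{\alpha}+ITL_5^{\beta}$, hence in the corresponding ideal for all $n\ge 5$. This forces the quotient $H_n(\alpha,\beta)/(ITL_n^{\alpha}+ITL_n^{\beta})$ to factor through $\kk(\Gamma_n/N)$, where $N$ is the normal closure of $s_1s_2s_4\in\Gamma_n^0$. For $n=5$ one has $\Gamma_5^0=\Sp_4(\F_3)$, which is quasisimple, so $N=\Gamma_5^0$; and then Theorem~\ref{theoprelimgamma}(vi) (normal generation of $\Gamma_{n+1}^0$ by the image of $\Gamma_n^0$) propagates this to all $n\ge 5$. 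Thus the quotient is a quotient of $\kk C_3$, in fact of $\kk[s]/(s-\alpha)(s-\beta)$, hence two-dimensional, which matches $H_n(\alpha,\beta)/(M_n(\alpha)\cap M_n(\beta))$ and gives the equality. The key ingredients you are missing are the specific element $s_1s_2s_4+1$ (found by machine) and the group-theoretic input about $\Gamma_n^0$.
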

\begin{proof}
Part (i) comes from the fact that $E_3(\alpha^{-1})$ is mapped to $0$
under both $s_i \mapsto \alpha$ and $s_i \mapsto \beta$, as is easily checked.
We now deal with part (ii).
For $n = 5$ 
We check by computer that $s_1 s_2 s_4 + 1 \in I = ITL_n^{\alpha}(\alpha,\beta ) + ITL_n^{\beta}(\alpha,\beta )$
when $n = 5$, hence for $n \geq 5$. It follows that $H_n(\alpha,\beta)/I$
is a quotient of $\kk \Gamma_n/N$, where $N$ is the normal subgroup
of $\Gamma_n$ generated by $w = s_1 s_2 s_4$. Note that $w \in \Gamma_n^0
= \Ker(\Gamma_n \onto C_3)$. In particular, for $n = 5$, $w$
belongs to $\Sp_4(\F_3)$, and one easily check that $N = \Sp_4(\F_3) = \Gamma_n^0$
in this case, by quasi-simplicity of $\Sp_4(\F_3)$. By Theorem \ref{theoprelimgamma} (vi) it follows
that $N = \Gamma_n^0$ for all $n \geq 5$. Thus $H_n(\alpha,\beta)/I$
is a quotient of $\kk C_3 = \kk [s]/(s^3 -1)$ of dimension at least 2,
and even of $\kk [s]/(s+ \alpha)(s+ \beta)$, which has dimension 2. It follows
that $H_n(\alpha,\beta)/I$ has dimension 2 hence $I = M_n(\alpha) \cap M_n(\beta)$
hence (ii). In order to prove (iii), we first note that $x = s_1 s_2 + s_2 s_1$
is mapped to 0 under the maps $s_i \mapsto \alpha$ and $s_i \mapsto \beta$,
hence $(x) \subset K = M_n(\alpha) \cap M_n(\beta)$. It is
then sufficient to show that $H_n(\alpha,\beta)/(x)$ has dimension 2. From the
presentation of $H_n(\alpha,\beta)$ one gets that adding $s_1 s_2 = s_2 s_1$
implies $s_i = s_j$ for all $i,j$ hence $H_n(\alpha,\beta)/(x) = \kk[s]/(s+ \alpha)
(s+ \beta)$ has dimension 2. This proves (iii).

\end{proof}

\begin{remark}
In the characteristic 0 (semisimple) case with generic parameters, the sum of the
two copies of the Temperley-Lieb ideals is the whole Hecke algebra for $n \geq 5$,
as the corresponding quotient has irreducible representations labelled
by the Young diagrams with at most 2 rows and 2 columns, and there are
clearly no such diagram of size more than 4.
\end{remark}

\begin{lemma} In $H_n(1,j)$, we have
$ 
r_w^+ \equiv j^2 E_3(j^2) ,
r_w^- \equiv j  E_3(j^2) ,
\varphi(r_w^+) \equiv 0 ,
\varphi(r_w^-) \equiv 0 ,
\varphi^2(r_w^+) \equiv j E_3(1)$ and
$\varphi^2(r_w^-) \equiv j^2 E_3(1)$.

\end{lemma}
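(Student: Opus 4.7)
The plan is to reduce everything to a direct computation in $H_n(1,j)$, using the cleaner reformulation $r_w^\pm = e_1(s_2^{\pm 1}+1)e_1$ where $e_1 = 1+s_1+s_1^{-1} \in \kk\Gamma_n$. This identity can be verified by expanding $e_1 s_2 e_1 = \sum_{i,j=0}^2 s^i s_2 s^j$ inside $\kk\Gamma_3 = \kk Q_8 \rtimes C_3$: using $s_2 = \ii^{-1}s$ and the cyclic action of $s$ on $\{\ii,\jj,\kkb\}$, one writes $s^i s_2 s^j = (s^i \ii^{-1}s^{-i}) s^{i+j+1}$, which quickly gives $e_1 s_2 e_1 = (\ii z+\jj z+\kkb z)e_1$, and an entirely similar computation shows $e_1 s_2^{-1}e_1 = (\ii+\jj+\kkb)e_1$. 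Adding $e_1^2 = e_1$ recovers the definitions of $r_w^+$ and $r_w^-$.

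Now work in $H_n(1,j)$. The defining relation $(s_i-1)(s_i-j)=0$ gives, in characteristic $2$, $s_i^2 = j^2 s_i + j$ and hence $s_i^{-1} = j^2 s_i + j$. I will compute once and for all the images of $e_1$ under $\Id$, $\varphi$, and $\varphi^2$:
\begin{align*}
e_1 &\equiv 1 + s_1 + j^2 s_1 + j = j^2 + j s_1, \\
\varphi(e_1) &\equiv 1 + j s_1 + j^2(j^2 s_1 + j) = 0, \\
\varphi^2(e_1) &\equiv 1 + j^2 s_1 + j(j^2 s_1 + j) = j(1+s_1).
\end{align*}
The vanishing of $\varphi(e_1)$ immediately yields $\varphi(r_w^\pm) \equiv 0$. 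Moreover, $s_2^{-1}+1 \equiv j^2 s_2 + j + 1 = j^2(s_2+1)$, and similarly $\varphi^2(s_2^{-1}+1) \equiv j(j^2 s_2 + j) + 1 = s_2 + j$, so the ``$-$'' cases reduce to the ``$+$'' cases by scalars.

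What is left is to compute $e_1(s_2+1)e_1$ and $\varphi^2(e_1)(j^2 s_2+1)\varphi^2(e_1)$ in the basis $\{T_w\}_{w\in\mathfrak{S}_3}$ and recognize them. For the first, expand $(j^2 + j s_1)(s_2 + 1)(j^2 + j s_1)$, using $s_1^2 = j^2 s_1 + j$ to eliminate the single squared term; the result collects to $j^2 s_1 s_2 s_1 + s_1 s_2 + s_2 s_1 + j s_1 + j s_2 + j^2 = j^2 E_3(j^2)$. Then $r_w^- \equiv j^2 \cdot j^2 E_3(j^2) = j E_3(j^2)$ by the scalar reduction above. For the second, expand $j^2(1+s_1)(j^2 s_2+1)(1+s_1)$, again reducing $s_1^2$; one obtains $j^2 E_3(1)$ after collecting, whence $\varphi^2(r_w^+) \equiv j^2 \cdot j^2 E_3(1) = j E_3(1)$ and similarly $\varphi^2(r_w^-) \equiv j^2 E_3(1)$.

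The only real obstacle is bookkeeping in the two expansions of six-term products; there is no conceptual difficulty, since the reduction to $e_1$ and the two scalar identities $s_2^{-1}+1 \equiv j^2(s_2+1)$ and $\varphi(e_1)\equiv 0$ already cut the work in half.
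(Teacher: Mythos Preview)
Your proof is correct and follows the same approach as the paper's, which simply says ``straightforward computation from the equations $s_i^2 + j^2 s_i + j = 0$ and $s_i^{-1} = j^2 s_i + j$''; your version is more carefully organized, first reducing everything via the images of $e_1$ under $\Id$, $\varphi$, $\varphi^2$. Note that the identity $r_w^\pm = e_1(s_2^{\pm 1}+1)e_1$ you verify at the start is in fact the paper's \emph{definition} of $r_w^\pm$, so that opening paragraph is redundant.
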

\begin{proof} Straightforward computation from the equations
$s_i^2 + j^2 s_i + j = 0$ and $s_i^{-1} = j^2 s_i + j$.
\end{proof}

\begin{lemma} \label{lemimagesB} Let $n \geq 3$ and $\pi : \kk \Gamma_n \onto \mathcal{H}_n \into H_n(j,j^2)
\times H_n(1,j) \times H_n(1,j^2)$. Then
$\pi(\mathcal{B}_1) \subset 0 \times ITL_n^j \times ITL_n^{j^2}$,
$\pi(\mathcal{B}_j) \subset ITL_n^j \times 0 \times ITL_n^1$,
$\pi(\mathcal{B}_{j^2}) \subset ITL_n^{j^2} \times ITL_n^1 \times 0 $
\end{lemma}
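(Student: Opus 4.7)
The plan is to establish the three inclusions coordinate by coordinate, computing for each of $\mathcal{B}_1,\mathcal{B}_j,\mathcal{B}_{j^2}$ its image in each factor of $H_n(j,j^2)\times H_n(1,j)\times H_n(1,j^2)$ separately. Once the result is known for $\mathcal{B}_1$, the other two will follow by applying $\varphi$ and using $\pi\circ\varphi=\tilde\varphi\circ\pi$, where $\tilde\varphi$ cyclically permutes the three factors via the isomorphisms $H_n(\alpha,\beta)\to H_n(j^2\alpha,j^2\beta)$ of the commutative diagram preceding the lemma.

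First I would handle the $H_n(j,j^2)$-coordinate of $\mathcal{B}_1$. By definition $\mathcal{B}_1=\ker(\kk\Gamma_n\twoheadrightarrow BW_n)$, and with the chosen parameters $q=j$, $\lambda=1$, $\delta=1$ the quotient $BW_n/(e_1)$ is exactly $H_n(j,j^2)$: indeed, setting $e_i=0$ in the relation $s_i-s_i^{-1}=\delta(1-e_i)$ yields $s_i^2+s_i+1=0$. Hence the composition $\kk\Gamma_n\to BW_n\to H_n(j,j^2)$ kills $\mathcal{B}_1$, so $\pi(\mathcal{B}_1)$ vanishes in the first factor.

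For the $H_n(1,j)$-coordinate I invoke the preceding lemma, which records $r_w^+\equiv j^2E_3(j^2)$ and $r_w^-\equiv jE_3(j^2)$ in $H_n(1,j)$. Since $\mathcal{B}_1$ is the two-sided ideal of $\kk\Gamma_n$ generated by $r_w^+$ and $r_w^-$, its image is the two-sided ideal of $H_n(1,j)$ generated by these two (invertible) scalar multiples of $E_3(j^2)$, which is precisely $ITL_n^j(1,j)$. For the $H_n(1,j^2)$-coordinate I would invoke the Galois automorphism $\sigma$ interchanging $j$ and $j^2$: since $r_w^{\pm}\in\Z\Gamma_n$ is $\sigma$-fixed while $\sigma(E_3(j^2))=E_3(j)$, the image of $\mathcal{B}_1$ in $H_n(1,j^2)$ is the ideal generated by $jE_3(j)$ and $j^2E_3(j)$, that is $ITL_n^{j^2}(1,j^2)$. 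This settles the inclusion for $\mathcal{B}_1$.

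Finally I would propagate to $\mathcal{B}_j$ and $\mathcal{B}_{j^2}$ via $\pi\circ\varphi=\tilde\varphi\circ\pi$. The three component maps of $\tilde\varphi$ cyclically permute the Hecke algebras and satisfy $\tilde\varphi(E_3(x))=E_3(jx)$; hence $\tilde\varphi(ITL_n^\alpha(\alpha,\beta))=ITL_n^{j^2\alpha}(j^2\alpha,j^2\beta)$. A direct bookkeeping — tracking which factor each component of $\tilde\varphi$ sends to which, and how $j^2\alpha$ shifts the superscript of the Temperley--Lieb ideal — then yields the stated inclusions for $\pi(\mathcal{B}_j)=\tilde\varphi(\pi(\mathcal{B}_1))$ and $\pi(\mathcal{B}_{j^2})=\tilde\varphi^2(\pi(\mathcal{B}_1))$. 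The only genuine computation is the one already performed in the preceding lemma; everything else is bookkeeping, and the only subtle point is keeping the three-fold cyclic permutation straight.
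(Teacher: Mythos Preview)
Your proof is correct and follows essentially the same approach as the paper's: both argue coordinate by coordinate for $\mathcal{B}_1$ using the preceding lemma and the fact that $H_n(j,j^2)$ is a quotient of $BW_n$, then propagate to $\mathcal{B}_j,\mathcal{B}_{j^2}$ via $\varphi$. The only minor difference is that for the $H_n(1,j^2)$-coordinate of $\mathcal{B}_1$ you appeal to the Galois involution swapping $j$ and $j^2$, whereas the paper uses the commutation $p_\gamma\circ\varphi=\tilde\varphi\circ p_{\gamma j}$ together with the preceding lemma's explicit values of $\varphi^2(r_w^\pm)$ in $H_n(1,j)$; both routes are equally valid.
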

\begin{proof}
Let $p_{\gamma} : \mathcal{H}_n \to H_n(\alpha,\beta)$, for
$\{ \alpha,\beta,\gamma \} = \mu_3$. The induced map from $\AAA_n$,
also denoted by $p_{\gamma}$,
factors through $\kk \Gamma_n/ \mathcal{B}_{\gamma^{-1}}$.
We have $p_1(\mathcal{B}_1) = 0$,
$p_{j^2} (\mathcal{B}_1) = ITL_n^j(1,j)$ by the lemma.
We have $p_{\gamma} \circ \varphi = \tilde{\varphi} \circ p_{\gamma j}$
hence $p_{\gamma} \circ \varphi^2 = \tilde{\varphi}^2 \circ p_{\gamma j^{-2}}$
and, using the lemma and the commutative diagrams above,
 $p_j(\mathcal{B}_1) = \tilde{\varphi}(p_{j^2}(\varphi^{-1} (\mathcal{B}_1)))
=  \tilde{\varphi}(p_{j^2}(\mathcal{B}_{j^2})) = \tilde{\varphi}(ITL_n^1(1,j))
= ITL_n^{j^2}(1,j^2)$.

\end{proof}

\begin{prop} \label{propbBBB} Recall $\bb = s_1 s_2^{-1} + s_2 s_1^{-1} + s_1^{-1} s_2
+s_2^{-1} s_1$.
\begin{enumerate}
\item $\bb \in (\mathcal{B}_1 + \mathcal{B}_j) \cap (\mathcal{B}_1 + \mathcal{B}_{j^2})
\cap (\mathcal{B}_j + \mathcal{B}_{j^2})$ for $n \geq 4$
\item In $H_4(1,j)$, one has $ITL_4^1 \cap ITL_4^j = \{ 0 \}$
\item For $n \geq 4$, the inclusions of Lemma \ref{lemimagesB} are equalities.
\item For $n \geq 5$, $\dim \kk \Gamma_n / (\mathcal{B}_1 + \mathcal{B}_j) = 2 \dim TL_n - 1$.
\item $\mathcal{B}_1 + \mathcal{B}_j \cap \mathcal{B}_{j^2}
= (\mathcal{B}_1 + \mathcal{B}_j) \cap (\mathcal{B}_1 + \mathcal{B}_{j^2})$ for $n = 4$.
\item For $n \geq 4$, $\bb \in \mathcal{B}_1 + \mathcal{B}_j \cap \mathcal{B}_{j^2}$.
\end{enumerate}
\end{prop}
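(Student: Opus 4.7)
The six parts are proven in an interlocking sequence, with (2) and (1) as the main technical inputs; the other parts follow.

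Part (2) is a statement inside the $24$-dimensional algebra $H_4(1,j)$: both $ITL_4^1$ and $ITL_4^j$ are ideals generated by the explicit elements $E_3(1)$ and $E_3(j^2)$, so bases can be computed directly (easily by computer) and one verifies that their dimensions add without overlap, forcing the intersection to be zero. For (3), I would upgrade the inclusions of Lemma \ref{lemimagesB} to equalities. Using $p_{(1,j^2)}=\tilde\varphi\circ p_{(1,j)}\circ\varphi^{-1}$ and $\tilde\varphi(E_3(x))=E_3(jx)$, the images of the generators of $\mathcal{B}_1$ are $\pi(r_w^+)=(0,j^2E_3(j^2),jE_3(j))$ and $\pi(r_w^-)=(0,jE_3(j^2),j^2E_3(j))$ in $\mathcal{H}_n$. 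The $2\times 2$ coefficient matrix has determinant $j^4-j^2=j+j^2=1\neq 0$, so $\kk$-linear combinations recover $(0,E_3(j^2),0)$ and $(0,0,E_3(j))$ inside $\pi(\mathcal{B}_1)$. Multiplying these elements by triples from $\mathcal{H}_n$ on both sides (the projection of $\mathcal{H}_n$ onto each Hecke factor being surjective thanks to the character constraints) then sweeps out the full ideal $0\times ITL_n^j(1,j)\times ITL_n^{j^2}(1,j^2)$; the other two lines of Lemma \ref{lemimagesB} follow by $\varphi$-symmetry.

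The heart is (1). The starting point is the identity $(\ii+\kkb)(r_w^++r_w^-)=\bb(1+s_1+s_1^2)$ in $\kk\Gamma_3$, verified by direct expansion using $\bb=(1+z)(\ii+\kkb)$ (noting that $(\ii+\kkb)(\ii+\jj+\kkb)=z\ii+(1+z)\jj+\kkb$ and that $(1+z)^2=0$ in characteristic $2$). This shows $\bb(1+s_1+s_1^2)\in\mathcal{B}_1$; applying $\varphi$ gives $\bb(1+js_1+j^2s_1^2)\in\mathcal{B}_j$ and analogously $\bb(1+j^2s_1+js_1^2)\in\mathcal{B}_{j^2}$. Summing the three yields $\bb\in\mathcal{B}_+$, and pairwise subtraction yields $\bb s_1(s_1+j)\in\mathcal{B}_1+\mathcal{B}_j$, i.e.\ $\bb s_1\equiv j\bb\pmod{\mathcal{B}_1+\mathcal{B}_j}$. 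The main obstacle is that $s_1+j$ is a zero divisor, so this does not directly isolate $\bb$: one needs a further independent relation. I expect this to be resolved for $n=4$ by exploiting conjugates of $r_w^\pm$ by $s_3$, which produce elements of $\mathcal{B}_1^4$ not visible in $\mathcal{B}_1^3$, combined if necessary with a computer check; for $n\ge 5$ the conclusion then follows by functoriality of $\kk\Gamma_4\to\kk\Gamma_n$, which sends $\mathcal{B}_\alpha^4$ into $\mathcal{B}_\alpha^n$.

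Once (1) is in hand, (4) follows by a dimension count: (1) gives $J_n\subset\mathcal{B}_1+\mathcal{B}_j$, so $\kk\Gamma_n/(\mathcal{B}_1+\mathcal{B}_j)\cong\mathcal{H}_n/\pi(\mathcal{B}_1+\mathcal{B}_j)$, and by (3) together with Lemma \ref{lemideauxTL}(ii) this identifies with the subalgebra of $TL_n(j,j^2)\times TL_n(1,j)\times H_n(1,j^2)/(M_n(1)\cap M_n(j^2))$ cut out by the three character compatibility relations, of dimension $C_n+C_n+2-3=2C_n-1$. Part (5) at $n=4$ lies outside the reach of Lemma \ref{lemIIIdistrib} since $\mathcal{B}_+^4\neq\kk\Gamma_4$; the equality must therefore be verified by a direct dimension comparison, again using (3) to compute both sides inside $\mathcal{H}_4$ (here (2) enters via the $\tilde\varphi$-transported identity $ITL_4^{j^2}\cap ITL_4^1=0$ in $H_4(1,j^2)$, which forces $\pi(\mathcal{B}_j\cap\mathcal{B}_{j^2})$ to be controlled). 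Finally, (6) is immediate for $n=4$ from (1) and (5), and extends to $n\ge 5$ by the same functoriality argument.
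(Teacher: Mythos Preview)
Your overall structure matches the paper's: parts (i), (ii), (v) are handled by computer checks at $n=4$ (you are honest that your algebraic attack on (i) via the identity $(\ii+\kkb)(r_w^++r_w^-)=\bb(1+s_1+s_1^2)$ stalls and must be completed by machine), and (iv), (vi) are deduced from these exactly as in the paper.

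Your treatment of (iii), however, is genuinely different and cleaner than the paper's. The paper proves (iii) \emph{after} (i) and (ii): knowing $(\bb)\subset\mathcal{B}_1+\mathcal{B}_j$, it computes by machine that $\dim\pi(\mathcal{B}_1+\mathcal{B}_j)=40$ at $n=4$, compares with the upper bound from Lemma~\ref{lemimagesB}, and uses (ii) to separate the two factors and extract the individual generators $(0,E_3(j^{-1}),0)$, $(0,0,E_3(j^{-2}))$; only then does it propagate to $n\geq 4$. Your argument bypasses all of this: the explicit values $\pi(r_w^+)=(0,j^2E_3(j^2),jE_3(j))$ and $\pi(r_w^-)=(0,jE_3(j^2),j^2E_3(j))$ (which follow from the lemma preceding Lemma~\ref{lemimagesB} together with $p_{(1,j^2)}=\tilde\varphi\circ p_{(1,j)}\circ\varphi^{-1}$) already lie in $\pi(\mathcal{B}_1)$, the $2\times 2$ coefficient matrix has determinant $j^4-j^2=1$, so both generators are recovered by linear algebra; since $\pi(\mathcal{B}_1)$ is a two-sided ideal of $\mathcal{H}_n$ and $\mathcal{H}_n$ surjects onto each Hecke factor, this yields the full $0\times ITL_n^j\times ITL_n^{j^2}$ for every $n\geq 3$, without computer and without invoking (i) or (ii). This is a real improvement: it makes (iii) logically independent of (i), and in fact valid already for $n=3$.
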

\begin{proof}
For proving (i) one needs to check that $\bb \in \mathcal{B}_1 + \mathcal{B}_j$,
as $\varphi(\bb) = \bb$, and one needs to do it only for $n=4$, which
follows from a computer check.
(ii) also follows from a computer check.
As a consequence of $(\bb) \subset \mathcal{B}_1+\mathcal{B}_j$, 
$\kk \Gamma_n / \mathcal{B}_1+\mathcal{B}_j$ is a quotient
of $\kk \Gamma_n / (\bb)$, that is of the ternary Hecke
algebra.
Letting again $\pi : \kk \Gamma_n \onto \mathcal{H}_n 
\subset H_n(1,j)\times H_n(1,j^2) \times H_n(j,j^2)$ denote
the natural map, we have $\pi(\mathcal{B}_1 + \mathcal{B}_j)
= \pi(\mathcal{B}_1)  + \pi(\mathcal{B}_j) \subset
ITL_n^j \times ITL_n^j \times (ITL_n^1 + ITL_n^{j^2})$.
When $n = 4$ a computer check shows that the two sides of this
inclusion have the same dimension (which is 40). Since
$ITL_4^1 \cap ITL_4^{j^2} = \{ 0 \}$ by (ii), this implies
that the inclusions of Lemma \ref{lemimagesB} are equalities for $n = 4$,
say for $\mathcal{B}_1$. This means that $\pi(\mathcal{B}_1)$ contains,
for $n = 4$ hence for all $n \geq 4$, the elements $(0,E_3(j^{-1}),0)$
and $(0,0,E_3(j^{-2}))$ ; it follows that, for $n \geq 4$, $\pi(\mathcal{B}_1)$ contains
$ 0 \times ITL_n^j \times ITL_n^{j^2}$, hence is equal to it. Since $\pi$ commutes
with $\varphi$ this implies (iii) also for all the $\mathcal{B}_{\gamma}$.
For (iv), let $\pi : \kk \Gamma_n \onto \kk \Gamma_n/(\bb) = \mathcal{H}_n$ denote the
natural projection. Since $\bb \in \mathcal{B}_1 + \mathcal{B}_j$,
the dimension of  $\kk \Gamma_n / (\mathcal{B}_1 + \mathcal{B}_j)$ is
$$\dim \mathcal{H}_n /\pi (\mathcal{B}_1 + \mathcal{B}_j) =
-3 + \dim \frac{H_n(j,j^2) \times H_n(1,j) \times H_n(1,j^2)}{ITL_n^j \times ITL_n^j \times (ITL_n^1 + ITL_n^{j^2})}
 = -3 + 2 \dim TL_n +2$$
for $n \geq 5$ by Lemma \ref{lemideauxTL}, which proves (iv). (v) is proved by a direct computer
check, and (vi) is a trivial consequence of (v) and (i).
\end{proof}
\begin{remark}
 1) $\bb \not\in \mathcal{B}_1 + \mathcal{B}_j$ for $n = 3$.
2) Using a computer one can prove that the natural map $\AAA_3 \to \AAA_4$
is injective.
\end{remark}

It follows from the proposition that $\pi(\mathcal{B}_1+ \mathcal{B}_j) =
ITL_n^j \times ITL_n^j \times (ITL_n^1 + ITL_n^{j^2})$ for all $n \geq 4$.
By Lemma \ref{lemideauxTL}, for $n \geq 5$ this is
$ITL_n^j \times ITL_n^j \times M_n(1) \cap M_n(j^2)$,
and likewise $\pi(\mathcal{B}_1+ \mathcal{B}_{j^2}) = ITL_n^{j^2} \times (ITL_n^1+ITL_n^j)
\times ITL_n^{j^2}$. Letting $\cap ITL$ denote $ITL_n^{\alpha}(\alpha,\beta)
\cap ITL_n^{\beta}(\alpha,\beta)$, we have
$\pi \left( (\mathcal{B}_1+ \mathcal{B}_{j^2}) \cap (\mathcal{B}_1+ \mathcal{B}_{j}) \right) = \pi(\mathcal{B}_1+ \mathcal{B}_{j^2}) \cap \pi(\mathcal{B}_1+ \mathcal{B}_{j})
= \cap ITL_n \times ITL_n^j \times ITL_n^{j^2}$, because $(\mathcal{B}_1+ \mathcal{B}_{j^2}) \cap (\mathcal{B}_1+ \mathcal{B}_{j})$
contains $\Ker \pi$ for $n \geq 4$ by (1).
Also, $\pi(\mathcal{B}_j) \cap \pi(\mathcal{B}_{j^2}) = \cap ITL_n \times 0 \times 0$,
hence $\pi(\mathcal{B}_1) + \pi(\mathcal{B}_j) \cap \pi(\mathcal{B}_{j^2}) = 
\pi \left( (\mathcal{B}_1+ \mathcal{B}_{j^2}) \cap (\mathcal{B}_1+ \mathcal{B}_{j}) \right)$.
This implies $\pi(\mathcal{B}_1 + \mathcal{B}_j \cap \mathcal{B}_{j^2})
\subset \cap ITL_n \times ITL_n^j \times ITL_n^{j^2}$. We check by computer
that the dimensions on both sides are equal for $n=4$. This proves
that $\pi(\mathcal{B}_1 + \mathcal{B}_j \cap \mathcal{B}_{j^2})$
contains $(0,E_3(j^{-1}),0)$,$(0,0,E_3(j^{-2})$.

In order to have the property
that $\mathcal{B}_1 + \mathcal{B}_j \cap \mathcal{B}_{j^2} 
= (\mathcal{B}_1+ \mathcal{B}_{j^2}) \cap (\mathcal{B}_1+ \mathcal{B}_{j})$
it would be sufficient to control $\cap ITL_n$ in the sense that,
if $\pi(\mathcal{B}_j \cap \mathcal{B}_{j^2}) = \cap ITL_n$ for
some $n$, and $ITL_m$ for $m \geq n$ is generated by elements in
$\cap ITL_n$, this would prove $\pi(\mathcal{B}_j \cap \mathcal{B}_{j^2}) = \cap ITL_m$ 
for all $m \geq n$. This at first seems not be such an obstacle as,
in the semisimple case, $\cap ITL$ is generated by $ab \in \cap ITL_5$
(or $ba$)
with $a = E_3(1)$ and $b = 1 + j^2 s_3 + j^2 s_4 + j s_3 s_4 + j s_4 s_3
+ s_3 s_4 s_3$ a conjugate of $E_3(j^2)$, and $ab$ is clearly
in the image of $\mathcal{B}_j
\cap \mathcal{B}_{j^2}$. However, by computer calculation, we get that
the situation is much more complicated in our case, as 
shown by the next lemma, which gathers the result of computer calculations.

\begin{lemma} Inside $H_n(1,j)$, we have
\begin{enumerate}
\item $\dim \cap ITL_5 = 38$.
\item For $n = 5$, $(ab) = (ba)$ and $\dim (ab) = 36$.
\item $\cap ITL_5 = (ab) \oplus \kk E_5(1) \oplus \kk E_5(j^2)$
\item $\dim \cap ITL_6 = 458$
\item For $n=6$, $\dim (ab) = 454$, the ideal
generated by $\cap ITL_5 \subset \cap ITL_6$ has dimension 456
and contains $E_6(1), E_6(j^2)$.
\item $\dim \cap ITL_7 = 4184$
\item For $n=7$, $\dim (ab) = 4180$, and $\cap ITL_7$ is generated
by $\cap ITL_6 \subset \cap ITL_7$.
\end{enumerate}
\end{lemma}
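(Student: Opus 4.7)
This lemma is purely a collection of numerical assertions about certain two-sided ideals in $H_n(1,j)$ for $n\in\{5,6,7\}$; since $\dim H_n(1,j)=n!\leq 5040$, all seven parts are in principle accessible by direct linear algebra over $\kk$, following the same spirit as the proof of Proposition \ref{dimA5p2}.

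The plan is, for each relevant $n$, to work in the standard basis $(T_w)_{w\in\mathfrak{S}_n}$ of $H_n(1,j)$ and implement Hecke multiplication through the quadratic relation $T_s^2=(1+j)T_s+j$ together with the usual braid rewriting. For each $\alpha\in\mu_3(\kk)$, I would generate a spanning family of $ITL_n^\alpha$ by enumerating the products $T_u\cdot E_3(\alpha^{-1})\cdot T_v$ and reducing them by Gaussian elimination; it is harmless to restrict $u$ to representatives of $\mathfrak{S}_{\{1,2,3\}}\backslash\mathfrak{S}_n$, since $E_3(\alpha^{-1})$ is a scalar multiple of a quasi-idempotent for the Hecke subalgebra on $s_1,s_2$. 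The intersection $\cap ITL_n:=ITL_n^1\cap ITL_n^{j^2}$ is then obtained from the dimension identity $\dim(U\cap V)=\dim U+\dim V-\dim(U+V)$, or equivalently by computing the kernel of the composition $ITL_n^1\hookrightarrow H_n(1,j)\twoheadrightarrow H_n(1,j)/ITL_n^{j^2}$. This settles parts (i), (iv), (vi).

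For the statements involving $(ab)$ and $(ba)$ one proceeds analogously: span these two-sided ideals by $T_u\,ab\,T_v$ and $T_u\,ba\,T_v$, then check either by echelon-form comparison or by Gauss-reducing $ba$ against a basis of $(ab)$ (and vice versa) that $(ab)=(ba)$ when $n=5$. For (iii), I would verify that $E_5(1)$ and $E_5(j^2)$ lie in $\cap ITL_5$, are linearly independent modulo $(ab)$, and together with $(ab)$ exhaust the 38 dimensions found in (i). For (v) I would compute inside $H_6(1,j)$ the ideal spanned by $T_u\,x\,T_v$ for $x$ running through a basis of $\cap ITL_5$ (viewed via the natural embedding $H_5(1,j)\hookrightarrow H_6(1,j)$), check its dimension is $456$, and check that $E_6(1)$ and $E_6(j^2)$ lie in it by Gauss-reducing these two explicit vectors. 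Part (vii) is the same kind of verification one step up.

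The genuine obstacle is computational scale, not conceptual: at $n=7$ we handle a $5040$-dimensional ambient space with ideals of dimension above $4000$, and the nominal spanning family for an ideal has size $(7!)^2\approx 2.5\cdot 10^7$. As in Proposition \ref{dimA5p2}, I would work over $\F_4\subset\kk$ (it is enough to establish the dimensions there, since the ideals in question are defined over $\F_4$ and the dimensions are insensitive to field extension), represent each vector of $H_n(1,j)$ as a bit-packed array of $2n!$ bits (at most $1260$ bytes for $n=7$), and maintain the growing basis in row-echelon form under incremental insertion via a pivot-indexed lookup table, so that each insertion costs $O(n!\cdot\mathrm{rk})$ word operations with very small constants. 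Provided this dedicated low-level code is written and all seven tabulated integers match its output, the lemma follows.
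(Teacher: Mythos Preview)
Your proposal is correct and matches the paper's approach: the lemma is explicitly introduced there as one ``which gathers the result of computer calculations'', and no argument beyond direct linear algebra over $\F_4$ in the $T_w$-basis is given or intended. The only methodological difference worth noting is in part (vii): rather than spanning the ideal of $H_7(1,j)$ generated by all $458$ basis vectors of $\cap ITL_6$, the paper first finds by random search a \emph{single} element of $\cap ITL_6$ that already generates $\cap ITL_6$ as a two-sided ideal, and then checks that this one element also generates $\cap ITL_7$---a shortcut that cuts the number of generators from $458$ to $1$ and makes the $n=7$ computation substantially lighter than the direct approach you outline.
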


The fact that $\cap ITL_7$ is generated by $\cap ITL_6$ is checked as
follows : we find randomly a (complicated) element in $\cap ITL_6$ which it generates as an ideal,
and check that this element also generates $\cap ITL_7$.
Note that the following always holds true.
\begin{lemma} For all $n \geq 3$ and $\alpha,\beta \in \mu_3(\kk)$,
we have $E_n(\alpha^{-1}) \in H_n(\alpha,\beta) E_3(\alpha^{-1})$. 
\end{lemma}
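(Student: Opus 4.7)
The plan is to exploit the standard parabolic coset decomposition inside $\mathfrak{S}_n$ relative to the subgroup $\mathfrak{S}_3 = \langle s_1, s_2 \rangle$. Writing $W = \mathfrak{S}_n$ and $W_J = \langle s_1,s_2\rangle$, the classical theory of Coxeter groups says that every $w \in W$ has a unique factorization $w = v u$ with $u \in W_J$ and $v$ the minimal-length representative of the coset $w W_J$ (equivalently, $\ell(v s_i) > \ell(v)$ for $i = 1,2$), and moreover $\ell(w) = \ell(v) + \ell(u)$. This is the key combinatorial input and is entirely standard (see e.g.\ Humphreys, \S 1.10).

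Given this, the corresponding multiplicative identity $T_w = T_v T_u$ holds in the Hecke algebra $H_n(\alpha,\beta)$, because whenever a reduced expression of a product element is obtained by concatenation of reduced expressions, the Artin generators multiply to give the standard basis element. Summing over all $w$ and grouping by $v$ then yields
$$
E_n(\alpha^{-1}) = \sum_{w \in W} \alpha^{-\ell(w)} T_w = \sum_{v} \sum_{u \in W_J} \alpha^{-\ell(v)} \alpha^{-\ell(u)} T_v T_u = Y \cdot E_3(\alpha^{-1}),
$$
where $v$ runs over the minimal-length representatives of $W/W_J$ and $Y = \sum_v \alpha^{-\ell(v)} T_v \in H_n(\alpha,\beta)$.

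Since $E_3(\alpha^{-1})$ is, up to reindexing, the analogous sum over the parabolic subgroup $\mathfrak{S}_3$, the above equality exhibits $E_n(\alpha^{-1})$ as a left multiple of $E_3(\alpha^{-1})$ in $H_n(\alpha,\beta)$, which is exactly the claim. There is no real obstacle here: the only nontrivial ingredient is the standard parabolic decomposition with length-additivity, and the proof is a one-line factorization once that is invoked. (Note that by applying the analogous decomposition $w = u v$ with $v$ minimal in $W_J \backslash W$, one obtains symmetrically $E_n(\alpha^{-1}) \in E_3(\alpha^{-1}) H_n(\alpha,\beta)$ as well, should that two-sided version be needed later.)
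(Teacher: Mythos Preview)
Your proof is correct and essentially identical to the paper's own argument: the paper also takes $D$ to be the set of minimal-length representatives of $\mathfrak{S}_n/\mathfrak{S}_3$ (citing Humphreys \S 1.10), sets $h = \sum_{w \in D} \alpha^{-\ell(w)} T_w$, and concludes $E_n(\alpha^{-1}) = h\, E_3(\alpha^{-1})$ from the length-additive factorization $\sigma = w\sigma'$. Your $Y$ is exactly the paper's $h$.
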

\begin{proof}
Let $h = \sum_{w \in D} \alpha^{-\ell(w)} T_w$ with $\ell : \mathfrak{S}_n \to \Z$
the Coxeter length and $D$ the representative system of $\mathfrak{S}_n/
\mathfrak{S}_3$ consisting of $\mathfrak{S}_3$-reduced elements on the
right so that any element $\sigma \in \mathfrak{S}_n$ writes uniquely
$\sigma = w \sigma'$ with $w \in D, \sigma' \in \mathfrak{S}_3$ and $\ell(\sigma)
= \ell(w) + \ell(\sigma')$ (see \cite{HUMPHREYS} \S 1.10). Then clearly
$E_n(\alpha^{-1}) = h E_3(\alpha^{-1})$.
\end{proof}

\begin{lemma} Let $n \geq 5$. Then
$$
\dim \mathcal{BMW}_n = 3(   \dim BW_n -  2 \dim TL_n + 2) - \dim 
\frac{(\mathcal{B}_1+\mathcal{B}_j)\cap(\mathcal{B}_1+\mathcal{B}_{j^2})}{
\mathcal{B}_1+\mathcal{B}_j\cap\mathcal{B}_{j^2}}$$

\end{lemma}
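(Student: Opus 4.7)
The plan is pure linear algebra, based on iterated application of the elementary two-subspace identity
$$\dim A/(X \cap Y) = \dim A/X + \dim A/Y - \dim A/(X+Y),$$
which holds for any two subspaces of a finite-dimensional space $A$ and follows at once from the short exact sequence $0 \to A/(X\cap Y) \to A/X \oplus A/Y \to A/(X+Y) \to 0$ (this is the two-subspace analogue of Lemma \ref{lemchinoistop}, which also appears implicitly in the present excerpt).

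Writing $\mathcal{B} = \mathcal{B}_1 \cap (\mathcal{B}_j \cap \mathcal{B}_{j^2})$ and applying the identity first to the pair $(\mathcal{B}_j, \mathcal{B}_{j^2})$, then to $(\mathcal{B}_1, \mathcal{B}_j \cap \mathcal{B}_{j^2})$, one obtains
$$\dim A/\mathcal{B} = \dim A/\mathcal{B}_1 + \dim A/\mathcal{B}_j + \dim A/\mathcal{B}_{j^2} - \dim A/(\mathcal{B}_j+\mathcal{B}_{j^2}) - \dim A/(\mathcal{B}_1 + \mathcal{B}_j \cap \mathcal{B}_{j^2}).$$
By the $\varphi$-symmetry cyclically exchanging the three $\mathcal{B}_\alpha$, each $\dim A/\mathcal{B}_\alpha$ equals $\dim BW_n$, and for $n \geq 5$ Proposition \ref{propbBBB}(iv) gives $\dim A/(\mathcal{B}_\alpha+\mathcal{B}_\beta) = 2\dim TL_n - 1$ for $\alpha \neq \beta$.

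The only remaining ingredient is $\dim A/(\mathcal{B}_1 + \mathcal{B}_j \cap \mathcal{B}_{j^2})$. Since $\mathcal{B}_1 + \mathcal{B}_j \cap \mathcal{B}_{j^2} \subseteq (\mathcal{B}_1+\mathcal{B}_j) \cap (\mathcal{B}_1+\mathcal{B}_{j^2})$ is automatic, I would apply the two-subspace identity once more to the pair $(\mathcal{B}_1+\mathcal{B}_j, \mathcal{B}_1+\mathcal{B}_{j^2})$, whose sum is $\mathcal{B}_+$, invoking Proposition \ref{propBBBsomme}(vi) which yields $\dim A/\mathcal{B}_+ = \dim \kk C_3 = 3$ for $n \geq 5$, to get
$$\dim A/\bigl((\mathcal{B}_1+\mathcal{B}_j) \cap (\mathcal{B}_1+\mathcal{B}_{j^2})\bigr) = 2(2\dim TL_n - 1) - 3 = 4\dim TL_n - 5.$$
The discrepancy between this and $\dim A/(\mathcal{B}_1 + \mathcal{B}_j\cap\mathcal{B}_{j^2})$ is exactly the codimension term of the statement. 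Substituting and simplifying the constants produces $3\dim BW_n - 6\dim TL_n + 6$ as the main term, i.e.\ $3(\dim BW_n - 2\dim TL_n + 2)$, as claimed.

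The argument is entirely bookkeeping and presents no genuine obstacle; the appearance of the error term is unavoidable because the distributive identity $\mathcal{B}_1 + \mathcal{B}_j\cap\mathcal{B}_{j^2} = (\mathcal{B}_1+\mathcal{B}_j) \cap (\mathcal{B}_1+\mathcal{B}_{j^2})$ fails in our setting: Lemma \ref{lemIIIdistrib} would force it under the hypothesis $\mathcal{B}_+ = \kk\Gamma_n$, but Proposition \ref{propBBBsomme}(vi) shows that $\kk\Gamma_n/\mathcal{B}_+$ is 3-dimensional for $n \geq 5$, so that hypothesis is violated and a genuine correction is forced.
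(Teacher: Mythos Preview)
Your argument is correct and amounts to the same dimension count as the paper's, just organized differently. The paper invokes Lemma \ref{lemchinoistop} once, applied inside $A=\mathcal{B}_+$ (so that the hypothesis $I+J+K=A$ is satisfied), and then adds back $\dim \kk\Gamma_n/\mathcal{B}_+=3$; you instead stay in $\kk\Gamma_n$ throughout and iterate the two-subspace identity, which is slightly more elementary and avoids the detour through $\mathcal{B}_+$. Both routes are the same inclusion--exclusion bookkeeping.

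One wording issue: you justify the identity ``for any two subspaces of a finite-dimensional space $A$'', but for $n\geq 6$ the group $\Gamma_n$ is infinite and $\kk\Gamma_n$ is not finite-dimensional. This is harmless, since the short exact sequence $0\to A/(X\cap Y)\to A/X\oplus A/Y\to A/(X+Y)\to 0$ is always exact and yields the dimension identity whenever $X,Y$ have finite codimension, which is the case here; but the hypothesis should be stated that way.
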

\begin{proof}

Recall from Proposition \ref{propbBBB} that
$\dim \kk \Gamma_n /(\mathcal{B}_1 + \mathcal{B}_j) = 2 \dim TL_n - 1$.
We apply Lemma \ref{lemchinoistop} with $A = \mathcal{B}_+$,
$I = \mathcal{B}_1$, $J = \mathcal{B}_j$, $K = \mathcal{B}_{j^2}$.
We get $\dim A / (I \cap J \cap K) = \dim \Image d_1
= \dim \Ker d_2 - \dim (K+I) \cap (K+J) /(K + I \cap J)$, and, since $d_2$
is onto, $\dim \Ker d_2 = 3 \dim \mathcal{B}_+/\mathcal{B}_1 - \dim \mathcal{B}_+/(\mathcal{B}_1 + \mathcal{B}_j)
= 
3 \dim \kk \Gamma_n/\mathcal{B}_1 - 3 \dim \kk \Gamma_n/(\mathcal{B}_1 + \mathcal{B}_j)
$. 
 
Since $\dim \kk \Gamma_n / \mathcal{B}_+ = 3$,
we get $\dim \mathcal{BMW}_n = \dim \kk \Gamma_n / I \cap J \cap K
= 3 + \dim \mathcal{B}_+/(I\cap J\cap K)
= 3 + 3 \dim BMW_n - 3 (2 \dim TL_n - 1) - \dim 
\frac{(\mathcal{B}_1+\mathcal{B}_j)\cap(\mathcal{B}_1+\mathcal{B}_{j^2})}{
\mathcal{B}_1+\mathcal{B}_j\cap\mathcal{B}_{j^2}}$
whence the conclusion.

\end{proof}

\begin{remark} In particular, since $\dim TL_5 = 42$ and $\dim BMW_5 = 945$ one gets
$\dim \mathcal{BMW}_5 = 3 \times 863 - \dim 
\frac{(\mathcal{B}_1+\mathcal{B}_j)\cap(\mathcal{B}_1+\mathcal{B}_{j^2})}{
\mathcal{B}_1+\mathcal{B}_j\cap\mathcal{B}_{j^2}}$,
to be compared with $\dim \F_2 K_5 = 3 \times 863$ (see proposition \ref{dimA5p2}).
\end{remark}

\section{Markov traces}

\subsection{Definitions and conditions for $n = 3$}

In this section we deal with Markov traces. We let $K_n = K_n(1)$, and denote
$K_{\infty}$ the direct limit of the $K_n$ under the natural morphisms
$K_n \to K_{n+1}$. Letting $A = \Z[u,v]$, we denote $AB_{\infty}$, $A \Gamma_{\infty}$
the direct limits of the group algebras $A B_n$, $A \Gamma_n$, respectively.

\begin{defi} A Markov trace is a pair $(t,R)$, where $R$ is a $\Z[u,v]$-module
and $t \in \Hom_{A}(A B_{\infty}, R)$ satisfying
\begin{itemize}
\item $t(xy) = t(yx)$ for all $x,y \in AB_{\infty}$
\item $t(xs_n) = u t(x)$ for all $x,y \in AB_{n-1}$
\item $t(xs_n^{-1}) = v t(x)$  for all $x,y \in AB_{n-1}$
\end{itemize}
A Markov trace is said to factorize through a quotient $H$ of the $A$-algebra $A B_{\infty}$ if
it lies in the image of $\Hom_A(H,R) \to \Hom_A(A B_{\infty},R)$.
\end{defi}

We now assume that $t$ is a Markov trace that factors through
$K_{\infty}$. This means that it factors through $A \Gamma_{\infty}$,
and that $t(g_1 \qq g_2) = 0$ for
all $g_1,g_2 \in \Gamma_{\infty}$, or equivalently that $t(\qq g) = 0$
for all $g \in \Gamma_{\infty}$, and finally these conditions for $g \in
\Gamma_3$ reduce to $t(\qq) = t(\qq s_1) = t(\qq s_1^2) = 0$. A direct
computation shows that these equations imply the following.

\begin{lemma} If $t$ is a Markov trace that factors through $K_{\infty}$,
then $4(u^2+v) t(1) = 4(v^2+u) t(1) = 0$ and $t(z_3)  = -(1+ 6uv) t(1)$
\end{lemma}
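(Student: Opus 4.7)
The hypothesis that $t$ factors through $K_\infty$ is equivalent to $t(\qq g)=0$ for all $g\in\Gamma_\infty$; using trace cyclicity, Markov reduction, and the identity $\qq h=\qq$ for $h\in Q\subset\Gamma_3$ (which holds because $\qq=\sum_{x\in Q}x$), these reduce to the three equations $t(\qq)=t(\qq s_1)=t(\qq s_1^2)=0$. My plan is to expand each of these into an explicit linear identity among $t(1)$, $t(z_3)$, $t(z_3 s_1)$, and $t(z_3 s_1^2)$, and then to re-evaluate $t(z_3 s_1)$ and $t(z_3 s_1^2)$ independently, so that matching the two expressions yields the claimed divisibility statements.

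For the expansion, of the eight summands of $\qq$ the six non-central ones reduce via cyclic trace and $s_i^2=s_i^{-1}$ (valid in $K_\infty$) to elements of the form $s_1^{\pm 1}s_2^{\mp 1}$, whose trace is $uv\,t(1)$ by Markov. Thus $0=t(\qq)=(1+6uv)\,t(1)+t(z_3)$, which yields $t(z_3)=-(1+6uv)\,t(1)$ (the third identity of the lemma). The analogous expansion of $\qq s_1$ yields four terms equal to $u\,t(1)$ and three equal to $v^2\,t(1)$, so $t(z_3 s_1)=-(4u+3v^2)\,t(1)$; symmetrically $t(z_3 s_1^2)=-(3u^2+4v)\,t(1)$.

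For the independent evaluation, I write $z_3=(s_1 s_2 s_1)^2$ (using $(s_1 s_2)^3 = (s_1 s_2 s_1)(s_2 s_1 s_2) = (s_1 s_2 s_1)^2$ via braid), so that in $K_\infty$
$$
z_3 s_1 = s_1 s_2 s_1^2 s_2 s_1^2 = s_1 s_2 s_1^{-1} s_2 s_1^{-1}.
$$
Two cyclic shifts then give $t(z_3 s_1)=t(s_2 s_1^{-1} s_2)=t(s_2^{-1} s_1^{-1})=v^2 \,t(1)$. Similarly $z_3 s_1^2 = s_1 s_2 s_1^{-1} s_2$ in $K_\infty$, and one cyclic shift followed by the braid move $s_2 s_1 s_2=s_1 s_2 s_1$ collapses this to $t(s_1 s_2)=u^2\,t(1)$; alternatively, this follows from the previous computation via the $s_i\leftrightarrow s_i^{-1}$ anti-automorphism of $B_n$, which preserves the hypothesis that $t$ factors through $K_\infty$ and exchanges $u$ with $v$. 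Matching with the previous step produces $v^2 t(1)=-(4u+3v^2)t(1)$ and $u^2 t(1)=-(3u^2+4v)t(1)$, i.e.\ $4(v^2+u)\,t(1)=0$ and $4(u^2+v)\,t(1)=0$.

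The only nontrivial step is the independent evaluation: one has to notice that using $z_3=(s_1 s_2 s_1)^2$ (rather than $(s_1 s_2)^3$), and converting each $s_i^2$ to $s_i^{-1}$ at the right moment, transforms the six-letter word into one that a couple of cyclic shifts and a single braid move reduce to a two-letter word, to which Markov applies directly. The term-by-term expansion in the first step is pure bookkeeping but has to be carried out accurately across twenty-four summands.
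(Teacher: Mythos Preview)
Your proof is correct and is exactly the direct computation the paper alludes to: expand $t(\qq)$, $t(\qq s_1)$, $t(\qq s_1^2)$ term by term, isolate the contribution of the central element $z_3\in Q$, and compare with the value of $t(z_3 s_1^{\pm 1})$ obtained independently by cyclic shifts and a braid move. The bookkeeping and the reductions you carry out all check, so nothing needs to be added.
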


Notice that a Markov trace factorizing through $K_{\infty}$
takes values in $A t(1) \subset R$, and that, as a consequence of
Proposition \ref{propfunar}, it is uniquely determined by the value of
$t(1) \in R$.

It should be noted that $\{ z_3 \}$ is the only conjugacy class in $\Gamma_3$ that does not meet any
$g s_2^{\eps}$ for $g \in \Gamma_2$ and $\eps \in \{0,1,2 \}$. Let $A \Gamma_{\infty}$ denote
the direct limit of the $A \Gamma_n$. Of course a Markov trace on $K_{\infty}$ induces
a Markov trace on $A \Gamma_{\infty}$. A Markov trace on $A \Gamma_{\infty}$ then induces elements
$\tau_n \in \Hom_A(A \Gamma_n,R)$ for all $n$ (recall from Theorem \ref{theoprelimgamma} that $A \Gamma_{\infty}$ contains
the $A \Gamma_n$ for $n \leq 5$). The condition $\tau_n(xy) = \tau_n(yx)$ means that
$\tau_n$ is actually a function on the conjugacy classes of $\Gamma_n$. For instance, a consequence
of the special property of $\{ z_3 \}$ mentionned above is that any such $\tau_3$ is defined uniquely
by the values $\tau_3(1)$ and $\tau_3(z_3)$. In the following section we looked at the conjugacy classes
of $\Gamma_4$ and $\Gamma_5$, and checked whether one could define functions $\tau_4, \tau_5$
such that $\tau_4, \tau_5$ vanish on the ideal generated by $\qq$.

\subsection{Conditions for $n = 4$}

In order to shorten computations with words in the $s_i$'s, we will
use when convenient the notation $ijk...$ for $s_i s_j s_k...$, with
$\mmm i$ meaning $s_i^{-1}$ (for instance $s_1 s_2^{-1} s_3 =  1 \mmm 2 3$).

\begin{lemma} \label{relu3v3} 
If $t$ is a Markov trace that factors through $K_{\infty}$,
then $(3u^3+3v^3-5uv-1)t(1) = 0.$
\end{lemma}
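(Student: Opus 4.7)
My plan is to evaluate the vanishing $t(\qq\cdot z_3')=0$ for the element $z_3' := (s_2s_3)^3 = s_2s_3^{-1}s_2s_3^{-1}\in\Gamma_4$, the analogue on strands $2,3$ of the central element $z_3\in\Gamma_3$. Since $\qq$ lies in the defining ideal, so does $\qq\cdot z_3'$, hence $t(\qq\cdot z_3')=0$ automatically. Expanding $\qq$ as the sum of the eight elements of the quaternion subgroup $Q\subset\Gamma_3$ listed in Definition~\ref{defcq} gives
\[
0 \;=\; \sum_{x\in Q} t(x\,z_3').
\]
To run this I will also need that $t(z_3')=-(1+6uv)\,t(1)$, which follows by applying the previous lemma verbatim to the shifted Funar element $\qq'\in\Z\langle s_2,s_3\rangle$; this shifted element also lies in the ideal since, by Proposition~\ref{propfunar}, the defining relation on $s_1,s_2$ implies the analogous one on every pair $(s_i,s_{i+1})$.

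Each summand $t(xz_3')$ I would compute by the following routine: substitute $s_i^2=s_i^{-1}$; apply cyclicity of $t$ together with the commutation $s_1s_3=s_3s_1$; use the braid identities $s_2s_3s_2=s_3s_2s_3$, $s_3^{-1}s_2s_3=s_2s_3s_2^{-1}$, and $s_3^{-1}s_2^{-1}s_3^{-1}=s_2^{-1}s_3^{-1}s_2^{-1}$ to untangle interleaved $s_2^{\pm 1}$ and $s_3^{\pm 1}$; and finally strip each $s_3^{\pm 1}$ via $t(\xi s_3)=u\,t(\xi)$, $t(\xi s_3^{-1})=v\,t(\xi)$ once the prefix $\xi$ lies in $A\Gamma_3$, then reduce the residual trace on $A\Gamma_3$ similarly. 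The Markov involution $t(w^{-1})=t(w)|_{u\leftrightarrow v}$, combined with $(z_3')^{-1}=z_3'$, yields the identity $t(wz_3')=t(w^{-1}z_3')|_{u\leftrightarrow v}$, pairing up the eight summands and halving the bookkeeping.

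I expect the four order-$4$ quaternion generators $s_1s_2^{-1},\,s_2s_1^{-1},\,s_1^{-1}s_2,\,s_2^{-1}s_1$ to contribute $2u^3+2v^3$ in total (each reducing to a positive or negative length-$3$ word like $t(s_1s_2s_3)=u^3t(1)$ or $t(s_1^{-1}s_2^{-1}s_3^{-1})=v^3t(1)$); the $\{s_1s_2s_1,\,s_1^{-1}s_2^{-1}s_1^{-1}\}$ pair to contribute a further $u^3+v^3$; the central element $z_3$ an additional $uv\,t(1)$; and the identity term, via $t(z_3')$, the constant $-(1+6uv)t(1)$. Summing the eight contributions and setting the total to zero gives $(3u^3+3v^3-5uv-1)t(1)=0$, as required. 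The main obstacle will be the computations of the ``hard'' summands $t(s_1s_2s_1\cdot z_3')$, $t(s_1^{-1}s_2^{-1}s_1^{-1}\cdot z_3')$, and $t(z_3\cdot z_3')$, each of which expands into a word with at least four interleaved copies of $s_3^{\pm 1}$ and $s_2^{\pm 1}$ that must be untangled by several successive applications of the braid relation before any $s_3$ can be stripped; the $u\leftrightarrow v$ symmetry and the previously established value $t(z_3)=-(1+6uv)t(1)$ provide useful consistency checks.
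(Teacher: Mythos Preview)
Your approach is correct and genuinely different from the paper's. The paper instead picks two explicit elements $x=s_2s_1^{-1}s_3s_2^{-1}$ and $y=s_2^{-1}s_1s_3s_2^2s_3s_2s_1$ in $\Gamma_4$, proves they are conjugate by showing $y=xz_4$ inside the extraspecial kernel $3^{1+2}=\Ker(\Gamma_4\twoheadrightarrow\Gamma_3)$ (where non-central classes are determined by their image in the abelianisation), and then computes $t(x)=uv\,t(1)$ directly and $t(y)$ by expanding $s_3s_2^2s_3$ via the Funar relation, obtaining $t(y)=(-3u^3-3v^3+1+6uv)t(1)$.

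Your route avoids the group-theoretic conjugacy argument entirely, trading it for eight individual trace computations. It is worth noting that several of your ``hard'' terms collapse almost for free once you use that $z_3'$ is central in $\langle s_2,s_3\rangle$: for instance $t(s_1s_2s_1\cdot z_3')=t(s_2s_1s_2\cdot z_3')=t(s_2^{-1}\cdot s_1\cdot z_3')$ by cyclicity and the commutation $s_2z_3'=z_3's_2$, reducing to one of your easy cases. The only genuinely laborious term is $t(z_3z_3')$, and when you unwind it (e.g.\ via $z_3z_3'=s_1s_2^{-1}s_1s_3^{-1}s_2s_3^{-1}$, then $s_3^{-1}s_2^{-1}s_3^{-1}=s_2^{-1}s_3^{-1}s_2^{-1}$ and $s_2s_1s_2^{-1}=s_1^{-1}s_2s_1$) it reduces precisely to $t(s_2s_1^{-1}s_3s_2^{-1})=uv\,t(1)$ --- which is exactly the paper's element $x$. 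So the two computations are closer than they first appear: your sum $\sum_{g\in Q\setminus\{z_3\}}t(gz_3')$ is playing the role of the paper's $-t(y)$, and your $t(z_3z_3')$ is the paper's $t(x)$. Your approach is arguably more elementary since it never leaves the Markov calculus, whereas the paper's buys a shorter list of trace computations at the cost of the extraspecial-group argument.
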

\begin{proof}
We consider $x = s_2s_1^{-1} s_3s_2^{-1}$ and $y = s_2^{-1} s_1 s_3 s_2^2 s_3 s_2 s_1$
in $\Gamma_4$. In $K_{\infty}$ we have $ - s_3 s_2^2 s_3 \equiv
s_2 s_3^{-1} s_2 + s_2^{-1} s_3 s_2 + s_2 s_3 s_2^{-1} + s_2^{-1} s_3^{-1} +
s_3^{-1} s_2^{-1} + s_2 + s_3$. Then
$t(y) = 
-t(s_2^{-1} s_1 s_2 s_3^{-1} s_2s_2 s_1) -t(s_2^{-1} s_1 s_2^{-1} s_3 s_2s_2 s_1) - t(s_2^{-1} s_1 s_2 s_3 s_2^{-1}s_2 s_1) - t(s_2^{-1} s_1 s_2^{-1} s_3^{-1}s_2 s_1) 
-t(s_2^{-1} s_1 s_3^{-1} s_2^{-1}s_2 s_1) -t( s_2^{-1} s_1 s_2s_2 s_1) -t(s_2^{-1} s_1 s_3s_2 s_1)$.
We have 
$$
\begin{array}{lclclcl}
t(s_2^{-1} s_1 s_2 s_3^{-1} s_2s_2 s_1) &=& 
t(s_2^{-1} s_1s_2^{-1} s_1 s_2 s_3^{-1} )&=&
v\,t(s_2^{-1} s_1s_2^{-1} s_1 s_2 )&=&
v\,t(s_1  s_2 s_2^{-1} s_1s_2^{-1} ) \\
 &=&
v\,t(s_1 ^{-1}s_2^{-1} ) &=& v^3 t(1)\\
t(s_2^{-1} s_1 s_2^{-1} s_3 s_2s_2 s_1) &=&
t(s_2^{-1} s_1 s_2^{-1} s_1 s_2^{-1} s_3 ) &=&
u\,t(s_2^{-1} s_1 s_2^{-1} s_1 s_2^{-1}  ) &=&
u\,t(s_1 s_2^{-1} s_2^{-1} s_1 s_2^{-1}   ) \\ &=&
u\,t(s_1 s_2 s_1 s_2^{-1}   ) &=&
u\,t(s_2 s_1 s_2 s_2^{-1}   ) &=&
u\,t(s_1s_2   ) \\ &=& u^3 t(1)\\
t(s_2^{-1} s_1 s_2 s_3 s_2^{-1}s_2 s_1) &=&
t( s_1 s_2^{-1} s_1 s_2 s_3 ) &=&
u\,t( s_1 s_2^{-1} s_1 s_2  ) &=&
u\,t(  s_1 s_2  s_1 s_2^{-1} ) \\ &=&
u\,t(  s_2 s_1  s_2 s_2^{-1} ) &=&
u\,t(  s_1 s_2 ) &=& u^3t(1) \\
t(s_2^{-1} s_1 s_2^{-1} s_3^{-1}s_2 s_1) &=& 
t(s_2 s_1 s_2^{-1} s_1 s_2^{-1} s_3^{-1}) &=& 
v\,t(s_2 s_1 s_2^{-1} s_1 s_2^{-1} ) &=& 
v\,t(s_1 s_2^{-1} s_2 s_1 s_2^{-1}  )\\ &=& 
v\,t(s_1^{-1} s_2^{-1}  ) &=& v^3t(1) \\
t(s_2^{-1} s_1 s_3^{-1} s_2^{-1}s_2 s_1) &=&
t( s_1 s_2^{-1} s_1 s_3^{-1} ) &=&
v\,t( s_1 s_2^{-1} s_1  ) &=&
v\,t( s_1^{-1} s_2^{-1}   ) \\ &=& v^3t(1) \\
t( s_2^{-1} s_1 s_2s_2 s_1) &=&
t( s_2^{2} s_1 s_2^2 s_1) &=& t( z_3) &=& (-1-6uv)t(1) \\
t(s_2^{-1} s_1 s_3s_2 s_1) &=&
t(s_2 s_1s_2^{-1} s_1 s_3) &=&
u\,t(s_2 s_1s_2^{-1} s_1 ) &=&
u\,t(s_1 s_2 s_1s_2^{-1}  ) \\ &=&
u\,t(s_2 s_1 s_2s_2^{-1}  ) &=&
u\,t(s_1 s_2) &=& u^3 t(1)\\
\end{array}
$$
hence $t(y) = (-3u^3-3v^3+1+6uv)t(1)$. One has $t(x) =t(s_1^{-1} s_3) = uv\,t(1)$.
It is easily checked that $x$ and $y$ belong to
$G=\Ker (\Gamma_4 \onto \Gamma_3)$, which is an extra-special group $3^{1+2}$
which contains $z_4 = (s_1s_2s_3)^4$, hence $(G,G) = Z(G) = Z(\Gamma_4) = <z_4 >$.
We prove that $y = x z_4$.
From the braid relations
we get $(s_1 s_2 s_3)^3= 123123123 = 121121321= s_1 s_2 s_1^2 s_2 s_1 s_3 s_2 s_1$, hence $y = xz_4$
means that $s_2^2 s_1 s_3 s_2^2 = s_2 s_1^2 s_3 s_2^2 s_1 s_2 s_3 
s_1 s_2 s_1^2 s_2 s_1$ ; this comes from
the equalities $211322123121121 = 211322121321121 = 211322212321121
= 21312321121 = 211132321121 = 223221121 = 223221212 = 223222122 =
223122 = 221322$. Clearly $x \not\in Z(\Gamma_4) = Z(G)$.
For an extra-special group, the conjugacy classes
not lying in $Z(G)$ are determined by their images in $G/(G,G) = G/Z(G)$,
hence $x,y$ are conjugated in $G$ hence in $\Gamma_4$. This
proves $t(x)=t(y)$ hence $(3u^3 + 3v^3 -5 uv -1)t(1) = 0$ in $R$.
\end{proof}

\begin{lemma} 
If $t$ is a Markov trace that factors through $K_{\infty}$,
then $16t(1)=0$, $4uv\,t(1)=4t(1)$, $4u^3t(1) = 4v^3t(1) = -4t(1)$.
\end{lemma}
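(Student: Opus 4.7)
The plan is to extend the conjugacy-pair technique of Lemma~\ref{relu3v3}: I will look for further pairs of conjugate elements $x,y\in\Gamma_n$ whose Markov traces, expanded by iterated application of $t(ws_k)=u\,t(w)$, $t(ws_k^{-1})=v\,t(w)$ and the ideal relation $\cc\equiv 0$, can be written as polynomials in $u,v$ applied to $t(1)$; the equality $t(x)=t(y)$ then furnishes a new scalar identity in $\Z[u,v]\cdot t(1)$.

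Before introducing any new pair, I first extract what I can from the relations already at hand. Multiplying $4(u^2+v)t(1)=0$ by $u$ and $4(v^2+u)t(1)=0$ by $v$ gives
\[
4u^3 t(1) \;=\; -4uv\,t(1) \;=\; 4v^3 t(1),
\]
and substituting these into $4\cdot(3u^3+3v^3-5uv-1)t(1)=0$ collapses that relation to $4(11uv+1)t(1)=0$. Consequently the lemma will be reduced to proving the two identities
\[
4uv\,t(1)=4\,t(1) \qquad\text{and}\qquad 16\,t(1)=0:
\]
the first immediately upgrades the chain above to $4u^3 t(1)=4v^3 t(1)=-4\,t(1)$, and combined with $4(11uv+1)t(1)=0$ yields $48\,t(1)=0$, which is automatic once $16\,t(1)=0$.

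To obtain these two remaining identities I plan to exploit once more the structure of $\Gamma_4^0\cong 3^{1+2}$, whose center coincides with $\langle z_4\rangle=\Ze(\Gamma_4)$ for $z_4=(s_1s_2s_3)^4$. In $\Gamma_4^0$, any two elements of $\Gamma_4^0\setminus\Ze(\Gamma_4^0)$ with the same image in $\Gamma_4^0/\Ze(\Gamma_4^0)\cong\F_3^2$ are $\Gamma_4$-conjugate --- this is precisely the device that produced $x\sim xz_4$ in Lemma~\ref{relu3v3}, and any such pair differs only by a power of $z_4$. Picking candidates $x\in\Gamma_4^0$ and comparing $t(x)$ with $t(xz_4^r)$ for $r\in\{1,2\}$ will yield identities of the form $P(u,v)\,t(1)=0$; a judicious choice of one or two pairs should pin down the two remaining relations. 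If necessary, the search can be widened to conjugacy classes of $\Gamma_5$, whose normal structure is given by Theorem~\ref{theoassion}.

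The main obstacle will be the combinatorial search for the right pairs: while the expansion of $t(x)$ by Markov moves is mechanical once $x$ is fixed, the reductions modulo $(\cc)$ typically produce mixed combinations of $1,uv,u^3,v^3,u^2v^2,\ldots$, and it is only for very specific choices of $x$ that the resulting identity simplifies to a clean integer multiple of $t(1)$ or of $uv\,t(1)$. Computer-assisted exploration of $\AAA_4$, in the spirit of the rest of this section, is likely to be indispensable in selecting suitable candidates.
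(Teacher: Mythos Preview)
Your plan is essentially the paper's own approach, and your preliminary algebra (deriving $4u^3t(1)=-4uv\,t(1)=4v^3t(1)$ and $4(11uv+1)t(1)=0$) is correct and matches the final paragraph of the paper's argument. But what you have written is a plan, not a proof: the entire content of the lemma lies in the ``combinatorial search'' you postpone.

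The paper carries out exactly this plan with one explicit pair $a=s_2s_3^{-1}s_1s_2s_3^{-1}s_1s_2s_1$ and $b=s_3^{-1}s_2s_3s_1s_2^{-1}s_3s_1s_2$, conjugate in $\Gamma_4$ via $c=s_2s_1^{-1}s_3s_2^{-1}$. Expanding $t(a)$ and $t(b)$ by Markov moves and the relation $\cc\equiv 0$ yields $t(a)-t(b)=(64u^2v+12u-4v^2)t(1)=0$, which simplifies via your relations to $80u\,t(1)=0$. There is then one further ingredient you do not mention and which your outline seems not to anticipate: the paper does \emph{not} obtain $16t(1)=0$ from conjugacy relations alone. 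It invokes the structural fact, proved earlier, that $K_\infty$ has exponent a power of $2$; since $\gcd(5,2^r)=1$ this lets one pass from $80u\,t(1)=0$ to $16u\,t(1)=0$, whence $16v\,t(1)=0$ and, via $(3u^3+3v^3-5uv-1)t(1)=0$, finally $16t(1)=0$. If you were hoping to find a pair that gives a pure $2$-power directly, there is no evidence that such a pair exists in $\Gamma_4$, and you should make the role of this torsion input explicit.

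One small slip: the extra-special group $3^{1+2}$ is $\Ker(\Gamma_4\twoheadrightarrow\Gamma_3)$, not $\Gamma_4^0=\Ker(\Gamma_4\twoheadrightarrow C_3)$; the latter has order $216$.
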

\begin{proof}
We recall $(32\mmm3) = (\mmm 232)$ and $(3\mmm23) = -(2\mmm 32)-(\mmm232)-(23\mmm2)-(\mmm2\mmm3)-(\mmm3
\mmm2)-(2)-(3) $ and note that 
$t(z_3) = (-1 -6uv)t(1)$, $t(12121) = t(11211)= t(11112)= t(12) = u^2t(1)$.
We will compute $t(a)$ and $t(b)$ with $a = (2\mmm312\mmm3121)$
and $b = (\mmm3231\mmm2312)$. It can be checked by hand
that, in $\Gamma_4$, we have $ac = cb$ with $c = (2\mmm13\mmm2)$, hence $t(a) = t(b)$.
\def\tt#1{t(\mathrm{{#1}})}

We first compute $t(a) = t(2\mmm312\mmm3121)$. We have
$t(2\mmm312\mmm3121) = t(21\mmm32\mmm3121) = t(21332\mmm3121) = 
t(213\mmm232121) = 
 -t(212\mmm322121)-t(21\mmm2322121)-t(2123\mmm22121)-t(21\mmm2\mmm32121)
-t(21\mmm3
\mmm22121)-t(2122121)-t(2132121) $

\begin{itemize}
\item[$\bullet$] $t(212 \mmm 322121) = t(22121212\mmm 3) = v\,t(22121212) = 
v\,t(22212121) = v\,t(12121) = u^2v\,t(1) $ 
\item[$\bullet$] $ t(21\mmm2322121) = t(2212121\mmm23) = u\,t(2212121\mmm2) =
  u\,t(\mmm22212121) =  u\,t(212121) =  u\,t(121212) = u \,t(z_3)$
\item[$\bullet$] $ t(2123\mmm22121) = t(2123121) = t(1212123) = u\,t(121212) = u \, t(z_3)$
\item[$\bullet$] $ t(21\mmm2\mmm32121) = t(212121\mmm2\mmm3) =
 v\,t(212121\mmm2) =  v\,t(\mmm2212121) = v\,t(12121) = u^2v\,t(1)$
\item[$\bullet$] $ t(21\mmm3 \mmm22121) = t(21\mmm3 121) = 
 t(12121\mmm3 ) = v\,t(12121 ) = u^2v\,t(1)$ 
\item[$\bullet$] $ t(2122121) = t(2122212) = t(2112) = t(1122) = v^2t(1)$
\item[$\bullet$] $ t(2132121) = t(2121213) = u\,t(212121) = u\,t(121212) = u\,t(z_3)$
\end{itemize}
hence
$t(2\mmm312\mmm3121) = (-3u^2v -3 u\,t(z_3) - v^2)t(1)
= (-3 u^2v + 3u(1+6uv)-v^2)t(1) = (3u + 15u^2 v - v^2)t(1)$.

We now compute $t(b) = t(\mmm3231\mmm2312)$. We have
$t(\mmm3231\mmm2312)= t(\mmm3213\mmm2312) = -t(\mmm3212\mmm3212)-
t(\mmm321\mmm23212)-t(\mmm32123\mmm212)-t(\mmm321\mmm2\mmm312)-
t(\mmm321\mmm3
\mmm212)-t(\mmm321212)-t(\mmm321312)$ and
\begin{itemize}
\item[$\bullet$] $t(\mmm3212\mmm3212) = t(\mmm3121\mmm3212) = 
t(1\mmm32\mmm31212) = t(1332\mmm31212) = t(13\mmm2321212)$
\item[$\bullet$] $t(\mmm321\mmm23212) = t(\mmm3\mmm1213212) = 
t(\mmm1\mmm3231212) = t(\mmm133231212) = t(\mmm132321212) = 
t(\mmm123221212) = t(221212\mmm123) = u \,t(221212\mmm12) =   u \,t(\mmm12221212) =  
 u \, t(212) =   u \, t(122) = u^2v\,t(1)$  
\item[$\bullet$] $t(\mmm32123\mmm212) =t(\mmm31213\mmm212) = t(1\mmm3231\mmm212) 
=t(133231\mmm212) =
t(132321\mmm212) =t(123221\mmm212) = t(221\mmm212123) = u t(221\mmm21212) = 
u t(2221\mmm2121) = u t(1\mmm2121)= u t(1\mmm2212) = u t(112) = u^2v\,t(1)$
\item[$\bullet$] $t(\mmm321\mmm2\mmm312) = t(\mmm3\mmm121\mmm312) = t(\mmm1\mmm32\mmm3112) = t(\mmm1332\mmm3112) = 
t(\mmm13\mmm232112)$
\item[$\bullet$] $t(\mmm321\mmm3\mmm212) = t(\mmm32\mmm31\mmm212)
 = t(332\mmm31\mmm212) = t(3\mmm2321\mmm212)$
\item[$\bullet$] $t(\mmm321212) = t(21212\mmm3) = v\,t(21212) = v\,t(22122) = v\,t(22221) =  v\,t(21) = v\,t(12) = u^2v t(1)$
\item[$\bullet$] $t(\mmm321312) = t(\mmm323112) = t(3323112) = t(3232112) = t(2322112) = 
t(2211223) = u t(221122) =  u t(112222) = u t(112) = u^2v\,t(1)$
\end{itemize}
We have $t(21212) = t(22122)  = t(12222)  = t(12) = u^2t(1)$,
$t(221212\mmm12) = t(221212112) =  t(112221212) = 
t(111212) = t(212) =  t(122) = uv\,t(1)$, $t(13\mmm2321212) 
= t(213\mmm232121) = (3u+15u^2v-v^2)t(1)$ as we already computed,
hence $t(\mmm13\mmm232112) = t(113\mmm232112) = t(13\mmm2321121) = t(13\mmm2321212) = (3u+15u^2v-v^2)t(1)$,
$t(3\mmm2321\mmm212) = t(3\mmm23212212) =  t(3\mmm23212121) = 
 t(13\mmm2321212)  = (3u+15u^2v-v^2)t(1)$. We thus get
$t(\mmm3231\mmm2312) = (-3(3u+15u^2v-v^2) - 4 u^2v)t(1) =
(-9u + 3 v^2 -49u^2 v)t(1)$.
We have that $(\mmm3231\mmm2312)$
is conjugated to $(2\mmm312\mmm3121)$
hence $t(b) = (3u+15u^2v-v^2)t(1) = (-9u + 3 v^2 -49u^2 v)t(1)$.
Therefore $t(a)=t(b)$ means
$(64u^2v +12u-4v^2)t(1) = 0$.
Since $4v^2t(1) = -4u\,t(1)$ and $4u^2v\,t(1) = (4u^2)v\,t(1) = -4v^2t(1) = 4u\,t(1)$,
this means $(64u +12u+4u)t(1) = 0$, i.e. $80 u\,t(1) = 0$. Since $80 = 16\times5$
and we know $2^r t(1) = 0$ for some $r$, there exists $g,h \in \Z$
with $2^r g + 5 h = 1$ hence $80 h u t(1) = 16 u t(1) = 0$.
From $4v t(1)= -4u^2t(1)$
we then get $16 v\,t(1) = 0$. By Lemma \ref{relu3v3}
we have $(3u^3+3v^3-5uv-1)t(1) = 0$, whence $16u\,t(1)=16v t(1)= 0$
implies $16t(1)=0$. Moreover,
$0 = 4\times (3u^3+3v^3-5uv-1)t(1) = 
(12u^3+12v^3-20uv-4)t(1) 
= (-12uv-12uv-20uv-4)t(1) = (-44uv - 4)t(1)$ because $4u^3t(1) =
-4uv\,t(1) = 4v^3t(1)$. Since $-44t(1) = 4t(1)$. This proves $4uv\,t(1) = 4t(1)$,
and $4u^3t(1) = 4v^3t(1) = -4t(1)$.

\end{proof}

\begin{remark}
Over $A = \Z[u,v]/(16,4(u^2+v),4(v^2+u),3u^3 + 3v^3 -5 uv -1)$, one can define
a `Markov trace' for $n=4$ extending a given $\tau_3$ originating from $MT(K_{\infty},R)$,
namely a linear map $\tau_4 : A \Gamma_4 \to A$
with $\tau_4(xy) = \tau_4(yx)$ and, when $x \in A \Gamma_3$,
$\tau_4(xs_3) = u\tau_3(x)$, $\tau_4(xs_3^{-1}) = v \tau_3(x)$. This can be checked as follows : for each one
of the 24 conjugacy classes of $\Gamma_4$, one takes an element in it and
find a word in $s_1,s_2,s_3$ representing it ; we then get a value for the
Markov trace by the implicit algorithm used to prove Proposition \ref{propfunar}. This class function naturally
extends to a trace $\tau_4 : A \Gamma_4 \to A$, and we check that,
for each $g_0 \in \Gamma_3$, we have $\tau_4(g_0 s_3) = u\tau_3(g_0)$, $\tau_4(g_0 s_3^{-1}) = v\tau_3(g_0)$.
Finally, we check that this $\tau_4$ factorizes through $K_4$, that is that
$\tau_4(g_1\qq g_2) = 0$ for each $g_1,g_2 \in \Gamma_4$ and, as before,
$\qq$ is the sum of the elements of $Q_8 \subset \Gamma_3$. Since $g_1$ can be taken in
$\Gamma_4/N_{\Gamma_4}(Q_8)$ and $g_2$ can be taken in $Q_8 \backslash \Gamma_4$,
there is only 729 conditions $\tau_4(g_1 \qq g_2) = 0$ to check. Since $\tau_4$ is already a class
function this number of equations reduces drastically to 18, so we can check
that $\tau_4$ indeed factors through $K_4$.

When $n=5$, we check similarly that there is a linear map 
$\tau_5 : A \Gamma_5 \to A$ with $\tau_5(xy) = \tau_5(yx)$ and, when $x \in A \Gamma_4$,
$\tau_5(xs_4) = u\tau_4(x)$, $\tau_5(xs_4^{-1}) = v \tau_4(x)$ : the computations in \GAP\ 
take only a lot more time, and we use the software \MACAULAY\ in order to automatize
equality checking inside $A$. The conditions for $t$ to factorize through $K_5$
amount to 243 equalities in $A$, which we check to be true using \MACAULAY.
\end{remark}

The two lemmas above can be combined to show the following.

\begin{lemma}
If $t$ is a Markov trace that factors through $K_{\infty}$,
then $(u+v+1)(u+jv+j^2)(u+j^2 v + j)t(1) = (u^3+v^3 - 3uv + 1)t(1) = 0$.
\end{lemma}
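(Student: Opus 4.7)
The plan is to show that the two equalities in the lemma reduce to a single algebraic identity and then deduce it by combining the four relations proved in the two previous lemmas. First, I would record the classical factorization
$$ u^3 + v^3 + 1 - 3uv = (u+v+1)(u+jv+j^2)(u+j^2v+j), $$
valid over any ring containing $j$ with $1+j+j^2=0$ (it is the evaluation at $(u,v,1)$ of the norm form $X^3+Y^3+Z^3-3XYZ$). Since everything happens inside $\Z[u,v]\,t(1) \subset R$ and we only claim the equality after multiplication by $t(1)$, this reduces the statement to showing $X \cdot t(1) = 0$ where $X = u^3+v^3-3uv+1$.

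Write $T = t(1)$. The previous two lemmas give the four relations
$$ 16T = 0, \quad 4uvT = 4T, \quad 4u^3T = 4v^3T = -4T, \quad (3u^3+3v^3-5uv-1)T = 0. $$
I would compute $3XT$ and $4XT$ separately. For the first, using the fourth relation to substitute $3u^3T+3v^3T = (5uv+1)T$,
$$ 3XT = (3u^3+3v^3)T - 9uvT + 3T = (5uv+1)T - 9uvT + 3T = -4(uv-1)T = -(4uvT - 4T) = 0 $$
by the second relation. For the second, using the third relation and again the second,
$$ 4XT = 4u^3T + 4v^3T - 3(4uvT) + 4T = -4T - 4T - 12T + 4T = -16T = 0 $$
by the first relation.

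Since $\gcd(3,4)=1$ in $\Z$, we can write $1 = 4-3$, hence $XT = 4XT - 3XT = 0$. This is the second equality of the lemma, and the first equality follows from the factorization recalled in the first step. The main (minor) obstacle is simply to be careful that all manipulations take place after multiplication by $T$, so that the preceding relations may be used as identities in $\Z[u,v]T$; no further computation in $\Gamma_n$ is required beyond what the previous lemmas already established.
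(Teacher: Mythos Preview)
Your proof is correct and follows essentially the same approach as the paper: both deduce $(u^3+v^3-3uv+1)t(1)=0$ as an integer linear combination of the four relations $16t(1)=0$, $(4uv-4)t(1)=0$, $(4u^3+4)t(1)=(4v^3+4)t(1)=0$, and $(3u^3+3v^3-5uv-1)t(1)=0$ established in the two preceding lemmas, together with the factorization in $\Z[j]$. The only cosmetic difference is that the paper writes the combination in one line, whereas you split it into showing $3Xt(1)=0$ and $4Xt(1)=0$ and then invoke $\gcd(3,4)=1$; this amounts to the same linear algebra.
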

\begin{proof}
$(u+v+1)(u+jv+j^2)(u+j^2 v + j) = u^3+v^3 - 3uv + 1$ holds true in $\Z[j]$,
and $(u^3+v^3 - 3uv + 1)t(1) = 0$
because $u^3+v^3 - 3uv + 1= (4u^3+4) + (4v^3 + 4)-2\times(4uv - 4)
- (3u^3 + 3v^3 - 5uv - 1) - 16$.
\end{proof}

\subsection{Markov traces modulo 4}

In this section we prove that Markov traces exist modulo 4. We let $R = (\Z/4\Z)[j]$,
that is $(\Z/4\Z)[x]/(x^2+x+1)$, and consider 
the reduction $\bar{t} : K_{\infty} \to R[u,v]\bar{t}(1)$, with values in
$R \otimes_{\Z/4\Z} (\Z[u,v] t(1) / 4t(1))$. Here we let $\mu_3  = \{ 1,j,j^2 \}$. Since $4\bar{t}(1) = 0$,
we have $0 = (3u^3 + 3 v^3 -5 uv - 1)\bar{t}(1) = -(u^3+uv+v^3+1)\bar{t}(1)
= -(u+v+1)(u+jv + j^2 )(u+j^2 v + j)\bar{t}(1)$ hence a natural map
$$ R[u,v]/(u^3 + v^3 + uv +1) \to \tilde{M} = \prod_{\gamma \in \mu_3} R[u,v] /(v + \gamma u + \gamma^2) \simeq R[u]^3.
$$
It can be checked (e.g. using \MACAULAY) that the intersection of the ideals
$(v + \gamma u + \gamma^2)$ in $R[u,v]$ is equal to their product $(u^3 + v^3 + uv +1)$,
so the above map is injective.
Now consider the Hecke algebras $H_n(\alpha,\beta)$ over $R = (\Z/4\Z)[j]$, their direct limit $H_{\infty}(\alpha,\beta)$, and
introduce their Markov trace $tr_{\gamma} : H_{\infty}(\alpha,\beta) \to R[u] \simeq R[u,v] /(v + \gamma u + \gamma^2)$
for $\{\alpha,\beta,\gamma \} = \mu_3$,
such that $tr_{\gamma}(gs_n) = utr_{\gamma}(g)$
and $tr_{\gamma}(gs_n^{-1}) = (\gamma u + \gamma^{-1})tr_{\gamma}(g) = v tr_{\gamma}(g)$
for $g \in H_n(\alpha,\beta)$.

They extend to Markov traces $K_{\infty} \to R[u,v] /(v + \gamma u + \gamma^2)$.
Then a convenient Markov trace $\bar{t} : K_{\infty} \to R[u,v]/(u^3+uv+v^3+1)$
can be defined by $\bar{t}(g) = (tr_{\gamma}(g))_{\gamma \in \mu_3}$ ; indeed,
this defines at first a map to the cyclic $R[u,v]/(u^3+uv+v^3+1)$-module generated
by $\bar{t}(1) \in \tilde{M}$, which is free of rank 1 as $R[u,v]/(u^3+uv+v^3+1) \into \tilde{M}$.
In particular, $\bar{t}(g) = 0$ for $g$ in the ideal $J_n(\alpha,\beta)$
of $R \Gamma_n$ defining $H_n(\alpha,\beta)$, for every $\alpha,\beta$.
It follows that $\bar{t}$ vanishes on $J$, hence factorizes through the direct
limit $\mathcal{H}_{\infty}$ of the $\mathcal{H}_n = R \Gamma_n / J$.
Finally the proof of Lemma \ref{lemAtoH} says that $\cc \in J$ not only modulo 2 but modulo 4,
hence $\bar{t}$ factorizes also through $K_n(1)$, so this $\bar{t}$ is indeed a
Markov trace on $K_{\infty}$.

\begin{prop} Any Markov trace $t$ on $K_{\infty}$ with $4t(1) = 0$
factorizes through $\mathcal{H}_{\infty}$, and is induced
by the Markov traces of the Hecke algebras $H_{\infty}(\alpha,\beta)$.
\end{prop}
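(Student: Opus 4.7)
The plan is to reduce $t$ to the ``universal'' Markov trace $\bar{t}$ built in the preceding paragraph by showing that $t$ is obtained from $\bar{t}$ by a simple $R[u,v]$-linear specialization, where $R=(\Z/4\Z)[j]$. Concretely, I would first combine the hypothesis $4t(1)=0$ with the relation $(u^3+v^3-3uv+1)t(1)=0$ established in the combined lemma just before this section to obtain $(u^3+v^3+uv+1)t(1)=0$, using $-3uv\equiv uv\pmod 4$. Extending scalars to $R$ if necessary, this says that $t(1)$ is annihilated by the factorized polynomial $(u+v+1)(u+jv+j^2)(u+j^2v+j)$ in $R[u,v]$.

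Next, I would introduce the $R[u,v]$-linear specialization $\phi:R[u,v]/(u^3+v^3+uv+1)\to R[u,v]\cdot t(1)$ defined by sending the class of $1$ to $t(1)$; the previous step makes this well-defined. Composing with the Markov trace $\bar{t}\colon K_\infty\to R[u,v]/(u^3+v^3+uv+1)$ constructed just before the statement---which was shown there to factor through $\mathcal{H}_\infty$ and to arise as $g\mapsto(tr_\gamma(g))_{\gamma\in\mu_3}$ via the embedding coming from the Macaulay identity $\bigcap_\gamma(v+\gamma u+\gamma^2)=(u^3+v^3+uv+1)$---yields a Markov trace $\phi\circ\bar{t}$ on $K_\infty$ taking value $t(1)$ at $1$. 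By the uniqueness statement recalled in the introduction, namely that a Markov trace on $\AAA_\infty$ is determined by its value at $1$ (a consequence of Proposition~\ref{propfunar}), one then concludes $t=\phi\circ\bar{t}$. Both assertions of the proposition follow at once: $t$ factors through $\mathcal{H}_\infty$ because $\bar{t}$ does, and $t$ is induced by the three Hecke-algebra Markov traces $tr_\gamma$ post-composed with $\phi$.

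The hard work is entirely upstream, not in the present proposition. The genuine obstacle is the derivation of $(u^3+v^3-3uv+1)t(1)=0$, which required the delicate trace calculations of Lemma~\ref{relu3v3} (comparing $t(x)$ and $t(y)$ on conjugate elements of the extraspecial kernel $3^{1+2}$ of $\Gamma_4\onto\Gamma_3$) together with the subsequent lemma yielding $16t(1)=0$, $4uv\,t(1)=4t(1)$, $4u^3t(1)=4v^3t(1)=-4t(1)$, as well as the ideal-theoretic identification $\bigcap_\gamma(v+\gamma u+\gamma^2)=(u^3+v^3+uv+1)$ that underlies the construction of $\bar{t}$. Once those ingredients are in hand, the present proposition reduces to a routine application of uniqueness of Markov traces given their value at $1$.
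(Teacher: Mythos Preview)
Your proposal is correct and follows essentially the same approach as the paper: the paper places the construction of $\bar{t}$ and the verification that it factors through $\mathcal{H}_\infty$ in the paragraph immediately preceding the proposition, and the proposition is meant to follow by the uniqueness of Markov traces on $K_\infty$ given $t(1)$ (stated after Proposition~\ref{propfunar}). You have made the final specialization-and-uniqueness step explicit, which the paper leaves to the reader.
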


\begin{remark}
\begin{enumerate}
\item Over $(\Z/4\Z)[j]$, and even over $\Z[j]$, denoting $\bb = s_1 s_2^{-1} - s_1^{-1} s_2 + s_2 s_1^{-1} - s_2^{-1} s_1$,
one still gets that $\bb$ belongs to the intersection of the ideals $J_n(\alpha,\beta)$. Do we still have $\mathcal{H}_n = R \Gamma_n/(\bb)$,
for $R = (\Z/4\Z)[j]$ or even $R = \Z[j]$ ?
\item A natural question is whether the Birman-Wenzl algebra is still a quotient of $R \Gamma_n/(\qq)$ when
$R = (\Z/4\Z)[j]$ ($\la = 1$, $\delta = j-j^2 = 1+2j$). The answer is no, as a straightforward though tedious calculation shows that,
over $\Z[j]$, $\qq$ is mapped inside $BW_3$ to $(1-\delta + \delta^2 - \delta^3) b_1 + (-2 \delta + \delta^2 - \delta^3) b_2
+ (\delta^2 - 3 \delta) b_3 + 2 b_4 + (2- \delta - \delta^2) b_5 + (3- \delta) b_6
+ (\delta^2 - \delta^3) b_7 + (\delta - 2 \delta^2 - \delta^3) b_8 + (2 \delta + \delta^2 + \delta^3) b_9 +
(\delta - \delta^3) b_{10} + 2 \delta^3 b_{11} + (\delta - \delta^2 - 2 \delta^3) b_{12}
+ (\delta^2 + \delta) b_{13} + (\delta- \delta^2) b_{14} + (\delta- \delta^2) b_{15}$,
which is nonzero modulo $4$.

\end{enumerate}
\end{remark}

\subsection{Comparison with the claims of \cite{FUNAR}}

In order to
make the comparison with \cite{FUNAR} easier, we switch our notations
to the ones there. We first briefly review the
setting used in \cite{FUNAR}. In \cite{FUNAR},
elements $z,z' \in \C^{\times}$ are chosen, $A = A(z,z')$ is defined to be the subring of $\C$ generated by
$z,z'$, the $K_n(\gamma)$ are defined over $A$ with $\gamma \in A$,
and the direct limit $K_{\infty} = K_{\infty}(\gamma)$
of the $K_n = K_n(\gamma)$ is introduced. Let $K_n^{\ab}$ be quotient of
the module $K_n$ by the submodule $[K_n,K_n]$ spanned by the $xy-yx$ for $x,y \in K_n$,
and $K_{\infty}^{\ab}$ be the direct limit of the $K_n^{\ab}$.

For $R$ some fixed
$A$-module, the following $A$-modules
are defined :
$$
\begin{array}{lcl}
AF(K_{\infty},R) &=& \{ t \in \Hom_A(K_{\infty},R) \ | \ t(x s_n y) = zt(xy),
t(xs_n^{-1} y) = z't(xy), x,y \in K_n \}\\ 
MT(K_{\infty},R) &=& \{ t \in \Hom_A(K^{\ab}_{\infty},R) \ | \ t(x s_n y) = zt(xy),
t(xs_n^{-1} y) = z't(xy), x,y \in K_n^{\ab} \} 
\end{array}
$$
Since, for $a,b \in K_{n+1}^{\ab}$, $ab = ba$, we have
$t(x s_n y) - zt(xy) = t(yxs_n - zyx)$ and  $t(x s_n^{-1} y) - z't(xy) = t(yxs_n^{-1} - z'yx)$.
It follows that 
$MT(K_{\infty},R) = \Hom_A(L(K_{\infty}),R)$ with
$L(K_{\infty})$ the quotient of $K_{\infty}^{\ab}$ by the $A$-submodule
spanned by the $xs_n - zx, xs_n^{-1} - z'x$ for $x \in K_n$.

Then is introduced an $A$-module $M$ defined as the quotient
of $K_n$ by the $A$-submodule spanned by the $as_ib - z ab, as_i^2 b - tab$
for $a,b \in K_i$ and $i<n$ (by abusing notations, here $K_i$
means the image of $K_i$ in $K_n$), and with $t = \gamma z'$. Since
$K_{n+1}$ is the sum of the $K_n s_n^{\eps} K_n$ for $\eps \in \{0,1,2\}$
we have $AF(K_n,R) = \Hom_A(M,R)$. The author of \cite{FUNAR} incorrectly
identifies this space with $R \otimes_A M$. More generally,
most of the arguments in \cite{FUNAR} implicitely assume that
the $A$-modules involved are free,
which is incorrect in view of our results. In particular,
for a nontrivial $t \in MT(K_{\infty},R)$ to exist, it is claimed that
$z,z'$ have to be related by the relations $(z')^2 = -z, z^2 = -z'$, these coming from $t(\qq s_1) = 0$,
$t(\qq s_1^2) = 0$ (in the notations of \cite{FUNAR}, $\qq s_1 = R_0$, $\qq s_1^2 = R_1$
and $\qq = R_2$). Actually, one finds that, if $t$ is such a Markov
trace, then $t(\qq s_1^2) = 4(z^2 + z')t(1)$,
$t(\qq s_1) = 4((z')^2 + z)t(1)$ and $t(\qq) = t(z_3) + 6zz' t(1) + t(1)$,
with $z_3 = (s_1 s_2)^3$. Of course division by $4$ is not licit
in general.

\section{Appendix : the 25-dimensional representation of $S_4(3)$}

A crucial tool for investigating $K_n$ in characteristic 3 has been
the 25-dimensional irreducible representation of $S_4(3)$, denoted $\varphi_5$ in \cite{ATLASBRAUER}
(see section \ref{ssecn5p3}).
We proved and used that it is defined over $\F_3$, and we computed an explicit
matrix model for it. 
We provide in figures \ref{s1g} to \ref{s4g} the images of the Artin
generators in such a model, so that the reader
have the possibility to check some of the computations of this paper.
In order to save space, the following convention
has been adopted for representing elements in $\F_3$ : a dot $\cdot$ represents $0$, a black square $\blacksquare$ represents $-1$
and an empty square $\square$ represents $1$.

\begin{figure}
\begin{center}
\resizebox{8cm}{!}{\includegraphics{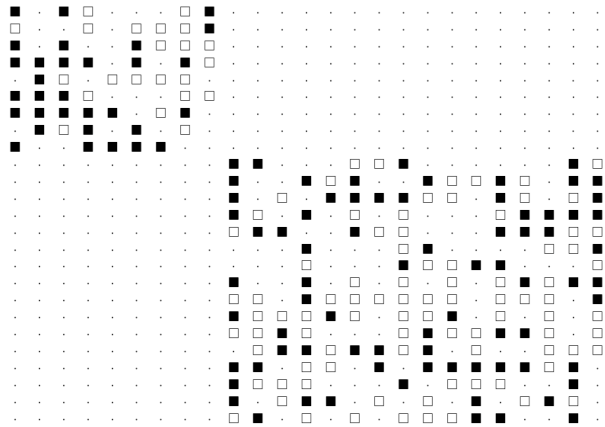}}\ \ 
\end{center}
\caption{$\varphi_5(s_1)$}
\label{s1g}
\end{figure}
\begin{figure}
\begin{center}
\resizebox{8cm}{!}{\includegraphics{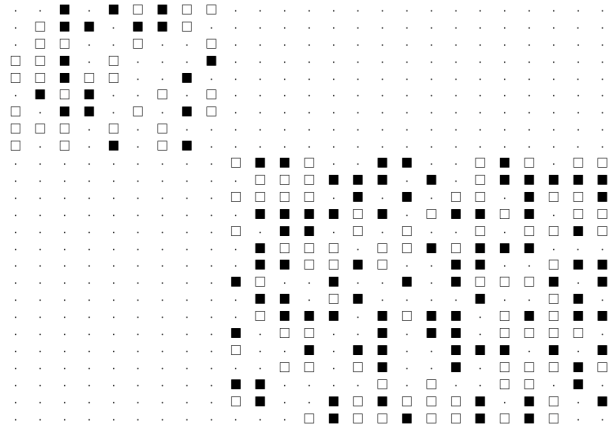}}\ \ 
\end{center}
\caption{$\varphi_5(s_2)$}
\label{s2g}
\end{figure}
\begin{figure}
\begin{center}
\resizebox{8cm}{!}{\includegraphics{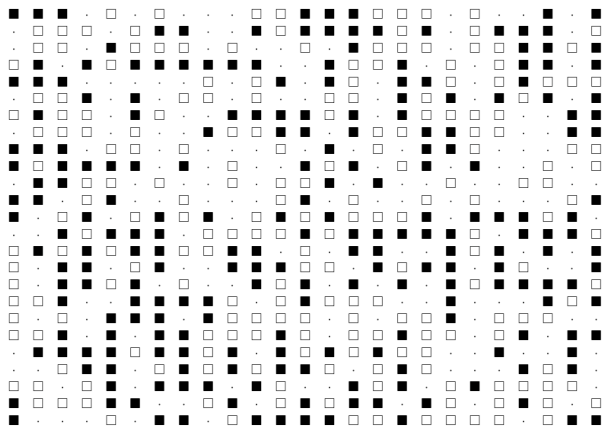}}\ \ 
\end{center}
\caption{$\varphi_5(s_3)$}
\label{s3g}
\end{figure}
\begin{figure}
\begin{center}
\resizebox{8cm}{!}{\includegraphics{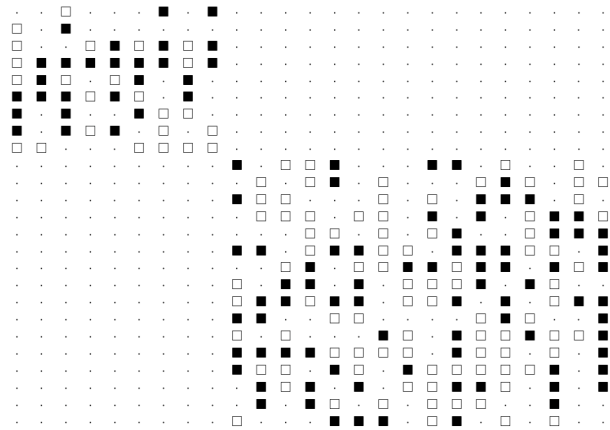}}\ \ 
\end{center}
\caption{$\varphi_5(s_4)$}
\label{s4g}
\end{figure}

\end{document}